\newtheorem{theorem}{Theorem}[section]
\newtheorem{axiom}[theorem]{Axiom}
\newtheorem{conjecture}[theorem]{Conjecture}
\newtheorem{corollary}[theorem]{Corollary}
\newtheorem{definition}[theorem]{Definition}
\newtheorem{example}[theorem]{Example}
\newtheorem{exercise}[theorem]{Exercise}
\newtheorem{lemma}[theorem]{Lemma}
\newtheorem{proposition}[theorem]{Proposition}
\newtheorem{remark}[theorem]{Remark}
\let\pdfoutput=\undefined\fi
\chardef\@x10\chardef\@xv60
\def\tcitime{
\def\@time{%
  \@minute\time\@hour\@minute\divide\@hour\@xv
  \ifnum\@hour<\@x 0\fi\the\@hour:%
  \multiply\@hour\@xv\advance\@minute-\@hour
  \ifnum\@minute<\@x 0\fi\the\@minute
  }}%
\def\x@hyperref#1#2#3{%
   \catcode`\~ = 12
   \catcode`\$ = 12
   \catcode`\_ = 12
   \catcode`\# = 12
   \catcode`\& = 12
   \y@hyperref{#1}{#2}{#3}%
}
\def\y@hyperref#1#2#3#4{%
   #2\ref{#4}#3
   \catcode`\~ = 13
   \catcode`\$ = 3
   \catcode`\_ = 8
   \catcode`\# = 6
   \catcode`\& = 4
}
\def\QCTOpt[#1]#2{%
  \def\QCTOptB{#1}
  \def\QCTOptA{#2}
}
\def\QCTNOpt#1{%
  \def\QCTOptA{#1}
  \let\QCTOptB\empty
}
\def\Qct{%
  \@ifnextchar[{%
    \QCTOpt}{\QCTNOpt}
}
\def\QCBOpt[#1]#2{%
  \def\QCBOptB{#1}%
  \def\QCBOptA{#2}%
}
\def\QCBNOpt#1{%
  \def\QCBOptA{#1}%
  \let\QCBOptB\empty
}
\def\Qcb{%
  \@ifnextchar[{%
    \QCBOpt}{\QCBNOpt}%
}
\def\PrepCapArgs{%
  \ifx\QCBOptA\empty
    \ifx\QCTOptA\empty
      {}%
    \else
      \ifx\QCTOptB\empty
        {\QCTOptA}%
      \else
        [\QCTOptB]{\QCTOptA}%
      \fi
    \fi
  \else
    \ifx\QCBOptA\empty
      {}%
    \else
      \ifx\QCBOptB\empty
        {\QCBOptA}%
      \else
        [\QCBOptB]{\QCBOptA}%
      \fi
    \fi
  \fi
}
\def\GRAPHICSPS#1{%
 \ifcase\GRAPHICSTYPE
   \special{ps: #1}%
 \or
   \special{language "PS", include "#1"}%
 \fi
}%
\def\graffile#1#2#3#4{%
    \bgroup
	   \@inlabelfalse
       \leavevmode
       \@ifundefined{bbl@deactivate}{\def~{\string~}}{\activesoff}%
        \raise -#4 \BOXTHEFRAME{%
           \hbox to #2{\raise #3\hbox to #2{\null #1\hfil}}}%
    \egroup
}%
\def\draftbox#1#2#3#4{%
 \leavevmode\raise -#4 \hbox{%
  \frame{\rlap{\protect\tiny #1}\hbox to #2%
   {\vrule height#3 width\z@ depth\z@\hfil}%
  }%
 }%
}%
\let\nographics=\@msidraft
\newif\ifwasdraft
\def\GRAPHIC#1#2#3#4#5{%
   \ifnum\@msidraft=\@ne\draftbox{#2}{#3}{#4}{#5}%
   \else\graffile{#1}{#3}{#4}{#5}%
   \fi
}
\def\addtoLaTeXparams#1{%
    \edef\LaTeXparams{\LaTeXparams #1}}%
\newif\ifBoxFrame \BoxFramefalse
\newif\ifOverFrame \OverFramefalse
\newif\ifUnderFrame \UnderFramefalse
\def\BOXTHEFRAME#1{%
   \hbox{%
      \ifBoxFrame
         \frame{#1}%
      \else
         {#1}%
      \fi
   }%
}
\def\doFRAMEparams#1{\BoxFramefalse\OverFramefalse\UnderFramefalse\readFRAMEparams#1\end}%
\def\readFRAMEparams#1{%
 \ifx#1\end%
  \let\next=\relax
  \else
  \ifx#1i\dispkind=\z@\fi
  \ifx#1d\dispkind=\@ne\fi
  \ifx#1f\dispkind=\tw@\fi
  \ifx#1t\addtoLaTeXparams{t}\fi
  \ifx#1b\addtoLaTeXparams{b}\fi
  \ifx#1p\addtoLaTeXparams{p}\fi
  \ifx#1h\addtoLaTeXparams{h}\fi
  \ifx#1X\BoxFrametrue\fi
  \ifx#1O\OverFrametrue\fi
  \ifx#1U\UnderFrametrue\fi
  \ifx#1w
    \ifnum\@msidraft=1\wasdrafttrue\else\wasdraftfalse\fi
    \@msidraft=\@ne
  \fi
  \let\next=\readFRAMEparams
  \fi
 \next
 }%
\def\IFRAME#1#2#3#4#5#6{%
      \bgroup
      \let\QCTOptA\empty
      \let\QCTOptB\empty
      \let\QCBOptA\empty
      \let\QCBOptB\empty
      #6%
      \parindent=0pt
      \leftskip=0pt
      \rightskip=0pt
      \setbox0=\hbox{\QCBOptA}%
      \@tempdima=#1\relax
      \ifOverFrame
          \typeout{This is not implemented yet}%
          \show\HELP
      \else
         \ifdim\wd0>\@tempdima
            \advance\@tempdima by \@tempdima
            \ifdim\wd0 >\@tempdima
               \setbox1 =\vbox{%
                  \unskip\hbox to \@tempdima{\hfill\GRAPHIC{#5}{#4}{#1}{#2}{#3}\hfill}%
                  \unskip\hbox to \@tempdima{\parbox[b]{\@tempdima}{\QCBOptA}}%
               }%
               \wd1=\@tempdima
            \else
               \textwidth=\wd0
               \setbox1 =\vbox{%
                 \noindent\hbox to \wd0{\hfill\GRAPHIC{#5}{#4}{#1}{#2}{#3}\hfill}\\%
                 \noindent\hbox{\QCBOptA}%
               }%
               \wd1=\wd0
            \fi
         \else
            \ifdim\wd0>0pt
              \hsize=\@tempdima
              \setbox1=\vbox{%
                \unskip\GRAPHIC{#5}{#4}{#1}{#2}{0pt}%
                \break
                \unskip\hbox to \@tempdima{\hfill \QCBOptA\hfill}%
              }%
              \wd1=\@tempdima
           \else
              \hsize=\@tempdima
              \setbox1=\vbox{%
                \unskip\GRAPHIC{#5}{#4}{#1}{#2}{0pt}%
              }%
              \wd1=\@tempdima
           \fi
         \fi
         \@tempdimb=\ht1
         \advance\@tempdimb by -#2
         \advance\@tempdimb by #3
         \leavevmode
         \raise -\@tempdimb \hbox{\box1}%
      \fi
      \egroup%
}%
\def\DFRAME#1#2#3#4#5{%
  \vspace\topsep
  \hfil\break
  \bgroup
     \leftskip\@flushglue
	 \rightskip\@flushglue
	 \parindent\z@
	 \parfillskip\z@skip
     \let\QCTOptA\empty
     \let\QCTOptB\empty
     \let\QCBOptA\empty
     \let\QCBOptB\empty
	 \vbox\bgroup
        \ifOverFrame 
           #5\QCTOptA\par
        \fi
        \GRAPHIC{#4}{#3}{#1}{#2}{\z@}%
        \ifUnderFrame 
           \break#5\QCBOptA
        \fi
	 \egroup
  \egroup
  \vspace\topsep
  \break
}%
\def\FFRAME#1#2#3#4#5#6#7{%
  \@ifundefined{floatstyle}
    {
     \begin{figure}[#1]%
    }
    {
	 \ifx#1h
      \begin{figure}[H]%
	 \else
      \begin{figure}[#1]%
	 \fi
	}
  \let\QCTOptA\empty
  \let\QCTOptB\empty
  \let\QCBOptA\empty
  \let\QCBOptB\empty
  \ifOverFrame
    #4
    \ifx\QCTOptA\empty
    \else
      \ifx\QCTOptB\empty
        \caption{\QCTOptA}%
      \else
        \caption[\QCTOptB]{\QCTOptA}%
      \fi
    \fi
    \ifUnderFrame\else
      \label{#5}%
    \fi
  \else
    \UnderFrametrue%
  \fi
  \begin{center}\GRAPHIC{#7}{#6}{#2}{#3}{\z@}\end{center}%
  \ifUnderFrame
    #4
    \ifx\QCBOptA\empty
      \caption{}%
    \else
      \ifx\QCBOptB\empty
        \caption{\QCBOptA}%
      \else
        \caption[\QCBOptB]{\QCBOptA}%
      \fi
    \fi
    \label{#5}%
  \fi
  \end{figure}%
 }%
\def\makeactives{
  \catcode`\"=\active
  \catcode`\;=\active
  \catcode`\:=\active
  \catcode`\'=\active
  \catcode`\~=\active
}
   \gdef\activesoff{%
      \def"{\string"}%
      \def;{\string;}%
      \def:{\string:}%
      \def'{\string'}%
      \def~{\string~}%
    }
\def\FRAME#1#2#3#4#5#6#7#8{%
 \bgroup
 \ifnum\@msidraft=\@ne
   \wasdrafttrue
 \else
   \wasdraftfalse%
 \fi
 \def\LaTeXparams{}%
 \dispkind=\z@
 \def\LaTeXparams{}%
 \doFRAMEparams{#1}%
 \ifnum\dispkind=\z@\IFRAME{#2}{#3}{#4}{#7}{#8}{#5}\else
  \ifnum\dispkind=\@ne\DFRAME{#2}{#3}{#7}{#8}{#5}\else
   \ifnum\dispkind=\tw@
    \edef\@tempa{\noexpand\FFRAME{\LaTeXparams}}%
    \@tempa{#2}{#3}{#5}{#6}{#7}{#8}%
    \fi
   \fi
  \fi
  \ifwasdraft\@msidraft=1\else\@msidraft=0\fi{}%
  \egroup
 }%
\def\TEXUX#1{"texux"}
\long\def\QQQ#1#2{%
     \long\expandafter\def\csname#1\endcsname{#2}}%
\long\def\QQA#1#2{}%
\def\QTR#1#2{{\csname#1\endcsname {#2}}}%
\def\EXPAND#1[#2]#3{}%
\def\NOEXPAND#1[#2]#3{}%
\def\LaTeXparent#1{}%
\def\ChildStyles#1{}%
\def\ChildDefaults#1{}%
\def\QTagDef#1#2#3{}%
  \providecommand{\UNICODE}[2][]{\protect\rule{.1in}{.1in}}
  \providecommand{\U}[1]{\protect\rule{.1in}{.1in}}
\def\QQfnmark#1{\footnotemark}
 \def\abstract{%
  \if@twocolumn
   \section*{Abstract (Not appropriate in this style!)}%
   \else \small 
   \begin{center}{\bf Abstract\vspace{-.5em}\vspace{\z@}}\end{center}%
   \quotation 
   \fi
  }%
   \def\registered{\relax\ifmmode{}\r@gistered
                    \else$\m@th\r@gistered$\fi}%
 \def\r@gistered{^{\ooalign
  {\hfil\raise.07ex\hbox{$\scriptstyle\rm\text{R}$}\hfil\crcr
  \mathhexbox20D}}}}{}%
\newdimen\theight
\def\newfmtname{LaTeX2e}
  \DeclareOldFontCommand{\rm}{\normalfont\rmfamily}{\mathrm}
  \DeclareOldFontCommand{\sf}{\normalfont\sffamily}{\mathsf}
  \DeclareOldFontCommand{\tt}{\normalfont\ttfamily}{\mathtt}
  \DeclareOldFontCommand{\bf}{\normalfont\bfseries}{\mathbf}
  \DeclareOldFontCommand{\it}{\normalfont\itshape}{\mathit}
  \DeclareOldFontCommand{\sl}{\normalfont\slshape}{\@nomath\sl}
  \DeclareOldFontCommand{\sc}{\normalfont\scshape}{\@nomath\sc}
\def\alpha{{\Greekmath 010B}}%
\def\beta{{\Greekmath 010C}}%
\def\gamma{{\Greekmath 010D}}%
\def\delta{{\Greekmath 010E}}%
\def\epsilon{{\Greekmath 010F}}%
\def\zeta{{\Greekmath 0110}}%
\def\eta{{\Greekmath 0111}}%
\def\theta{{\Greekmath 0112}}%
\def\iota{{\Greekmath 0113}}%
\def\kappa{{\Greekmath 0114}}%
\def\lambda{{\Greekmath 0115}}%
\def\mu{{\Greekmath 0116}}%
\def\nu{{\Greekmath 0117}}%
\def\xi{{\Greekmath 0118}}%
\def\pi{{\Greekmath 0119}}%
\def\rho{{\Greekmath 011A}}%
\def\sigma{{\Greekmath 011B}}%
\def\tau{{\Greekmath 011C}}%
\def\upsilon{{\Greekmath 011D}}%
\def\phi{{\Greekmath 011E}}%
\def\chi{{\Greekmath 011F}}%
\def\psi{{\Greekmath 0120}}%
\def\omega{{\Greekmath 0121}}%
\def\varepsilon{{\Greekmath 0122}}%
\def\vartheta{{\Greekmath 0123}}%
\def\varpi{{\Greekmath 0124}}%
\def\varrho{{\Greekmath 0125}}%
\def\varsigma{{\Greekmath 0126}}%
\def\varphi{{\Greekmath 0127}}%
\def\nabla{{\Greekmath 0272}}
\def\FindBoldGroup{%
   {\setbox0=\hbox{$\mathbf{x\global\edef\theboldgroup{\the\mathgroup}}$}}%
}
\def\Greekmath#1#2#3#4{%
    \if@compatibility
        \ifnum\mathgroup=\symbold
           \mathchoice{\mbox{\boldmath$\displaystyle\mathchar"#1#2#3#4$}}%
                      {\mbox{\boldmath$\textstyle\mathchar"#1#2#3#4$}}%
                      {\mbox{\boldmath$\scriptstyle\mathchar"#1#2#3#4$}}%
                      {\mbox{\boldmath$\scriptscriptstyle\mathchar"#1#2#3#4$}}%
        \else
           \mathchar"#1#2#3#4%
        \fi 
    \else 
        \FindBoldGroup
        \ifnum\mathgroup=\theboldgroup 
           \mathchoice{\mbox{\boldmath$\displaystyle\mathchar"#1#2#3#4$}}%
                      {\mbox{\boldmath$\textstyle\mathchar"#1#2#3#4$}}%
                      {\mbox{\boldmath$\scriptstyle\mathchar"#1#2#3#4$}}%
                      {\mbox{\boldmath$\scriptscriptstyle\mathchar"#1#2#3#4$}}%
        \else
           \mathchar"#1#2#3#4%
        \fi     	    
	  \fi}
\newif\ifGreekBold  \GreekBoldfalse
\let\SAVEPBF=\pbf
\def\pbf{\GreekBoldtrue\SAVEPBF}%
  \newcounter{equationnumber}  
  \def\mathletters{%
     \addtocounter{equation}{1}
     \edef\@currentlabel{\theequation}%
     \setcounter{equationnumber}{\c@equation}
     \setcounter{equation}{0}%
     \edef\theequation{\@currentlabel\noexpand\alph{equation}}%
  }
    \def\BibTeX{{\rm B\kern-.05em{\sc i\kern-.025em b}\kern-.08em
                 T\kern-.1667em\lower.7ex\hbox{E}\kern-.125emX}}}{}%
\def\AmS{{\protect\usefont{OMS}{cmsy}{m}{n}%
                A\kern-.1667em\lower.5ex\hbox{M}\kern-.125emS}}}{}%
\def\@@eqncr{\let\@tempa\relax
    \ifcase\@eqcnt \def\@tempa{& & &}\or \def\@tempa{& &}%
      \else \def\@tempa{&}\fi
     \@tempa
     \if@eqnsw
        \iftag@
           \@taggnum
        \else
           \@eqnnum\stepcounter{equation}%
        \fi
     \fi
     \global\tag@false
     \global\@eqnswtrue
     \global\@eqcnt\z@\cr}
\def\TCItag{\@ifnextchar*{\@TCItagstar}{\@TCItag}}
\def\@TCItag#1{%
    \global\tag@true
    \global\def\@taggnum{(#1)}}
\def\@TCItagstar*#1{%
    \global\tag@true
    \global\def\@taggnum{#1}}
\def\tsum{\mathop{\textstyle \sum }}%
\def\tprod{\mathop{\textstyle \prod }}%
\def\ExitTCILatex{\makeatother }
\if@compatibility\message{amsmath already loaded}\fi\aftergroup\ExitTCILatex}
\if@compatibility\message{amstex already loaded}\fi\aftergroup\ExitTCILatex}
\if@compatibility\message{amsgen already loaded}\fi\aftergroup\ExitTCILatex}
\let\DOTSI\relax
\def\RIfM@{\relax\ifmmode}%
\def\FN@{\futurelet\next}%
\def\iint{\DOTSI\intno@\tw@\FN@\ints@}%
\def\iiint{\DOTSI\intno@\thr@@\FN@\ints@}%
\def\iiiint{\DOTSI\intno@4 \FN@\ints@}%
\def\idotsint{\DOTSI\intno@\z@\FN@\ints@}%
\def\ints@{\findlimits@\ints@@}%
\newif\iflimtoken@
\newif\iflimits@
\def\findlimits@{\limtoken@true\ifx\next\limits\limits@true
 \else\ifx\next\nolimits\limits@false\else
 \limtoken@false\ifx\ilimits@\nolimits\limits@false\else
 \ifinner\limits@false\else\limits@true\fi\fi\fi\fi}%
\def\multint@{\int\ifnum\intno@=\z@\intdots@                          
 \else\intkern@\fi                                                    
 \ifnum\intno@>\tw@\int\intkern@\fi                                   
 \ifnum\intno@>\thr@@\int\intkern@\fi                                 
 \int}
\def\multintlimits@{\intop\ifnum\intno@=\z@\intdots@\else\intkern@\fi
 \ifnum\intno@>\tw@\intop\intkern@\fi
 \ifnum\intno@>\thr@@\intop\intkern@\fi\intop}%
\def\intic@{%
    \mathchoice{\hskip.5em}{\hskip.4em}{\hskip.4em}{\hskip.4em}}%
\def\negintic@{\mathchoice
 {\hskip-.5em}{\hskip-.4em}{\hskip-.4em}{\hskip-.4em}}%
\def\ints@@{\iflimtoken@                                              
 \def\ints@@@{\iflimits@\negintic@
   \mathop{\intic@\multintlimits@}\limits                             
  \else\multint@\nolimits\fi                                          
  \eat@}
 \else                                                                
 \def\ints@@@{\iflimits@\negintic@
  \mathop{\intic@\multintlimits@}\limits\else
  \multint@\nolimits\fi}\fi\ints@@@}%
\def\intkern@{\mathchoice{\!\!\!}{\!\!}{\!\!}{\!\!}}%
\def\plaincdots@{\mathinner{\cdotp\cdotp\cdotp}}%
\def\intdots@{\mathchoice{\plaincdots@}%
 {{\cdotp}\mkern1.5mu{\cdotp}\mkern1.5mu{\cdotp}}%
 {{\cdotp}\mkern1mu{\cdotp}\mkern1mu{\cdotp}}%
 {{\cdotp}\mkern1mu{\cdotp}\mkern1mu{\cdotp}}}%
\def\RIfM@{\relax\protect\ifmmode}
\def\text{\RIfM@\expandafter\text@\else\expandafter\mbox\fi}
\let\nfss@text\text
\def\text@#1{\mathchoice
   {\textdef@\displaystyle\f@size{#1}}%
   {\textdef@\textstyle\tf@size{\firstchoice@false #1}}%
   {\textdef@\textstyle\sf@size{\firstchoice@false #1}}%
   {\textdef@\textstyle \ssf@size{\firstchoice@false #1}}%
   \glb@settings}
\def\textdef@#1#2#3{\hbox{{%
                    \everymath{#1}%
                    \let\f@size#2\selectfont
                    #3}}}
\newif\iffirstchoice@
\def\Let@{\relax\iffalse{\fi\let\\=\cr\iffalse}\fi}%
\def\vspace@{\def\vspace##1{\crcr\noalign{\vskip##1\relax}}}%
\def\multilimits@{\bgroup\vspace@\Let@
 \baselineskip\fontdimen10 \scriptfont\tw@
 \advance\baselineskip\fontdimen12 \scriptfont\tw@
 \lineskip\thr@@\fontdimen8 \scriptfont\thr@@
 \lineskiplimit\lineskip
 \vbox\bgroup\ialign\bgroup\hfil$\m@th\scriptstyle{##}$\hfil\crcr}%
\def\Sb{_\multilimits@}%
\def\endSb{\crcr\egroup\egroup\egroup}%
\def\Sp{^\multilimits@}%
\newdimen\ex@
\def\rightarrowfill@#1{$#1\m@th\mathord-\mkern-6mu\cleaders
 \hbox{$#1\mkern-2mu\mathord-\mkern-2mu$}\hfill
 \mkern-6mu\mathord\rightarrow$}%
\def\leftarrowfill@#1{$#1\m@th\mathord\leftarrow\mkern-6mu\cleaders
 \hbox{$#1\mkern-2mu\mathord-\mkern-2mu$}\hfill\mkern-6mu\mathord-$}%
\def\leftrightarrowfill@#1{$#1\m@th\mathord\leftarrow
\mkern-6mu\cleaders
 \hbox{$#1\mkern-2mu\mathord-\mkern-2mu$}\hfill
 \mkern-6mu\mathord\rightarrow$}%
\def\overrightarrow{\mathpalette\overrightarrow@}%
\def\overrightarrow@#1#2{\vbox{\ialign{##\crcr\rightarrowfill@#1\crcr
 \noalign{\kern-\ex@\nointerlineskip}$\m@th\hfil#1#2\hfil$\crcr}}}%
\def\overleftarrow{\mathpalette\overleftarrow@}%
\def\overleftarrow@#1#2{\vbox{\ialign{##\crcr\leftarrowfill@#1\crcr
 \noalign{\kern-\ex@\nointerlineskip}$\m@th\hfil#1#2\hfil$\crcr}}}%
\def\overleftrightarrow{\mathpalette\overleftrightarrow@}%
\def\overleftrightarrow@#1#2{\vbox{\ialign{##\crcr
   \leftrightarrowfill@#1\crcr
 \noalign{\kern-\ex@\nointerlineskip}$\m@th\hfil#1#2\hfil$\crcr}}}%
\def\underrightarrow{\mathpalette\underrightarrow@}%
\def\underrightarrow@#1#2{\vtop{\ialign{##\crcr$\m@th\hfil#1#2\hfil
  $\crcr\noalign{\nointerlineskip}\rightarrowfill@#1\crcr}}}%
\def\underleftarrow{\mathpalette\underleftarrow@}%
\def\underleftarrow@#1#2{\vtop{\ialign{##\crcr$\m@th\hfil#1#2\hfil
  $\crcr\noalign{\nointerlineskip}\leftarrowfill@#1\crcr}}}%
\def\underleftrightarrow{\mathpalette\underleftrightarrow@}%
\def\underleftrightarrow@#1#2{\vtop{\ialign{##\crcr$\m@th
  \hfil#1#2\hfil$\crcr
 \noalign{\nointerlineskip}\leftrightarrowfill@#1\crcr}}}%
\def\qopnamewl@#1{\mathop{\operator@font#1}\nlimits@}
\let\nlimits@\displaylimits
\def\setboxz@h{\setbox\z@\hbox}
\def\varlim@#1#2{\mathop{\vtop{\ialign{##\crcr
 \hfil$#1\m@th\operator@font lim$\hfil\crcr
 \noalign{\nointerlineskip}#2#1\crcr
 \noalign{\nointerlineskip\kern-\ex@}\crcr}}}}
 \def\rightarrowfill@#1{\m@th\setboxz@h{$#1-$}\ht\z@\z@
  $#1\copy\z@\mkern-6mu\cleaders
  \hbox{$#1\mkern-2mu\box\z@\mkern-2mu$}\hfill
  \mkern-6mu\mathord\rightarrow$}
\def\leftarrowfill@#1{\m@th\setboxz@h{$#1-$}\ht\z@\z@
  $#1\mathord\leftarrow\mkern-6mu\cleaders
  \hbox{$#1\mkern-2mu\copy\z@\mkern-2mu$}\hfill
  \mkern-6mu\box\z@$}
\def\projlim{\qopnamewl@{proj\,lim}}
\def\injlim{\qopnamewl@{inj\,lim}}
\def\varinjlim{\mathpalette\varlim@\rightarrowfill@}
\def\varprojlim{\mathpalette\varlim@\leftarrowfill@}
\def\varliminf{\mathpalette\varliminf@{}}
\def\varliminf@#1{\mathop{\underline{\vrule\@depth.2\ex@\@width\z@
   \hbox{$#1\m@th\operator@font lim$}}}}
\def\varlimsup{\mathpalette\varlimsup@{}}
\def\varlimsup@#1{\mathop{\overline
  {\hbox{$#1\m@th\operator@font lim$}}}}
\def\align{\@verbatim \frenchspacing\@vobeyspaces \@alignverbatim
You are using the "align" environment in a style in which it is not defined.}
\let\csname endalign*\endcsname =\endtrivlist
\def\alignat{\@verbatim \frenchspacing\@vobeyspaces \@alignatverbatim
You are using the "alignat" environment in a style in which it is not defined.}
\let\csname endalignat*\endcsname =\endtrivlist
\def\xalignat{\@verbatim \frenchspacing\@vobeyspaces \@xalignatverbatim
You are using the "xalignat" environment in a style in which it is not defined.}
\let\csname endxalignat*\endcsname =\endtrivlist
\def\gather{\@verbatim \frenchspacing\@vobeyspaces \@gatherverbatim
You are using the "gather" environment in a style in which it is not defined.}
\let\csname endgather*\endcsname =\endtrivlist
\def\multiline{\@verbatim \frenchspacing\@vobeyspaces \@multilineverbatim
You are using the "multiline" environment in a style in which it is not defined.}
\let\csname endmultiline*\endcsname =\endtrivlist
\def\arrax{\@verbatim \frenchspacing\@vobeyspaces \@arraxverbatim
You are using a type of "array" construct that is only allowed in AmS-LaTeX.}
\def\tabulax{\@verbatim \frenchspacing\@vobeyspaces \@tabulaxverbatim
You are using a type of "tabular" construct that is only allowed in AmS-LaTeX.}
\let\csname endarrax*\endcsname =\endtrivlist
\let\csname endtabulax*\endcsname =\endtrivlist
 \def\endequation{%
     \ifmmode\ifinner 
      \iftag@
        \addtocounter{equation}{-1} 
        $\hfil
           \displaywidth\linewidth\@taggnum\egroup \endtrivlist
        \global\tag@false
        \global\@ignoretrue   
      \else
        $\hfil
           \displaywidth\linewidth\@eqnnum\egroup \endtrivlist
        \global\tag@false
        \global\@ignoretrue 
      \fi
     \else   
      \iftag@
        \addtocounter{equation}{-1} 
        \eqno \hbox{\@taggnum}
        \global\tag@false%
        $$\global\@ignoretrue
      \else
        \eqno \hbox{\@eqnnum}
        $$\global\@ignoretrue
      \fi
     \fi\fi
 } 
 \newif\iftag@ \tag@false
 \def\TCItag{\@ifnextchar*{\@TCItagstar}{\@TCItag}}
 \def\@TCItag#1{%
     \global\tag@true
     \global\def\@taggnum{(#1)}}
 \def\@TCItagstar*#1{%
     \global\tag@true
     \global\def\@taggnum{#1}}
     \def\tag{\@ifnextchar*{\@tagstar}{\@tag}}
     \def\@tag#1{%
         \global\tag@true
         \global\def\@taggnum{(#1)}}
     \def\@tagstar*#1{%
         \global\tag@true
         \global\def\@taggnum{#1}}
\def\binom#1#2{{#1 \choose #2}}%
\newcommand{\ev}{\operatorname{ev}}
\begin{document}
\title{Poincar\'e duality isomorphisms in tensor categories}
\author{Marc Masdeu}
\email{m.masdeu@warwick.ac.uk}
\address{Department of Mathematics, University of Warwick, Coventry, United Kingdom}
\author{Marco Adamo Seveso}
\email{seveso.marco@gmail.com}
\address{Dipartimento di Matematica "Federigo Enriques", Universit\`a di Milano, Via Cesare Saldini 50, 20133 Milano, Italia}
\begin{abstract}
If for a vector space  $V$ of dimension $g$ over a characteristic zero field we denote by $\wedge^iV$ its alternating powers, and by $V^\vee$ its linear dual, then there are natural Poincar\'e isomorphisms:
\[
\wedge^i V^\vee \cong \wedge^{g-i} V.
\]
We describe an analogous result for objects in rigid pseudo-abelian $\mathbb{Q}$-linear ACU tensor categories.
\end{abstract}
\maketitle
\tableofcontents

\section{Introduction}

Let $V$ be a vector space of finite dimension $g$ over a characteristic zero
field, let $\mathbb{I}$ be the field of scalars, viewed as a vector space,
and consider the alternating algebra $\wedge ^{\cdot }V$. Then the internal
multiplication morphism defined by the formula%
\begin{equation*}
\iota _{1}\left( x\right) \left( \omega _{1}\wedge ...\wedge \omega
_{j}\right) :=\tsum\nolimits_{k=1}^{j}\left( -1\right) ^{j}\left\langle
x,\omega _{k}\right\rangle \omega _{1}\wedge ...\wedge \widehat{\omega }%
_{k}\wedge ...\wedge \omega _{j}
\end{equation*}%
gives a map%
\begin{equation*}
\iota _{1}:V\rightarrow Hom\left( \wedge ^{\cdot }V^{\vee },\wedge ^{\cdot
-1}V^{\vee }\right)
\end{equation*}%
valued in the space of degree $-1$ anti-derivation. Since $\iota _{1}\left(
x\right) ^{2}=0$, by the universal property of the alternating algebra the
morphism $\iota _{1}$ extends to a morphism of algebras%
\begin{equation*}
\iota :\wedge ^{\cdot }V\rightarrow Hom\left( \wedge ^{\cdot }V^{\vee
},\wedge ^{\cdot }V^{\vee }\right) ^{op}\text{,}
\end{equation*}%
where $\left( \cdot \right) ^{op}$ means the opposite algebra, such that $%
\iota \left( x\right) :\wedge ^{j}V^{\vee }\rightarrow \wedge ^{j-i}V^{\vee
} $ if $x\in \wedge ^{i}V$ and $j\geq i$ (and it is zero otherwise). In
order to match with the notations employed in the paper, it will be
convenient to define, for every $j\geq i$:%
\begin{equation*}
\iota _{i,j}\left( x\right) :=\frac{\left( j-i\right) !}{j!}\iota \left(
x\right) _{\mid \wedge ^{j}V^{\vee }}:\wedge ^{j}V^{\vee }\rightarrow \wedge
^{j-i}V^{\vee }\text{, if }x\in \wedge ^{i}V\text{.}
\end{equation*}%
This gives morphisms $\iota _{i,j}:\wedge ^{i}V\rightarrow Hom\left( \wedge
^{j}V^{\vee },\wedge ^{j-i}V^{\vee }\right) $ with the following property.
If we identify $\wedge ^{\cdot }V^{\vee }\simeq \left( \wedge ^{\cdot
}V\right) ^{\vee }$ by means of%
\begin{equation}
\ev_{V,a}^{i}:\wedge ^{i}V^{\vee }\otimes \wedge ^{i}V\rightarrow \mathbb{I}
\label{Intro F0}
\end{equation}%
obtained by the natural inclusions $\wedge ^{i}\left( \cdot \right)
\hookrightarrow \otimes ^{i}\left( \cdot \right) $ followed by the perfect
pairing
\[
\ev_{V}^{i}\left( \omega _{1}\otimes ...\otimes \omega
_{i},x_{1}\otimes ...x_{i}\right) :=\tprod\nolimits_{k=1}^{i}\left\langle
\omega _{k},x_{k}\right\rangle,
\]
then%
\begin{equation}
\ev_{V,a}^{j}\left( \omega _{j},x_{i}\wedge x_{j-i}\right)
=\ev_{V,a}^{j-i}\left( \iota _{i,j}\left( x_{i}\right) \left( \omega
_{j}\right) ,x_{j-i}\right) \text{ for }x_{i}\in \wedge ^{i}V\text{, }%
x_{j-i}\in \wedge ^{j-i}V^{\vee }\text{ and }\omega _{j}\in \wedge
^{j}V^{\vee }\text{,}  \label{Intro F1}
\end{equation}%
meaning that $\iota _{i,j}\left( x_{i}\right) :\wedge ^{j}V^{\vee
}\rightarrow \wedge ^{j-i}V^{\vee }$ is dual to the multiplication map $%
x_{i}\wedge \cdot :\wedge ^{j-i}V\rightarrow \wedge ^{j}V$.

These internal multiplications morphisms allow for the definition of the Poincar\'e morphism%
\begin{equation*}
D^{i,g}:\wedge ^{i}V\overset{\iota _{i,g}}{\rightarrow }Hom\left( \wedge
^{g}V^{\vee },\wedge ^{g-i}V^{\vee }\right) \simeq \wedge ^{g-i}V^{\vee }
\end{equation*}%
and using reflexivity after dualizing yields%
\begin{equation*}
D_{i,g}:\wedge ^{i}V^{\vee }\overset{\iota _{i,g}}{\rightarrow }Hom\left(
\wedge ^{g}V,\wedge ^{g-i}V\right) \simeq \wedge ^{g-i}V\text{.}
\end{equation*}%
As it it well known one has%
\begin{equation}
D_{g-i,g}\circ D^{i,g}=\left( -1\right) ^{i\left( g-i\right) }\binom{g}{g-i}%
^{-1}\text{ and }D^{i,g}\circ D_{g-i,g}=\left( -1\right) ^{i\left(
g-i\right) }\binom{g}{i}^{-1}  \label{Intro F2}
\end{equation}

If the category of finite dimensional vector spaces is replaced by a
more general neutral tannakian category, the fibre functor allows to extend
this result to this category due to $\left( \text{\ref{Intro F2}}\right) $
and the existence of a faithful exact linear functor valued in the category
of vector spaces, once the appropriate definition of the Poincar\'{e}
morphism is given in such a way that it is preserved by tensor functors. The
aim of this paper is to generalize this result to rigid pseudo-abelian and $%
\mathbb{Q}$-linear $ACU$ tensor categories, with the aim of applications to
Chow motives, and prove the analogue statement for the symmetric algebras $%
\vee ^{\cdot }V$.

Suppose indeed that $V$ is a supervector space of odd degree. Then the same
formalism applies, replacing the alternating algebra with the symmetric
algebra: the reason is that, by definition, the commutativity constraint $%
\tau _{V,W}:V\otimes W\rightarrow W\otimes V$ in the category of supervector
spaces is given by $\tau _{V,W}\left( x\otimes y\right) =-\left( y\otimes
x\right) $ if $V$ and $W$ have odd degree and, hence, the symmetrizer
operates as an anti-symmetrizer on the underlying vector spaces.

\bigskip

The viewpoint taken in this paper is to use $\left( \text{%
\ref{Intro F1}}\right) $ as the defining property of the internal
multiplication morphisms. Suppose that $\mathcal{C}$ is a rigid
pseudo-abelian and $\mathbb{Q}$-linear $ACU$ tensor category with identity
object $\mathbb{I}$\ and that we are given $V\in \mathcal{C}$ of rank $r\in
End\left( \mathbb{I}\right) $. If $A_{\cdot }$ denotes one of the
alternating or symmetric algebras, the data of the multiplication morphisms $%
\varphi _{i,j}:A_{i}\otimes A_{j}\rightarrow A_{i+j}$ is equivalent to that
of the associated morphisms $f_{i,j}:A_{i}\rightarrow \hom \left(
A_{j},A_{i+j}\right) $. When $j\geq i$, we may consider the composite%
\begin{equation*}
\iota _{i,j}:A_{i}\overset{f_{i,j-i}}{\rightarrow }\hom \left(
A_{j-i},A_{j}\right) \overset{d}{\rightarrow }\hom \left( A_{j}^{\vee
},A_{j-i}^{\vee }\right)
\end{equation*}%
where $d:\hom \left( X,Y\right) \rightarrow \hom \left( Y^{\vee },X^{\vee
}\right) $ is the internal duality morphism as defined in \S 2. Next we
define%
\begin{equation*}
D^{i,j}:A_{i}\overset{\iota _{i,j}}{\rightarrow }\hom \left( A_{j}^{\vee
},A_{j-i}^{\vee }\right) \overset{\alpha ^{-1}}{\rightarrow }A_{j-i}^{\vee
}\otimes A_{j}^{\vee \vee }\text{,}
\end{equation*}%
where $\alpha :\hom \left( X,Y\right) \rightarrow Y\otimes X^{\vee }$ is the
canonical morphism. Working dually and employing the reflexivity one also
gets%
\begin{equation*}
D_{i,j}:A_{i}^{\vee }\rightarrow A_{j-i}\otimes A_{j}^{\vee }\text{.}
\end{equation*}

We say that $V$ has \emph{alternating} (resp. \emph{symmetric}) rank $g\in 
\mathbb{N}_{\geq 1}$ if $L:=\wedge ^{g}V$ (resp. $L:=\vee ^{g}V$) is
invertible and if $\binom{r+i-g}{i}$\ (resp. $\binom{r+g-1}{i}$) is
invertible in $End\left( \mathbb{I}\right) $ for every $0\leq i\leq g$.
Here, for an integer $k\geq 1$,%
\begin{equation*}
\binom{T}{k}:=\frac{1}{k!}T\left( T-1\right) ...\left( T-k+1\right) \in 
\mathbb{Q}\left[ T\right] \text{ and }\binom{T}{0}=1\text{.}
\end{equation*}%
Then we compute, for every $i\leq g$, the compositions%
\begin{eqnarray*}
&&A_{i}\overset{D^{i,g}}{\rightarrow }A_{g-i}^{\vee }\otimes L\overset{%
D_{g-i,g}\otimes 1_{L}}{\rightarrow }A_{i}^{\vee }\otimes L^{-1}\otimes
L\simeq A_{i}^{\vee }\text{,} \\
&&A_{g-i}^{\vee }\overset{D_{g-i,g}}{\rightarrow }A_{i}\otimes L^{-1}\overset%
{D^{i,g}\otimes 1_{L^{-1}}}{\rightarrow }A_{g-i}^{\vee }\otimes L\otimes
L^{-1}\simeq A_{g-i}^{\vee }
\end{eqnarray*}%
and we prove in Theorem \ref{Alternating algebras T} $\left( 3\right) $\
(resp. Theorem \ref{Symmetric algebras T} $\left( 3\right) $)\ that, when $%
A_{\cdot }=\wedge ^{\cdot }V$ (resp. $A_{\cdot }=\vee ^{\cdot }V$), they are
equal to%
\begin{eqnarray}
&&\left( -1\right) ^{i\left( g-i\right) }\binom{g}{g-i}^{-1}\binom{r-i}{g-i}%
\text{ (resp. }\binom{g}{g-i}^{-1}\binom{r+g-1}{g-i}\text{),}  \notag \\
&&\left( -1\right) ^{i\left( g-i\right) }\binom{g}{i}^{-1}\binom{r+i-g}{i}%
\text{ (resp. }\binom{g}{i}^{-1}\binom{r+g-1}{i}\text{).}  \label{Intro F3}
\end{eqnarray}%
In particular, the multiplication maps $\varphi _{i,g-i}:A_{i}\otimes
A_{g-i}\rightarrow A_{g}$ are perfect pairings for every $0\leq i\leq g$
(see Corollaries \ref{Alternating algebras CT} and \ref{Symmetric algebras
CT}). We remark that the same constants obtained in $\left( \text{\ref{Intro
F2}}\right) $ and, more generally, for odd degree supervector spaces,
matches those in $\left( \text{\ref{Intro F3}}\right) $ when $r=g$ in the
alternating case and, respectively, $r=-g$ in the symmetric case. We say in
this case that $V$ has strong alternating or symmetric rank in these cases.

\bigskip

Some remarks are in order about the range of applicability of our results.
First of all we note that, in general, the alternating or the symmetric rank
may be not uniquely determined. Suppose, however, that we know that there is
a field $K$ such that $r\in K\subset End\left( \mathbb{I}\right) $ admitting
an embedding $\iota :K\hookrightarrow \mathbb{R}$. Then it follows from the
formulas $\mathrm{rank}\left( \wedge ^{k}V\right) =\binom{r}{k}$ and $%
\mathrm{rank}\left( \vee ^{k}V\right) =\binom{r+k-1}{k}$ (see \cite[7.2.4
Proposition]{AKh} or \cite[$\left( 7.1.2\right) $]{De}) that we have $r\in
\left\{ -1,g\right\} $ (resp. $r\in \left\{ -g,1\right\} $) when $V$ has
alternating (resp. symmetric)\ rank $g$. In particular, when $r>0$ (resp. $%
r<0$) with respect to the ordering induced by $\iota $, we deduce that $r=g$
(resp. $r=-g$), so that $g$ is a uniquely determined and $V$ has strong
alternating (resp. symmetric)\ rank $g=r$ (resp. $g=-r$)

We recall that $V$ is Kimura positive (resp. negative) when $\wedge ^{N+1}V=0
$ (resp. $\vee ^{N+1}V=0$) for $N\geq 0$ large enough. In this case, the
formula $\mathrm{rank}\left( \wedge ^{k}V\right) =\binom{r}{k}$ (resp. $%
\mathrm{rank}\left( \vee ^{k}V\right) =\binom{r+k-1}{k}$) implies that $r\in 
\mathbb{Z}_{\geq 0}$ (resp. $r\in \mathbb{Z}_{\leq 0}$)\ and the smallest
integer $N$ such that $\wedge ^{N+1}V=0$ (resp. $\vee ^{N+1}V=0$) is $r$
(resp. $-r$). Furthermore, it is known that in this case, when $End\left( 
\mathbb{I}\right) $ does not have non-trivial idempotents, then $\wedge ^{r}V
$ (resp. $\vee ^{-r}V$) is invertible (see \cite[11.2 Lemma]{Kh}): in other
words $V$ has strong alternating (resp. symmetric) rank $g=r$ (resp. $g=-r$).

In particular, our results applies to the motives $V=h^{1}\left( X\right) $
attached to abelian schemes $X=A$ (see \cite{DM} and \cite{Ku}) or a smooth
complete curve $X=C$ over a field (see \cite{Ki1}), which are known to be
Kimura negative, while products of an even number of such motives are Kimura
positive (see \cite{Ki1} for applications of this notion to the product of
two curves). In the subsequent paper \cite{MS} we will apply these results
in order to get a motive whose realizations affords two copies of odd weight
modular forms on indefinite quaternion algebras. When the quaternion algebra
is split, the construction due to Scholl refines and gives a motive whose
realizations affords modular forms of both even or odd weight (see \cite{Sc}%
). Working over an indefinite division quaternion algebra and employing
ideas which goes back to \cite{JL}, a motive of even weight modular forms
has been constructed in \cite{IS} as the kernel of an appropriate Laplace
operators. The results of this paper will be used in \cite{MS} in order to
show the existence of kernels of Dirac operators which are square-roots of
these Laplace operators; the idea of constructing canonical models for the
various incarnations of two copies of odd weight modular forms from square
roots of the Laplace operators is due, once again, to Jordan and Livn\'{e}%
. However, even for these realizations, it is not possible to canonically
split them in a single copy: this is possible only including a splitting
field for the quaternion algebra in the coefficients, but the resulting
splitting depends on the choice of an identification of the base changed
algebra with the split quaternion algebra.

Finally, we remark that the perfectness of the multiplication maps gives a
Poincar\'{e} duality%
\begin{equation}
A_{i}\simeq \hom \left( A_{g-i},A_{g}\right) \simeq A_{g}\otimes
A_{g-i}^{\vee }\text{.}  \label{Intro F4}
\end{equation}%
Indeed, when $V=h^{1}\left( A\right) $ for an abelian scheme $A$ of
dimension $d$, we have that $h^{2d}\left( A\right) \simeq \mathbb{I}\left(
-d\right) $ is invertible and then it is known that%
\begin{equation*}
\vee ^{i}h^{1}\left( A\right) \simeq h^{i}\left( A\right) \simeq
h^{2d-i}\left( A\right) ^{\vee }\left( -d\right) \simeq h^{2d}\left(
A\right) \otimes h^{2d-i}\left( A\right) ^{\vee }\simeq \vee
^{2d}h^{1}\left( A\right) \otimes \vee ^{2d-i}h^{1}\left( A\right) ^{\vee }%
\text{,}
\end{equation*}%
where the canonical identifications $h^{k}\left( A\right) \simeq \vee
^{k}h^{1}\left( A\right) $ are proved in \cite[Remarks $\left( 3.1.2\right) $
$\left( i\right) $]{Ku}, while $h^{i}\left( A\right) \simeq h^{2d-i}\left(
A\right) ^{\vee }\left( -d\right) $ is proved in \cite{DM} (see also \cite[%
Remarks $\left( 3.1.2\right) $ $\left( i\right) $]{Ku}). This gives a
refinement of the motivic Poincar\'{e} duality which states that, for a
smooth projective scheme $X$ of relative dimension $d$, we have%
\begin{equation}
h\left( X\right) \simeq h\left( X\right) ^{\vee }\left( -d\right) \text{.}
\label{Intro F5}
\end{equation}%
Applying $\left( \text{\ref{Intro F4}}\right) $ to the motive $h^{1}\left(
C\right) $ of a smooth complete curve over a field of genus $e$, which is
Kimura negative of Kimura rank $2e$ with $\vee ^{2e}h^{1}\left( A\right)
\simeq \mathbb{I}\left( -e\right) $\ (by \cite[Theorem 4.2 and Remark 4.5]%
{Ki1}), one gets%
\begin{equation*}
\vee ^{i}h^{1}\left( C\right) \simeq \vee ^{2e}h^{1}\left( C\right) \otimes
\vee ^{2e-i}h^{1}\left( C\right) ^{\vee }\simeq \vee ^{2e-i}h^{1}\left(
C\right) ^{\vee }\left( -d\right)
\end{equation*}%
which however, in this case, is not a refinement of $\left( \text{\ref{Intro
F5}}\right) $. We also mention the fact that it is conjectured in \cite[%
Conjecture 7.1]{Ki1} that Chow motives should be Kimura finite, i.e. they
should be a direct sum of a Kimura positive and a Kimura negative motive.

\bigskip

The paper is organized as follows. In \S 2 we develop a general formalism
of internal multiplication morphisms attached to a pairing $\varphi
:S\otimes X\rightarrow Y$ to be applied to the multiplication morphisms in
some algebra object. In \S 3 we prove the prototype of our Poincar\'{e}
isomorphism, which only depends on the data of $\varphi _{S,X}:S\otimes
X\rightarrow Y$, $\varphi _{X,S}:X\otimes S\rightarrow Y$, $\varphi
_{S^{\vee },X^{\vee }}:S^{\vee }\otimes X^{\vee }\rightarrow Y^{\vee }$, $%
\varphi _{X^{\vee },S^{\vee }}:X^{\vee }\otimes S^{\vee }\rightarrow Y^{\vee
}$ subject to an appropriate commutativity constraint and one involving how
the internal multiplications are related with respect to the Casimir
elements: no associativity constraint is needed for these results. In \S 4
we apply the above results to the case of algebra objects and include
results from \S 2 in order to get what is the effect of the associativity
constraint on internal multiplication morphisms (see Proposition \ref{S1
Algebras P1} and Corollary \ref{S1 Algebras C1}). We also make explicit the
identifications $\wedge ^{\cdot }V^{\vee }\simeq \left( \wedge ^{\cdot
}V\right) ^{\vee }$ and $\vee ^{\cdot }V^{\vee }\simeq \left( \vee ^{\cdot
}V\right) ^{\vee }$ by choosing an appropriate evaluation map as in $\left( 
\text{\ref{Intro F0}}\right) $, as useful for the subsequent computations.
In \S 5 we prove the results for the alternating algebras and in \S 6 we
state the results in the symmetric case, the proof being entirely analogous.
The key property relating the internal multiplication morphisms with the
Casimir elements which is needed to apply the formal Poincar\'{e}
isomorphism of \S 3 is proved in \S 5.1 and the proof requires, besides the
two properties of Corollary \ref{S1 Algebras C1}, the anti-derivation (resp.
derivation) property in case $A_{\cdot }=\wedge ^{\cdot }V$ (resp. $A_{\cdot
}=\vee ^{\cdot }V$) which is verified in \S 5. We also prove various
compatibilities of these Poincar\'{e} morphisms in Theorem \ref{Alternating
algebras T} and Proposition \ref{Alternating algebras P2} in the alternating
case, while the corresponding results in the symmetric case are given in
Theorem \ref{Symmetric algebras T} and Proposition \ref{Symmetric algebras
P2}. These further results will be crucial for the applications given in 
\cite{MS}.

\section{Linear algebra in tensor categories}

In the first part of this paper we let $\mathcal{C}$ be an $ACU$ additive $%
\otimes $-biadditive category with unit object $\left( \mathbb{I},l,r\right) 
$ and internal homs. We will usually not write the associativity or unitary
object constraints explicitly, while the commutativity constraint will be
usually denoted by $\tau _{X,Y}:X\otimes Y\rightarrow Y\otimes X$ or by
labeling the positions which are switched, such as $\tau _{1,2}\otimes
1_{Z}=\tau _{X,Y}\otimes 1_{Z}:X\otimes Y\otimes Z\rightarrow Y\otimes
X\otimes Z$ or $\tau _{1,2\otimes 3}=\tau _{X,Y\otimes Z}:X\otimes Y\otimes
Z\rightarrow Y\otimes Z\otimes X$.

\bigskip

To fix notations we recall that the existence of internal homs means that,
if $X,Y\in \mathcal{C}$ there is $\hom \left( X,Y\right) \in \mathcal{C}$
such that%
\begin{equation}
Hom\left( S,\hom \left( X,Y\right) \right) =Hom\left( S\otimes X,Y\right)
\label{S1 F1}
\end{equation}%
holds as contravariant functors on $\mathcal{C}$. Taking $S=\hom \left(
X,Y\right) $ and $1_{\hom \left( X,Y\right) }$ yields%
\begin{equation*}
\ev_{X,Y}:\hom \left( X,Y\right) \otimes X\rightarrow Y
\end{equation*}%
such that $f:S\rightarrow \hom \left( X,Y\right) $ uniquely corresponds to%
\begin{equation*}
\varphi _{f}:S\otimes X\overset{f\otimes 1_{X}}{\rightarrow }\hom \left(
X,Y\right) \otimes X\overset{\ev_{X,Y}}{\rightarrow }Y
\end{equation*}%
under the identification $\left( \text{\ref{S1 F1}}\right) $. The opposite
evaluation is the composite%
\begin{equation*}
\ev_{X,Y}^{\tau }:X\otimes \hom \left( X,Y\right) \overset{\tau _{X,\hom
\left( X,Y\right) }}{\rightarrow }\hom \left( X,Y\right) \otimes X\overset{%
\ev_{X,Y}}{\rightarrow }Y
\end{equation*}%
and $\left( \hom \left( X,Y\right) ,\ev_{X,Y}^{\tau }\right) $ represents $%
Hom\left( X\otimes S,Y\right) $. Then $\left( \hom \left( X,Y\right)
,\ev_{X,Y}\right) $, uniquely determined up to a unique isomorphism, is
called an internal hom pair for $\left( X,Y\right) $ and, when $Y=\mathbb{I}$%
, we write:
\[
\left( \hom \left( X,Y\right) ,\ev_{X,Y}\right) =\left( X^{\vee
}, \ev_{X}\right),\quad\left( \hom \left( X,Y\right) ,\ev_{X,Y}^{\tau }\right)
=\left( X^{\vee },\ev_{X}^{\tau }\right)
\]
and we call $\left( X^{\vee
},\ev_{X}\right) $ a dual pair for $X$.

We remark that $\hom \left( X,Y\right) $ is a bifunctor, contravariant in
the first variable and covariant in the second variable as follows. If $%
f:X_{2}\rightarrow X_{1}$ and $g:Y_{1}\rightarrow Y_{2}$ we define%
\begin{equation*}
\hom \left( f,g\right) :\hom \left( X_{1},Y_{1}\right) \rightarrow \hom
\left( X_{2},Y_{2}\right)
\end{equation*}%
as the unique morphism making the following diagram commutative:%
\begin{equation}
\xymatrix{ \hom\left(X_{1},Y_{1}\right)\otimes X_{2} \ar[r]^-{1_{\hom\left(X_{1},Y_{1}\right)}\otimes f} \ar[d]_{\hom\left(f,g\right)\otimes1_{X_{2}}} & \hom\left(X_{1},Y_{1}\right)\otimes X_{1} \ar[d]^{g\circ \ev_{X_{1},Y_{1}}} \\ \hom\left(X_{2},Y_{2}\right)\otimes X_{2} \ar[r]^-{\ev_{X_{2},Y_{2}}} & Y_{2}\text{.} }
\label{S1 FHom(f,g)}
\end{equation}%
Note that we have $Hom\left( 1_{S},\hom \left( f,g\right) \right) =Hom\left(
1_{S}\otimes f,g\right) $ via Yoneda's embedding and $\left( \text{\ref{S1
F1}}\right) $, from which the functoriality of $\hom $ follows.

It follows from this functorial description that $\hom $ is biadditive. More
explicitly, suppose that we have given biproduct decompositions $%
X=X^{+}\oplus X^{-}$ and $Y=Y^{+}\oplus Y^{-}$ which are given by injective
morphisms $i_{X}^{\pm }:X^{\pm }\rightarrow X$, $i_{Y}^{\pm }:Y^{\pm
}\rightarrow Y$, surjective morphisms $p_{X}^{\pm }:X\rightarrow X^{\pm }$, $%
p_{Y}^{\pm }:Y\rightarrow Y^{\pm }$ and associated idempotents $e_{X}^{\pm
}:X\rightarrow X$, $e_{Y}^{\pm }:Y\rightarrow Y$. The functorial description
yields%
\begin{equation*}
\hom \left( X,Y\right) =\hom \left( X^{+},Y^{+}\right) \oplus \hom \left(
X^{+},Y^{-}\right) \oplus \hom \left( X^{-},Y^{+}\right) \oplus \hom \left(
X^{-},Y^{-}\right)
\end{equation*}%
associated to the decomposition of $Hom\left( S\otimes X,Y\right) $. For $%
\varepsilon ,\eta \in \left\{ \pm \right\} $, writing $i_{\hom \left(
X^{\varepsilon },Y^{\eta }\right) }:\hom \left( X^{\varepsilon },Y^{\eta
}\right) \rightarrow \hom \left( X,Y\right) $, $p_{\hom \left(
X^{\varepsilon },Y^{\eta }\right) }:\hom \left( X,Y\right) \rightarrow \hom
\left( X^{\varepsilon },Y^{\eta }\right) $ and $e_{\hom \left(
X^{\varepsilon },Y^{\eta }\right) }:\hom \left( X,Y\right) \rightarrow \hom
\left( X,Y\right) $ for the injective and surjective morphisms and the
idempotents arising from the decomposition of $Hom\left( S\otimes X,Y\right) 
$ and Yoneda's lemma, one checks%
\begin{equation}
i_{\hom \left( X^{\varepsilon },Y^{\eta }\right) }=\hom \left(
p_{X}^{\varepsilon },i_{Y}^{\eta }\right) \text{, }p_{\hom \left(
X^{\varepsilon },Y^{\eta }\right) }=\hom \left( i_{X}^{\varepsilon
},p_{Y}^{\eta }\right) \text{ and }e_{\hom \left( X^{\varepsilon },Y^{\eta
}\right) }=\hom \left( e_{X}^{\varepsilon },e_{Y}^{\eta }\right)
\label{S1 FDec morphisms}
\end{equation}%
as well as%
\begin{equation}
\ev_{X^{\varepsilon },Y^{\eta }}=p_{Y}^{\eta }\circ \ev_{X,Y}\circ \left(
i_{\hom \left( X^{\varepsilon },Y^{\eta }\right) }\otimes i_{X}^{\varepsilon
}\right) =p_{Y}^{\eta }\circ \ev_{X,Y}\circ \left( \hom \left(
p_{X}^{\varepsilon },i_{Y}^{\eta }\right) \otimes i_{X}^{\varepsilon
}\right) \text{.}  \label{S1 FDec evaluation}
\end{equation}

In particular, taking $f:X=X_{2}\rightarrow X_{1}=Y$ and $g=1_{\mathbb{I}}$
yields%
\begin{equation*}
f^{\vee }:=Y^{\vee }\rightarrow X^{\vee }
\end{equation*}%
and $X\leadsto X^{\vee }$ is a contravariant biadditive functor.

\bigskip

We proceed to define standard canonical morphisms. For a totally ordered
finite set $I$, a family $\left( X_{i},Y_{i}\right) _{i\in I}$ of objects $%
X_{i},Y_{i}\in \mathcal{C}$ and a morphism $\varphi :\otimes _{i\in
I}Y_{i}\rightarrow Y$, we may consider%
\[
\ev_{X_{i},Y_{i}}^{\varphi ,I}:(\otimes _{i\in I}\hom \left(
X_{i},Y_{i}\right) \otimes \left( \otimes _{i\in I}X_{i}\right) \overset{%
\tau _{X_{i},Y_{i}}^{I}}{\rightarrow }\otimes _{i\in I}\left( \hom \left(
X_{i},Y_{i}\right) \otimes X_{i}\right) \overset{\otimes _{i\in
I}\ev_{X_{i},Y_{i}}}{\rightarrow }\otimes _{i\in I}Y_{i}\overset{\varphi }{%
\rightarrow }Y\text{,}
\]%
where $\tau _{X_{i},Y_{i}}^{I}$ is obtained by appropriately switching the
components\footnote{%
In symbols,%
\[
\tau _{X_{i},Y_{i}}^{I}\left( \left( \otimes _{i\in I}f_{i}\right) \otimes
\left( \otimes _{i\in I}x_{i}\right) \right) :=\otimes _{i\in I}\left(
f_{i}\otimes x_{i}\right) \text{.}
\]%
}. Then we may define%
\[
\epsilon _{X_{i},Y_{i}}^{\psi ,\varphi ,I}:\otimes _{i\in I}\hom \left(
X_{i},Y_{i}\right) \rightarrow \hom \left( X,Y\right) 
\]%
as the unique morphism such that $\ev_{X,Y}\circ \left( \epsilon
_{X_{i},Y_{i}}^{\psi ,\varphi ,I}\otimes 1_{X}\right)
=\ev_{X_{i},Y_{i}}^{\psi ,\varphi ,I}$. When $I=\left\{ 1,...,i\right\} $, $%
X_{i}=X$ for every $i$, $Y_{i}=\mathbb{I}$ for every $i$ and $\varphi
:\otimes _{i\in I}\mathbb{I}\overset{\sim }{\rightarrow }\mathbb{I}$ is the
canonical morphism we write $\tau _{X}^{i}:=\tau _{X_{i},Y_{i}}^{I}$, $%
\ev_{X}^{i}:=\ev_{X_{i},Y_{i}}^{\varphi ,I}$ and $\epsilon
_{X_{i},Y_{i}}^{\varphi ,I}:=\epsilon _{X}^{i}$.

\bigskip

The morphisms%
\begin{equation*}
i_{X}:X\rightarrow X^{\vee \vee }\text{ and }\alpha _{X,Y}:Y\otimes X^{\vee
}\rightarrow \hom \left( X,Y\right)
\end{equation*}%
are defined, respectively, as the unique morphisms making the following
diagrams commutative:%
\begin{equation*}
\xymatrix{ X\otimes X^{\vee} \ar@/^{0.75pc}/[dr]^-{\ev_{X}^{\tau}} \ar[d]|{i_{X}\otimes1_{X^{\vee}}} & & Y\otimes X^{\vee}\otimes X \ar@/^{0.75pc}/[dr]^-{1_{Y}\otimes \ev_{X}} \ar[d]|{\alpha_{X,Y}\otimes1_{X}} & \\ X^{\vee\vee}\otimes X^{\vee} \ar[r]^-{\ev_{X^{\vee}}} & \mathbb{I} & \hom\left(X,Y\right)\otimes X \ar[r]^-{\ev_{X,Y}} & Y\text{.}}
\end{equation*}

\bigskip

We may consider the morphism%
\begin{equation*}
\hom \left( Y,Z\right) \otimes \hom \left( X,Y\right) \otimes X\overset{%
1_{\hom \left( Y,Z\right) }\otimes \ev_{X,Y}}{\rightarrow }\hom \left(
Y,Z\right) \otimes Y\overset{\ev_{Y,Z}}{\rightarrow }Z
\end{equation*}%
and define the internal composition law%
\begin{equation*}
c=c_{X,Y,Z}=\left( \cdot \right) \circ \left( \cdot \right) :\hom \left(
Y,Z\right) \otimes \hom \left( X,Y\right) \rightarrow \hom \left( X,Z\right)
\end{equation*}%
as the unique morphism such that%
\begin{equation*}
\ev_{X,Z}\circ \left( c_{X,Y,Z}\otimes 1_{X}\right) =\ev_{Y,Z}\circ \left(
1_{\hom \left( Y,Z\right) }\otimes \ev_{X,Y}\right) \text{.}
\end{equation*}%
The opposite internal composition law is defined as the composite%
\begin{equation*}
c_{X,Y,Z}^{\tau }:\hom \left( X,Y\right) \otimes \hom \left( Y,Z\right) 
\overset{\tau _{\hom \left( X,Y\right) ,\hom \left( Y,Z\right) }}{%
\rightarrow }\hom \left( Y,Z\right) \otimes \hom \left( X,Y\right) \overset{%
c_{X,Y,Z}}{\rightarrow }\hom \left( X,Z\right)
\end{equation*}

The following result is easily established.

\begin{lemma}
\label{S1 LComp}Suppose that we have given%
\begin{equation*}
f:S\rightarrow \hom \left( X,Y\right) \text{ and }g:T\rightarrow \hom \left(
Y,Z\right)
\end{equation*}%
which correspond, under $\left( \text{\ref{S1 F1}}\right) $, to morphisms%
\begin{equation*}
\varphi _{f}:S\otimes X\rightarrow Y\text{ and }\varphi _{g}:T\otimes
Y\rightarrow Z\text{.}
\end{equation*}

Then%
\begin{equation*}
c_{X,Y,Z}\circ \left( g\otimes f\right) :T\otimes S\overset{g\otimes f}{%
\rightarrow }\hom \left( Y,Z\right) \otimes \hom \left( X,Y\right) \overset{%
c_{X,Y,Z}}{\rightarrow }\mathbf{Hom}\left( X,Z\right)
\end{equation*}%
corresponds, under $\left( \text{\ref{S1 F1}}\right) $, to the morphism%
\begin{equation*}
\varphi _{g}\circ \left( 1_{T}\otimes \varphi _{f}\right) :T\otimes S\otimes
X\rightarrow Z\text{.}
\end{equation*}
\end{lemma}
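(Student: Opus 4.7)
The plan is to exploit the adjunction in (\ref{S1 F1}): a morphism $h : T \otimes S \to \hom(X,Z)$ is uniquely determined by its image $\ev_{X,Z} \circ (h \otimes 1_X) : T \otimes S \otimes X \to Z$. So it suffices to show that
\[
\ev_{X,Z} \circ \bigl((c_{X,Y,Z} \circ (g \otimes f)) \otimes 1_X\bigr) = \varphi_g \circ (1_T \otimes \varphi_f).
\]

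First I would split the left-hand side by functoriality of $\otimes$ as
\[
\ev_{X,Z} \circ (c_{X,Y,Z} \otimes 1_X) \circ (g \otimes f \otimes 1_X),
\]
and then apply the defining property of the internal composition, namely
\[
\ev_{X,Z} \circ (c_{X,Y,Z} \otimes 1_X) = \ev_{Y,Z} \circ (1_{\hom(Y,Z)} \otimes \ev_{X,Y}).
\]
Substituting, and again using the bifunctoriality of $\otimes$ to rearrange the factors, yields
\[
\ev_{Y,Z} \circ \bigl(g \otimes (\ev_{X,Y} \circ (f \otimes 1_X))\bigr) = \ev_{Y,Z} \circ (g \otimes \varphi_f),
\]
since by definition $\varphi_f = \ev_{X,Y} \circ (f \otimes 1_X)$.

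Finally I would factor $g \otimes \varphi_f = (g \otimes 1_Y) \circ (1_T \otimes \varphi_f)$ and use $\varphi_g = \ev_{Y,Z} \circ (g \otimes 1_Y)$ to conclude that the composite equals $\varphi_g \circ (1_T \otimes \varphi_f)$, as required. There is no real obstacle here: the argument is a direct diagram chase entirely driven by the universal property defining $\hom$ and the definition of $c_{X,Y,Z}$; the only care needed is in tracking the tensor factors carefully so that the interchange of $\otimes$ and $\circ$ is applied correctly.
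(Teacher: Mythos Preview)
Your argument is correct and is exactly the natural proof: the paper itself omits the argument entirely, simply remarking that the result ``is easily established.'' Your unwinding via the defining property of $c_{X,Y,Z}$ and the adjunction (\ref{S1 F1}) is the intended one, and your bookkeeping of the tensor factors is accurate.
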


\bigskip

In addition to the "external" duality morphism $Hom\left( X,Y\right)
\rightarrow Hom\left( Y^{\vee },X^{\vee }\right) $, the category $\mathcal{C}
$ is endowed with an internal duality morphism%
\begin{equation*}
d_{X,Y}:\hom \left( X,Y\right) \rightarrow \hom \left( Y^{\vee },X^{\vee
}\right) \text{,}
\end{equation*}%
which is by definition the unique morphism making the following diagram
commutative:%
\begin{equation*}
\xymatrix{ \hom\left(X,Y\right)\otimes Y^{\vee} \ar[r]^-{\tau_{\hom\left(X,Y\right),Y^{\vee}}} \ar[d]_{d_{X,Y}\otimes1_{Y^{\vee}}} & Y^{\vee}\otimes\hom\left(X,Y\right) \ar[d]^{c_{X,Y,\mathbb{I}}} \\ \hom\left(Y^{\vee},X^{\vee}\right)\otimes Y^{\vee} \ar[r]^-{\ev_{Y^{\vee},X^{\vee}}} & X^{\vee}\text{.} }
\end{equation*}%
It enjoys a number of expected properties, namely it makes commutative the
following diagrams.

\begin{itemize}
\item It is the unique morphism making the following diagram commutative%
\footnote{%
In symbols, setting $f^{\vee }:=d_{X,Y}\left( f\right) $ for $f\in \hom
\left( X,Y\right) $,%
\begin{equation*}
\left\langle f\left( x\right) ,y^{\vee }\right\rangle =\left\langle
x,f^{\vee }\left( y^{\vee }\right) \right\rangle \text{ for }x\in X\text{
and }y^{\vee }\in Y^{\vee }\text{.}
\end{equation*}%
}:%
\begin{equation}
\xymatrix{ \hom\left(X,Y\right)\otimes X\otimes Y^{\vee} \ar[rr]^{\left(1_{X}\otimes d_{X,Y}\otimes1_{Y^{\vee}}\right)\circ\left(\tau_{\hom\left(X,Y\right),X}\otimes1_{Y^{\vee}}\right)} \ar[d]_{\ev_{X,Y}\otimes1_{Y^{\vee}}} & & X\otimes\hom\left(Y^{\vee},X^{\vee}\right)\otimes Y^{\vee} \ar[d]^{1_{X}\otimes \ev_{Y^{\vee},X^{\vee}}} \\ Y\otimes Y^{\vee} \ar[r]^{\ev_{Y}^{\tau}} & \mathbb{I} & X\otimes X^{\vee} \ar[l]_{\ev_{X}^{\tau}}}
\label{S1 D Internal duality 1}
\end{equation}

\item The following diagram is commutative\footnote{%
In symbols, for $f\in \hom \left( X,Y\right) $ and $g\in \hom \left(
Y,Z\right) $,%
\begin{equation*}
\left( g\circ f\right) ^{\vee }=g^{\vee }\circ ^{opp}f^{\vee }=f^{\vee
}\circ g^{\vee }\text{.}
\end{equation*}%
}:%
\begin{equation}
\xymatrix{ \hom\left(Y,Z\right)\otimes\hom\left(X,Y\right) \ar[r]^-{d_{Y,Z}\otimes d_{X,Y}} \ar[d]_{c_{X,Y,Z}} & \hom\left(Z^{\vee},Y^{\vee}\right)\otimes\hom\left(Y^{\vee},X^{\vee}\right) \ar[d]^{c_{Z^{\vee},Y^{\vee},X^{\vee}}^{\tau}} \\ \hom\left(X,Z\right) \ar[r]^-{d_{X,Z}} & \hom\left(Z^{\vee},X^{\vee}\right)\text{.} }
\label{S1 D Internal duality 2}
\end{equation}

\item If we have given $f:X_{2}\rightarrow X_{1}$ and $g:Y_{1}\rightarrow
Y_{2}$ the following diagram is commutative:%
\begin{equation}
\xymatrix{ \hom\left(X_{1},Y_{1}\right) \ar[r]^-{d_{X_{1},Y_{1}}} \ar[d]_{\hom\left(f,g\right)} & \hom\left(Y_{1}^{\vee},X_{1}^{\vee}\right) \ar[d]^{\hom\left(g^{\vee},f^{\vee}\right)} \\ \hom\left(X_{2},Y_{2}\right) \ar[r]^-{d_{X_{2},Y_{2}}} & \hom\left(Y_{2}^{\vee},X_{2}^{\vee}\right)\text{.} }
\label{S1 D Internal duality 3}
\end{equation}

\item The following further diagrams are commutative\footnote{%
In symbols, the second commutative diagram tells that $f^{\vee \vee }=f$ up
to the identification $X^{\vee \vee }=X$ and $Y^{\vee \vee }=Y$ whenever $X$
and $Y$ are reflexive.}:%
\begin{equation}
\xymatrix{ Y\otimes X^{\vee} \ar[r]^-{\alpha_{X,Y}} \ar[d]|{\left(1_{X^{\vee}}\otimes i_{Y}\right)\circ\tau_{Y,X^{\vee}}} & \hom\left(X,Y\right) \ar[d]|{d_{X,Y}} & \hom\left(X,Y\right) \ar@/^{1pc}/[rd]^-{\hom\left(1_{X},i_{Y}\right)} \ar[d]|{d_{Y^{\vee},X^{\vee}}\circ d_{X,Y}} & \\ X^{\vee}\otimes Y^{\vee\vee} \ar[r]^-{\alpha_{Y^{\vee},X^{\vee}}} & \hom\left(Y^{\vee},X^{\vee}\right) & \hom\left(X^{\vee\vee},Y^{\vee\vee}\right) \ar[r]^-{\hom\left(i_{X},1_{Y^{\vee\vee}}\right)} & \hom\left(X,Y^{\vee\vee}\right)}
\label{S1 D Internal duality 4}
\end{equation}
\end{itemize}

\bigskip

We recall that $\mathcal{C}$ is rigid whenever the morphisms $\epsilon
_{X_{i},Y_{i}}^{I}$ and $i_{X}$ are isomorphisms and is said to be
pseudo-abelian when idempotents have kernels (and then also cokernels).

\bigskip 

We will employ the following notation: a label $\left( \otimes \right) $
(resp. $\left( \tau \right) $) placed in the middle of a diagram will mean
that the diagram is commutative by functoriality of $\otimes $ (resp. the $%
\tau $ constraint).

\subsection{Abstract internal multiplication}

Suppose that we have given a morphism%
\begin{equation*}
f:S\rightarrow \hom \left( X,Y\right) \text{ corresponding to }\varphi
_{f}:S\otimes X\overset{f\otimes 1_{X}}{\rightarrow }\hom \left( X,Y\right)
\otimes X\overset{\ev_{X,Y}}{\rightarrow }Y\text{.}
\end{equation*}

Then we define the corresponding "internal multiplication" morphism as the
composite:%
\begin{equation*}
\iota _{f}:S\overset{f}{\rightarrow }\hom \left( X,Y\right) \overset{d_{X,Y}}%
{\rightarrow }\hom \left( Y^{\vee },X^{\vee }\right) \text{ corresponding to 
}\varphi _{\iota _{f}}:S\otimes Y^{\vee }\overset{\iota _{f}\otimes
1_{Y^{\vee }}}{\rightarrow }\hom \left( Y^{\vee },X^{\vee }\right) \otimes
Y^{\vee }\overset{\ev_{Y^{\vee },X^{\vee }}}{\rightarrow }X^{\vee }\text{.}
\end{equation*}

One checks that $\left( \text{\ref{S1 D Internal duality 1}}\right) $\
implies that the following diagram is commutative:%
\begin{equation}
\xymatrix@C78pt{ S\otimes X\otimes Y^{\vee} \ar[r]^-{\left(1_{X}\otimes\varphi_{\iota_{f}}\right)\circ\left(\tau_{S,X}\otimes1_{Y^{\vee}}\right)} \ar[d]_{\varphi_{f}\otimes1_{Y^{\vee}}} & X\otimes X^{\vee} \ar[d]^{\ev_{X}^{\tau}} \\ Y\otimes Y^{\vee} \ar[r]^-{\ev_{Y}^{\tau}} & \mathbb{I}\text{.} }
\label{S1 AIM D1}
\end{equation}

\begin{remark}
\label{S1 AIM R1}The morphism $\varphi _{\iota _{f}}$, and hence $\iota _{f}$%
, is characterized by the property of making $\left( \text{\ref{S1 AIM D1}}%
\right) $ commutative.
\end{remark}

\bigskip

Suppose now that we have also given:%
\begin{eqnarray*}
&&g:T\rightarrow \hom \left( Y,Z\right) \text{ corresponding to }\varphi
_{g}:T\otimes Y\rightarrow Z\text{,} \\
&&h:U\rightarrow \hom \left( X,Z\right) \text{ corresponding to }\varphi
_{h}:U\otimes X\rightarrow Z\text{,} \\
&&k:T\rightarrow \hom \left( S,U\right) \text{ corresponding to }\varphi
_{k}:T\otimes S\rightarrow U\text{.}
\end{eqnarray*}

As an application of Lemma \ref{S1 LComp}, we have the equivalence:%
\begin{equation}
\xymatrix{ T\otimes S\otimes X \ar@{}[dr]|{\circlearrowright} \ar[r]^-{1_{T}\otimes\varphi_{f}} \ar[d]|{\varphi_{k}\otimes1_{X}} & T\otimes Y \ar@{}[dr]|{\Leftrightarrow} \ar[d]|{\varphi_{g}} & T\otimes S \ar@{}[dr]|{\circlearrowright} \ar[r]^-{g\otimes f} \ar[d]|{\varphi_{k}} & \hom\left(Y,Z\right)\otimes\hom\left(X,Y\right) \ar[d]|{c_{X,Y,Z}} \\ U\otimes X \ar[r]^-{\varphi_{h}} & Z & U \ar[r]^-{h} & \hom\left(X,Z\right)}
\label{S1 AIM D2}
\end{equation}

We also have the associated internal multiplication morphisms:%
\begin{eqnarray*}
\iota _{g} &:&T\overset{g}{\rightarrow }\hom \left( Y,Z\right) \overset{%
d_{Y,Z}}{\rightarrow }\hom \left( Z^{\vee },Y^{\vee }\right) \text{
corresponding to }\varphi _{\iota _{g}}:T\otimes Z^{\vee }\rightarrow
Y^{\vee }\text{,} \\
\iota _{h} &:&U\overset{h}{\rightarrow }\hom \left( X,Z\right) \overset{%
d_{X,Z}}{\rightarrow }\hom \left( Z^{\vee },X^{\vee }\right) \text{
corresponding to }\varphi _{\iota _{h}}:U\otimes Z^{\vee }\rightarrow
X^{\vee }\text{.}
\end{eqnarray*}

Consider the morphism:%
\begin{equation*}
\varphi _{k}^{\tau }:S\otimes T\overset{\tau _{S,T}}{\rightarrow }T\otimes S%
\overset{\varphi _{k}}{\rightarrow }U\text{.}
\end{equation*}

The equivalence $\left( \text{\ref{S1 AIM D2}}\right) $, applied with $%
\left( f,g,h,\varphi _{k}\right) $ replaced by $\left( \iota _{g},\iota
_{h},\iota _{f},\varphi _{k}^{\tau }\right) $, easily translates into the
equivalence:%
\begin{equation}
\xymatrix{ S\otimes T\otimes Z^{\vee} \ar@{}[dr]|{\circlearrowright} \ar[r]^-{1_{S}\otimes\varphi_{\iota_{g}}} \ar[d]|{\varphi_{k}^{\tau}\otimes1_{Z^{\vee}}} & S\otimes Y^{\vee} \ar@{}[dr]|{\Leftrightarrow} \ar[d]|{\varphi_{\iota_{f}}} & T\otimes S \ar@{}[dr]|{\circlearrowright} \ar[r]^-{\iota_{g}\otimes\iota_{f}} \ar[d]|{\varphi_{k}} & \hom\left(Z^{\vee},Y^{\vee}\right)\otimes\hom\left(Y^{\vee},X^{\vee}\right) \ar[d]|{c_{Z^{\vee},Y^{\vee},X^{\vee}}^{\tau}} \\ U\otimes Z^{\vee} \ar[r]^-{\varphi_{\iota_{h}}} & X^{\vee} & U \ar[r]^-{\iota_{h}} & \hom\left(Z^{\vee},X^{\vee}\right)\text{.}}
\label{S1 AIM D3}
\end{equation}

Finally we remark that the second square of the following diagram is
commutative by $\left( \text{\ref{S1 D Internal duality 2}}\right) $:%
\begin{equation*}
\xymatrix{ T\otimes S \ar[r]^-{g\otimes f} \ar[d]_{\varphi_{k}} & \hom\left(Y,Z\right)\otimes\hom\left(X,Y\right) \ar[r]^-{d_{Y,Z}\otimes d_{X,Y}} \ar[d]|{c_{X,Y,Z}} & \hom\left(Z^{\vee},Y^{\vee}\right)\otimes\hom\left(Y^{\vee},X^{\vee}\right) \ar[d]^{c_{Z^{\vee},Y^{\vee},X^{\vee}}^{\tau}} \\ U \ar[r]^-{h} & \hom\left(X,Z\right) \ar[r]^-{d_{X,Z}} & \hom\left(Z^{\vee},X^{\vee}\right)\text{.}}
\end{equation*}%
It follows that we have the implication%
\begin{equation}
\left( \text{\ref{S1 AIM D2}}\right) \text{ commutative }\Rightarrow \text{ }%
\left( \text{\ref{S1 AIM D3}}\right) \text{ commutative\footnote{%
Which is indeed an equivalence when $X$, $Y$, and $Z$ are reflexive, by the
second commutative diagram in $\left( \text{\ref{S1 D Internal duality 4}}%
\right) $.} }  \label{S1 AIM Implication1}
\end{equation}

\bigskip

We now turn to the consideration of how the formation of internal
multiplication behaves with respect to biproduct decompositions. To this end
we assume that, for all the objects $W$ considered above, we have given a
biproduct decomposition $W=W^{+}\oplus W^{-}$ obtained by means of injective
morphisms $i_{W}^{\pm }:W^{\pm }\rightarrow W$, surjective morphisms $%
p_{W}^{\pm }:W\rightarrow W^{\pm }$ and associated idempotents $e_{W}^{\pm
}:W\rightarrow W$.

For $\left( \varepsilon _{1},\eta _{1},\nu _{1}\right) ,\left( \varepsilon
_{2},\eta _{2},\nu _{2}\right) \in \left\{ \pm \right\} \times \left\{ \pm
\right\} \times \left\{ \pm \right\} $, define the following morphisms:%
\begin{eqnarray*}
&&f^{\varepsilon _{1},\varepsilon _{2};\eta _{2}}:S^{\varepsilon _{1}}%
\overset{i_{S}^{\varepsilon _{1}}}{\rightarrow }S\overset{f}{\rightarrow }%
\hom \left( X,Y\right) \overset{p_{\hom \left( X^{\varepsilon _{2}},Y^{\eta
_{2}}\right) }}{\rightarrow }\hom \left( X^{\varepsilon _{2}},Y^{\eta
_{2}}\right) \text{,} \\
&&\iota _{f}^{\varepsilon _{1},\eta _{2};\varepsilon _{2}}:S^{\varepsilon
_{1}}\overset{i_{S}^{\varepsilon _{1}}}{\rightarrow }S\overset{\iota _{f}}{%
\rightarrow }\hom \left( Y^{\vee },X^{\vee }\right) \overset{p_{\hom \left(
Y^{\eta _{2}\vee },X^{\varepsilon _{2}\vee }\right) }}{\rightarrow }\hom
\left( Y^{\eta _{2}\vee },X^{\varepsilon _{2}\vee }\right) \text{,} \\
&&\varphi _{f}^{\varepsilon _{1},\varepsilon _{2};\eta _{2}}:S^{\varepsilon
_{1}}\otimes X^{\varepsilon _{2}}\overset{i_{S}^{\varepsilon _{1}}\otimes
i_{X}^{\varepsilon _{2}}}{\rightarrow }S\otimes X\overset{\varphi _{f}}{%
\rightarrow }Y\overset{p_{Y}^{\eta _{2}}}{\rightarrow }Y^{\eta _{2}}\text{,}
\\
&&\varphi _{\iota _{f}}^{\varepsilon _{1},\eta _{2};\varepsilon
_{2}}:S^{\varepsilon _{1}}\otimes Y^{\eta _{2}\vee }\overset{%
i_{S}^{\varepsilon _{1}}\otimes i_{Y^{\vee }}^{\eta _{2}}}{\rightarrow }%
S\otimes Y^{\vee }\overset{\varphi _{\iota _{f}}}{\rightarrow }X^{\vee }%
\overset{p_{X^{\vee }}^{\varepsilon _{2}}}{\rightarrow }X^{\varepsilon
_{2}\vee }
\end{eqnarray*}%
as well as the morphisms $g^{\eta _{1},\eta _{2};\nu _{2}}$, $h^{\nu
_{1},\varepsilon _{2};\nu _{2}}$, $k^{\eta _{1},\varepsilon _{1};\nu _{1}}$
and the other defined similarly as for $f$.

\begin{lemma}
\label{S1 AIM L1}Writing $\varphi _{f^{\varepsilon _{1},\varepsilon
_{2};\eta _{2}}}:S^{\varepsilon _{1}}\otimes X^{\varepsilon _{2}}\rightarrow
Y^{\eta _{2}}$ (resp. $\varphi _{\iota _{f}^{\varepsilon _{1},\eta
_{2};\varepsilon _{2}}}:S^{\varepsilon _{1}}\otimes Y^{\eta _{2}\vee
}\rightarrow X^{\varepsilon _{2}\vee }$)\ for the morphism corresponding to $%
f^{\varepsilon _{1},\varepsilon _{2};\eta _{2}}:S^{\varepsilon
_{1}}\rightarrow \hom \left( X^{\varepsilon _{2}},Y^{\eta _{2}}\right) $
(resp. $\iota _{f}^{\varepsilon _{1},\eta _{2};\varepsilon
_{2}}:S^{\varepsilon _{1}}\rightarrow \hom \left( Y^{\eta _{2}\vee
},X^{\varepsilon _{2}\vee }\right) $),\ we have $\varphi _{f^{\varepsilon
_{1},\varepsilon _{2};\eta _{2}}}=\varphi _{f}^{\varepsilon _{1},\varepsilon
_{2};\eta _{2}}$ and $\varphi _{\iota _{f}^{\varepsilon _{1},\eta
_{2};\varepsilon _{2}}}=\varphi _{\iota _{f}}^{\varepsilon _{1},\eta
_{2};\varepsilon _{2}}$\ as well as $\iota _{f^{\varepsilon _{1},\varepsilon
_{2};\eta _{2}}}=\iota _{f}^{\varepsilon _{1},\eta _{2};\varepsilon _{2}}$
or, equivalently, $\varphi _{\iota _{f^{\varepsilon _{1},\varepsilon
_{2};\eta _{2}}}}=\varphi _{\iota _{f}^{\varepsilon _{1},\eta
_{2};\varepsilon _{2}}}=\varphi _{\iota _{f}}^{\varepsilon _{1},\eta
_{2};\varepsilon _{2}}$.
\end{lemma}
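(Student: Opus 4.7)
The plan is to derive all three equalities directly from the definitions of $\varphi_{f}$, $\iota_{f}$, and $\hom(f,g)$, combined with the decomposition formulas $(\ref{S1 FDec morphisms})$ and $(\ref{S1 FDec evaluation})$ and the naturality diagram $(\ref{S1 D Internal duality 3})$ for the internal duality. The three statements are really two computations: the first two are the same formal argument applied to $f$ and to $\iota_{f}$, while the third follows from the compatibility of $d_{X,Y}$ with biproduct decompositions.

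For the first identity, I would unfold $f^{\varepsilon_{1},\varepsilon_{2};\eta_{2}} = p_{\hom(X^{\varepsilon_{2}},Y^{\eta_{2}})} \circ f \circ i_{S}^{\varepsilon_{1}}$ and use the equality $p_{\hom(X^{\varepsilon_{2}},Y^{\eta_{2}})} = \hom(i_{X}^{\varepsilon_{2}},p_{Y}^{\eta_{2}})$ from $(\ref{S1 FDec morphisms})$. Applying the defining diagram $(\ref{S1 FHom(f,g)})$ of $\hom(i_{X}^{\varepsilon_{2}},p_{Y}^{\eta_{2}})$ then lets one rewrite $\ev_{X^{\varepsilon_{2}},Y^{\eta_{2}}} \circ (\hom(i_{X}^{\varepsilon_{2}},p_{Y}^{\eta_{2}}) \otimes 1_{X^{\varepsilon_{2}}})$ as $p_{Y}^{\eta_{2}} \circ \ev_{X,Y} \circ (1_{\hom(X,Y)} \otimes i_{X}^{\varepsilon_{2}})$. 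Substituting and collecting the remaining arrows yields
\[
\varphi_{f^{\varepsilon_{1},\varepsilon_{2};\eta_{2}}} = p_{Y}^{\eta_{2}} \circ \varphi_{f} \circ (i_{S}^{\varepsilon_{1}} \otimes i_{X}^{\varepsilon_{2}}) = \varphi_{f}^{\varepsilon_{1},\varepsilon_{2};\eta_{2}}.
\]

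The second identity is obtained by the exact same argument applied with $f$ replaced by $\iota_{f}:S \to \hom(Y^{\vee},X^{\vee})$ and the pair $(X,Y)$ replaced by $(Y^{\vee},X^{\vee})$, using the given decompositions of $X^{\vee}$ and $Y^{\vee}$ in place of those of $X$ and $Y$. No additional ingredient is needed.

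For the third identity, I would apply the naturality diagram $(\ref{S1 D Internal duality 3})$ to the pair of morphisms $i_{X}^{\varepsilon_{2}}:X^{\varepsilon_{2}} \to X$ and $p_{Y}^{\eta_{2}}:Y \to Y^{\eta_{2}}$. This gives the commutative square
\[
d_{X^{\varepsilon_{2}},Y^{\eta_{2}}} \circ \hom(i_{X}^{\varepsilon_{2}},p_{Y}^{\eta_{2}}) = \hom((p_{Y}^{\eta_{2}})^{\vee},(i_{X}^{\varepsilon_{2}})^{\vee}) \circ d_{X,Y}.
\]
By $(\ref{S1 FDec morphisms})$ the left-hand $\hom$ is $p_{\hom(X^{\varepsilon_{2}},Y^{\eta_{2}})}$, and using the dual biproduct decompositions $i_{Y^{\vee}}^{\eta_{2}} = (p_{Y}^{\eta_{2}})^{\vee}$ and $p_{X^{\vee}}^{\varepsilon_{2}} = (i_{X}^{\varepsilon_{2}})^{\vee}$ the right-hand $\hom$ is $p_{\hom(Y^{\eta_{2}\vee},X^{\varepsilon_{2}\vee})}$. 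Precomposing with $f \circ i_{S}^{\varepsilon_{1}}$ then identifies the left side with $d_{X^{\varepsilon_{2}},Y^{\eta_{2}}} \circ f^{\varepsilon_{1},\varepsilon_{2};\eta_{2}} = \iota_{f^{\varepsilon_{1},\varepsilon_{2};\eta_{2}}}$ and the right side with $p_{\hom(Y^{\eta_{2}\vee},X^{\varepsilon_{2}\vee})} \circ \iota_{f} \circ i_{S}^{\varepsilon_{1}} = \iota_{f}^{\varepsilon_{1},\eta_{2};\varepsilon_{2}}$, finishing the proof. The only delicate point is keeping track of the bookkeeping between the projector/injection presentations of the decomposition of $\hom$ and their behavior under duality; everything reduces to systematic use of $(\ref{S1 FDec morphisms})$.
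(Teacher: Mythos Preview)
Your proof is correct and follows essentially the same approach as the paper: both arguments use $(\ref{S1 FDec morphisms})$ and $(\ref{S1 FHom(f,g)})$ for the first two identities, and for the third both invoke the naturality square $(\ref{S1 D Internal duality 3})$ together with the identification of the dual biproduct data $(p_{Y}^{\eta_{2}})^{\vee}=i_{Y^{\vee}}^{\eta_{2}}$, $(i_{X}^{\varepsilon_{2}})^{\vee}=p_{X^{\vee}}^{\varepsilon_{2}}$ to recognize the two sides as $\iota_{f^{\varepsilon_{1},\varepsilon_{2};\eta_{2}}}$ and $\iota_{f}^{\varepsilon_{1},\eta_{2};\varepsilon_{2}}$.
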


\begin{proof}
The equality $\varphi _{f^{\varepsilon _{1},\varepsilon _{2};\eta
_{2}}}=\varphi _{f}^{\varepsilon _{1},\varepsilon _{2};\eta _{2}}$ is a
consequence of $p_{\hom \left( X^{\varepsilon _{2}},Y^{\eta _{2}}\right)
}=\hom \left( i_{X}^{\varepsilon _{2}},p_{Y}^{\eta _{2}}\right) $ given by $%
\left( \text{\ref{S1 FDec morphisms}}\right) $ and the characterizing
property $\left( \text{\ref{S1 FHom(f,g)}}\right) $ and $\varphi _{\iota
_{f}^{\varepsilon _{1},\eta _{2};\varepsilon _{2}}}=\varphi _{\iota
_{f}}^{\varepsilon _{1},\eta _{2};\varepsilon _{2}}$ is proved in the same
way. Next, consider the following diagram:%
\begin{equation*}
\xymatrix{S^{\varepsilon_{1}} \ar[r]^{i_{S}^{\varepsilon_{1}}} & S \ar[r]^{f} & \hom\left(X,Y\right) \ar[r]^{d_{X,Y}} \ar[d]|{p_{\hom\left(X^{\varepsilon_{2}},Y^{\eta_{2}}\right)}} & \hom\left(Y^{\vee},X^{\vee}\right) \ar[d]^{p_{\hom\left(Y^{\eta_{2}\vee},X^{\varepsilon_{2}\vee}\right)}} \\ & & \hom\left(X^{\varepsilon_{2}},Y^{\eta_{2}}\right) \ar[r]^{d_{X^{\varepsilon_{2}},Y^{\eta_{2}}}} & \hom\left(Y^{\eta_{2}\vee},X^{\varepsilon_{2}\vee}\right)\text{.}}
\end{equation*}%
The square is commutative because \[\hom \left( i_{X}^{\varepsilon
_{2}},p_{Y}^{\eta _{2}}\right) =p_{\hom \left( X^{\varepsilon _{2}},Y^{\eta
_{2}}\right) }\] and \[\hom \left( \left( p_{Y}^{\eta _{2}}\right) ^{\vee
},\left( i_{X}^{\varepsilon _{2}}\right) ^{\vee }\right) =\hom \left(
i_{Y^{\vee }}^{\eta _{2}},p_{X^{\vee }}^{\varepsilon _{2}}\right) =p_{\hom
\left( Y^{\eta _{2}\vee },X^{\varepsilon _{2}\vee }\right) },\] again by $%
\left( \text{\ref{S1 FDec morphisms}}\right) $ and because the duality is a
contravariant and additive functor, so that we may apply $\left( \text{\ref%
{S1 D Internal duality 3}}\right) $. But we have%
\begin{eqnarray*}
&&p_{\hom \left( Y^{\eta _{2}\vee },X^{\varepsilon _{2}\vee }\right) }\circ
d_{X,Y}\circ f\circ i_{S}^{\varepsilon _{1}}=p_{\hom \left( Y^{\eta _{2}\vee
},X^{\varepsilon _{2}\vee }\right) }\circ \iota _{f}\circ i_{S}^{\varepsilon
_{1}}=\iota _{f}^{\varepsilon _{1},\eta _{2};\varepsilon _{2}}\text{,} \\
&&d_{X^{\varepsilon _{2}},Y^{\eta _{2}}}\circ p_{\hom \left( X^{\varepsilon
_{2}},Y^{\eta _{2}}\right) }\circ f\circ i_{S}^{\varepsilon
_{1}}=d_{X^{\varepsilon _{2}},Y^{\eta _{2}}}\circ f^{\varepsilon
_{1},\varepsilon _{2};\eta _{2}}=\iota _{f^{\varepsilon _{1},\varepsilon
_{2};\eta _{2}}}\text{.}
\end{eqnarray*}
\end{proof}

\bigskip

It follows from Lemma \ref{S1 AIM L1} (also applied to $g$, $h$ and $k$)\
that we may apply the above considerations with $\left( f,g,h,\varphi
_{k}\right) $ replaced by $\left( f^{\varepsilon _{1},\varepsilon _{2};\eta
_{2}},g^{\eta _{1},\eta _{2};\nu _{2}},h^{\nu _{1},\varepsilon _{2};\nu
_{2}},\varphi _{k}^{\eta _{1},\varepsilon _{1};\nu _{1}}\right) $. Hence, we
deduce that%
\begin{equation}
\xymatrix@C110pt{ S^{\varepsilon_{1}}\otimes X^{\varepsilon_{2}}\otimes Y^{\eta_{2}\vee} \ar[r]^-{\left(1_{X^{\varepsilon_{2}}}\otimes\varphi_{\iota_{f}}^{\varepsilon_{1},\eta_{2};\varepsilon_{2}}\right)\circ\left(\tau_{S^{\varepsilon_{1}},X^{\varepsilon_{2}}}\otimes1_{Y^{\eta_{2}\vee}}\right)} \ar[d]_{\varphi_{f}^{\varepsilon_{1},\varepsilon_{2};\eta_{2}}\otimes1_{Y^{\eta_{2}\vee}}} & X^{\varepsilon_{2}}\otimes X^{\varepsilon_{2}\vee} \ar[d]^{\ev_{X^{\varepsilon_{2}}}^{\tau}} \\ Y^{\eta_{2}}\otimes Y^{\eta_{2}\vee} \ar[r]^-{\ev_{Y^{\eta_{2}}}^{\tau}} & \mathbb{I} }
\label{S1 AIM D4}
\end{equation}%
is commutative and that we have the implications:

\begin{equation}
\xymatrix{ T^{\eta_{1}}\otimes S^{\varepsilon_{1}}\otimes X^{\varepsilon_{2}} \ar@{}[dr]|{\circlearrowright} \ar[r]^-{1_{T^{\eta_{1}}}\otimes\varphi_{f}^{\varepsilon_{1},\varepsilon_{2};\eta_{2}}} \ar[d]|{\varphi_{k}^{\eta_{1},\varepsilon_{1};\nu_{1}}\otimes1_{X^{\varepsilon_{2}}}} & T^{\eta_{1}}\otimes Y^{\eta_{2}} \ar@{}[dr]|{\Leftrightarrow} \ar[d]|{\varphi_{g}^{\eta_{1},\eta_{2};\nu_{2}}} & T^{\eta_{1}}\otimes S^{\varepsilon_{1}} \ar@{}[dr]|{\circlearrowright} \ar[r]^-{g^{\eta_{1},\eta_{2};\nu_{2}}\otimes f^{\varepsilon_{1},\varepsilon_{2};\eta_{2}}} \ar[d]|{\varphi_{k}^{\eta_{1},\varepsilon_{1};\nu_{1}}} & \hom\left(Y^{\eta_{2}},Z^{\nu_{2}}\right)\otimes\hom\left(X^{\varepsilon_{2}},Y^{\eta_{2}}\right) \ar[d]|{c_{X^{\varepsilon_{2}},Y^{\eta_{2}},Z^{\nu_{2}}}} \\ U^{\nu_{1}}\otimes X^{\varepsilon_{2}} \ar[r]^-{\varphi_{h}^{\nu_{1},\varepsilon_{2};\nu_{2}}} & Z^{\nu_{2}} & U^{\nu_{1}} \ar[r]^-{h^{\nu_{1},\varepsilon_{2};\nu_{2}}} & \hom\left(X^{\varepsilon_{2}},Z^{\nu_{2}}\right)\text{,}}
\label{S1 AIM D5}
\end{equation}

\begin{equation}
\xymatrix{ S^{\varepsilon_{1}}\otimes T^{\eta_{1}}\otimes Z^{\nu_{2}\vee} \ar@{}[dr]|{\circlearrowright} \ar[r]^-{1_{S^{\varepsilon_{1}}}\otimes\varphi_{\iota_{g}}^{\eta_{1},\nu_{2};\eta_{2}}} \ar[d]|{\varphi_{k}^{\eta_{1},\varepsilon_{1};\nu_{1},\tau}\otimes1_{Z^{\nu_{2}\vee}}} & S^{\varepsilon_{1}}\otimes Y^{\eta_{2}\vee} \ar@{}[dr]|{\Leftrightarrow} \ar[d]|{\varphi_{\iota_{f}}^{\varepsilon_{1},\eta_{2};\varepsilon_{2}}} & T^{\eta_{1}}\otimes S^{\varepsilon_{1}} \ar@{}[dr]|{\circlearrowright} \ar[r]^-{\iota_{g}^{\eta_{1},\nu_{2};\eta_{2}}\otimes\iota_{f}^{\varepsilon_{1},\eta_{2};\varepsilon_{2}}} \ar[d]|{\varphi_{k}^{\eta_{1},\varepsilon_{1};\nu_{1}}} & \hom\left(Z^{\nu_{2}\vee},Y^{\eta_{2}\vee}\right)\otimes\hom\left(Y^{\eta_{2}\vee},X^{\varepsilon_{2}\vee}\right) \ar[d]|{c_{Z^{\nu_{2}\vee},Y^{\eta_{2}\vee},X^{\varepsilon_{2}\vee}}^{\tau}} \\ U^{\nu_{1}}\otimes Z^{\nu_{2}\vee} \ar[r]^-{\varphi_{\iota_{h}}^{\nu_{1},\nu_{2};\varepsilon_{2}}} & X^{\varepsilon_{2}\vee} & U^{\nu_{1}} \ar[r]^-{\iota_{h}^{\nu_{1},\nu_{2};\varepsilon_{2}}} & \hom\left(Z^{\nu_{2}\vee},X^{\varepsilon_{2}\vee}\right)\text{,}}
\label{S1 AIM D6}
\end{equation}

as well as 
\begin{equation}
\left( \text{\ref{S1 AIM D5}}\right) \text{ commutative }\Rightarrow \text{ }%
\left( \text{\ref{S1 AIM D6}}\right) \text{ commutative.}
\label{S1 AIM Implication2}
\end{equation}

The proof of the following Lemma is just a formal computation.

\begin{lemma}
\label{S1 AIM L2}Suppose that $e_{Z}^{\nu _{2}}\circ \varphi _{h}\circ
\left( e_{U}^{\nu _{1}}\otimes 1_{X}\right) =e_{Z}^{\nu _{2}}\circ \varphi
_{h}$ and $e_{Z}^{\nu _{2}}\circ \varphi _{g}\circ \left( 1_{T}\otimes
e_{Y}^{\eta _{2}}\right) =e_{Z}^{\nu _{2}}\circ \varphi _{g}$. Then we have
the implication $\left( \text{\ref{S1 AIM D2}}\right) $ commutative $\Rightarrow $ $%
\left( \text{\ref{S1 AIM D5}}\right) $ commutative$.$
\end{lemma}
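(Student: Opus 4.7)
The plan is to unfold the definitions of the restricted morphisms entering $\left( \text{\ref{S1 AIM D5}}\right)$ and reduce its commutativity to that of $\left( \text{\ref{S1 AIM D2}}\right)$ by absorbing the middle idempotents using the two hypotheses; no deeper structural input from the tensor category is needed beyond $(\otimes)$-functoriality and the biproduct identities $i^{\pm }\circ p^{\pm }=e^{\pm }$ and $p^{\pm }\circ i^{\pm }=1$.

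First I would spell out both composites in $\left( \text{\ref{S1 AIM D5}}\right)$. Setting $\iota :=i_{T}^{\eta _{1}}\otimes i_{S}^{\varepsilon _{1}}\otimes i_{X}^{\varepsilon _{2}}$ for the common inclusion and using $i_{U}^{\nu _{1}}\circ p_{U}^{\nu _{1}}=e_{U}^{\nu _{1}}$ together with $i_{Y}^{\eta _{2}}\circ p_{Y}^{\eta _{2}}=e_{Y}^{\eta _{2}}$, the left-hand side rewrites, after shuffling inclusions past the tensor, as $p_{Z}^{\nu _{2}}\circ \varphi _{h}\circ (e_{U}^{\nu _{1}}\otimes 1_{X})\circ (\varphi _{k}\otimes 1_{X})\circ \iota$, and the right-hand side as $p_{Z}^{\nu _{2}}\circ \varphi _{g}\circ (1_{T}\otimes e_{Y}^{\eta _{2}})\circ (1_{T}\otimes \varphi _{f})\circ \iota$.

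Next I would use the two hypotheses to remove $e_{U}^{\nu _{1}}$ and $e_{Y}^{\eta _{2}}$ from the middle. Since $e_{Z}^{\nu _{2}}=i_{Z}^{\nu _{2}}\circ p_{Z}^{\nu _{2}}$ and $p_{Z}^{\nu _{2}}\circ i_{Z}^{\nu _{2}}=1_{Z^{\nu _{2}}}$, composing each hypothesized identity on the left with $p_{Z}^{\nu _{2}}$ and then cancelling the section yields $p_{Z}^{\nu _{2}}\circ \varphi _{h}\circ (e_{U}^{\nu _{1}}\otimes 1_{X})=p_{Z}^{\nu _{2}}\circ \varphi _{h}$ and $p_{Z}^{\nu _{2}}\circ \varphi _{g}\circ (1_{T}\otimes e_{Y}^{\eta _{2}})=p_{Z}^{\nu _{2}}\circ \varphi _{g}$. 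Substituting these back reduces the two sides to $p_{Z}^{\nu _{2}}\circ \varphi _{h}\circ (\varphi _{k}\otimes 1_{X})\circ \iota$ and $p_{Z}^{\nu _{2}}\circ \varphi _{g}\circ (1_{T}\otimes \varphi _{f})\circ \iota$, respectively.

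Finally, the assumed commutativity of $\left( \text{\ref{S1 AIM D2}}\right)$, namely $\varphi _{h}\circ (\varphi _{k}\otimes 1_{X})=\varphi _{g}\circ (1_{T}\otimes \varphi _{f})$, when pre-composed with $\iota$ and post-composed with $p_{Z}^{\nu _{2}}$, yields precisely the desired equality and hence the commutativity of $\left( \text{\ref{S1 AIM D5}}\right)$. There is no genuine obstacle here; the only point requiring care is to perform the $i^{\pm }\circ p^{\pm }=e^{\pm }$ substitutions at the correct positions in the composites, and to observe that the hypotheses, phrased in terms of $e_{Z}^{\nu _{2}}$, pass to the corresponding statements involving only $p_{Z}^{\nu _{2}}$ by virtue of the splitting $p_{Z}^{\nu _{2}}\circ i_{Z}^{\nu _{2}}=1$.
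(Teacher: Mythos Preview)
Your proof is correct and is precisely the ``formal computation'' the paper alludes to without giving details: unfold the restricted morphisms, collapse the inner $i^{\pm}\circ p^{\pm}$ into idempotents, absorb those idempotents using the two hypotheses (after passing from $e_{Z}^{\nu_{2}}$ to $p_{Z}^{\nu_{2}}$ via $p_{Z}^{\nu_{2}}\circ e_{Z}^{\nu_{2}}=p_{Z}^{\nu_{2}}$), and then invoke \eqref{S1 AIM D2}. There is nothing to add.
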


The following result is now a combination of the commutativity of $\left( 
\text{\ref{S1 AIM D4}}\right) $, Lemma \ref{S1 AIM L2} and $\left( \text{\ref%
{S1 AIM Implication2}}\right) $.

\begin{proposition}
\label{S1 AIM P1}Suppose that $e_{Z}^{\nu _{2}}\circ \varphi _{h}\circ
\left( e_{U}^{\nu _{1}}\otimes 1_{X}\right) =e_{Z}^{\nu _{2}}\circ \varphi
_{h}$ and $e_{Z}^{\nu _{2}}\circ \varphi _{g}\circ \left( 1_{T}\otimes
e_{Y}^{\eta _{2}}\right) =e_{Z}^{\nu _{2}}\circ \varphi _{g}$ and that $%
\left( \text{\ref{S1 AIM D2}}\right) $ is commutative. Then $\left( \text{%
\ref{S1 AIM D4}}\right) $, $\left( \text{\ref{S1 AIM D5}}\right) $ and $%
\left( \text{\ref{S1 AIM D6}}\right) $ are commutative.
\end{proposition}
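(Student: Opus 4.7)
The plan is to prove the three commutativity assertions in cascade, since each is already prepared by a lemma or implication stated just above. First I would establish the commutativity of diagram $(\ref{S1 AIM D4})$ by a direct application of the general diagram $(\ref{S1 AIM D1})$: by Lemma \ref{S1 AIM L1}, the morphisms obtained by post- and pre-composition with the projection/injection pairs match, in the sense that $\varphi_{f^{\varepsilon_1,\varepsilon_2;\eta_2}}=\varphi_{f}^{\varepsilon_1,\varepsilon_2;\eta_2}$ and $\varphi_{\iota_{f^{\varepsilon_1,\varepsilon_2;\eta_2}}}=\varphi_{\iota_f}^{\varepsilon_1,\eta_2;\varepsilon_2}$, so $(\ref{S1 AIM D4})$ is nothing but $(\ref{S1 AIM D1})$ applied to the morphism $f^{\varepsilon_1,\varepsilon_2;\eta_2}\colon S^{\varepsilon_1}\to\hom(X^{\varepsilon_2},Y^{\eta_2})$. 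This step requires no further calculation; it is purely formal once Lemma \ref{S1 AIM L1} is in hand.

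Next I would derive the commutativity of $(\ref{S1 AIM D5})$ by invoking Lemma \ref{S1 AIM L2}. The hypotheses of that lemma are exactly the two idempotent-compatibility identities we assume, namely $e_Z^{\nu_2}\circ\varphi_h\circ(e_U^{\nu_1}\otimes 1_X)=e_Z^{\nu_2}\circ\varphi_h$ and $e_Z^{\nu_2}\circ\varphi_g\circ(1_T\otimes e_Y^{\eta_2})=e_Z^{\nu_2}\circ\varphi_g$, together with the commutativity of $(\ref{S1 AIM D2})$. Thus $(\ref{S1 AIM D5})$ is immediate from Lemma \ref{S1 AIM L2}.

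Finally, having obtained $(\ref{S1 AIM D5})$, I would apply the implication $(\ref{S1 AIM Implication2})$, which is the decomposed analogue of $(\ref{S1 AIM Implication1})$, to conclude the commutativity of $(\ref{S1 AIM D6})$. Under the hood this uses $(\ref{S1 D Internal duality 2})$ to translate between the "composition" diagram $(\ref{S1 AIM D5})$ and the "internal multiplication" diagram $(\ref{S1 AIM D6})$, exchanging $c$ with $c^\tau$ via $d_{X,Y}$.

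Since this proposition is essentially a bookkeeping assembly, the real content lies in the ingredients rather than in the proof itself. The only step where one might get stuck is Lemma \ref{S1 AIM L2}, which requires inserting idempotents through the ambient diagram $(\ref{S1 AIM D2})$ and propagating them along using biadditivity and functoriality of $\hom$; the hypotheses allow one to reabsorb these insertions without altering the outer composition. Once that lemma is granted, the proposition follows by a three-line pasting of the results above.
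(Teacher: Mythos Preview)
Your proposal is correct and follows essentially the same route as the paper: the commutativity of $(\ref{S1 AIM D4})$ is obtained by applying $(\ref{S1 AIM D1})$ to the decomposed morphism via Lemma \ref{S1 AIM L1}, that of $(\ref{S1 AIM D5})$ from Lemma \ref{S1 AIM L2} under the two idempotent hypotheses and the commutativity of $(\ref{S1 AIM D2})$, and that of $(\ref{S1 AIM D6})$ from the implication $(\ref{S1 AIM Implication2})$. The paper states the proof in exactly this way, as a combination of these three ingredients.
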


For future reference it will be convenient to introduce some more notation.
When $\alpha _{X,Y}$ is an isomorphism, we define%
\begin{equation*}
D_{f}:S\overset{f}{\rightarrow }\hom \left( X,Y\right) \overset{\alpha
_{X,Y}^{-1}}{\rightarrow }Y\otimes X^{\vee }\text{.}
\end{equation*}%
Suppose now that we have given $g:S\rightarrow \hom \left( X^{\vee },Y^{\vee
}\right) $, so that we have $\iota _{g}:S\rightarrow \hom \left( Y^{\vee
\vee },X^{\vee \vee }\right) $. When $X$ is reflexive, we define%
\begin{equation*}
\iota _{g}^{\ast }:S\overset{\iota _{g}}{\rightarrow }\hom \left( Y^{\vee
\vee },X^{\vee \vee }\right) \overset{\hom \left( i_{Y},i_{X}^{-1}\right) }{%
\rightarrow }\hom \left( Y,X\right) \text{.}
\end{equation*}%
Suppose that $X$ is reflexive and that both $\alpha _{X^{\vee },Y^{\vee }}$
and $\alpha _{Y,X}$ are isomorphisms. Then it follows from the first
commutative diagram of $\left( \text{\ref{S1 D Internal duality 4}}\right) $
that $d_{Y,X}$ is an isomorphism and then, from the second commutative
diagram of $\left( \text{\ref{S1 D Internal duality 4}}\right) $, we deduce
that $\hom \left( i_{Y},i_{X}^{-1}\right) \circ d_{X^{\vee },Y^{\vee
}}=d_{Y,X}^{-1}$, so that%
\begin{eqnarray*}
\iota _{\iota _{g}^{\ast }} &:&=d_{Y,X}\circ \iota _{g}^{\ast }=d_{Y,X}\circ
\hom \left( i_{Y},i_{X}^{-1}\right) \circ d_{X^{\vee },Y^{\vee }}\circ g \\
&=&d_{Y,X}\circ d_{Y,X}^{-1}\circ g=g\text{.}
\end{eqnarray*}%
It follows from $\left( \text{\ref{S1 AIM D1}}\right) $ that the first of
the subsequent equivalently commutative diagrams commutes:%
\begin{equation}
\xymatrix@C50pt{ S\otimes Y\otimes X^{\vee} \ar[r]^-{\left(1_{Y}\otimes\varphi_{g}\right)\circ\left(\tau_{S,Y}\otimes1_{X^{\vee}}\right)} \ar[d]|{\varphi_{\iota_{g}^{\ast}}\otimes1_{X^{\vee}}} & Y\otimes Y^{\vee} \ar@{}[dr]|{\Leftrightarrow} \ar[d]|{\ev_{Y}^{\tau}} & S\otimes X^{\vee}\otimes Y \ar[r]^-{\left(1_{X^{\vee}}\otimes\varphi_{\iota_{g}^{\ast}}\right)\circ\left(\tau_{S,X^{\vee}}\otimes1_{Y}\right)} \ar[d]|{\varphi_{g}\otimes1_{Y}} & X^{\vee}\otimes X \ar[d]|{\ev_{X}} \\ X\otimes X^{\vee} \ar[r]^-{\ev_{X}^{\tau}} & \mathbb{I} & Y^{\vee}\otimes Y \ar[r]^-{\ev_{Y}} & \mathbb{I}\text{.}}
\label{S1 AIM D1 reflexivity 1}
\end{equation}%
Here the equivalence is easily obtained by applying $1_{S}\otimes \tau
_{X^{\vee },Y}$ (resp. $1_{S}\otimes \tau _{Y,X^{\vee }}$)\ to the first
(resp. second) diagram to get the second (resp. first) diagram. Also, it
follows from the functorial description $Hom\left( 1_{S},\hom \left(
i_{Y},i_{X}^{-1}\right) \right) =Hom\left( 1_{S}\otimes
i_{Y},i_{X}^{-1}\right) $ (up to $\left( \text{\ref{S1 F1}}\right) $)\ of $%
\hom $ that the following diagram is commutative:%
\begin{equation}
\xymatrix{ S\otimes Y \ar[r]^-{\varphi_{\iota_{g}^{\ast}}} \ar[d]_{1_{S}\otimes i_{Y}} & X \ar[d]^{i_{X}} \\ S\otimes Y^{\vee\vee} \ar[r]^-{\varphi_{\iota_{g}}} & X^{\vee\vee}\text{.} }
\label{S1 AIM D1 reflexivity 2}
\end{equation}

\bigskip

Finally, in addition to $D_{\iota _{g}}:S\rightarrow X^{\vee \vee }\otimes
Y^{\vee \vee \vee }$, defined when $\alpha _{Y^{\vee \vee },X^{\vee \vee }}$
is an isomorphism, we may define, when $X$ is reflexive and $\alpha _{Y,X}$
is an isomorphism:%
\begin{equation*}
D_{\iota _{g}^{\ast }}:S\overset{\iota _{g}^{\ast }}{\rightarrow }\hom
\left( Y,X\right) \overset{\alpha _{Y,X}^{-1}}{\rightarrow }X\otimes Y^{\vee
}\text{.}
\end{equation*}%
The relationship between $D_{\iota _{g}}$ and $D_{\iota _{g}^{\ast }}$ can
be made explicit as follows. Consider the following diagram:%
\begin{equation}
\xymatrix{ S \ar[r]^-{g} & \hom\left(X^{\vee},Y^{\vee}\right) \ar[r]^-{d_{X^{\vee},Y^{\vee}}} & \hom\left(Y^{\vee\vee},X^{\vee\vee}\right) \ar[r]^-{\hom\left(i_{Y},i_{X}^{-1}\right)} & \hom\left(Y,X\right) \\ & Y^{\vee}\otimes X^{\vee\vee} \ar[r]^-{\left(1_{X^{\vee\vee}}\otimes i_{Y^{\vee}}\right)\circ\tau_{Y^{\vee},X^{\vee\vee}}} \ar[u]^{\alpha_{X^{\vee},Y^{\vee}}}  & X^{\vee\vee}\otimes Y^{\vee\vee\vee} \ar[r]^-{i_{X}^{-1}\otimes\left(i_{Y}\right)^{\vee}} \ar[u]|{\alpha_{Y^{\vee\vee},X^{\vee\vee}}} & X\otimes Y^{\vee}\text{.} \ar[u]_{\alpha_{Y,X}}}
\label{S1 AIM D7'}
\end{equation}%
The first square is commutative thank to the first diagram in $\left( \text{%
\ref{S1 D Internal duality 4}}\right) $, while the second square is
commutative by functoriality of $\alpha $. The subsequent lemma, whose proof
we leave to the reader, shows that, when $Y$ is reflexive, $\left(
i_{Y}\right) ^{\vee }=i_{Y^{\vee }}^{-1}$ and we find, in this case,%
\begin{equation}
D_{\iota _{g}}=\left( i_{X}\otimes i_{Y^{\vee }}\right) \circ D_{\iota
_{g}^{\ast }}  \label{S1 AIM D7}
\end{equation}

\begin{lemma}
\label{S1 Lemma reflexivity duality}We have the equality $\left(
i_{X}\right) ^{\vee }\circ i_{X^{\vee }}=1_{X^{\vee }}$. In particular, if $%
X $ is reflexive, then $X^{\vee }$ is reflexive and $i_{X^{\vee
}}^{-1}=\left( i_{X}\right) ^{\vee }$.
\end{lemma}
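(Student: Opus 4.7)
The plan is to use Yoneda's universal property encoded in $\left( \text{\ref{S1 F1}}\right)$: both $\left( i_{X}\right)^{\vee }\circ i_{X^{\vee }}$ and $1_{X^{\vee }}$ are morphisms from $X^{\vee }$ to $X^{\vee }=\hom \left( X,\mathbb{I}\right)$, so they agree iff their corresponding morphisms $X^{\vee }\otimes X\rightarrow \mathbb{I}$ agree. Since $1_{X^{\vee }}$ corresponds to $\ev_{X}$ (by the very definition of the evaluation obtained from $S=\hom(X,\mathbb{I})$), the task reduces to identifying the morphism $\varphi :X^{\vee }\otimes X\rightarrow \mathbb{I}$ associated to $\left( i_{X}\right)^{\vee }\circ i_{X^{\vee }}$ and showing $\varphi =\ev_{X}$.

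To compute $\varphi$, I would first observe that $i_{X^{\vee }}:X^{\vee }\rightarrow X^{\vee \vee \vee }$ corresponds, by the definition of $i_{(-)}$, to $\ev_{X^{\vee }}^{\tau }:X^{\vee }\otimes X^{\vee \vee }\rightarrow \mathbb{I}$. Next I would use the functorial description $Hom\left( 1_{S},\hom \left( f,g\right) \right) =Hom\left( 1_{S}\otimes f,g\right)$ (cf.\ $\left( \text{\ref{S1 FHom(f,g)}}\right)$), applied to $f=i_{X}$ and $g=1_{\mathbb{I}}$, to conclude that post-composing $i_{X^{\vee }}$ with $\left( i_{X}\right)^{\vee }=\hom \left( i_{X},1_{\mathbb{I}}\right)$ corresponds to pre-composing the associated morphism by $1_{X^{\vee }}\otimes i_{X}$. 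Thus
\[
\varphi =\ev_{X^{\vee }}^{\tau }\circ \left( 1_{X^{\vee }}\otimes i_{X}\right) =\ev_{X^{\vee }}\circ \tau _{X^{\vee },X^{\vee \vee }}\circ \left( 1_{X^{\vee }}\otimes i_{X}\right).
\]
By naturality of the commutativity constraint, $\tau _{X^{\vee },X^{\vee \vee }}\circ \left( 1_{X^{\vee }}\otimes i_{X}\right) =\left( i_{X}\otimes 1_{X^{\vee }}\right) \circ \tau _{X^{\vee },X}$. Plugging in the defining identity $\ev_{X^{\vee }}\circ \left( i_{X}\otimes 1_{X^{\vee }}\right) =\ev_{X}^{\tau }=\ev_{X}\circ \tau _{X,X^{\vee }}$ of $i_{X}$, and using $\tau _{X,X^{\vee }}\circ \tau _{X^{\vee },X}=1$, the expression collapses to $\ev_{X}$, finishing the first assertion. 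This is a routine diagram chase; no step is truly hard, the only pitfall being keeping track of which copy of $\tau$ is applied where.

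For the ``in particular'' clause, assume $X$ is reflexive so that $i_{X}$ is an isomorphism. Since $\left( -\right)^{\vee }$ is a (contravariant) functor it preserves isomorphisms, hence $\left( i_{X}\right)^{\vee }:X^{\vee \vee \vee }\rightarrow X^{\vee }$ is an isomorphism. The identity $\left( i_{X}\right)^{\vee }\circ i_{X^{\vee }}=1_{X^{\vee }}$ then forces $i_{X^{\vee }}=\bigl(\left( i_{X}\right)^{\vee }\bigr)^{-1}$, which is both an isomorphism (so $X^{\vee }$ is reflexive) and exactly the formula $i_{X^{\vee }}^{-1}=\left( i_{X}\right)^{\vee }$ claimed.
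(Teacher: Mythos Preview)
Your proof is correct. The paper actually leaves the proof of this lemma to the reader, so there is no in-text argument to compare with; your Yoneda-based verification (reducing to showing that the associated map $X^{\vee}\otimes X\to\mathbb{I}$ equals $\ev_X$, then unwinding the definitions of $i_{(-)}$, $(-)^{\vee}=\hom(-,1_{\mathbb{I}})$, and $\ev^{\tau}$) is exactly the routine check the authors had in mind.
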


\bigskip

The following lemma will be useful later.

\begin{lemma}
\label{S1 AIM L3}Suppose that we have given $f:S\rightarrow \hom \left(
X,Y\right) $ and, respectively, $g:S\rightarrow \hom \left( X^{\vee
},Y^{\vee }\right) $, that $\alpha _{Y^{\vee },X^{\vee }}$ is an
isomorphism, so that $D_{\iota _{f}}$ is defined, and, respectively, that $%
\alpha _{Y^{\vee \vee },X^{\vee \vee }}$ is an isomorphism and that $X$ is reflexive and $%
\alpha _{Y,X}$ is an isomorphism, so that $D_{\iota _{g}^{\ast }}$ is
defined. Then the first and, respectively, the second of the following
diagrams is commutative:%
\begin{equation*}
\xymatrix{ S\otimes X \ar[r]^-{\varphi_{f}} \ar[d]|{D_{\iota_{f}}\otimes1_{X}} & Y \ar[d]|{i_{Y}} & S\otimes X^{\vee} \ar[d]|{D_{\iota_{g}^{\ast}}\otimes1_{X^{\vee}}} \ar@/^{0.75pc}/[dr]^-{\varphi_{g}} &  \\ X^{\vee}\otimes Y^{\vee\vee}\otimes X \ar[r]^-{\ev_{13,Y^{\vee\vee}}^{\phi}} & Y^{\vee\vee} & X\otimes Y^{\vee}\otimes X^{\vee} \ar[r]^-{\ev_{13,Y^{\vee}}^{\tau}} & Y^{\vee}}
\end{equation*}%
where $\ev_{13,Y^{\vee \vee }}^{\phi }:=\left( 1_{Y^{\vee \vee }}\otimes
\ev_{X}\right) \circ \left( \tau _{X^{\vee },Y^{\vee \vee }}\otimes
1_{X}\right) $ and $\ev_{13,Y^{\vee }}^{\tau }:=\left( 1_{Y^{\vee }}\otimes
\ev_{X}^{\tau }\right) \circ \left( \tau _{X,Y^{\vee }}\otimes 1_{X^{\vee
}}\right) $.
\end{lemma}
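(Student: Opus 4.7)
Both parts of the lemma equate two morphisms whose target is a dual object ($Y^{\vee\vee}=\hom(Y^\vee,\mathbb{I})$ in the first diagram, $Y^\vee=\hom(Y,\mathbb{I})$ in the second), so in each case the plan is to invoke the universal property $(\text{\ref{S1 F1}})$ and reduce to equality of morphisms into $\mathbb{I}$ by pairing on the right with the appropriate predual and composing with $\ev_{Y^\vee}$ or $\ev_Y$. Both sides will then be expanded using the commutative diagram $(\text{\ref{S1 AIM D1}})$ (respectively $(\text{\ref{S1 AIM D1 reflexivity 1}})$) and the defining property of $\alpha$.

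For the first diagram, after pairing with $1_{Y^\vee}$ and applying $\ev_{Y^\vee}$, the left-hand side $\ev_{Y^\vee}\circ((i_Y\circ\varphi_f)\otimes 1_{Y^\vee})$ reduces to $\ev_Y^\tau\circ(\varphi_f\otimes 1_{Y^\vee})$ by the definition of $i_Y$, and then $(\text{\ref{S1 AIM D1}})$ converts this to $\ev_X^\tau\circ(1_X\otimes\varphi_{\iota_f})\circ(\tau_{S,X}\otimes 1_{Y^\vee})$. On the right-hand side, I unwind $D_{\iota_f}=\alpha_{Y^\vee,X^\vee}^{-1}\circ\iota_f$; the defining property of $\alpha_{Y^\vee,X^\vee}$ immediately yields $\varphi_{\iota_f}=(1_{X^\vee}\otimes\ev_{Y^\vee})\circ(D_{\iota_f}\otimes 1_{Y^\vee})$. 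Substituting this back into the rewritten left-hand side and permuting tensor factors via symmetric monoidal coherence, both sides collapse onto the same composite $S\otimes X\otimes Y^\vee\to\mathbb{I}$ in which $D_{\iota_f}$ is followed by the two independent evaluations $\ev_X$ on the $X^\vee$--$X$ slot and $\ev_{Y^\vee}$ on the $Y^{\vee\vee}$--$Y^\vee$ slot. The second diagram is handled by the same template, with $1_Y$ and $\ev_Y$ replacing $1_{Y^\vee}$ and $\ev_{Y^\vee}$, the second square of $(\text{\ref{S1 AIM D1 reflexivity 1}})$ replacing $(\text{\ref{S1 AIM D1}})$, and the defining property of $\alpha_{Y,X}$ supplying the identity $\varphi_{\iota_g^\ast}=(1_X\otimes\ev_Y)\circ(D_{\iota_g^\ast}\otimes 1_Y)$; an analogous shuffle identifies the two sides as a common composite $S\otimes X^\vee\otimes Y\to\mathbb{I}$.

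The only real obstacle is tensor-factor bookkeeping: no structural input beyond the two commutative diagrams cited and the defining property of $\alpha$ is needed, so the core of the argument is arranging the relevant $\tau$'s and associators so that the coherence theorem for symmetric monoidal categories identifies the two composites into $\mathbb{I}$.
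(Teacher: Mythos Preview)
Your argument is correct, but it follows a different route from the paper's. The paper proves the first diagram by a direct diagram chase: it builds a diagram whose regions are commutative by the defining property of $\alpha_{X,Y}$ and by the first square of $(\text{\ref{S1 D Internal duality 4}})$ (relating $d_{X,Y}$ and $\alpha$), and reads off $i_Y\circ\varphi_f=\ev_{13,Y^{\vee\vee}}^{\phi}\circ(D_{\iota_f}\otimes 1_X)$ directly, never passing through an auxiliary pairing into $\mathbb{I}$. For the second diagram the paper does \emph{not} rerun the template; instead it applies the first result to $g$, obtaining $i_{Y^\vee}\circ\varphi_g=\ev_{13,Y^{\vee\vee\vee}}^{\phi}\circ(D_{\iota_g}\otimes 1_{X^\vee})$, then composes with $(i_Y)^\vee$ and invokes Lemma~\ref{S1 Lemma reflexivity duality} together with the relation $(\text{\ref{S1 AIM D7'}})$ between $D_{\iota_g}$ and $D_{\iota_g^\ast}$ to convert this into the claimed identity. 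Your approach is more symmetric---it treats the two diagrams in parallel via the same ``pair with the predual and compare in $\mathbb{I}$'' mechanism, using only $(\text{\ref{S1 AIM D1}})$, $(\text{\ref{S1 AIM D1 reflexivity 1}})$, and the defining property of $\alpha$---and in particular it sidesteps $(\text{\ref{S1 D Internal duality 4}})$, Lemma~\ref{S1 Lemma reflexivity duality}, and $(\text{\ref{S1 AIM D7'}})$ entirely. The paper's approach, on the other hand, economizes by recycling the first computation for the second.
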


\begin{proof}
Consider the following diagram, where we set $t_{Y,X}:=\left( 1_{X^{\vee
}}\otimes i_{Y}\right) \circ \tau _{Y,X^{\vee }}$:%
\begin{equation*}
\xymatrix{ & Y \ar[r]^{i_{Y}} & Y^{\vee\vee} \\ \hom\left(X,Y\right)\otimes
X \ar@/^{0.75pc}/[ur]|{\ev_{X,Y}} \ar[d]|{d_{X,Y}\otimes1_{X}} & Y\otimes
X^{\vee}\otimes X \ar@{}[ul]|(0.4){(A)} \ar@{}[ur]|{(\otimes)}
\ar@{}[dr]|(0.4){(\tau)} \ar@{}[dl]|{(B)} \ar[u]|{1_{Y}\otimes \ev_{X}}
\ar[l]_(0.45){\alpha_{X,Y}\otimes1_{X}}
\ar[r]^(0.45){i_{Y}\otimes1_{X^{\vee}\otimes X}}
\ar[d]|{t_{Y,X}\otimes1_{X}} & Y^{\vee\vee}\otimes X^{\vee}\otimes X
\ar[u]|{1_{Y^{\vee\vee}}\otimes \ev_{X}}
\ar@/^{0.75pc}/[dl]|(0.3){\tau_{Y^{\vee\vee},X^{\vee}}\otimes1_{X}} \\
\hom\left(Y^{\vee},X^{\vee}\right)\otimes X & X^{\vee}\otimes
Y^{\vee\vee}\otimes X \ar[l]_(0.45){\alpha_{Y^{\vee},X^{\vee}}\otimes1_{X}}
& }
\end{equation*}%
The region $\left( A\right) $ is commutative by definition of $\alpha _{X,Y}$
and $\left( B\right) $ thanks to the first diagram in $\left( \text{\ref{S1
D Internal duality 4}}\right) $. Noticing that $\tau _{X^{\vee },Y^{\vee
\vee }}=\left( \tau _{Y^{\vee \vee },X^{\vee }}\right) ^{-1}$ we deduce%
\begin{eqnarray*}
i_{Y}\circ \varphi _{f} &=&i_{Y}\circ \ev_{X,Y}\circ \left( f\otimes
1_{X}\right) =\ev_{13,Y^{\vee \vee }}^{\phi }\circ \left( \alpha _{Y^{\vee
},X^{\vee }}^{-1}\otimes 1_{X}\right) \circ \left( d_{X,Y}\otimes
1_{X}\right) \circ \left( f\otimes 1_{X}\right) \\
&=&\ev_{13,Y^{\vee \vee }}^{\phi }\circ \left( D_{\iota _{f}}\otimes
1_{X}\right) \text{.}
\end{eqnarray*}

Next, we have $i_{Y^{\vee }}\circ \varphi _{g}=\ev_{13,Y^{\vee \vee \vee
}}^{\phi }\circ \left( D_{\iota _{g}}\otimes 1_{X^{\vee }}\right) $ by the
previous computation (because $\alpha _{Y^{\vee \vee },X^{\vee \vee }}$ is
an isomorphism). Applying $\left( i_{Y}\right) ^{\vee }$ we deduce, by Lemma %
\ref{S1 Lemma reflexivity duality},%
\begin{eqnarray*}
\varphi _{g} &=&\left( i_{Y}\right) ^{\vee }\circ \ev_{13,Y^{\vee \vee \vee
}}\circ \left( D_{\iota _{g}}\otimes 1_{X^{\vee }}\right) =\ev_{13,Y^{\vee
}}\circ \left( 1_{X^{\vee \vee }}\otimes \left( i_{Y}\right) ^{\vee }\otimes
1_{X^{\vee }}\right) \circ \left( D_{\iota _{g}}\otimes 1_{X^{\vee }}\right)
\\
&=&\ev_{13,Y^{\vee }}\circ \left( i_{X}\otimes 1_{Y^{\vee }}\otimes
1_{X^{\vee }}\right) \circ \left( i_{X}^{-1}\otimes \left( i_{Y}\right)
^{\vee }\otimes 1_{X^{\vee }}\right) \circ \left( D_{\iota _{g}}\otimes
1_{X^{\vee }}\right) \\
&=&\ev_{13,Y^{\vee }}^{\tau }\circ \left( i_{X}^{-1}\otimes \left(
i_{Y}\right) ^{\vee }\otimes 1_{X^{\vee }}\right) \circ \left( D_{\iota
_{g}}\otimes 1_{X^{\vee }}\right) \text{.}
\end{eqnarray*}%
But it follows from $\left( \text{\ref{S1 AIM D7'}}\right) $ that we have $%
D_{\iota _{g}^{\ast }}=\left( i_{X}^{-1}\otimes \left( i_{Y}\right) ^{\vee
}\right) \circ D_{\iota _{g}}$, proving that the second diagram is
commutative.
\end{proof}

\subsection{Some commutative diagram involving the Casimir element}

If we have given two objects $X$ and $Y$ and $W=W_{1}\otimes W_{2}\otimes
W_{3}\otimes W_{4}$, where $\left( W_{1},W_{2},W_{3},W_{4}\right) $ is a
permutation of the string $\left( X^{\vee },X,Y^{\vee },Y\right) $, we
define morphisms $\ev_{ij,kl}^{\alpha ,\beta }:W\rightarrow \mathbb{I}$,
where $i,j,k,l\in \left\{ 1,2,3,4\right\} $ are such that $i<j$, $k<l$ and $%
i<k$ and $\alpha ,\beta \in \left\{ \phi ,\tau \right\} $, as follows. We
let $ij$ be one of the two pairs for which $ev:W_{i}\otimes W_{j}\rightarrow 
\mathbb{I}$ is defined and we write a corresponding superscript $\alpha
=\phi $ if $W_{j}\in \left\{ X,Y\right\} $ (so that $ev=\ev_{X}$ or $\ev_{Y}$%
)\ or $\alpha =\tau $ if $W_{j}\in \left\{ X^{\vee },Y^{\vee }\right\} $ (so
that $ev=\ev_{X}^{\tau }$ or $ev=\ev_{Y}^{\tau }$); the same rule is applied
to the triple $\left( k,l,\beta \right) $. Then we define%
\begin{equation*}
\ev_{ij,kl}^{\alpha ,\beta }:W\overset{\tau _{\sigma }}{\rightarrow }X^{\vee
}\otimes X\otimes Y^{\vee }\otimes Y\overset{\ev_{X}\otimes \ev_{Y}}{%
\rightarrow }\mathbb{I}\otimes \mathbb{I}\overset{u_{\mathbb{I}}}{%
\rightarrow }\mathbb{I}\text{,}
\end{equation*}%
where $\tau _{\sigma }$ is the morphism obtained from any permutation $%
\sigma $ suitably reordering the factors. We have, for example,%
\begin{eqnarray*}
&&\ev_{12,34}^{\phi ,\phi }:X^{\vee }\otimes X\otimes X^{\vee }\otimes X%
\overset{\ev_{X}\otimes \ev_{X}}{\rightarrow }\mathbb{I}\otimes \mathbb{I}%
\overset{u_{\mathbb{I}}}{\rightarrow }\mathbb{I}\text{.} \\
&&\ev_{14,23}^{\tau ,\phi }:X\otimes X^{\vee }\otimes X\otimes X^{\vee }%
\overset{\tau _{1,2\otimes 3}\otimes 1_{X^{\vee }}}{\rightarrow }X^{\vee
}\otimes X\otimes X\otimes X^{\vee }\overset{\ev_{X}\otimes \ev_{X}^{\tau }}{%
\rightarrow }\mathbb{I}\otimes \mathbb{I}\overset{u_{\mathbb{I}}}{%
\rightarrow }\mathbb{I}\text{.}
\end{eqnarray*}

We say that an object $X$ admits a Casimir element if $X$ is a reflexive
object such that $\epsilon :X^{\vee \vee }\otimes X^{\vee }\rightarrow
\left( X^{\vee }\otimes X\right) ^{\vee }$ is an isomorphism. Then we define
the Casimir element:%
\begin{equation*}
C_{X}:\mathbb{I}\overset{\ev_{X}^{\vee }}{\rightarrow }\left( X^{\vee
}\otimes X\right) ^{\vee }\overset{\epsilon ^{-1}}{\rightarrow }X^{\vee \vee
}\otimes X^{\vee }\overset{i_{X}^{-1}\otimes 1_{X^{\vee }}}{\rightarrow }%
X\otimes X^{\vee }\text{.}
\end{equation*}%
We collect in the following lemma well known properties of the Casimir
element.

\begin{lemma}
\label{S1 Casimir L Properties}Suppose that $X$ has a Casimir element.

\begin{enumerate}
\item[$\left( 1\right) $] $C_{X}$ is the unique morphism making one of the
following diagrams commutative:%
\begin{eqnarray*}
&&\ev_{X}:X^{\vee }\otimes X\overset{1_{X^{\vee }}\otimes C_{X}\otimes 1_{X}}{%
\rightarrow }X^{\vee }\otimes X\otimes X^{\vee }\otimes X\overset{%
\ev_{12,34}^{\phi ,\phi }}{\rightarrow }\mathbb{I}\text{,} \\
&&\ev_{X}:X^{\vee }\otimes X\overset{C_{X}\otimes \tau _{X^{\vee },X}}{%
\rightarrow }X\otimes X^{\vee }\otimes X\otimes X^{\vee }\overset{%
\ev_{14,23}^{\tau ,\phi }}{\rightarrow }\mathbb{I}\text{,} \\
&&\ev_{X}:X^{\vee }\otimes X\overset{\tau _{X^{\vee },X}\otimes C_{X}}{%
\rightarrow }X\otimes X^{\vee }\otimes X\otimes X^{\vee }\overset{%
\ev_{14,23}^{\tau ,\phi }}{\rightarrow }\mathbb{I}\text{.}
\end{eqnarray*}

\item[$\left( 2\right) $] $C_{X}$ is the unique morphism making one of the
following diagrams commutative:%
\begin{eqnarray*}
&&1_{X}:X\overset{C_{X}\otimes 1_{X}}{\rightarrow }X\otimes X^{\vee }\otimes
X\overset{1_{X}\otimes \ev_{X}}{\rightarrow }X\text{,} \\
&&1_{X^{\vee }}:X^{\vee }\overset{1_{X^{\vee }}\otimes C_{X}}{\rightarrow }%
X^{\vee }\otimes X\otimes X^{\vee }\overset{\ev_{X}\otimes 1_{X^{\vee }}}{%
\rightarrow }X^{\vee }\text{.}
\end{eqnarray*}

\item[$\left( 3\right) $] If $X_{1}$, $X_{2}$ and $X_{1}\otimes X_{2}$ have
a Casimir element, then%
\[
C_{X_{1}\otimes X_{2}}=\left( 1_{X_{1}\otimes X_{2}}\otimes \epsilon \right)
\circ \left( 1_{X_{1}}\otimes \tau _{X_{1}^{\vee },X_{2}}\otimes
1_{X_{2}^{\vee }}\right) \circ \left( C_{X_{1}}\otimes C_{X_{2}}\right) 
\text{.}
\]

\item[$\left( 4\right) $] If $X=X^{+}\oplus X^{-}$ is a biproduct
decomposition inducing $X^{\vee }=X^{\vee +}\oplus X^{\vee -}$, then $X^{\pm
}$ both have a Casimir element and $C_{X^{\pm }}=\left( p_{X^{\pm }}\otimes
p_{X^{\vee \pm }}\right) \circ C_{X}$ for the associated surjective
morphisms $p_{X^{\pm }}:X\rightarrow X^{\pm }$ and $p_{X^{\vee \pm
}}:X^{\vee }\rightarrow X^{\vee \pm }$.

\item[$\left( 5\right) $] We have that $X^{\vee }$ has a Casimir element and 
$C_{X^{\vee }}=\left( 1_{X^{\vee }}\otimes i_{X}\right) \circ \tau
_{X,X^{\vee }}\circ C_{X}$.
\end{enumerate}
\end{lemma}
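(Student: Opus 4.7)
The plan is to prove part (1) first, since parts (2)--(5) will all follow from it via the standard trick of exhibiting a candidate morphism, verifying it satisfies the characterizing diagram of (1), and invoking uniqueness.

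For part (1), I would unpack $C_{X}$ from its definition $C_{X} = (i_{X}^{-1}\otimes 1_{X^{\vee}}) \circ \epsilon^{-1} \circ \ev_{X}^{\vee}$. The first identity $\ev_{X} = \ev_{12,34}^{\phi,\phi}\circ (1_{X^{\vee}}\otimes C_{X}\otimes 1_{X})$ is essentially a combination of two universal properties: (a) the characterization of $\ev_{X}^{\vee}:\mathbb{I}\to(X^{\vee}\otimes X)^{\vee}$ as the morphism whose corresponding element under $(\ref{S1 F1})$ is $\ev_{X}$ itself; and (b) the fact that the isomorphism $\epsilon:X^{\vee\vee}\otimes X^{\vee}\to(X^{\vee}\otimes X)^{\vee}$ intertwines the evaluation of $(X^{\vee}\otimes X)^{\vee}$ with the nested composition $\ev_{12,34}^{\phi,\phi}$ (precomposed with $i_{X}\otimes 1$). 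Chaining these identities and cancelling $i_{X}\circ i_{X}^{-1}$ gives the claim. The remaining two diagrams in (1) then follow from the first by inserting the commutativity constraint $\tau_{X^{\vee},X}$ and using functoriality of $\tau$. For uniqueness, any $C'$ making the first diagram commute is the image of $\ev_{X}$ under the bijection $\textrm{Hom}(X^{\vee}\otimes X,\mathbb{I})\cong \textrm{Hom}(\mathbb{I},X\otimes X^{\vee})$ induced by $\epsilon$, the adjunction $(\ref{S1 F1})$, and reflexivity of $X$, so $C'=C_{X}$.

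Parts (2) through (5) are then each handled by verifying the proposed formula satisfies the characterization from (1) (or a dual version of it). For part (2), I compose the zig-zag $X\xrightarrow{C_{X}\otimes 1_{X}}X\otimes X^{\vee}\otimes X\xrightarrow{1_{X}\otimes \ev_{X}}X$ on the left with $1_{X^{\vee}}\otimes(-)$ and then with $\ev_{X}^{\tau}$; by part (1) the result collapses to $\ev_{X}^{\tau}$, and since $\alpha_{X,X}$ is an isomorphism (by reflexivity), this forces the composite to be $1_{X}$. The analogous argument with $\ev_{X}^{\tau}$ replaced by $\ev_{X}$ handles the $1_{X^{\vee}}$ identity. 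For part (3), I verify the proposed formula for $C_{X_{1}\otimes X_{2}}$ by plugging it into the first characterizing diagram of (1) applied to $X_{1}\otimes X_{2}$, reducing $\ev_{X_{1}\otimes X_{2}}$ to the composite of $\ev_{X_{1}}$ and $\ev_{X_{2}}$ (twisted by $\tau$ and $\epsilon$), and matching factor by factor.

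For part (4), I apply $p_{X^{\pm}}\otimes p_{X^{\vee\pm}}$ to $C_{X}$, then use the biproduct compatibilities $(\ref{S1 FDec morphisms})$ and $(\ref{S1 FDec evaluation})$ to identify $\ev_{X^{\pm}}$ with the restriction of $\ev_{X}$ along $i_{X}^{\pm}$ and $i_{X^{\vee}}^{\pm}$; uniqueness from (1) then identifies the result with $C_{X^{\pm}}$. For part (5), I check that $(1_{X^{\vee}}\otimes i_{X})\circ\tau_{X,X^{\vee}}\circ C_{X}$ satisfies the first characterizing diagram of (1) for $C_{X^{\vee}}$, namely $\ev_{X^{\vee}}=\ev_{12,34}^{\phi,\phi}\circ(1_{X^{\vee\vee}}\otimes C_{X^{\vee}}\otimes 1_{X^{\vee}})$; using reflexivity to replace $\ev_{X^{\vee}}$ by $\ev_{X}^{\tau}\circ(1_{X^{\vee}}\otimes i_{X}^{-1})$, this reduces precisely to the third (or equivalently second) characterizing diagram from (1) for $C_{X}$. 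The main obstacle across all five parts is purely bookkeeping: keeping track of the permutations $\tau_{\sigma}$ and of the double-dual isomorphism $i_{X}$ so that the various flavors of $\ev$ match up correctly. None of the steps is deep, but part (3) in particular requires careful placement of $\tau_{X_{1}^{\vee},X_{2}}$ and the compatibility of $\epsilon$ with tensor products.
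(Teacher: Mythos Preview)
The paper does not actually prove this lemma: it is introduced with the sentence ``We collect in the following lemma well known properties of the Casimir element'' and no argument is given. Your proposal is therefore not comparable to a proof in the paper, but it is a correct and standard way to supply the omitted details. The overall strategy---establishing (1) directly from the definition of $C_{X}$ via the adjunction and the universal property of $\epsilon$, then deducing (2)--(5) by checking that each proposed formula satisfies the characterizing identity of (1) and invoking uniqueness---is exactly how one verifies the unit--counit relations for a coevaluation in a rigid category.

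One small correction in your sketch of (2): after tensoring the zig-zag on the left by $1_{X^{\vee}}$ you should compose with $\ev_{X}$, not $\ev_{X}^{\tau}$, since the source is $X^{\vee}\otimes X$. The resulting composite is precisely $\ev_{12,34}^{\phi,\phi}\circ(1_{X^{\vee}}\otimes C_{X}\otimes 1_{X})$, which equals $\ev_{X}$ by (1), and then the injectivity you need is that of $f\mapsto \ev_{X}\circ(1_{X^{\vee}}\otimes f)$ from $\mathrm{Hom}(X,X)$ to $\mathrm{Hom}(X^{\vee}\otimes X,\mathbb{I})$; this follows from reflexivity of $X$ (it is the map $f\mapsto f^{\vee}$ under the adjunction, and $(-)^{\vee\vee}$ is the identity up to $i_{X}$). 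Your justification ``since $\alpha_{X,X}$ is an isomorphism'' is not quite the right hypothesis to cite here, though the conclusion stands. Otherwise the bookkeeping you describe for (3), (4), (5) is accurate.
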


\bigskip

When $X$ has a Casimir element, an explicit inverse of the canonical map $%
f\mapsto \varphi _{f}$ can be given. This is the content of the subsequent
proposition.

\begin{proposition}
\label{S1 Casimir P DefD_f}Suppose that we have given $f:S\rightarrow \hom
\left( X,Y\right) $, which is associated to $\varphi _{f}:S\otimes
X\rightarrow Y$, and that $X$ has a Casimir element. Then the following
diagram is commutative%
\begin{equation*}
\xymatrix{ S \ar[r]^-{f} \ar[d]_{1_{S}\otimes C_{X}} & \hom\left(X,Y\right) \\ S\otimes X\otimes X^{\vee} \ar[r]^-{\varphi_{f}\otimes1_{X^{\vee}}} & Y\otimes X^{\vee}\text{.} \ar[u]_{\alpha_{X,Y}}}
\end{equation*}
\end{proposition}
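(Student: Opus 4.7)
The plan is to invoke the adjunction-style identification $(\text{\ref{S1 F1}})$: two morphisms $S \to \hom(X,Y)$ coincide as soon as their transforms $S \otimes X \to Y$ coincide. Setting
\[
\beta := \alpha_{X,Y} \circ (\varphi_f \otimes 1_{X^\vee}) \circ (1_S \otimes C_X) \colon S \to \hom(X,Y),
\]
I will show that $\varphi_\beta := \ev_{X,Y} \circ (\beta \otimes 1_X)$ equals $\varphi_f$; since $\varphi_f$ is the transform of $f$ under $(\text{\ref{S1 F1}})$, Yoneda then yields $\beta = f$, which is exactly the claimed commutativity.

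To compute $\varphi_\beta$, I tensor $\beta$ on the right with $1_X$ and postcompose with $\ev_{X,Y}$. By the defining commutative square of $\alpha_{X,Y}$, the composite $\ev_{X,Y} \circ (\alpha_{X,Y} \otimes 1_X)$ equals $1_Y \otimes \ev_X \colon Y \otimes X^\vee \otimes X \to Y$, whence
\[
\varphi_\beta = (1_Y \otimes \ev_X) \circ (\varphi_f \otimes 1_{X^\vee \otimes X}) \circ \bigl((1_S \otimes C_X) \otimes 1_X\bigr).
\]
The interchange law for $\otimes$, together with the associativity and unit constraints needed to rebracket the factors, allows me to separate the action of $\varphi_f$ on the $S \otimes X$ factor from the manipulation of the remaining ones, so that the composition reorganizes as
\[
\varphi_\beta = \varphi_f \circ \Bigl(1_S \otimes \bigl((1_X \otimes \ev_X) \circ (C_X \otimes 1_X)\bigr)\Bigr).
\]

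By Lemma \ref{S1 Casimir L Properties}$\left( 2\right)$ the inner composite $(1_X \otimes \ev_X) \circ (C_X \otimes 1_X)$ is the identity of $X$, so $\varphi_\beta = \varphi_f$ and the diagram commutes. The one piece of genuine bookkeeping is the rebracketing that isolates the factor $(1_X \otimes \ev_X) \circ (C_X \otimes 1_X)$ acting on the "other" tensor slots; once this is done, Lemma \ref{S1 Casimir L Properties}$\left( 2\right)$ collapses it at once, and no ingredient beyond functoriality of $\otimes$ and the defining property of $\alpha_{X,Y}$ is required.
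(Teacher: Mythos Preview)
Your proof is correct and follows essentially the same approach as the paper: both set $\beta$ (the paper calls it $a$) equal to the composite $\alpha_{X,Y}\circ(\varphi_f\otimes 1_{X^\vee})\circ(1_S\otimes C_X)$, compute its transform $\varphi_\beta$ using the defining property of $\alpha_{X,Y}$ together with functoriality of $\otimes$, and collapse the result to $\varphi_f$ via Lemma~\ref{S1 Casimir L Properties}\,$(2)$. The paper packages these steps in a commutative diagram while you write them equationally, but the content is identical.
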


\begin{proof}
Consider the following diagram%
\begin{equation*}
\xymatrix{ S\otimes X \ar[r]^-{1_{S}\otimes C_{X}\otimes1_{X}} \ar@/_{0.75pc}/[dr]_{1_{S\otimes X}} & S\otimes X\otimes X^{\vee}\otimes X \ar[r]^-{\varphi_{f}\otimes1_{X^{\vee}\otimes X}} \ar[d]|{1_{S\otimes X}\otimes \ev_{X}} & Y\otimes X^{\vee}\otimes X \ar[r]^-{\alpha_{X,Y}\otimes1_{X}} \ar[d]|{1_{Y}\otimes \ev_{X}} & \hom\left(X,Y\right)\otimes X \ar@/^{0.75pc}/[dl]^{\ev_{X,Y}} \\ & S\otimes X \ar[r]^{\varphi_{f}} & Y & }
\end{equation*}%
The first triangle is commutative because $1_{X}:X\overset{C_{X}\otimes 1_{X}%
}{\rightarrow }X\otimes X^{\vee }\otimes X\overset{1_{X}\otimes \ev_{X}}{%
\rightarrow }X$ by Lemma \ref{S1 Casimir L Properties}, the square is
commutative by functoriality of $\otimes $ and the second triangle by
definition of $\alpha _{X,Y}$. But the map $\varphi _{a}$ associated to $%
a:=\alpha _{X,Y}\circ \left( \varphi _{f}\otimes 1_{X^{\vee }}\right) \circ
\left( 1_{S}\otimes C_{X}\right) $ is obtained going from $S\otimes X$ to $%
\hom \left( X,Y\right) \otimes X$ in the upper row and then applying $%
\ev_{X,Y}$. The commutativity implies that this is the morphism $\varphi
_{f}\circ 1_{S\otimes X}=\varphi _{f}$.
\end{proof}

\bigskip

We are mainly concerned with the following consequence of Proposition \ref%
{S1 Casimir P DefD_f}: when $X$ has a Casimir element,%
\begin{equation}
D_{f}:S\overset{1_{S}\otimes C_{X}}{\rightarrow }S\otimes X\otimes X^{\vee }%
\overset{\varphi _{f}\otimes 1_{X^{\vee }}}{\rightarrow }Y\otimes X^{\vee }\text{.}
\label{S1 Casimir D DefD_f}
\end{equation}

\subsection{Behavior of the internal multiplications with respect to tensor
product constructions}

We suppose in this section that we have given $f_{i}:S_{i}\rightarrow \hom
\left( X_{i},Y_{i}\right) $ is associated to $\varphi _{i}=\varphi
_{f_{i}}:S_{i}\otimes X_{i}\rightarrow Y_{i}$ for $i=1,2$. Define the
following morphisms%
\begin{eqnarray*}
&&f_{1}\otimes _{\epsilon }f_{2}:S_{1}\otimes S_{2}\overset{f_{1}\otimes
f_{2}}{\rightarrow }\hom \left( X_{1},Y_{1}\right) \otimes \hom \left(
X_{2},Y_{2}\right) \overset{\epsilon }{\rightarrow }\hom \left( X_{1}\otimes
X_{2},Y_{1}\otimes Y_{2}\right) \text{,} \\
&&f_{1}\otimes _{\epsilon }^{\tau }f_{2}:S_{2}\otimes S_{1}\overset{\tau
_{S_{2},S_{1}}}{\rightarrow }S_{1}\otimes S_{2}\overset{f_{1}\otimes f_{2}}{%
\rightarrow }\hom \left( X_{1},Y_{1}\right) \otimes \hom \left(
X_{2},Y_{2}\right) \overset{\epsilon }{\rightarrow }\hom \left( X_{1}\otimes
X_{2},Y_{1}\otimes Y_{2}\right) \text{,} \\
&&\varphi _{1}\otimes _{\epsilon }\varphi _{2}:S_{1}\otimes S_{2}\otimes
X_{1}\otimes X_{2}\overset{1_{S_{1}\otimes \tau _{2,3}\otimes 1_{X_{2}}}}{%
\rightarrow }S_{1}\otimes X_{1}\otimes S_{2}\otimes X_{2}\overset{\varphi
_{1}\otimes \varphi _{2}}{\rightarrow }Y_{1}\otimes Y_{2}\text{,} \\
&&\varphi _{1}\otimes _{\epsilon }^{\tau }\varphi _{2}:S_{2}\otimes
S_{1}\otimes X_{1}\otimes X_{2}\overset{\tau _{1,2\otimes 3}\otimes 1_{X_{2}}%
}{\rightarrow }S_{1}\otimes X_{1}\otimes S_{2}\otimes X_{2}\overset{\varphi
_{1}\otimes \varphi _{2}}{\rightarrow }Y_{1}\otimes Y_{2}\text{.}
\end{eqnarray*}%
It is easy to see that one has the following result.

\begin{lemma}
\label{S1 Casimir Ltensor1}We have that $\varphi _{1}\otimes _{\epsilon
}\varphi _{2}=\varphi _{f_{1}\otimes _{\epsilon }f_{2}}$ (resp. $\varphi
_{1}\otimes _{\epsilon }^{\tau }\varphi _{2}=\varphi _{f_{1}\otimes
_{\epsilon }^{\tau }f_{2}}$) is the morphism associated to $f_{1}\otimes
_{\epsilon }f_{2}$ (resp. $f_{1}\otimes _{\epsilon }^{\tau }f_{2}$).
\end{lemma}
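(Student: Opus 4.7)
The plan is a direct diagram chase built from the defining property of the morphism $\epsilon = \epsilon^{\psi,\varphi,I}_{X_i,Y_i}$ recalled earlier in Section 2, together with the bifunctoriality of $\otimes$ and naturality of the commutativity constraint. There is no deep content; the only real work is bookkeeping of switching isomorphisms, which I expect to be the main (minor) obstacle.

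First I would unwind $\varphi_{f_1 \otimes_\epsilon f_2}$ using its definition via $(\ref{S1 F1})$:
\[
\varphi_{f_1 \otimes_\epsilon f_2} \;=\; \ev_{X_1 \otimes X_2,\, Y_1 \otimes Y_2} \circ \bigl((\epsilon \circ (f_1 \otimes f_2)) \otimes 1_{X_1 \otimes X_2}\bigr).
\]
Then I would invoke the characterizing identity of $\epsilon$, namely
\[
\ev_{X_1 \otimes X_2,\, Y_1 \otimes Y_2} \circ (\epsilon \otimes 1_{X_1 \otimes X_2}) \;=\; (\ev_{X_1,Y_1} \otimes \ev_{X_2,Y_2}) \circ \tau^I_{X_i,Y_i},
\]
to rewrite $\varphi_{f_1 \otimes_\epsilon f_2}$ as the composite
\[
(\ev_{X_1,Y_1} \otimes \ev_{X_2,Y_2}) \circ \tau^I_{X_i,Y_i} \circ \bigl((f_1 \otimes f_2) \otimes 1_{X_1 \otimes X_2}\bigr).
\]

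Next I would use naturality of $\tau^I_{X_i,Y_i}$ to commute the $f_i$'s past the switching permutation, yielding
\[
\bigl((f_1 \otimes 1_{X_1}) \otimes (f_2 \otimes 1_{X_2})\bigr) \circ (1_{S_1} \otimes \tau_{S_2, X_1} \otimes 1_{X_2}).
\]
Applying $\ev_{X_1,Y_1} \otimes \ev_{X_2,Y_2}$ on the left and using $\ev_{X_i,Y_i} \circ (f_i \otimes 1_{X_i}) = \varphi_i$ identifies the result with
\[
(\varphi_1 \otimes \varphi_2) \circ (1_{S_1} \otimes \tau_{S_2, X_1} \otimes 1_{X_2}) \;=\; \varphi_1 \otimes_\epsilon \varphi_2,
\]
proving the first identity.

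For the $\tau$ variant I would observe that, by construction, $f_1 \otimes_\epsilon^\tau f_2 = (f_1 \otimes_\epsilon f_2) \circ \tau_{S_2,S_1}$, so
\[
\varphi_{f_1 \otimes_\epsilon^\tau f_2} \;=\; \varphi_{f_1 \otimes_\epsilon f_2} \circ (\tau_{S_2,S_1} \otimes 1_{X_1 \otimes X_2}).
\]
Using the first identity already proved, this equals $(\varphi_1 \otimes_\epsilon \varphi_2) \circ (\tau_{S_2,S_1} \otimes 1_{X_1 \otimes X_2})$. A short naturality check, collapsing $(1_{S_1} \otimes \tau_{S_2,X_1} \otimes 1_{X_2}) \circ (\tau_{S_2,S_1} \otimes 1_{X_1 \otimes X_2})$ into the single permutation $\tau_{1,2 \otimes 3} \otimes 1_{X_2}$, shows that this is exactly $\varphi_1 \otimes_\epsilon^\tau \varphi_2$, completing the proof.
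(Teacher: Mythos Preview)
Your proof is correct and is exactly the straightforward unwinding of definitions that the paper has in mind; indeed, the paper omits the proof entirely, saying only ``It is easy to see that one has the following result.'' Your bookkeeping of the switching isomorphisms is accurate, including the reduction of the $\tau$ variant to the untwisted case via $f_1\otimes_\epsilon^\tau f_2=(f_1\otimes_\epsilon f_2)\circ\tau_{S_2,S_1}$.
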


\bigskip

Next, we consider the associated internal multiplication morphisms $\iota
_{i}:=\iota _{f_{i}}$, for $i=1,2$. The following lemma is easily deduced
from the characterizing property $\left( \text{\ref{S1 AIM D1}}\right) $ of $%
\varphi _{\iota _{i}}$.

\begin{lemma}
\label{S1 Casimir Ltensor2}The following diagram is commutative%
\begin{equation*}
\xymatrix@C150pt{ S_{1}\otimes S_{2}\otimes X_{1}\otimes X_{2}\otimes Y_{1}^{\vee}\otimes Y_{2}^{\vee} \ar[r]^-{\left(1_{X_{1}\otimes X_{2}}\otimes\left(\varphi_{\iota_{1}}\otimes_{\epsilon}\varphi_{\iota_{2}}\right)\right)\circ\left(\tau_{S_{1}\otimes S_{2},X_{1}\otimes X_{2}}\otimes1_{Y_{1}^{\vee}\otimes Y_{2}^{\vee}}\right)} \ar[d]_{\left(\varphi_{1}\otimes_{\epsilon}\varphi_{2}\right)\otimes1_{Y_{1}^{\vee}\otimes Y_{2}^{\vee}}} & X_{1}\otimes X_{2}\otimes X_{1}^{\vee}\otimes X_{2}^{\vee} \ar[d]^{\ev_{13,24}^{\tau,\tau}} \\ Y_{1}\otimes Y_{2}\otimes Y_{1}^{\vee}\otimes Y_{2}^{\vee} \ar[r]^-{\ev_{13,24}^{\tau,\tau}} & \mathbb{I} }
\end{equation*}%
and the same with $\otimes _{\epsilon }$ replaced by $\otimes _{\epsilon
}^{\tau }$ and $S_{1}\otimes S_{2}$ by $S_{2}\otimes S_{1}$.
\end{lemma}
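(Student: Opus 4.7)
The plan is to reduce the lemma to the single-factor characterizing property (S1 AIM D1) of each $\varphi_{\iota_i}$, and then assemble the two-factor statement by tensoring and reshuffling via symmetric monoidal coherence. Concretely, by Remark \ref{S1 AIM R1} applied to $f_i$ ($i=1,2$), we have equalities of morphisms $S_i\otimes X_i\otimes Y_i^{\vee}\rightarrow \mathbb{I}$:
\[
\ev_{Y_i}^{\tau }\circ \left( \varphi _{i}\otimes 1_{Y_i^{\vee }}\right) =\ev_{X_i}^{\tau }\circ \left( 1_{X_i}\otimes \varphi _{\iota _{i}}\right) \circ \left( \tau _{S_i,X_i}\otimes 1_{Y_i^{\vee }}\right)\text{.}
\]

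First I would tensor these two equalities with each other (index $i=1$ on the left and $i=2$ on the right) and post-compose with the canonical $u_{\mathbb{I}}\colon\mathbb{I}\otimes\mathbb{I}\rightarrow\mathbb{I}$, obtaining an equality of morphisms $S_{1}\otimes X_{1}\otimes Y_{1}^{\vee }\otimes S_{2}\otimes X_{2}\otimes Y_{2}^{\vee }\rightarrow \mathbb{I}$. Next I would precompose both sides by the coherence isomorphism $\tau _{\sigma }\colon S_{1}\otimes S_{2}\otimes X_{1}\otimes X_{2}\otimes Y_{1}^{\vee }\otimes Y_{2}^{\vee }\rightarrow S_{1}\otimes X_{1}\otimes Y_{1}^{\vee }\otimes S_{2}\otimes X_{2}\otimes Y_{2}^{\vee }$ that moves $S_{2}$ and $X_{2}$ past $X_{1}\otimes Y_{1}^{\vee }$. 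By naturality of $\tau$, the tensored LHS unfolds into $\ev _{13,24}^{\tau ,\tau }\circ \left( \left( \varphi _{1}\otimes _{\epsilon }\varphi _{2}\right) \otimes 1_{Y_{1}^{\vee }\otimes Y_{2}^{\vee }}\right) $ upon recognising the definitions of $\otimes_\epsilon$ (which is nothing but $\left(\varphi_1\otimes\varphi_2\right)\circ\left(1_{S_1}\otimes\tau_{S_2,X_1}\otimes 1_{X_2}\right)$) and of $\ev_{13,24}^{\tau,\tau}$ (which factors as $(\ev_{Y_1}^\tau\otimes \ev_{Y_2}^\tau)\circ u_\mathbb{I}$ after an appropriate permutation of the four factors).

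Similarly, the tensored RHS matches $\ev _{13,24}^{\tau ,\tau }\circ \left( 1_{X_{1}\otimes X_{2}}\otimes \left( \varphi _{\iota _{1}}\otimes _{\epsilon }\varphi _{\iota _{2}}\right) \right) \circ \left( \tau _{S_{1}\otimes S_{2},X_{1}\otimes X_{2}}\otimes 1_{Y_{1}^{\vee }\otimes Y_{2}^{\vee }}\right)$, after identifying the shuffle $\tau_\sigma$ with the composite of $\tau_{S_1\otimes S_2,X_1\otimes X_2}\otimes 1$ followed by the permutation hidden inside $\ev_{13,24}^{\tau,\tau}$. This proves the diagram in question. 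The variant for $\otimes_\epsilon^\tau$ (with $S_{1}\otimes S_{2}$ replaced by $S_{2}\otimes S_{1}$) is obtained simply by further pre-composing with $\tau_{S_2,S_1}\otimes 1$ and invoking Lemma \ref{S1 Casimir Ltensor1}.

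The only real obstacle is keeping track of the six-fold permutation $\tau_\sigma$ and verifying that both sides fit the stated composites; this is pure coherence bookkeeping in the symmetric monoidal structure, which by Mac Lane's coherence theorem reduces to checking that the underlying permutations agree. Because the characterizing property (S1 AIM D1) was used only once per index and the tensor construction is strictly functorial, no further hypotheses beyond those already assumed on $\mathcal{C}$ (internal homs, the $ACU$ structure) are required.
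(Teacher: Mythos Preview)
Your proposal is correct and is precisely the approach the paper has in mind: the paper simply states that the lemma ``is easily deduced from the characterizing property $\left(\text{\ref{S1 AIM D1}}\right)$ of $\varphi_{\iota_i}$,'' and your argument---tensoring the two instances of $\left(\text{\ref{S1 AIM D1}}\right)$, composing with $u_{\mathbb{I}}$, and conjugating by the appropriate symmetric-monoidal shuffle---is exactly that deduction spelled out. One small slip: where you write ``factors as $(\ev_{Y_1}^\tau\otimes \ev_{Y_2}^\tau)\circ u_{\mathbb{I}}$'' the composition order should be reversed to $u_{\mathbb{I}}\circ(\ev_{Y_1}^\tau\otimes \ev_{Y_2}^\tau)$, but this does not affect the argument.
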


\bigskip

\begin{remark}
\label{S1 Casimir R}It is easy to deduce from Lemma \ref{S1 Casimir Ltensor2}
that $\iota _{f_{1}\otimes _{\epsilon }f_{2}}:S_{1}\otimes S_{2}\rightarrow
\hom \left( \left( Y_{1}\otimes Y_{2}\right) ^{\vee },\left( X_{1}\otimes
X_{2}\right) ^{\vee }\right) $ is associated to $\varphi _{\iota
_{f_{1}\otimes _{\epsilon }f_{2}}}$ making the following diagram commutative:%
\begin{equation*}
\xymatrix{ S_{1}\otimes S_{2}\otimes Y_{1}^{\vee}\otimes Y_{2}^{\vee} \ar[r]^-{\varphi_{\iota_{f_{1}}}\otimes_{\epsilon}\varphi_{\iota_{f_{2}}}} \ar[d]_{1_{S_{1}\otimes S_{2}}\otimes\epsilon} & X_{1}^{\vee}\otimes X_{2}^{\vee} \ar[d]^{\epsilon} \\ S_{1}\otimes S_{2}\otimes\left(Y_{1}\otimes Y_{2}\right)^{\vee} \ar[r]^-{\varphi_{\iota_{f_{1}\otimes_{\epsilon}f_{2}}}} & \left(X_{1}\otimes X_{2}\right)^{\vee}\text{.} }
\end{equation*}%
In particular, when the $\epsilon $ morphisms are isomorphism, we deduce
from Remark \ref{S1 AIM R1} that the commutativity of the diagram of Lemma %
\ref{S1 Casimir Ltensor2} is characterizing for $\varphi _{\iota
_{1}}\otimes _{\epsilon }\varphi _{\iota _{2}}$ (and similarly for $\varphi
_{\iota _{1}}\otimes _{\epsilon }^{\tau }\varphi _{\iota _{2}}$)
\end{remark}

\bigskip

From now on we specialize ourselves to the case where $f_{1}:S_{1}%
\rightarrow \hom \left( X,Y\right) $ and $f_{2}:S_{2}\rightarrow \hom \left(
X^{\vee },Y^{\vee }\right) $. We will assume, from now on, that $\alpha
_{X,Y}$ and $\alpha _{X^{\vee },Y^{\vee }}$ are isomorphisms, so that $%
D_{f_{1}}$ and $D_{f_{2}}$ are defined, and that $X$, $X^{\vee }$ and $Y$
have a Casimir element.

\begin{lemma}
\label{S1 Casimir Ltensor3}The following diagram is commutative:%
\begin{equation*}
\xymatrix{ S_{1}\otimes S_{2} \ar[r]^-{C_{Y}\otimes1_{S_{1}\otimes S_{2}}\otimes C_{X}} \ar[d]_{D_{f_{1}}\otimes D_{f_{2}}} & Y\otimes Y^{\vee}\otimes S_{1}\otimes S_{2}\otimes X\otimes X^{\vee} \ar@/^{0.75pc}/[dr]|-{1_{Y^{\vee}\otimes Y}\otimes\left(\varphi_{f_{1}}\otimes_{\epsilon}\varphi_{f_{2}}\right)} & \\ Y\otimes X^{\vee}\otimes Y^{\vee}\otimes X^{\vee\vee} \ar[r]^-{\ev_{24,Y\otimes Y^{\vee}}^{\tau}} & Y\otimes Y^{\vee} \ar[r]^-{C_{Y}\otimes1_{Y\otimes Y^{\vee}}} & Y\otimes Y^{\vee}\otimes Y\otimes Y^{\vee}\text{,} }
\end{equation*}%
where%
\begin{equation*}
\ev_{24,Y\otimes Y^{\vee }}^{\tau }:Y\otimes X^{\vee }\otimes Y^{\vee
}\otimes X^{\vee \vee }\overset{1_{Y}\otimes \tau _{X^{\vee },Y^{\vee
}}\otimes 1_{X^{\vee \vee }}}{\rightarrow }Y\otimes Y^{\vee }\otimes X^{\vee
}\otimes X^{\vee \vee }\overset{1_{Y\otimes Y^{\vee }}\otimes \ev_{X^{\vee
}}^{\tau }}{\rightarrow }Y\otimes Y^{\vee }\text{.}
\end{equation*}
\end{lemma}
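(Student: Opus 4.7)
The plan is to reduce the diagram to the formula of $(\ref{S1 Casimir D DefD_f})$, which expresses each $D_{f_i}$ as $(\varphi_{f_i}\otimes 1)\circ(1_{S_i}\otimes C_{X_i})$, and then cancel the redundant Casimir $C_{X^\vee}$ against $\ev_{X^\vee}^\tau$ using Lemma \ref{S1 Casimir L Properties} $(5)$ followed by $(2)$. First, I observe that the tensor factor $C_Y$ appears in the same outer position on both sides of the diagram. By functoriality of $\otimes$ it therefore suffices to prove the reduced commutativity
\begin{equation*}
\ev_{24,Y\otimes Y^{\vee }}^{\tau }\circ (D_{f_{1}}\otimes D_{f_{2}})=(\varphi _{f_{1}}\otimes _{\epsilon }\varphi _{f_{2}})\circ (1_{S_{1}\otimes S_{2}}\otimes C_{X})
\end{equation*}
as morphisms $S_{1}\otimes S_{2}\to Y\otimes Y^{\vee}$; tensoring on the left by $C_{Y}$ then recovers the full diagram.

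Next, I substitute $(\ref{S1 Casimir D DefD_f})$ into each $D_{f_i}$. The left-hand side of the reduced identity becomes the composite
\begin{equation*}
S_{1}\otimes S_{2}\xrightarrow{1\otimes C_{X}\otimes C_{X^{\vee}}}S_{1}\otimes X\otimes X^{\vee }\otimes S_{2}\otimes X^{\vee }\otimes X^{\vee \vee }\xrightarrow{\varphi_{f_{1}}\otimes 1\otimes \varphi_{f_{2}}\otimes 1}Y\otimes X^{\vee }\otimes Y^{\vee }\otimes X^{\vee \vee}\xrightarrow{\ev_{24,Y\otimes Y^{\vee }}^{\tau }}Y\otimes Y^{\vee },
\end{equation*}
after some $\tau$-reshuffles to place the factors correctly. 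I then apply Lemma \ref{S1 Casimir L Properties} $(5)$ to rewrite $C_{X^{\vee}}=(1_{X^{\vee}}\otimes i_{X})\circ \tau_{X,X^{\vee}}\circ C_{X}$; this replaces the second Casimir by a copy of $C_{X}$ whose $X$-component has been pushed into $X^{\vee\vee}$ through $i_{X}$.

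At this point the $X^{\vee\vee}$-input of $\ev_{X^{\vee}}^{\tau}$ is the image of $i_{X}$ applied to the $X$-component of this new $C_{X}$. The defining diagram of $i_{X}$ (equivalently, the first square of $(\ref{S1 D Internal duality 4})$) gives $\ev_{X^{\vee}}^{\tau}\circ (1_{X^{\vee}}\otimes i_{X})=\ev_{X}^{\tau}$ after the appropriate $\tau$, so the contraction now pairs the $X^{\vee}$-output of the first $C_{X}$ against the $X$-output of the second $C_{X}$ via $\ev_{X}$. Lemma \ref{S1 Casimir L Properties} $(2)$ — the identities $1_{X}=(1_{X}\otimes \ev_{X})\circ (C_{X}\otimes 1_{X})$ and $1_{X^{\vee}}=(\ev_{X}\otimes 1_{X^{\vee}})\circ (1_{X^{\vee}}\otimes C_{X})$ — then collapses this doubled Casimir down to a single $C_{X}$ feeding simultaneously into $\varphi_{f_{1}}$ (through its $X$-component) and into $\varphi_{f_{2}}$ (through its $X^{\vee}$-component), which is precisely the right-hand side $(\varphi_{f_{1}}\otimes_{\epsilon}\varphi_{f_{2}})\circ (1\otimes C_{X})$.

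The main obstacle is not conceptual but bookkeeping: the expanded composite lives in a six-fold tensor product, and several $\tau$ constraints must be tracked to bring $X^{\vee}$ adjacent to $X^{\vee\vee}$ for the contraction and to bring the output of the relevant Casimir factor adjacent to the corresponding $\varphi_{f_{i}}$. Organizing the proof as a sequence of commutative subdiagrams — first the $C_{X^{\vee}}$-rewriting subdiagram, then the $i_{X}/\ev$ compatibility subdiagram, then the Casimir-collapse subdiagram — keeps the symmetry manipulations local and verifies all of them by functoriality of $\otimes$ and $\tau$ together with the two cited properties of Lemma \ref{S1 Casimir L Properties}.
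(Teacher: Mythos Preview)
Your proposal is correct and follows essentially the same route as the paper: factor off the outer $C_Y$ (the paper's region $(A)$), expand $D_{f_1}\otimes D_{f_2}$ via $(\ref{S1 Casimir D DefD_f})$ (the paper's region $(C)$), and then collapse the redundant Casimir against $\ev_{X^{\vee}}^{\tau}$ (the paper's region $(B)$). The only cosmetic difference is in that last step: you rewrite $C_{X^{\vee}}$ in terms of $C_X$ via Lemma \ref{S1 Casimir L Properties}$(5)$ and then invoke the defining property of $i_X$ before applying Lemma \ref{S1 Casimir L Properties}$(2)$ to $X$, whereas the paper applies Lemma \ref{S1 Casimir L Properties}$(2)$ directly to $X^{\vee}$, obtaining $(1_{X^{\vee}}\otimes \ev_{X^{\vee}}^{\tau})\circ(\tau_{X^{\vee},X^{\vee}}\otimes 1)\circ(1_{X^{\vee}}\otimes C_{X^{\vee}})=1_{X^{\vee}}$ without the detour through $i_X$.
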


\begin{proof}
Define%
\begin{eqnarray*}
&&D_{12}:S_{1}\otimes S_{2}\overset{1_{S_{1}\otimes S_{2}}\otimes
C_{X}\otimes C_{X^{\vee }}}{\rightarrow }S_{1}\otimes S_{2}\otimes X\otimes
X^{\vee }\otimes X^{\vee }\otimes X^{\vee \vee }\overset{1_{S_{1}\otimes
S_{2}\otimes X}\otimes \tau _{X^{\vee },X^{\vee }}\otimes 1_{X^{\vee \vee }}}%
{\rightarrow } \\
&&S_{1}\otimes S_{2}\otimes X\otimes X^{\vee }\otimes X^{\vee }\otimes
X^{\vee \vee }\overset{\left( \varphi _{f_{1}}\otimes _{\epsilon }\varphi
_{f_{2}}\right) \otimes 1_{X^{\vee }\otimes X^{\vee \vee }}}{\rightarrow }%
Y\otimes Y^{\vee }\otimes X^{\vee }\otimes X^{\vee \vee }
\end{eqnarray*}%
and consider the following diagram:%
\begin{equation*}
\xymatrix{Y\otimes Y^{\vee}\otimes S_{1}\otimes S_{2}\otimes X\otimes
X^{\vee} \ar[rr]^(0.5){1_{Y\otimes
Y^{\vee}}\otimes\left(\varphi_{f_{1}}\otimes_{\epsilon}\varphi_{f_{2}}%
\right)} & \ar@{}[d]|{(A)} & Y\otimes Y^{\vee}\otimes Y\otimes Y^{\vee} \\
S_{1}\otimes S_{2} \ar@{}[ddr]|{(C)} \ar[u]^{C_{Y}\otimes1_{S_{1}\otimes
S_{2}}\otimes C_{X}} \ar[r]^(0.4){1_{S_{1}\otimes S_{2}}\otimes C_{X}}
\ar[dr]|{D_{12}} \ar@/_{1pc}/[ddr]_{D_{f_{1}}\otimes D_{f_{2}}} &
S_{1}\otimes S_{2}\otimes X\otimes X^{\vee} \ar@{}[d]|(0.4){(B)}
\ar[r]^(0.6){\varphi_{f_{1}}\otimes_{\epsilon}\varphi_{f_{2}}} & Y\otimes
Y^{\vee} \ar@{}[ddl]|{(D)} \ar[u]_{C_{Y}\otimes1_{Y\otimes Y^{\vee}}}\\ &
Y\otimes Y^{\vee}\otimes X^{\vee}\otimes X^{\vee\vee} \ar[ur]|{1_{Y\otimes
Y^{\vee}}\otimes \ev_{X^{\vee}}^{\tau}} & \\ & Y\otimes X^{\vee}\otimes
Y^{\vee}\otimes X^{\vee\vee}
\ar[u]|{1_{Y}\otimes\tau_{X^{\vee},Y^{\vee}}\otimes1_{X^{\vee\vee}}}
\ar@/_{1pc}/[uur]_{\ev_{24,Y\otimes Y^{\vee}}^{\tau}} & }
\end{equation*}%
The region $\left( A\right) $ is commutative because $\left( C_{Y}\otimes
1_{Y\otimes Y^{\vee }}\right) \circ \left( \varphi _{f_{1}}\otimes
_{\epsilon }\varphi _{f_{2}}\right) =\left( 1_{Y\otimes Y^{\vee }}\otimes
\left( \varphi _{f_{1}}\otimes _{\epsilon }\varphi _{f_{2}}\right) \right)
\circ \left( C_{Y}\otimes 1_{S_{1}\otimes S_{2}\otimes X\otimes X^{\vee
}}\right) $ by functoriality of $\otimes $ and then%
\begin{equation*}
\left( C_{Y}\otimes 1_{Y\otimes Y^{\vee }}\right) \circ \left( \varphi
_{f_{1}}\otimes _{\epsilon }\varphi _{f_{2}}\right) \circ \left(
1_{S_{1}\otimes S_{2}}\otimes C_{X}\right) =\left( 1_{Y\otimes Y^{\vee
}}\otimes \left( \varphi _{f_{1}}\otimes _{\epsilon }\varphi _{f_{2}}\right)
\right) \circ \left( C_{Y}\otimes 1_{S_{1}\otimes S_{2}}\otimes C_{X}\right) 
\text{.}
\end{equation*}%
The region $\left( D\right) $ is commutative by definition of $%
\ev_{24,Y\otimes Y^{\vee }}^{\tau }$. We claim that the regions $\left(
B\right) $ and $\left( C\right) $ are commutative, from which we will deduce
that the external portion of this diagram is commutative, giving us the
claim.

\textbf{Region }$\left( B\right) $\textbf{\ is commutative.} Consider the
following diagram%
\begin{equation*}
\xymatrix{S_{1}\otimes S_{2} \ar@{}[dr]|(0.3){(\otimes)}
\ar[d]_{1_{S_{1}\otimes S_{2}}\otimes C_{X}\otimes C_{X^{\vee}}}
\ar[r]^{1_{S_{1}\otimes S_{2}}\otimes C_{X}} & S_{1}\otimes S_{2}\otimes
X\otimes X^{\vee} \ar[dl]|{1\otimes C_{X^{\vee}}}
\ar[ddr]^{\varphi_{f_{1}}\otimes_{\epsilon}\varphi_{f_{2}}} & \\
S_{1}\otimes S_{2}\otimes X\otimes X^{\vee}\otimes X^{\vee}\otimes
X^{\vee\vee} \ar@{}[dr]|(0.2){(E)} \ar[d]_{1_{S_{1}\otimes S_{2}\otimes
X}\otimes\tau_{X^{\vee},X^{\vee}}\otimes1_{X^{\vee\vee}}} & & \\
S_{1}\otimes S_{2}\otimes X\otimes X^{\vee}\otimes X^{\vee}\otimes
X^{\vee\vee} \ar[uur]|(0.6){1\otimes \ev_{X^{\vee}}^{\tau}}
\ar[r]^(0.6){\left(\varphi_{f_{1}}\otimes_{\epsilon}\varphi_{f_{2}}\right)%
\otimes1} & Y\otimes Y^{\vee}\otimes X^{\vee}\otimes X^{\vee\vee}
\ar@{}[uu]|{(\otimes)} \ar[r]^(0.7){1\otimes \ev_{X^{\vee}}^{\tau}} &
Y\otimes Y^{\vee}}
\end{equation*}%
In light of the definition of $D_{12}$ the commutativity of the region $%
\left( B\right) $ follows once we show that the external portion of this
diagram is commutative. We remark that, by functoriality of $\tau $, by an
explicit computation of the involved permutations and by definition of $%
\ev_{X^{\vee }}^{\tau }$ the following diagram is commutative:%
\begin{equation*}
\xymatrix{ X^{\vee} \ar[r]^(0.33){C_{X^{\vee}}\otimes1_{X^{\vee}}}
\ar[dr]|(0.4){1_{X^{\vee}}\otimes C_{X^{\vee}}} & X^{\vee}\otimes
X^{\vee\vee}\otimes X^{\vee} \ar[rr]^{1_{X^{\vee}}\otimes \ev_{X^{\vee}}}
\ar[dr]^{1_{X^{\vee}}\otimes\tau_{X^{\vee\vee},X^{\vee}}}
\ar[d]|{\tau_{X^{\vee}\otimes X^{\vee\vee},X^{\vee}}} & & X^{\vee} \\ &
X^{\vee}\otimes X^{\vee}\otimes X^{\vee\vee}
\ar[r]^{\tau_{X^{\vee},X^{\vee}}\otimes1_{X^{\vee\vee}}} & X^{\vee}\otimes
X^{\vee}\otimes X^{\vee\vee} \ar[ur]^{1_{X^{\vee}}\otimes
\ev_{X^{\vee}}^{\tau}} & }
\end{equation*}%
It follows that $\left( 1_{X^{\vee }}\otimes \ev_{X^{\vee }}^{\tau }\right)
\circ \left( \tau _{X^{\vee },X^{\vee }}\otimes 1_{X^{\vee \vee }}\right)
\circ \left( 1_{X^{\vee }}\otimes C_{X^{\vee }}\right) =\left( 1_{X^{\vee
}}\otimes \ev_{X^{\vee }}\right) \circ \left( C_{X^{\vee }}\otimes 1_{X^{\vee
}}\right) =1_{X^{\vee }}$, where the last equality follows from $2.$ of
Lemma \ref{S1 Casimir L Properties}. The commutativity of the region $\left(
E\right) $ follows.

\textbf{Region }$\left( C\right) $\textbf{\ is commutative.} Consider the
following diagram%
\begin{equation*}
\xymatrix{ & S_{1}\otimes S_{2} \ar@{}[d]|{(\tau)} \ar@{}[dd]|(0.7){(F)}
\ar[dl]_{1_{S_{1}\otimes S_{2}}\otimes C_{X}\otimes C_{X^{\vee}}}
\ar[dr]^{1_{S_{1}}\otimes C_{X}\otimes1_{S_{2}}\otimes C_{X^{\vee}}} & \\
S_{1}\otimes S_{2}\otimes X\otimes X^{\vee}\otimes X^{\vee}\otimes
X^{\vee\vee} \ar[dd]|{1_{S_{1}\otimes S_{2}\otimes
X}\otimes\tau_{X^{\vee},X^{\vee}}\otimes1_{X^{\vee\vee}}}
\ar[rr]^{1_{S_{1}}\otimes\tau_{S_{2},X\otimes
X^{\vee}}\otimes1_{X^{\vee}\otimes X^{\vee\vee}}}
\ar[dr]|{1_{S_{1}}\otimes\tau_{S_{2},X}\otimes\tau_{X^{\vee},X^{\vee}}%
\otimes1_{X^{\vee\vee}}} & & S_{1}\otimes X\otimes X^{\vee}\otimes
S_{2}\otimes X^{\vee}\otimes X^{\vee\vee} \ar[dl]|{1_{S_{1}\otimes
X}\otimes\tau_{X^{\vee},S_{2}\otimes X^{\vee}}\otimes1_{X^{\vee\vee}}}
\ar[d]|{\varphi_{f_{1}}\otimes1_{X^{\vee}}\otimes\varphi_{f_{2}}%
\otimes1_{X^{\vee\vee}}} \\ \ar@{}[r]|(0.45){(\otimes)} & S_{1}\otimes
X\otimes S_{2}\otimes X^{\vee}\otimes X^{\vee}\otimes X^{\vee\vee}
\ar[dr]|{\varphi_{f_{1}}\otimes\varphi_{f_{2}}\otimes1_{X^{\vee}\otimes
X^{\vee\vee}}} & Y\otimes X^{\vee}\otimes Y^{\vee}\otimes X^{\vee\vee}
\ar@{}[l]|{(\tau)}
\ar[d]|{1_{Y}\otimes\tau_{X^{\vee},Y^{\vee}}\otimes1_{X^{\vee\vee}}} \\
S_{1}\otimes S_{2}\otimes X\otimes X^{\vee}\otimes X^{\vee}\otimes
X^{\vee\vee}
\ar[ur]|{1_{S_{1}}\otimes\tau_{S_{2},X}\otimes1_{X^{\vee}\otimes
X^{\vee}\otimes X^{\vee\vee}}}
\ar[rr]^{\left(\varphi_{f_{1}}\otimes_{\epsilon}\varphi_{f_{2}}\right)%
\otimes1_{X^{\vee}\otimes X^{\vee\vee}}} & \ar@{}[u]|{(G)} & Y\otimes
Y^{\vee}\otimes X^{\vee}\otimes X^{\vee\vee}}
\end{equation*}%
Going around this diagram clockwisely (resp. counter-clockwisely) from $%
S_{1}\otimes S_{2}$ until $Y\otimes Y^{\vee }\otimes X^{\vee }\otimes
X^{\vee \vee }$ we find $\left( 1_{Y}\otimes \tau _{X^{\vee },Y^{\vee
}}\otimes 1_{X^{\vee \vee }}\right) \circ \left( D_{f_{1}}\otimes
D_{f_{2}}\right) $ (resp. $D_{12}$) because, by $\left( \text{\ref{S1
Casimir D DefD_f}}\right) $, we have $D_{f_{i}}=\left( \varphi
_{f_{i}}\otimes 1_{X_{i}^{\vee }}\right) \circ \left( 1_{S_{i}}\otimes
C_{X_{i}}\right) $ for $i=1,2$, where $X_{1}=X$ and $X_{2}=X^{\vee }$ (resp.
by definition of $D_{12}$). It follows that we have to show that the
external portion of this diagram is commutative. The region $\left( F\right) 
$ is commutative by an explicit computation of the involved permutations,
while $\left( G\right) $ by definition of $\left( \varphi _{f_{1}}\otimes
_{\epsilon }\varphi _{f_{2}}\right) $.
\end{proof}

\bigskip

In addition to the other assumptions we will assume in the following
proposition that $\alpha _{Y^{\vee },X^{\vee }}$ and $\alpha _{Y^{\vee \vee
},X^{\vee \vee }}$ are isomorphisms, so that $D_{\iota _{f_{1}}}$ and $%
D_{\iota _{f_{2}}}$ are defined, and that $Y^{\vee }$, $Y^{\vee \vee }$ have
a Casimir element.

\begin{proposition}
\label{S1 Casimir P1}The following diagrams are commutative.

\begin{enumerate}
\item[$\left( 1\right) $]
\begin{equation*}
\xymatrix{ S_{1}\otimes S_{2} \ar[rr]^-{C_{Y}\otimes1_{S_{1}\otimes S_{2}}\otimes C_{X}} \ar[d]_{D_{f_{1}}\otimes D_{f_{2}}} & & Y\otimes Y^{\vee}\otimes S_{1}\otimes S_{2}\otimes X\otimes X^{\vee} \ar[d]^{1_{Y^{\vee}\otimes Y}\otimes\left(\varphi_{f_{1}}\otimes_{\epsilon}\varphi_{f_{2}}\right)} \\ Y\otimes X^{\vee}\otimes Y^{\vee}\otimes X^{\vee\vee} \ar[r]^-{\ev_{13,24}^{\tau,\tau}} & \mathbb{I} & Y\otimes Y^{\vee}\otimes Y\otimes Y^{\vee} \ar[l]_-{\ev_{14,23}^{\tau,\phi}}}
\end{equation*}

\item[$\left( 2\right) $]
\begin{equation*}
\xymatrix{ S_{1}\otimes S_{2} \ar[rr]^-{C_{X^{\vee}}\otimes1_{S_{1}\otimes S_{2}}\otimes C_{Y^{\vee}}} \ar[d]_{D_{\iota_{f_{1}}}\otimes D_{\iota_{f_{2}}}} & & X^{\vee}\otimes X^{\vee\vee}\otimes S_{1}\otimes S_{2}\otimes Y^{\vee}\otimes Y^{\vee\vee} \ar[d]^{1_{X^{\vee}\otimes X^{\vee\vee}}\otimes\left(\varphi_{\iota_{f_{1}}}\otimes_{\epsilon}\varphi_{\iota_{f_{1}}}\right)} \\ X^{\vee}\otimes Y^{\vee\vee}\otimes X^{\vee\vee}\otimes Y^{\vee\vee\vee} \ar[r]^-{\ev_{13,24}^{\tau,\tau}} & \mathbb{I} & X^{\vee}\otimes X^{\vee\vee}\otimes X^{\vee}\otimes X^{\vee\vee} \ar[l]_-{\ev_{14,23}^{\tau,\phi}} }
\end{equation*}

\item[$\left( 3\right) $]
\begin{equation*}
\xymatrix{ S_{1}\otimes S_{2} \ar[rr]^-{1_{S_{1}\otimes S_{2}}\otimes C_{X}\otimes C_{Y^{\vee}}} \ar[d]_{D_{\iota_{f_{1}}}\otimes D_{\iota_{f_{2}}}} & & S_{1}\otimes S_{2}\otimes X\otimes X^{\vee}\otimes Y^{\vee}\otimes Y^{\vee\vee} \ar[d]^{\left(\varphi_{f_{1}}\otimes_{\epsilon}\varphi_{f_{1}}\right)\otimes1_{Y^{\vee}\otimes Y^{\vee\vee}}} \\ X^{\vee}\otimes Y^{\vee\vee}\otimes X^{\vee\vee}\otimes Y^{\vee\vee\vee} \ar[r]^-{\ev_{13,24}^{\tau,\tau}} & \mathbb{I} & Y\otimes Y^{\vee}\otimes Y^{\vee}\otimes Y^{\vee\vee} \ar[l]_-{\ev_{13,24}^{\tau,\tau}} }
\end{equation*}
\end{enumerate}
\end{proposition}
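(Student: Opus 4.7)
The plan is to deduce each of the three diagrams from Lemma \ref{S1 Casimir Ltensor3} (or its substitution analogue) together with the zig-zag identities of Lemma \ref{S1 Casimir L Properties}. The recurring simplification used throughout is that, for any object $Z$ admitting a Casimir element,
\[
\ev_{14,23}^{\tau,\phi}\circ \left( C_{Z}\otimes 1_{Z\otimes Z^{\vee}}\right) =\ev_{Z}^{\tau}\colon Z\otimes Z^{\vee}\to \mathbb{I},
\]
which follows from Lemma \ref{S1 Casimir L Properties}$(1)$ in its second form $\ev_{Z}=\ev_{14,23}^{\tau,\phi}\circ (C_{Z}\otimes \tau _{Z^{\vee},Z})$ by pre-composing with $\tau _{Z,Z^{\vee}}$ and using $\tau _{Z^{\vee},Z}\circ \tau _{Z,Z^{\vee}}=1_{Z\otimes Z^{\vee}}$.

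For part $(1)$, apply Lemma \ref{S1 Casimir Ltensor3} to the pair $(f_{1},f_{2})$, obtaining an equality of two morphisms $S_{1}\otimes S_{2}\to Y\otimes Y^{\vee}\otimes Y\otimes Y^{\vee}$, and post-compose both sides with $\ev_{14,23}^{\tau,\phi}$. The upper path of the lemma becomes exactly the path $\ev_{14,23}^{\tau,\phi}\circ (1_{Y\otimes Y^{\vee}}\otimes (\varphi _{f_{1}}\otimes _{\epsilon }\varphi _{f_{2}}))\circ (C_{Y}\otimes 1\otimes C_{X})$ appearing in part $(1)$. On the lower path, the recurring identity collapses $\ev_{14,23}^{\tau,\phi}\circ (C_{Y}\otimes 1_{Y\otimes Y^{\vee}})$ to $\ev_{Y}^{\tau}$, and the composite $\ev_{Y}^{\tau}\circ \ev_{24,Y\otimes Y^{\vee}}^{\tau}$ on $Y\otimes X^{\vee}\otimes Y^{\vee}\otimes X^{\vee\vee}$ contracts positions $2,4$ via $\ev_{X^{\vee}}^{\tau}$ and positions $1,3$ via $\ev_{Y}^{\tau}$, hence coincides with $\ev_{13,24}^{\tau,\tau}$, yielding part $(1)$.

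For part $(2)$, apply Lemma \ref{S1 Casimir Ltensor3} with $(f_{1},f_{2})$ replaced by $(\iota _{f_{1}},\iota _{f_{2}})$: since $\iota _{f_{1}}\colon S_{1}\to \hom (Y^{\vee},X^{\vee})$ and $\iota _{f_{2}}\colon S_{2}\to \hom (Y^{\vee\vee},X^{\vee\vee})$, they fit the template of the lemma with $(X,Y)$ taken to be $(Y^{\vee},X^{\vee})$, and the hypotheses required (isomorphism of $\alpha _{Y^{\vee},X^{\vee}}$ and $\alpha _{Y^{\vee\vee},X^{\vee\vee}}$, existence of Casimir elements on $Y^{\vee},Y^{\vee\vee},X^{\vee}$) are precisely what the proposition assumes. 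The identical post-composition with $\ev_{14,23}^{\tau,\phi}$ (this time on $X^{\vee}\otimes X^{\vee\vee}\otimes X^{\vee}\otimes X^{\vee\vee}$) and the same zig-zag reduction, with $Z=X^{\vee}$ in place of $Z=Y$, deliver part $(2)$.

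Part $(3)$ is of a different nature, since its two sides mix $\varphi _{f_{i}}$ with $D_{\iota _{f_{i}}}$. The proof instead begins from the defining property \eqref{S1 AIM D1} of internal multiplication, which for $f_{1}$ reads $\ev_{Y}^{\tau}\circ (\varphi _{f_{1}}\otimes 1_{Y^{\vee}})=\ev_{X}^{\tau}\circ (1_{X}\otimes \varphi _{\iota _{f_{1}}})\circ (\tau _{S_{1},X}\otimes 1_{Y^{\vee}})$, and analogously for $f_{2}$. Tensoring these two identities rewrites $\ev_{13,24}^{\tau,\tau}\circ ((\varphi _{f_{1}}\otimes _{\epsilon }\varphi _{f_{2}})\otimes 1_{Y^{\vee}\otimes Y^{\vee\vee}})$ as an $\ev_{13,24}^{\tau,\tau}$-contraction on $X\otimes X^{\vee}\otimes X^{\vee}\otimes X^{\vee\vee}$ built from $\varphi _{\iota _{f_{1}}},\varphi _{\iota _{f_{2}}}$. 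Equation \eqref{S1 Casimir D DefD_f} applied to $\iota _{f_{1}}$ and $\iota _{f_{2}}$, expressing $D_{\iota _{f_{i}}}$ as $(\varphi _{\iota _{f_{i}}}\otimes 1)\circ (1\otimes C_{\bullet })$, together with Lemma \ref{S1 Casimir L Properties}$(5)$, which relates $C_{Y^{\vee\vee}}$ to $C_{Y^{\vee}}$, absorb the Casimir $C_{Y^{\vee}}$ on the RHS into the $D_{\iota _{f_{i}}}$-side, while the zig-zag identity of Lemma \ref{S1 Casimir L Properties}$(2)$ collapses $C_{X}$ with the pair $\ev_{X}^{\tau},\ev_{X^{\vee}}^{\tau}$ into the contraction $\ev_{13,24}^{\tau,\tau}$ on $X^{\vee}\otimes Y^{\vee\vee}\otimes X^{\vee\vee}\otimes Y^{\vee\vee\vee}$ that defines the LHS. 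The main obstacle lies in this last part, in the careful bookkeeping of tensor-factor permutations and Casimir identifications; the first two parts are otherwise entirely formal consequences of the tools already established.
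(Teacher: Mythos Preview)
Your treatment of parts $(1)$ and $(2)$ matches the paper's exactly: both apply Lemma~\ref{S1 Casimir Ltensor3} (directly, respectively after the substitution $(f_1,f_2)\mapsto(\iota_{f_1},\iota_{f_2})$), post-compose with $\ev_{14,23}^{\tau,\phi}$, and collapse via Lemma~\ref{S1 Casimir L Properties}$(1)$.

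For part $(3)$ you take a somewhat different organizational route. The paper proves $(3)$ by a single large diagram whose external boundary is Lemma~\ref{S1 Casimir Ltensor2} and whose central region is precisely the already-established part~$(2)$; the remaining regions use Lemma~\ref{S1 Casimir L Properties}$(5)$ to relate $C_{X^\vee}$ to $C_X$ (not $C_{Y^{\vee\vee}}$ to $C_{Y^\vee}$ as you write) and the defining property of $i_X$. Your approach instead unpacks part~$(2)$ and works directly from the adjunction \eqref{S1 AIM D1} and the formula \eqref{S1 Casimir D DefD_f} for $D_{\iota_{f_i}}$. The underlying ingredients are the same, but the paper's route is cleaner: having proved $(2)$, one already controls $D_{\iota_{f_1}}\otimes D_{\iota_{f_2}}$ in terms of $\varphi_{\iota_{f_1}}\otimes_\epsilon\varphi_{\iota_{f_2}}$ and the Casimirs $C_{X^\vee},C_{Y^\vee}$, so only the passage from $C_{X^\vee}$ back to $C_X$ remains. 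Your direct approach would work but requires exactly the ``careful bookkeeping'' you flag, which the paper avoids by reusing $(2)$.
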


\begin{proof}
$\left( 1\right) $ According to Lemma \ref{S1 Casimir Ltensor3} we have%
\begin{equation*}
\ev_{14,23}^{\tau ,\phi }\circ \left( 1_{Y^{\vee }\otimes Y}\otimes \left(
\varphi _{f_{1}}\otimes _{\epsilon }\varphi _{f_{2}}\right) \right) \circ
\left( C_{Y}\otimes 1_{S_{1}\otimes S_{2}}\otimes C_{X}\right)
=\ev_{14,23}^{\tau ,\phi }\circ \left( C_{Y}\otimes 1_{Y\otimes Y^{\vee
}}\right) \circ \ev_{24,Y\otimes Y^{\vee }}^{\tau }\circ \left(
D_{f_{1}}\otimes D_{f_{2}}\right) \text{.}
\end{equation*}%
It follows from Lemma \ref{S1 Casimir L Properties} $\left( 1\right) $ we have $%
\ev_{Y}=\ev_{14,23}^{\tau ,\phi }\circ \left( C_{Y}\otimes \tau _{Y^{\vee
},Y}\right) =\ev_{14,23}^{\tau ,\phi }\circ \left( C_{Y}\otimes 1_{Y\otimes
Y^{\vee }}\right) \circ \tau _{Y^{\vee },Y}$ and, by definition, $%
\ev_{24,Y\otimes Y^{\vee }}^{\tau }=\left( 1_{Y\otimes Y^{\vee }}\otimes
\ev_{X^{\vee }}^{\tau }\right) \circ \left( 1_{Y}\otimes \tau _{X^{\vee
},Y^{\vee }}\otimes 1_{X^{\vee \vee }}\right) $. We deduce%
\begin{eqnarray*}
&&\ev_{14,23}^{\tau ,\phi }\circ \left( C_{Y}\otimes 1_{Y\otimes Y^{\vee
}}\right) \circ \ev_{24,Y\otimes Y^{\vee }}^{\tau }=\ev_{14,23}^{\tau ,\phi
}\circ \left( C_{Y}\otimes 1_{Y\otimes Y^{\vee }}\right) \circ \tau
_{Y^{\vee },Y}\circ \tau _{Y,Y^{\vee }}\circ \ev_{24,Y\otimes Y^{\vee
}}^{\tau } \\
&&\text{ }=\ev_{Y}\circ \tau _{Y,Y^{\vee }}\circ \left( 1_{Y\otimes Y^{\vee
}}\otimes \ev_{X^{\vee }}^{\tau }\right) \circ \left( 1_{Y}\otimes \tau
_{X^{\vee },Y^{\vee }}\otimes 1_{X^{\vee \vee }}\right) =\ev_{13,24}^{\tau
,\tau }\text{.}
\end{eqnarray*}%
Hence we find%
\begin{equation*}
\ev_{14,23}^{\tau ,\phi }\circ \left( 1_{Y^{\vee }\otimes Y}\otimes \left(
\varphi _{f_{1}}\otimes _{\epsilon }\varphi _{f_{2}}\right) \right) \circ
\left( C_{Y}\otimes 1_{S_{1}\otimes S_{2}}\otimes C_{X}\right)
=\ev_{13,24}^{\tau ,\tau }\circ \left( D_{f_{1}}\otimes D_{f_{2}}\right) 
\text{.}
\end{equation*}

$\left( 2\right) $ This is just our claim $1.$ applied to the couple $\left( \iota
_{f_{1}},\iota _{f_{2}}\right) $ rather than $\left( f_{1},f_{2}\right) $.

$\left( 3\right) $ Consider the following diagram:%
\begin{equation}
\scalebox{0.65}{\xymatrix{S_{1}\otimes S_{2}\otimes X\otimes X^{\vee}\otimes
Y^{\vee}\otimes Y^{\vee\vee} \ar@{}[rrrrdd]|{(B)}
\ar[dddd]|{\left(\varphi_{f_{1}}\otimes_{\epsilon}\varphi_{f_{2}}\right)%
\otimes1_{Y^{\vee}\otimes Y^{\vee\vee}}} \ar[rrrrr]^{\tau_{S_{1}\otimes
S_{2},X\otimes X^{\vee}}\otimes1_{Y^{\vee}\otimes Y^{\vee\vee}}} & & & & &
X\otimes X^{\vee}\otimes S_{1}\otimes S_{2}\otimes Y^{\vee}\otimes
Y^{\vee\vee} \ar[ddddd]|{1_{X\otimes
X^{\vee}}\otimes\left(\varphi_{\iota_{f_{1}}}\otimes_{\epsilon}\varphi_{%
\iota_{f_{2}}}\right)} \\ & & & & X^{\vee}\otimes X\otimes S_{1}\otimes
S_{2}\otimes Y^{\vee}\otimes Y^{\vee\vee} \ar@{}[dddr]|{(\tau)}
\ar[ur]|(0.45){\tau_{X^{\vee},X}\otimes1_{S_{1}\otimes S_{2}\otimes
Y^{\vee}\otimes Y^{\vee\vee}}} \ar[ddd]|{1_{X^{\vee}\otimes
X}\otimes\left(\varphi_{\iota_{f_{1}}}\otimes_{\epsilon}\varphi_{%
\iota_{f_{2}}}\right)} \ar[dl]|{1_{X^{\vee}}\otimes
i_{X}\otimes1_{S_{1}\otimes S_{2}\otimes Y^{\vee}\otimes Y^{\vee\vee}}} & \\
& S_{1}\otimes S_{2} \ar@{}[drr]|{(A)} \ar[uul]|{1_{S_{1}\otimes
S_{2}}\otimes C_{X}\otimes C_{Y^{\vee}}}
\ar[rr]^(0.35){C_{X^{\vee}}\otimes1_{S_{1}\otimes S_{2}}\otimes
C_{Y^{\vee}}} \ar[d]_{D_{\iota_{f_{1}}}\otimes D_{\iota_{f_{2}}}} & &
X^{\vee}\otimes X^{\vee\vee}\otimes S_{1}\otimes S_{2}\otimes
Y^{\vee}\otimes Y^{\vee\vee} \ar@{}[dr]|{(\tau)} \ar[d]|{1_{X^{\vee}\otimes
X^{\vee\vee}}\otimes\left(\varphi_{\iota_{f_{1}}}\otimes_{\epsilon}\varphi_{%
\iota_{f_{2}}}\right)} & & \\ & X^{\vee}\otimes Y^{\vee\vee}\otimes
X^{\vee\vee}\otimes Y^{\vee\vee\vee} \ar[r]^(0.8){\ev_{13,24}^{\tau,\tau}} &
\mathbb{I} & X^{\vee}\otimes X^{\vee\vee}\otimes X^{\vee}\otimes
X^{\vee\vee} \ar[l]_(0.7){\ev_{14,23}^{\phi,\tau}} & & \\ Y\otimes
Y^{\vee}\otimes Y^{\vee}\otimes Y^{\vee\vee}
\ar@/_{1pc}/[urr]_{\ev_{13,24}^{\tau,\tau}} & & & & X^{\vee}\otimes X\otimes
X^{\vee}\otimes X^{\vee\vee} \ar@{}[dl]|(0.3){(C)} \ar@{}[ull]|(0.6){(D)}
\ar[ul]|{1_{X^{\vee}}\otimes i_{X}\otimes1_{X^{\vee}\otimes X^{\vee\vee}}}
\ar@/^{1pc}/[ull]^{\ev_{14,23}^{\tau,\tau}}
\ar[dr]|{\tau_{X^{\vee},X}\otimes1_{X^{\vee}\otimes X^{\vee\vee}}} & \\ & &
& & & X\otimes X^{\vee}\otimes X^{\vee}\otimes X^{\vee\vee}
\ar@/^{2pc}/[uulll]^{\ev_{13,24}^{\tau,\tau}}}}  \label{S1 Casimir P1 D1}
\end{equation}%
We remark that our claim is the commutativity of the unlabeled region of $%
\left( \text{\ref{S1 Casimir P1 D1}}\right) $ and that this commutativity
follows once we show that the external part and the labeled regions of $%
\left( \text{\ref{S1 Casimir P1 D1}}\right) $ are commutative. The external
part of this diagram is commutative by Lemma \ref{S1 Casimir Ltensor2}.

\textbf{Commutativity of the labeled regions of }$\left( \text{\ref{S1
Casimir P1 D1}}\right) $. The region $\left( A\right) $ is commutative by
our claim $\left( 2\right)$, the region $\left( C\right) $ by the defining property of $%
i_{X}$, the region $\left( D\right) $ by the definitions of $%
\ev_{14,23}^{\tau ,\tau }$ and $\ev_{13,24}^{\tau ,\tau }$. The commutativity
of the region $\left( B\right) $ follows the equality $C_{X^{\vee }}=\left(
1_{X^{\vee }}\otimes i_{X}\right) \circ \tau _{X,X^{\vee }}\circ C_{X}$ of
Lemma \ref{S1 Casimir L Properties} $\left( 5\right) $, from which we deduce that%
\begin{equation*}
\left( C_{X^{\vee }}\otimes 1_{S_{1}\otimes S_{2}}\otimes C_{Y^{\vee
}}\right) =\left( 1_{X^{\vee }}\otimes i_{X}\otimes 1_{S_{1}\otimes
S_{2}\otimes Y^{\vee }\otimes Y^{\vee \vee }}\right) \circ \left( \tau
_{X,X^{\vee }}\otimes 1_{S_{1}\otimes S_{2}\otimes Y^{\vee }\otimes Y^{\vee
\vee }}\right) \circ \left( C_{X}\otimes 1_{S_{1}\otimes S_{2}}\otimes
C_{Y^{\vee }}\right) \text{,}
\end{equation*}%
together with the equality $\left( C_{X}\otimes 1_{S_{1}\otimes
S_{2}}\otimes C_{Y^{\vee }}\right) =\left( \tau _{S_{1}\otimes
S_{2},X\otimes X^{\vee }}\otimes 1_{Y^{\vee }\otimes Y^{\vee \vee }}\right)
\circ \left( 1_{S_{1}\otimes S_{2}}\otimes C_{X}\otimes C_{Y^{\vee }}\right) 
$ (by functoriality of $\tau $) and the fact that $\tau _{X^{\vee },X}=\tau
_{X,X^{\vee }}^{-1}$.
\end{proof}

\section{A formal Poincar\'{e} duality isomorphism}

We remark that, for every object $W$, we have a natural map%
\begin{equation*}
End\left( \mathbb{I}\right) \rightarrow End\left( W\right)
\end{equation*}%
defined by the rule%
\begin{equation*}
\lambda _{W}:W\overset{l_{W}}{\rightarrow }\mathbb{I}\otimes W\overset{%
\lambda \otimes 1_{W}}{\rightarrow }\mathbb{I}\otimes W\overset{l_{W}^{-1}}{%
\rightarrow }W\text{, }\lambda \in End\left( \mathbb{I}\right) \text{.}
\end{equation*}%
It defines a left action of the commutative ring $End\left( \mathbb{I}%
\right) $ on $W$ for which every $f:W_{1}\rightarrow W_{2}$ becomes $%
End\left( \mathbb{I}\right) $-equivariant and such that, if we have given $%
\varphi :U\otimes V\rightarrow W$, then $\varphi \circ \left( \lambda
_{U}\otimes 1_{V}\right) =\lambda _{W}\circ \varphi =\varphi \circ \left(
1_{U}\otimes \lambda _{V}\right) $\footnote{%
Since indeed $\lambda _{U}\otimes 1_{V}=\lambda _{U\otimes V}=1_{U}\otimes
\lambda _{V}$, this second statement follows from the first.}.

\bigskip

Suppose in this section that $\mathcal{C}$ is rigid. We assume that we have given  morphisms $f_{S,X}:S\rightarrow \hom \left( X,Y\right) $, $%
f_{X,S}:X\rightarrow \hom \left( S,Y\right) $, $f_{S^{\vee },X^{\vee
}}:S^{\vee }\rightarrow \hom \left( X^{\vee },Y^{\vee }\right) $ and $%
f_{X^{\vee },S^{\vee }}:X^{\vee }\rightarrow \hom \left( S^{\vee },Y^{\vee
}\right) $. We write $\varphi _{S,X}:S\otimes X\rightarrow Y$, $\varphi
_{X,S}:X\otimes S\rightarrow Y$, $\varphi _{S^{\vee },X^{\vee }}:S^{\vee
}\otimes X^{\vee }\rightarrow Y^{\vee }$ and $\varphi _{X^{\vee },S^{\vee
}}:X^{\vee }\otimes S^{\vee }\rightarrow Y^{\vee }$ for the associated
morphisms. We set $\iota _{S,X}:=\iota _{f_{S,X}}$, $\iota _{X,S}:=\iota
_{f_{X,S}}$, $\iota _{S^{\vee },X^{\vee }}:=\iota _{f_{S^{\vee },X^{\vee }}}$
and $\iota _{X^{\vee },S^{\vee }}:=\iota _{f_{X^{\vee },S^{\vee }}}$. We may consider:%
\begin{eqnarray*}
&&D_{S,X^{\vee }}:=D_{\iota _{S,X}}:S\rightarrow X^{\vee }\otimes Y^{\vee
\vee }\text{, }D_{X,S^{\vee }}:=D_{\iota _{X,S}}:X\rightarrow S^{\vee
}\otimes Y^{\vee \vee }\text{,} \\
&&D_{\iota _{S^{\vee },X^{\vee }}}:S^{\vee }\rightarrow X^{\vee \vee
}\otimes Y^{\vee \vee \vee }\text{ and }D_{\iota _{X^{\vee },S^{\vee
}}}:X^{\vee }\rightarrow S^{\vee \vee }\otimes Y^{\vee \vee \vee }\text{,}
\end{eqnarray*}%
as well as
\begin{equation*}
D_{S^{\vee },X}:=D_{\iota _{S^{\vee },X^{\vee }}^{\ast }}:S^{\vee
}\rightarrow X\otimes Y^{\vee }\text{ and }D_{X^{\vee },S}:=D_{\iota
_{X^{\vee },S^{\vee }}^{\ast }}:X^{\vee }\rightarrow S\otimes Y^{\vee }\text{%
.}
\end{equation*}
We note that all the results of
the previous section available.

\bigskip

It is easy to deduce, from $\left( \text{\ref{S1 AIM D1 reflexivity 2}}\right) $ and 
\ref{S1 Casimir L Properties} $\left( 5\right) $, the following equivalence:
\begin{equation*}
\left( Cas\right) _{\mu _{S,X}}:\mu _{S,X}\cdot C_{S}=\left( \varphi _{\iota
_{X^{\vee },S^{\vee }}^{\ast }}\otimes _{\epsilon }^{\tau }\varphi _{\iota
_{X,S}}\right) \circ \left( C_{X}\otimes C_{Y}\right) \text{ }%
\Leftrightarrow \text{ }\mu _{S,X}\cdot C_{S^{\vee }}=\left( \varphi _{\iota
_{X,S}}\otimes _{\epsilon }\varphi _{\iota _{X^{\vee },S^{\vee }}}\right)
\circ \left( C_{X}\otimes C_{Y^{\vee }}\right)
\end{equation*}%
for some $\mu _{S,X}\in End\left( \mathbb{I}\right) $. Exchanging the roles of $S$ and $X$ we also have, for some $\mu _{X,S}\in
End\left( \mathbb{I}\right) $,%
\begin{equation*}
\left( Cas\right) _{\mu _{X,S}}:\mu _{X,S}\cdot C_{X}=\left( \varphi _{\iota
_{S^{\vee },X^{\vee }}^{\ast }}\otimes _{\epsilon }^{\tau }\varphi _{\iota
_{S,X}}\right) \circ \left( C_{S}\otimes C_{Y}\right) \text{ }%
\Leftrightarrow \text{ }\mu _{X,S}\cdot C_{X^{\vee }}=\left( \varphi _{\iota
_{S,X}}\otimes _{\epsilon }\varphi _{\iota _{S^{\vee },X^{\vee }}}\right)
\circ \left( C_{S}\otimes C_{Y^{\vee }}\right) \text{.}
\end{equation*}

\bigskip

If $\left( V,W\right) =\left( S,X\right) $ or $\left( X,S\right) $ and $%
\lambda _{V,W},\lambda _{V^{\vee },W^{\vee }}\in End\left( \mathbb{I}\right) 
$, we will consider the following diagrams:%
\begin{equation*}
\xymatrix{ V\otimes W \ar@{}[dr]|{\left(Com\right)_{\lambda_{V,W}}} \ar[r]^-{\tau_{V,W}} \ar[d]_{\varphi_{V,W}} & W\otimes V \ar[d]^{\varphi_{W,V}} & V^{\vee}\otimes W^{\vee} \ar@{}[dr]|{\left(Com\right)_{\lambda_{V^{\vee},W^{\vee}}}} \ar[r]^-{\tau_{V^{\vee},W^{\vee}}} \ar[d]_{\varphi_{V^{\vee},W^{\vee}}} & W^{\vee}\otimes V^{\vee} \ar[d]^{\varphi_{W^{\vee},V^{\vee}}} \\ Y \ar[r]^-{\lambda_{V,W}} & Y\text{,} & Y^{\vee} \ar[r]^-{\lambda_{V^{\vee},W^{\vee}}} & Y\text{.}}
\end{equation*}%
It will be convenient to introduce the following shorthand. We set $\left[ W%
\right] :=W\otimes W^{\vee }$, $\varphi _{\left[ S\right] ,\left[ X\right]
}:=\varphi _{S,X}\otimes _{\epsilon }\varphi _{S^{\vee },X^{\vee }}$, $%
\varphi _{\left[ X\right] ,\left[ S\right] }:=\varphi _{X,S}\otimes
_{\epsilon }\varphi _{X^{\vee },S^{\vee }}$, $\varphi _{\iota _{\left[ S%
\right] ,\left[ X\right] }}:=\varphi _{\iota _{S,X}}\otimes _{\epsilon
}\varphi _{\iota _{S^{\vee },X^{\vee }}}$, $\varphi _{\iota _{\left[ X\right]
,\left[ S\right] }}:=\varphi _{\iota _{X,S}}\otimes _{\epsilon }\varphi
_{\iota _{X^{\vee },S^{\vee }}}$, $\varphi _{\iota _{\left[ S\right] ,\left[
X\right] }}^{\tau }:=\varphi _{\iota _{S^{\vee },X^{\vee }}^{\ast }}\otimes
_{\epsilon }^{\tau }\varphi _{\iota _{S,X}}$ and $\varphi _{\iota _{\left[ X%
\right] ,\left[ S\right] }}^{\tau }:=\varphi _{\iota _{X^{\vee },S^{\vee
}}^{\ast }}\otimes _{\epsilon }^{\tau }\varphi _{\iota _{X,S}}$. If $\left(
V,W\right) =\left( S,X\right) $ or $\left( X,S\right) $ and $\lambda _{\left[
V\right] ,\left[ W\right] }\in End\left( \mathbb{I}\right) $, we will
consider the following diagram:%
\begin{equation*}
\xymatrix{\left[V\right]\otimes\left[W\right] \ar@{}[dr]|{\left(Com\right)_{\lambda_{\left[V\right],\left[W\right]}}} \ar[r]^-{\tau_{\left[V\right],\left[W\right]}} \ar[d]_{\varphi_{\left[V\right],\left[W\right]}} & \left[W\right]\otimes\left[V\right] \ar[d]^{\varphi_{\left[W\right],\left[V\right]}} \\ \left[Y\right] \ar[r]^-{\lambda_{\left[V\right],\left[W\right]}} & \left[Y\right]\text{.} }
\end{equation*}

\begin{remark}
\label{FPD R1}We have%
\begin{equation*}
\left( Com\right) _{\lambda _{V,W}}\text{ and }\left( Com\right) _{\lambda
_{V^{\vee },W^{\vee }}}\text{ commutative }\Rightarrow \text{ }\left(
Com\right) _{\lambda _{\left[ V\right] ,\left[ W\right] }}\text{ with }%
\lambda _{\left[ V\right] ,\left[ W\right] }=\lambda _{V,W}\cdot \lambda
_{V^{\vee },W^{\vee }}\text{.}
\end{equation*}
\end{remark}

\bigskip

\begin{proposition}
\label{FPD P1}If $\left( Cas\right) _{\mu _{S,X}}$ is satisfied and $\left(
Com\right) _{\lambda _{\left[ S\right] ,\left[ X\right] }}$ is commutative,
then the following diagrams are commutative:%
\begin{equation*}
\xymatrix{ S\otimes S^{\vee} \ar@/^{0.75pc}/[dr]^-{\mu_{S,X}\cdot \ev_{S}^{\tau}} \ar[d]|{D_{S,X^{\vee}}\otimes D_{S^{\vee},X}} & & S^{\vee}\otimes S \ar@/^{0.75pc}/[dr]^-{\mu_{S,X}\cdot \ev_{S}} \ar[d]|{D_{S^{\vee},X}\otimes D_{S,X^{\vee}}} & \\ X^{\vee}\otimes Y^{\vee\vee}\otimes X\otimes Y^{\vee} \ar[r]_-{\lambda_{\left[S\right],\left[X\right]}\cdot \ev_{13,24}^{\phi,\phi}} & \mathbb{I} & X\otimes Y^{\vee}\otimes X^{\vee}\otimes Y^{\vee\vee} \ar[r]_-{\lambda_{\left[S\right],\left[X\right]}\cdot \ev_{13,24}^{\tau,\tau}} & \mathbb{I}\text{.}}
\end{equation*}%
Similarly if $\left( Cas\right) _{\mu _{X,S}}$ is satisfied and $\left(
Com\right) _{\lambda _{\left[ X\right] ,\left[ S\right] }}$ is commutative,
we get the analogue commutative diagram where $\left( S,X\right) $ is
replaced by $\left( X,S\right) $.
\end{proposition}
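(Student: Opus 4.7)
The plan is to reduce both diagrams to a combination of Proposition \ref{S1 Casimir P1}, the characterizing diagram (\ref{S1 AIM D1}) of the internal multiplication, and the two hypotheses $(Cas)_{\mu_{S,X}}$ and $(Com)_{\lambda_{[S],[X]}}$. I treat the first diagram in detail; the second is handled by the symmetric argument, invoking the first equivalent form of $(Cas)_{\mu_{S,X}}$ in place of the second. I first apply Proposition \ref{S1 Casimir P1}$(3)$ to the pair $(f_{1},f_{2}) := (f_{S,X}, f_{S^{\vee},X^{\vee}})$; with this choice $D_{\iota_{f_{1}}} = D_{S,X^{\vee}}$ and $D_{\iota_{f_{2}}} = D_{\iota_{S^{\vee},X^{\vee}}}$, so the proposition yields
\[
\ev_{13,24}^{\tau,\tau}\circ (D_{S,X^{\vee}}\otimes D_{\iota_{S^{\vee},X^{\vee}}}) = \ev_{13,24}^{\tau,\tau}\circ (\varphi_{[S],[X]}\otimes 1_{Y^{\vee}\otimes Y^{\vee\vee}})\circ (1_{S\otimes S^{\vee}}\otimes C_{X}\otimes C_{Y^{\vee}}).
\]
The reflexivity identity (\ref{S1 AIM D7}), namely $D_{\iota_{g}} = (i_{X}\otimes i_{Y^{\vee}})\circ D_{\iota_{g}^{\ast}}$ with $g=f_{S^{\vee},X^{\vee}}$, together with the standard compatibility of evaluations with the reflexivity isomorphisms $i_{X}, i_{Y^{\vee}}$ (e.g.\ $\ev_{X}^{\tau}\circ (i_{X}^{-1}\otimes 1_{X^{\vee}}) = \ev_{X^{\vee}}$), rewrites the left-hand side as $\ev_{13,24}^{\phi,\phi}\circ (D_{S,X^{\vee}}\otimes D_{S^{\vee},X})$.

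Multiplying through by $\lambda_{[S],[X]}$ and invoking $(Com)_{\lambda_{[S],[X]}}$ in the form $\lambda_{[S],[X]}\cdot \varphi_{[S],[X]} = \varphi_{[X],[S]}\circ \tau_{[S],[X]}$, the right-hand side becomes an expression involving $\varphi_{[X],[S]} = \varphi_{X,S}\otimes_{\epsilon}\varphi_{X^{\vee},S^{\vee}}$. The key and most delicate step is then to convert the $\varphi$-morphisms (valued in $Y, Y^{\vee}$ and contracted against $Y^{\vee}, Y^{\vee\vee}$ via $\ev_{13,24}^{\tau,\tau}$) into the internal multiplication morphisms $\varphi_{\iota_{X,S}}, \varphi_{\iota_{X^{\vee},S^{\vee}}}$ (valued in $S^{\vee}, S^{\vee\vee}$ and contracted against $S, S^{\vee}$). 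This is achieved by a double application of the characterizing diagram (\ref{S1 AIM D1}), once for each factor of the tensor product: it exchanges each $Y$-contraction $\ev_{Y}^{\tau}\circ (\varphi_{X,S}\otimes 1_{Y^{\vee}})$ for the corresponding $S$-contraction $\ev_{S}^{\tau}\circ (1_{S}\otimes \varphi_{\iota_{X,S}})\circ (\tau_{X,S}\otimes 1_{Y^{\vee}})$, and similarly for the dual factor. After absorbing the resulting $\tau$-twists together with the one introduced by $(Com)$, the right-hand side takes the form of an evaluation-pairing against the morphism $(\varphi_{\iota_{X,S}}\otimes_{\epsilon}\varphi_{\iota_{X^{\vee},S^{\vee}}})\circ (C_{X}\otimes C_{Y^{\vee}}):\mathbb{I}\to S^{\vee}\otimes S^{\vee\vee}$.

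This inner composition is precisely the right-hand side of the second equivalent form of the hypothesis $(Cas)_{\mu_{S,X}}$; replacing it by $\mu_{S,X}\cdot C_{S^{\vee}}$ and then invoking the snake identity for $C_{S^{\vee}}$ (Lemma \ref{S1 Casimir L Properties}$(2)$) to collapse it against the remaining evaluations reduces the entire expression to $\mu_{S,X}\cdot \ev_{S}^{\tau}$, which establishes the first diagram. The second diagram is obtained by the entirely analogous argument, where the first equivalent form of $(Cas)_{\mu_{S,X}}$ (with $C_{S}$ and $C_{Y}$) replaces the second, and where $\ev_{S}^{\tau}$ is exchanged with $\ev_{S}$ in accordance with the swap of the arguments $S\otimes S^{\vee}\leftrightarrow S^{\vee}\otimes S$. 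The principal technical difficulty is the $\tau$-bookkeeping in the central conversion step, together with the careful matching of the conventions defining $\ev_{ij,kl}^{\alpha,\beta}$ under the reflexivity isomorphisms; once this is controlled, the identities follow formally from the stated hypotheses.
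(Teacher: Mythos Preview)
Your plan is correct and follows essentially the same route as the paper: start from Proposition~\ref{S1 Casimir P1}(3), use $(Com)_{\lambda_{[S],[X]}}$ to swap $\varphi_{[S],[X]}$ for $\varphi_{[X],[S]}$, convert the $\varphi$'s to internal multiplications via the adjointness property, apply $(Cas)_{\mu_{S,X}}$, collapse with a Casimir identity, and handle the passage $D_{\iota_{S^{\vee},X^{\vee}}}\to D_{S^{\vee},X}$ via~(\ref{S1 AIM D7}). The only cosmetic differences are that the paper packages the adjointness step as Lemma~\ref{S1 Casimir Ltensor2} rather than invoking~(\ref{S1 AIM D1}) twice, performs the reflexivity conversion at the very end rather than the beginning, and cites Lemma~\ref{S1 Casimir L Properties}(1) rather than~(2) for the final collapse; it also observes that the two diagrams are equivalently commutative (by applying $\tau$), so only one needs to be proved.
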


\begin{proof}
It is clear that the two diagrams are equivalently commutative, so that
suffices to prove the commutativity of the first diagram. It follows from Proposition \ref{S1 Casimir P1} $\left( 3\right)$  that we have%
\begin{equation*}
\lambda _{\left[ S\right] ,\left[ X\right] }\cdot \ev_{13,24}^{\tau ,\tau
}\circ \left( D_{\iota _{S,X}}\otimes D_{\iota _{S^{\vee },X^{\vee
}}}\right) =\lambda _{\left[ S\right] ,\left[ X\right] }\cdot
\ev_{13,24}^{\tau ,\tau }\circ \left( \varphi _{\left[ S\right] \otimes \left[
X\right] }\otimes 1_{\left[ Y^{\vee }\right] }\right) \circ \left( 1_{\left[
S\right] }\otimes C_{X}\otimes C_{Y^{\vee }}\right) \text{.}
\end{equation*}%
In order to compute the right hand side, consider the following diagram:%
\begin{equation*}
\xymatrix{ & \ar@{}[dr]|(0.57){(\tau)} & \left[ S \right]\ar@/_{1pc}/[dll]_{1_{\left[S\right]}\otimes C_{X}\otimes C_{Y^{\vee}}}\ar@/_{0.5pc}/[dl]|{C_{X}\otimes1_{\left[S\right]}\otimes C_{Y^{\vee}}}\ar[d]|{1_{\left[S\right]}\otimes C_{X}\otimes C_{Y^{\vee}}}\ar[r]^{i_{S}\otimes1_{S^\vee}} \ar@/^{3pc}/[dd]^{\mu_{S,X}\cdot 1_{\left[S\right]}\otimes C_{S^{\vee}}} & S^{\vee\vee}\otimes S^{\vee} \ar@{}[dl]|{(\otimes)}\ar[dd]^{\mu_{S,X}\cdot1_{S^{\vee\vee}\otimes S^{\vee}}\otimes C_{S^{\vee}}}\ar[r]^{\tau_{S^{\vee\vee},S^{\vee}}} & S^{\vee}\otimes S^{\vee\vee}\ar@{}[dl]|{(\tau)} \ar[dd]|(0.65){\mu_{S,X}\cdot 1_{S^{\vee}\otimes S^{\vee\vee}}\otimes C_{S^{\vee}}}  \\
\left[S\right]\otimes\left[X\right]\otimes\left[Y^{\vee}\right]
\ar[d]|{\varphi_{\left[S\right]\otimes\left[X\right]}\otimes1_{\left[Y^{%
\vee}\right]}}
\ar[r]^{\tau_{\left[S\right],\left[X\right]}\otimes1_{\left[Y^{\vee}%
\right]}} & \left[X\right]\otimes\left[S\right]\otimes\left[Y^{\vee}\right]
\ar@{}[u]|{(\tau)} \ar@{}[dl]|{(B)}
\ar[d]|{\varphi_{\left[X\right],\left[S\right]}\otimes1_{\left[Y^{\vee}%
\right]}}
\ar[r]^{\tau_{\left[X\right],\left[S\right]}\otimes1_{\left[Y^{\vee}%
\right]}} & \left[S\right]\otimes\left[X\right]\otimes\left[Y^{\vee}\right]
\ar@{}[dl]|{(A)}
\ar[d]|{1_{\left[S\right]}\otimes\varphi_{\iota_{\left[X\right],\left[S%
\right]}}} & & \\ \left[Y\right]\otimes\left[Y^{\vee}\right]
\ar@/_{1pc}/[drr]_{\lambda_{\left[S\right],\left[X\right]}\cdot
\ev_{13,24}^{\tau,\tau}} \ar[r]^{\lambda_{\left[S\right],\left[X\right]}} &
\left[Y\right]\otimes\left[Y^{\vee}\right] \ar@{}[dl]|(0.3){(D)}
\ar@/_{0.5pc}/[dr]|{\ev_{13,24}^{\tau,\tau}} &
\left[S\right]\otimes\left[S^{\vee}\right] \ar@{}[dr]|(0.3){(C)}
\ar[d]|{\ev_{13,24}^{\tau,\tau}}
\ar[r]^(0.43){i_{S}\otimes1_{S^{\vee}\otimes\left[S^{\vee}\right]}} &
S^{\vee\vee}\otimes S^{\vee}\otimes\left[S^{\vee}\right]
\ar@{}[dr]|(0.33){(E)} \ar@/^{0.5pc}/[dl]|{\ev_{13,24}^{\phi,\tau}}
\ar[r]^{\tau_{S^{\vee\vee},S^{\vee}}\otimes1_{\left[S^{\vee}\right]}} &
S^{\vee}\otimes S^{\vee\vee}\otimes\left[S^{\vee}\right]
\ar@/^{1pc}/[dll]^{\ev_{14,23}^{\tau,\phi}} \\ & & \mathbb{I} & &
}
\end{equation*}%
Here $\left( A\right) $ is commutative by the adjoint property of Lemma \ref%
{S1 Casimir Ltensor2}, $\left( B\right) =\left( Com\right) _{\lambda _{\left[
S\right] ,\left[ X\right] }}\otimes 1_{\left[ Y^{\vee }\right] }$ is
commutative by assumption, the equality $\mu _{S,X}\cdot 1_{\left[ S\right]
}\otimes C_{S^{\vee }}=\left( 1_{\left[ S\right] }\otimes \varphi _{\iota _{%
\left[ X\right] ,\left[ S\right] }}\right) \circ \left( 1_{\left[ S\right]
}\otimes C_{X}\otimes C_{Y^{\vee }}\right) $ is assured by $\left(
Cas\right) _{\mu _{S,X}}$, $\left( C\right) $ is commutative by definition
of $i_{S}$, $\left( D\right) $ is clearly commutative and $\left( E\right) $
by definition of $\ev_{13,24}^{\phi ,\tau }$ and $\ev_{14,23}^{\tau ,\phi }$.
We deduce the first of the subsequent equalities, while that second follows
from $\ev_{14,23}^{\tau ,\phi }\circ \left( \tau _{S^{\vee \vee },S^{\vee
}}\otimes C_{S^{\vee }}\right) =\ev_{S^{\vee }}$ granted by Lemma \ref%
{S1 Casimir L Properties} (1) and the third by definition of $i_{S}$:%
\begin{eqnarray*}
\lambda _{\left[ S\right] ,\left[ X\right] }\cdot \ev_{13,24}^{\tau ,\tau
}\circ \left( D_{\iota _{S,X}}\otimes D_{\iota _{S^{\vee },X^{\vee
}}}\right) &=&\mu _{S,X}\cdot \ev_{14,23}^{\tau ,\phi }\circ \left( \tau
_{S^{\vee \vee },S^{\vee }}\otimes C_{S^{\vee }}\right) \circ \left(
i_{S}\otimes 1_{S^{\vee }}\right) \\
&=&\mu _{S,X}\cdot \ev_{S^{\vee }}\circ \left( i_{S}\otimes 1_{S^{\vee
}}\right) =\mu _{S,X}\cdot \ev_{S}^{\tau }\text{.}
\end{eqnarray*}

We end the proof of the proposition by rearranging the left hand side of
this equality by looking at the following diagram:%
\begin{equation*}
\xymatrix{ & X^{\vee}\otimes Y^{\vee\vee}\otimes X^{\vee\vee}\otimes Y^{\vee\vee\vee} \ar@/^{0.75pc}/[dr]^-{\ev_{13,24}^{\tau,\tau}} & \\ S\otimes S^{\vee} \ar[r]_(0.4){D_{\iota_{S,X}}\otimes D_{\iota_{S^{\vee},X^{\vee}}^{\ast}}} \ar@/^{0.75pc}/[ur]^-{D_{\iota_{S,X}}\otimes D_{\iota_{S^{\vee},X^{\vee}}}} & X^{\vee}\otimes Y^{\vee\vee}\otimes X\otimes Y^{\vee} \ar@{}[ul]|(0.45){(F)} \ar@{}[ur]|(0.45){(G)} \ar[r]_-{\ev_{13,24}^{\phi,\phi}} \ar[u]|{1_{X^{\vee}\otimes Y^{\vee\vee}}\otimes i_{X}\otimes i_{Y^{\vee}}} & \mathbb{I}\text{.}}
\end{equation*}%
Here $\left( F\right) $ is commutative by $\left( \text{\ref{S1 AIM D7}}%
\right) $, while $\left( G\right) $ is commutative by definition of $i_{W}$
for $W=X$ and $Y$. The claimed commutativity follows.

Since the roles of $S$ and $X$ are symmetric, we get the same commutative
diagram where $\left( S,X\right) $ is replaced by $\left( X,S\right) $ if $%
\left( Cas\right) _{\mu _{X,S}}$ is satisfied and $\left( Com\right)
_{\lambda _{\left[ X\right] ,\left[ S\right] }}$ is commutative.
\end{proof}

\bigskip

\begin{lemma}
\label{FDP L1}If $\left( Com\right) _{\lambda _{X^{\vee },S^{\vee }}}$ is
commutative, the following diagrams are commutative:
\begin{equation*}
\xymatrix{ X^{\vee}\otimes S^{\vee} \ar[r]^-{D_{X^{\vee},S}\otimes1_{S^{\vee}}} \ar[d]|{1_{X^{\vee}}\otimes D_{S^{\vee},X}} & S\otimes Y^{\vee}\otimes S^{\vee} \ar[d]|{\lambda_{X^{\vee},S^{\vee}}\cdot \ev_{13,Y^{\vee}}^{\tau}} & S^{\vee}\otimes X^{\vee} \ar[r]^-{1_{S^{\vee}}\otimes D_{X^{\vee},S}} \ar[d]|{D_{S^{\vee},X}\otimes1_{X^{\vee}}} & S^{\vee}\otimes S\otimes Y^{\vee} \ar[d]|{\lambda_{X^{\vee},S^{\vee}}\cdot \ev_{S}\otimes1_{Y^{\vee}}} \\ X^{\vee}\otimes X\otimes Y^{\vee} \ar[r]^-{\ev_{X}\otimes1_{Y^{\vee}}} & Y^{\vee}\text{,} & X\otimes Y^{\vee}\otimes X^{\vee} \ar[r]^-{\ev_{13,Y^{\vee}}^{\tau}} & Y^{\vee}\text{.}}
\end{equation*}
Similarly, if $\left( Com\right) _{\lambda _{S^{\vee },X^{\vee }}}$ is
commutative, we get the analogue commutative diagram where $\left(
S,X\right) $ is replaced by $\left( X,S\right) $.

If $\left( Com\right) _{\lambda _{X,S}}$ is commutative, the following
diagrams are commutative:\newline
\begin{equation*}
\xymatrix{ X\otimes S \ar[r]^-{D_{X,S^{\vee}}\otimes1_{S}} \ar[d]|{1_{X}\otimes D_{S,X^{\vee}}} & S^{\vee}\otimes Y^{\vee\vee}\otimes S \ar[d]|{\lambda_{X,S}\cdot \ev_{13,Y^{\vee\vee}}^{\phi}} & S\otimes X \ar[r]^-{1_{S}\otimes D_{X,S^{\vee}}} \ar[d]|{D_{S,X^{\vee}}\otimes1_{X}} & S\otimes S^{\vee}\otimes Y^{\vee\vee} \ar[d]|{\lambda_{X,S}\cdot \ev_{S}^{\tau}\otimes1_{Y^{\vee\vee}}} \\ X\otimes X^{\vee}\otimes Y^{\vee\vee} \ar[r]^-{\ev_{X}^{\tau}\otimes1_{Y^{\vee\vee}}} & Y^{\vee\vee}\text{,} & X^{\vee}\otimes Y^{\vee\vee}\otimes X \ar[r]^-{\ev_{13,Y^{\vee\vee}}^{\phi}} & Y^{\vee\vee}\text{.}}
\end{equation*}
Similarly, if $\left( Com\right) _{\lambda _{S,X}}$ is commutative, we get
the analogue commutative diagram where $\left( S,X\right) $ is replaced by $%
\left( X,S\right) $.
\end{lemma}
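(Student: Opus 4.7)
The plan is to reduce each of the four diagrams stated in the lemma to a direct application of Lemma~\ref{S1 AIM L3} combined with the hypothesized $\left(Com\right)$ identity. The strategy is uniform across all cases, so it suffices to describe the first diagram in detail and then indicate the routine modifications.

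Consider the top-right composite of the first diagram. Applying the second diagram of Lemma~\ref{S1 AIM L3} to $g:=f_{X^\vee,S^\vee}$ (whose reflexive twist $\iota_g^\ast$ yields precisely $D_{X^\vee,S}$) identifies this composite with $\lambda_{X^\vee,S^\vee}\cdot\varphi_{X^\vee,S^\vee}$. By the hypothesis $\left(Com\right)_{\lambda_{X^\vee,S^\vee}}$, this equals $\varphi_{S^\vee,X^\vee}\circ\tau_{X^\vee,S^\vee}$. I would then rewrite the left-bottom composite in the same form: the second diagram of Lemma~\ref{S1 AIM L3} applied to $g:=f_{S^\vee,X^\vee}$ gives
\[
\varphi_{S^\vee,X^\vee}=\ev_{13,Y^\vee}^\tau\circ\bigl(D_{S^\vee,X}\otimes 1_{X^\vee}\bigr),
\]
so that $\varphi_{S^\vee,X^\vee}\circ\tau_{X^\vee,S^\vee}$ is obtained by pushing $\tau_{X^\vee,S^\vee}$ through $D_{S^\vee,X}$ via naturality of the symmetry and then collapsing the resulting permutation of tensor factors. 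This collapse rests only on the identity $\ev_X^\tau\circ\tau_{X^\vee,X}=\ev_X$ (equivalently $\tau_{X,X^\vee}\circ\tau_{X^\vee,X}=1_{X^\vee\otimes X}$) together with the hexagon coherence, and it converts the composite into $\left(\ev_X\otimes 1_{Y^\vee}\right)\circ\left(1_{X^\vee}\otimes D_{S^\vee,X}\right)$, which is exactly the left-bottom path.

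The second diagram of the lemma in the $(X^\vee,S^\vee)$ setting is handled identically, with the orientation of $\tau_{X^\vee,S^\vee}$ reversed. The third and fourth diagrams, which concern the $(X,S)$ setting and involve the codomain $Y^{\vee\vee}$, are treated by the same mechanism but invoking the \emph{first} diagram of Lemma~\ref{S1 AIM L3} (the one featuring $i_Y$ and $\ev_{13,Y^{\vee\vee}}^\phi$), since the $D$-morphisms that appear there are $D_{X,S^\vee}=D_{\iota_{X,S}}$ and $D_{S,X^\vee}=D_{\iota_{S,X}}$, defined without the reflexive twist. The hypothesis $\left(Com\right)_{\lambda_{X,S}}$ then supplies the required exchange of $\varphi_{X,S}$ and $\varphi_{S,X}$. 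The analogous statements obtained by exchanging the roles of $S$ and $X$ require no new argument, merely a relabelling of the indices.

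The only—and entirely routine—obstacle is the bookkeeping of permutations of tensor factors in verifying that $\ev_{13,Y^\vee}^\tau\circ(D\otimes 1)\circ\tau$ simplifies to $(\ev_X\otimes 1)\circ(1\otimes D)$, and similarly for the $\ev_{13,Y^{\vee\vee}}^\phi$ version. Each of these is a short diagram chase using only naturality of $\tau$, the hexagon identities, and the self-inverse property of the symmetry, so no conceptual difficulty arises.
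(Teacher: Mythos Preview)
Your argument is correct and somewhat more economical than the paper's. Both proofs rest on the same underlying adjunction identity, but they package it differently. The paper expands each $D_{\iota_g^{\ast}}$ via the Casimir formula $\left(\text{\ref{S1 Casimir D DefD_f}}\right)$, i.e.\ writes $D_{\iota_{X^{\vee},S^{\vee}}^{\ast}}=(\varphi_{\iota_{X^{\vee},S^{\vee}}^{\ast}}\otimes 1_{Y^{\vee}})\circ(1_{X^{\vee}}\otimes C_Y)$ and similarly for $D_{\iota_{S^{\vee},X^{\vee}}^{\ast}}$, and then appeals to the adjoint property $\left(\text{\ref{S1 AIM D1 reflexivity 1}}\right)$ together with $(Com)_{\lambda_{X^{\vee},S^{\vee}}}$ inside one large diagram. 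You instead invoke Lemma~\ref{S1 AIM L3} as a black box, which already encodes the statement $\varphi_g=\ev_{13,Y^{\vee}}^{\tau}\circ(D_{\iota_g^{\ast}}\otimes 1)$ without any mention of Casimir elements, and then reduce everything to the single symmetry coherence $\ev_{13,Y^{\vee}}^{\tau}\circ\tau_{X^{\vee},X\otimes Y^{\vee}}=\ev_X\otimes 1_{Y^{\vee}}$. This is a genuine shortcut: Lemma~\ref{S1 AIM L3} was proved earlier without Casimir machinery, so your route avoids the detour through $C_Y$ entirely. The trade-off is minor: the paper's explicit Casimir computation makes the two $D$-morphisms appear in a manifestly parallel form, which fits the style of the surrounding section, whereas your proof is shorter but relies on the reader recalling what Lemma~\ref{S1 AIM L3} buys.
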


\begin{proof}
It is clear that the first two diagrams are equivalently commutative, so
that suffices to prove the commutativity of the first diagram. Consider the
following diagram:%
\begin{equation*}
\xymatrix{ S^{\vee}\otimes X^{\vee} \ar@{}[dr]|{(\tau)}
\ar[d]|{1_{S^{\vee}\otimes X^{\vee}}\otimes C_{Y}} & X^{\vee}\otimes
S^{\vee} \ar@{}[dr]|{(\tau)} \ar[l]_{\tau_{X^{\vee},S^{\vee}}}
\ar[d]|{1_{X^{\vee}\otimes S^{\vee}}\otimes C_{Y}}
\ar[r]^{\tau_{X^{\vee},S^{\vee}}} & S^{\vee}\otimes X^{\vee}
\ar@{}[dr]|{(\tau)} \ar[r]^{\tau_{S^{\vee},X^{\vee}}}
\ar[d]|{1_{S^{\vee}\otimes X^{\vee}}\otimes C_{Y}} & X^{\vee}\otimes
S^{\vee} \ar[d]|{1_{X^{\vee}\otimes S^{\vee}}\otimes C_{Y}} \\
S^{\vee}\otimes X^{\vee}\otimes Y\otimes Y^{\vee} \ar@{}[ddr]|{(A)}
\ar[d]|{1_{S^{\vee}}\otimes\varphi_{\iota_{X^{\vee},S^{\vee}}^{\ast}}%
\otimes1_{Y^{\vee}}} & X^{\vee}\otimes S^{\vee}\otimes Y\otimes Y^{\vee}
\ar@{}[dr]|{(B)} \ar[l]_{\tau_{X^{\vee},S^{\vee}}\otimes1_{Y\otimes
Y^{\vee}}} \ar[r]^{\tau_{X^{\vee},S^{\vee}}\otimes1_{Y\otimes Y^{\vee}}}
\ar[d]|{\varphi_{X^{\vee},S^{\vee}}\otimes1_{Y\otimes Y^{\vee}}} &
S^{\vee}\otimes X^{\vee}\otimes Y\otimes Y^{\vee}
\ar[r]^{\tau_{S^{\vee},X^{\vee}}\otimes1_{Y\otimes Y^{\vee}}}
\ar[d]|{\varphi_{S^{\vee},X^{\vee}}\otimes1_{Y\otimes Y^{\vee}}} &
X^{\vee}\otimes S^{\vee}\otimes Y\otimes Y^{\vee} \ar@{}[ddl]|{(A)}
\ar[d]|{1_{X^{\vee}}\otimes\varphi_{\iota_{S^{\vee},X^{\vee}}^{\ast}}%
\otimes1_{Y^{\vee}}} \\ S^{\vee}\otimes S\otimes Y^{\vee}
\ar[d]|{\ev_{S}\otimes1_{Y^{\vee}}} & Y^{\vee}\otimes Y\otimes Y^{\vee}
\ar@/^{0.5pc}/[dl]^{\ev_{Y}\otimes1_{Y^{\vee}}}
\ar[r]^{\lambda_{X^{\vee},S^{\vee}}} & Y^{\vee}\otimes Y\otimes Y^{\vee}
\ar@/_{0.5pc}/[dr]_{\ev_{Y}\otimes1_{Y^{\vee}}} & X^{\vee}\otimes X\otimes
Y^{\vee} \ar[d]|{\ev_{X}\otimes1_{Y^{\vee}}} \\ Y^{\vee}
\ar[rrr]^{\lambda_{X^{\vee},S^{\vee}}} & & & Y^{\vee} }
\end{equation*}%
The regions $\left( A\right) $ are commutative by the adjoint property $%
\left( \text{\ref{S1 AIM D1 reflexivity 1}}\right) $ of $\varphi _{\iota
_{X^{\vee },S^{\vee }}^{\ast }}$ and $\varphi _{\iota _{S^{\vee },X^{\vee
}}^{\ast }}$, while $\left( B\right) =\left( Com\right) _{\lambda _{X^{\vee
},S^{\vee }}}\otimes 1_{Y\otimes Y^{\vee }}$ by our assumption. Recalling
that $D_{\iota _{X^{\vee },S^{\vee }}^{\ast }}=\left( \varphi _{\iota
_{X^{\vee },S^{\vee }}^{\ast }}\otimes 1_{Y^{\vee }}\right) \circ \left(
1_{X^{\vee }}\otimes C_{Y}\right) $ and $D_{\iota _{S^{\vee },X^{\vee
}}^{\ast }}=\left( \varphi _{\iota _{S^{\vee },X^{\vee }}^{\ast }}\otimes
1_{Y^{\vee }}\right) \circ \left( 1_{S^{\vee }}\otimes C_{Y}\right) $\ by $%
\left( \text{\ref{S1 Casimir D DefD_f}}\right) $, we deduce%
\begin{eqnarray*}
\left( \ev_{X}\otimes 1_{Y^{\vee }}\right) \circ \left( 1_{X^{\vee }}\otimes
D_{\iota _{S^{\vee },X^{\vee }}^{\ast }}\right) &=&\left( \ev_{X}\otimes
1_{Y^{\vee }}\right) \circ \left( 1_{X^{\vee }}\otimes \varphi _{\iota
_{S^{\vee },X^{\vee }}^{\ast }}\otimes 1_{Y^{\vee }}\right) \circ \left(
1_{X^{\vee }\otimes S^{\vee }}\otimes C_{Y}\right) \circ \tau _{S^{\vee
},X^{\vee }}\circ \tau _{X^{\vee },S^{\vee }} \\
&=&\lambda _{X^{\vee },S^{\vee }}\cdot \left( \ev_{S}\otimes 1_{Y^{\vee
}}\right) \circ \left( 1_{S^{\vee }}\otimes \varphi _{\iota _{X^{\vee
},S^{\vee }}^{\ast }}\otimes 1_{Y^{\vee }}\right) \circ \left( 1_{S^{\vee
}\otimes X^{\vee }}\otimes C_{Y}\right) \circ \tau _{X^{\vee },S^{\vee }} \\
&=&\lambda _{X^{\vee },S^{\vee }}\cdot \left( \ev_{S}\otimes 1_{Y^{\vee
}}\right) \circ \left( 1_{S^{\vee }}\otimes D_{\iota _{X^{\vee },S^{\vee
}}^{\ast }}\right) \circ \tau _{X^{\vee },S^{\vee }} \\
&=&\lambda _{X^{\vee },S^{\vee }}\circ \left( \ev_{S}\otimes 1_{Y^{\vee
}}\right) \circ \tau _{S\otimes Y^{\vee },S^{\vee }}\circ \left( D_{\iota
_{X^{\vee },S^{\vee }}^{\ast }}\otimes 1_{S^{\vee }}\right) \\
&=&\lambda _{X^{\vee },S^{\vee }}\circ \ev_{13,Y^{\vee }}^{\tau }\circ \left(
D_{\iota _{X^{\vee },S^{\vee }}^{\ast }}\otimes 1_{S^{\vee }}\right) \text{.}
\end{eqnarray*}%
The commutativity of the other diagrams is proved in a similar way.
\end{proof}

\bigskip

\begin{remark}
\label{Hom Reflexive Obj L}If $Y$ is a reflexive object the canonical
morphism%
\begin{equation*}
Hom\left( X,Y\right) \rightarrow Hom\left( Y^{\vee }\otimes X,\mathbb{I}%
\right)
\end{equation*}%
mapping $f:X\rightarrow Y$ to $\ev_{Y,\mathbb{I}}\circ \left( 1_{Y^{\vee
}}\otimes f\right) :Y^{\vee }\otimes X\overset{1_{Y^{\vee }}\otimes f}{%
\rightarrow }Y^{\vee }\otimes Y\overset{\ev_{Y,\mathbb{I}}}{\rightarrow }%
\mathbb{I}$ is a bijection.
\end{remark}

\bigskip

We can now prove the main result of this section.

\begin{theorem}
\label{FDP T}Suppose that $\left( Cas\right) _{\mu _{S,X}}$ is satisfied and
that $\left( Com\right) _{\lambda _{\left[ S\right] ,\left[ X\right] }}$ is
commutative. Then we have%
\begin{eqnarray*}
&&\mu _{S,X}:S\overset{D_{S,X^{\vee }}}{\rightarrow }X^{\vee }\otimes
Y^{\vee \vee }\overset{D_{X^{\vee },S}\otimes 1_{Y^{\vee \vee }}}{%
\rightarrow }S\otimes Y^{\vee }\otimes Y^{\vee \vee }\overset{1_{S}\otimes
\ev_{Y^{\vee }}^{\tau }}{\rightarrow }S\overset{\lambda _{\left[ S\right] ,%
\left[ X\right] }\lambda _{X^{\vee },S^{\vee }}}{\rightarrow }S\text{,} \\
&&\text{if }\left( Com\right) _{\lambda _{X^{\vee },S^{\vee }}}\text{ is
commutative,} \\
&&\mu _{S,X}\lambda _{S^{\vee },X^{\vee }}:S\overset{D_{S,X^{\vee }}}{%
\rightarrow }X^{\vee }\otimes Y^{\vee \vee }\overset{D_{X^{\vee },S}\otimes
1_{Y^{\vee \vee }}}{\rightarrow }S\otimes Y^{\vee }\otimes Y^{\vee \vee }%
\overset{1_{S}\otimes \ev_{Y^{\vee }}^{\tau }}{\rightarrow }S\overset{\lambda
_{\left[ S\right] ,\left[ X\right] }}{\rightarrow }S\text{,} \\
&&\text{if }\left( Com\right) _{\lambda _{S^{\vee },X^{\vee }}}\text{ is
commutative.}
\end{eqnarray*}

Suppose that $\left( Cas\right) _{\mu _{X,S}}$ is satisfied and that $\left(
Com\right) _{\lambda _{\left[ X\right] ,\left[ S\right] }}$ is commutative.
Then we have%
\begin{eqnarray*}
&&\mu _{X,S}:X^{\vee }\overset{D_{X^{\vee },S}}{\rightarrow }S\otimes
Y^{\vee }\overset{D_{S,X^{\vee }}\otimes 1_{Y^{\vee }}}{\rightarrow }X^{\vee
}\otimes Y^{\vee \vee }\otimes Y^{\vee }\overset{\ev_{Y^{\vee }}}{\rightarrow 
}X^{\vee }\overset{\lambda _{\left[ X\right] ,\left[ S\right] }\lambda _{S,X}%
}{\rightarrow }X^{\vee }\text{,} \\
&&\text{if }\left( Com\right) _{\lambda _{S,X}}\text{ is commutative,} \\
&&\mu _{X,S}\lambda _{X,S}:X^{\vee }\overset{D_{X^{\vee },S}}{\rightarrow }%
S\otimes Y^{\vee }\overset{D_{S,X^{\vee }}\otimes 1_{Y^{\vee }}}{\rightarrow 
}X^{\vee }\otimes Y^{\vee \vee }\otimes Y^{\vee }\overset{\ev_{Y^{\vee }}}{%
\rightarrow }X^{\vee }\overset{\lambda _{\left[ X\right] ,\left[ S\right] }}{%
\rightarrow }X^{\vee }\text{,} \\
&&\text{if }\left( Com\right) _{\lambda _{X,S}}\text{ is commutative.}
\end{eqnarray*}

We have the similar statements exchanging the roles of $S$ and $X$ in the
assumptions and the claims.
\end{theorem}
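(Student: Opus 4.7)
The plan is to reduce each claim to Proposition~\ref{FPD P1} via the bijection of Remark~\ref{Hom Reflexive Obj L}. Since $S$ is reflexive in the rigid category $\mathcal{C}$, the map $Hom(S,S)\to Hom(S\otimes S^{\vee},\mathbb{I})$ sending $f$ to $\ev_{S}^{\tau}\circ(f\otimes 1_{S^{\vee}})$ is a bijection under which $1_{S}$ corresponds to $\ev_{S}^{\tau}$. Because it is $End(\mathbb{I})$-linear, showing that a composite $\Phi:S\to S$ equals $c\cdot 1_{S}$ for some $c\in End(\mathbb{I})$ is equivalent to showing the identity $\ev_{S}^{\tau}\circ(\Phi\otimes 1_{S^{\vee}})=c\cdot\ev_{S}^{\tau}$ of morphisms $S\otimes S^{\vee}\to\mathbb{I}$.

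For the first claim, set $\Phi:=(1_{S}\otimes\ev_{Y^{\vee}}^{\tau})\circ(D_{X^{\vee},S}\otimes 1_{Y^{\vee\vee}})\circ D_{S,X^{\vee}}$; the target is $\lambda_{[S],[X]}\lambda_{X^{\vee},S^{\vee}}\cdot\Phi=\mu_{S,X}\cdot 1_{S}$. The first diagram of Proposition~\ref{FPD P1} yields
$$\mu_{S,X}\cdot\ev_{S}^{\tau}=\lambda_{[S],[X]}\cdot\ev_{13,24}^{\phi,\phi}\circ(D_{S,X^{\vee}}\otimes D_{S^{\vee},X}),$$
so it suffices to prove the sharper identity
$$\lambda_{X^{\vee},S^{\vee}}\cdot\ev_{S}^{\tau}\circ(\Phi\otimes 1_{S^{\vee}})=\ev_{13,24}^{\phi,\phi}\circ(D_{S,X^{\vee}}\otimes D_{S^{\vee},X})$$
and then multiply both sides by $\lambda_{[S],[X]}$. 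Both sides of this reduced identity factor through $D_{S,X^{\vee}}\otimes 1_{S^{\vee}}$, so the problem becomes an identity of morphisms $X^{\vee}\otimes Y^{\vee\vee}\otimes S^{\vee}\to\mathbb{I}$.

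To establish the reduced identity, rewrite $\ev_{13,24}^{\phi,\phi}\circ(1_{X^{\vee}\otimes Y^{\vee\vee}}\otimes D_{S^{\vee},X})$ by first swapping the outer $X^{\vee}$ and $Y^{\vee\vee}$ factors, obtaining
$$\ev_{Y^{\vee}}\circ(1_{Y^{\vee\vee}}\otimes(\ev_{X}\otimes 1_{Y^{\vee}})\circ(1_{X^{\vee}}\otimes D_{S^{\vee},X}))\circ(\tau_{X^{\vee},Y^{\vee\vee}}\otimes 1_{S^{\vee}}).$$
The first diagram of Lemma~\ref{FDP L1}, which uses $(Com)_{\lambda_{X^{\vee},S^{\vee}}}$, now replaces the inner morphism $(\ev_{X}\otimes 1_{Y^{\vee}})\circ(1_{X^{\vee}}\otimes D_{S^{\vee},X})$ by $\lambda_{X^{\vee},S^{\vee}}\cdot\ev_{13,Y^{\vee}}^{\tau}\circ(D_{X^{\vee},S}\otimes 1_{S^{\vee}})$. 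Unfolding $\ev_{13,Y^{\vee}}^{\tau}=(1_{Y^{\vee}}\otimes\ev_{S}^{\tau})\circ(\tau_{S,Y^{\vee}}\otimes 1_{S^{\vee}})$ and collecting the $\tau$-permutations by naturality yields $\lambda_{X^{\vee},S^{\vee}}\cdot\ev_{S}^{\tau}\circ(1_{S}\otimes\ev_{Y^{\vee}}^{\tau}\otimes 1_{S^{\vee}})\circ(D_{X^{\vee},S}\otimes 1_{Y^{\vee\vee}\otimes S^{\vee}})$, which is exactly the left-hand side.

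The remaining claims follow by the same template. The second claim, under $(Com)_{\lambda_{S^{\vee},X^{\vee}}}$, uses instead the swapped second diagram of Lemma~\ref{FDP L1}, which carries $\lambda_{S^{\vee},X^{\vee}}$ on the opposite side of the equation and thus contributes $\mu_{S,X}\lambda_{S^{\vee},X^{\vee}}$ in place of $\mu_{S,X}$. The two claims under $(Cas)_{\mu_{X,S}}$ are obtained by the same argument with $S$ and $X$ exchanged, invoking the second diagram of Proposition~\ref{FPD P1} and the corresponding diagrams of Lemma~\ref{FDP L1}; the ``similar statements'' are the symmetric counterparts. The main technical obstacle is the coherence step at the end of the third paragraph --- confirming that two composites of $\tau$-permutations, evaluations, and the single $D_{X^{\vee},S}$ agree as morphisms $X^{\vee}\otimes Y^{\vee\vee}\otimes S^{\vee}\to\mathbb{I}$, which boils down to a careful application of naturality of $\tau$ together with the definitions of $\ev_{13,24}^{\phi,\phi}$ and $\ev_{13,Y^{\vee}}^{\tau}$.
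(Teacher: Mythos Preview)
Your proof is correct and follows essentially the same approach as the paper: both combine Proposition~\ref{FPD P1} with Lemma~\ref{FDP L1} and then conclude via the bijection of Remark~\ref{Hom Reflexive Obj L}. The only cosmetic differences are that the paper packages the argument into a single large commutative diagram and works with $S^{\vee}\otimes S$ and $\ev_{S}$ (using the second diagrams of Proposition~\ref{FPD P1} and Lemma~\ref{FDP L1}) rather than with $S\otimes S^{\vee}$ and $\ev_{S}^{\tau}$ as you do.
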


\begin{proof}
Suppose that $\left( Cas\right) _{\mu _{S,X}}$ is satisfied, that $\left(
Com\right) _{\lambda _{\left[ S\right] ,\left[ X\right] }}$ is commutative
and consider the following diagram:%
\begin{equation*}
\xymatrix{ S^{\vee}\otimes S \ar[dd]|{\mu_{S,X}\cdot \ev_{S}}
\ar[dr]|{D_{S^{\vee},X}\otimes D_{S,X^{\vee}}}
\ar[r]^(0.45){1_{S^{\vee}}\otimes D_{S,X^{\vee}}} & S^{\vee}\otimes
X^{\vee}\otimes Y^{\vee\vee} \ar@{}[dl]|(0.25){(\otimes)}
\ar[r]^{1_{S^{\vee}}\otimes D_{X^{\vee},S}\otimes1_{Y^{\vee\vee}}}
\ar[d]^{D_{S^{\vee},X}\otimes1_{X^{\vee}\otimes Y^{\vee\vee}}} &
S^{\vee}\otimes S\otimes Y^{\vee}\otimes Y^{\vee\vee}
\ar[d]|{\lambda_{X^{\vee},S^{\vee}}\cdot \ev_{S}\otimes1_{Y^{\vee}\otimes
Y^{\vee\vee}}} \ar[dr]^{1_{S^{\vee}\otimes S}\otimes \ev_{Y^{\vee}}^{\tau}} &
\\ \ar@{}[urr]|(0.73){(B)} & X\otimes Y^{\vee}\otimes X^{\vee}\otimes
Y^{\vee\vee} \ar@{}[l]|(0.7){(A)} \ar@{}[dr]|(0.3){(C)}
\ar[dl]|{\lambda_{\left[S\right],\left[X\right]}\cdot
\ev_{13,24}^{\tau,\tau}}
\ar[r]^(0.6){\ev_{13,Y^{\vee}}^{\tau}\otimes1_{Y^{\vee\vee}}} &
Y^{\vee}\otimes Y^{\vee\vee}
\ar@/^{0.5pc}/[dll]|{\lambda_{\left[S\right],\left[X\right]}\cdot
\ev_{Y^{\vee}}^{\tau}} & S^{\vee}\otimes S \ar@{}[l]|{(\otimes)}
\ar@/^{1pc}/[dlll]^{\lambda_{\left[S\right],\left[X\right]}\lambda_{X^{%
\vee},S^{\vee}}\cdot \ev_{S}} \\ \mathbb{I} & & & }
\end{equation*}%
Here the region $\left( A\right) $ is commutative by Proposition \ref{FPD P1}%
, $\left( B\right) $ by Lemma \ref{FDP L1} when $\left( Com\right) _{\lambda
_{X^{\vee },S^{\vee }}}$ is commutative and $\left( C\right) $ by by
definition of $\ev_{13,24}^{\tau ,\tau }$, $\ev_{13,Y^{\vee }}^{\tau }$ and $%
\ev_{Y^{\vee }}^{\tau }$. We deduce, setting $a:=\lambda _{\left[ S\right] ,%
\left[ X\right] }\lambda _{X^{\vee },S^{\vee }}\cdot \left( 1_{S}\otimes
\ev_{Y^{\vee }}^{\tau }\right) \circ \left( D_{X^{\vee },S}\otimes 1_{Y^{\vee
\vee }}\right) \circ D_{S,X^{\vee }}$, the equality%
\begin{equation*}
\ev_{S}\circ \left( 1_{S^{\vee }}\otimes a\right) =\ev_{S}\circ \left(
1_{S^{\vee }}\otimes \mu _{S,X}\right) \text{.}
\end{equation*}%
Hence, by Remark \ref{Hom Reflexive Obj L}, we get $a=\mu _{S,X}$. The commutativity of the other diagrams is proved in a similar way.
\end{proof}

\bigskip

As an immediate consequence of Theorem \ref{FDP T} we get, in light of
Remark \ref{FPD R1}, the following result.

\begin{corollary}
\label{FDP C1}Suppose that $\left( Cas\right) _{\mu _{S,X}}$ and $\left(
Cas\right) _{\mu _{X,S}}$ are satisfied, that $\left( Com\right) _{\lambda
_{S,X}}$, $\left( Com\right) _{\lambda _{S^{\vee },X^{\vee }}}$, $\left(
Com\right) _{\lambda _{X,S}}$ and $\left( Com\right) _{\lambda _{X^{\vee
},S^{\vee }}}$ are commutative, that $\mu _{S,X}$, $\mu _{X,S}$, $\lambda
_{S,X}$, $\lambda _{S^{\vee },X^{\vee }}$, $\lambda _{X,S}$ and $\lambda
_{X^{\vee },S^{\vee }}$ are invertible and that $Y$ is an invertible object.
Then $D_{S,X^{\vee }}$, $D_{X^{\vee },S}$, $D_{S^{\vee },X}$, $D_{X,S^{\vee
}}$, $f_{S,X}$, $f_{X,S}$, $f_{S^{\vee },X^{\vee }}$ and $f_{X^{\vee
},S^{\vee }}$ are isomorphisms.
\end{corollary}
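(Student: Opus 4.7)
The plan is to build the corollary directly on top of Theorem~\ref{FDP T}, invoking its four symmetric instantiations. Each such instantiation expresses an invertible scalar on $S$ or $X^{\vee}$ (or their analogues obtained by swapping $S$ and $X$) as a triple composition passing through two of the $D$ morphisms and an evaluation on $Y^{\vee}$ or $Y^{\vee\vee}$. By Remark~\ref{FPD R1} the scalars $\lambda_{[S],[X]}$ and $\lambda_{[X],[S]}$ are products of the assumed invertible $\lambda$'s, hence invertible; combined with the invertibility of the $\mu$'s this makes each of these compositions an invertible scalar multiple of the identity.

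The first step is therefore to apply Theorem~\ref{FDP T} under $(Cas)_{\mu_{S,X}}$ together with $(Com)_{\lambda_{X^{\vee},S^{\vee}}}$: the invertibility of the resulting scalar forces $D_{S,X^{\vee}}$ to be a split monomorphism and
\[
(1_{S}\otimes \ev_{Y^{\vee}}^{\tau})\circ(D_{X^{\vee},S}\otimes 1_{Y^{\vee\vee}})
\]
to be a split epimorphism. Since $Y$ is invertible, so is $Y^{\vee\vee}$, both $\ev_{Y^{\vee}}^{\tau}$ and $\ev_{Y^{\vee}}$ are isomorphisms, and tensoring with $1_{Y^{\vee\vee}}$ or $1_{Y^{\vee}}$ is an auto-equivalence of $\mathcal{C}$. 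Peeling off these isomorphisms shows that $D_{X^{\vee},S}$ itself is a split epimorphism. Dually, applying the second formula of Theorem~\ref{FDP T} under $(Cas)_{\mu_{X,S}}$ and $(Com)_{\lambda_{S,X}}$ exhibits $D_{X^{\vee},S}$ as a split monomorphism and $D_{S,X^{\vee}}$ as a split epimorphism. Since any morphism that is simultaneously a split monomorphism and a split epimorphism is an isomorphism, both $D_{S,X^{\vee}}$ and $D_{X^{\vee},S}$ are isomorphisms. The same argument with the roles of $S$ and $X$ interchanged---which is justified because the hypotheses are fully symmetric under this swap---shows that $D_{X,S^{\vee}}$ and $D_{S^{\vee},X}$ are isomorphisms as well.

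To pass from the four $D$'s to the four $f$'s I use the factorizations $D_{S,X^{\vee}}=\alpha_{Y^{\vee},X^{\vee}}^{-1}\circ d_{X,Y}\circ f_{S,X}$ and its starred analogue $D_{S^{\vee},X}=\alpha_{Y,X}^{-1}\circ\hom(i_{Y},i_{X}^{-1})\circ d_{X^{\vee},Y^{\vee}}\circ f_{S^{\vee},X^{\vee}}$, and similarly for the other two pairs. In a rigid category the $\alpha$ morphisms are isomorphisms by definition; the reflexivity isomorphisms $i_{X}$ and $i_{Y}$ make $\hom(i_{Y},i_{X}^{-1})$ an isomorphism and, via the second commutative diagram of~$(\text{\ref{S1 D Internal duality 4}})$ applied to $(X,Y)$ and to $(Y^{\vee},X^{\vee})$, also force the internal duality morphisms $d$ to be isomorphisms. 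Hence each $f$ is an isomorphism if and only if the corresponding $D$ is, which completes the argument.

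The only real obstacle is bookkeeping: the two instantiations of Theorem~\ref{FDP T} needed for each $D$ give split mono and split epi conclusions attached to \emph{different} morphisms, and one must carefully strip away the invertible tensor factors $1_{Y^{\vee\vee}}$ or $1_{Y^{\vee}}$ and the evaluation isomorphisms in order to align both conclusions on the same $D$. No new computation beyond what is already encapsulated in the theorem is needed.
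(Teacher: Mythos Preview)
Your argument is correct and is precisely the unpacking of what the paper means by ``an immediate consequence of Theorem~\ref{FDP T}'' together with Remark~\ref{FPD R1}: the four instantiations of the theorem exhibit each $D$ as both split monic and split epic once the invertible $Y$-factors are stripped off, and the passage to the $f$'s goes through the factorization $D_{\iota_f}=\alpha^{-1}\circ d\circ f$ with $\alpha$ and $d$ invertible by rigidity. One small remark: the invertibility of $d_{X,Y}$ follows most directly from the \emph{first} diagram of~$(\text{\ref{S1 D Internal duality 4}})$ (as the paper notes just before the definition of $D_{\iota_g^{\ast}}$), though your two-step split argument via the second diagram also works.
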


\bigskip

Another important result for us will be the following corollary of Theorem %
\ref{FDP T}. Define the following morphisms%
\[
\begin{array}{ll}
\varphi _{X^{\vee },S^{\vee }}^{13}: & X^{\vee }\otimes Y^{\vee \vee
}\otimes S^{\vee }\otimes Y^{\vee \vee }\overset{1_{X^{\vee }}\otimes \tau
_{Y^{\vee \vee },S^{\vee }}\otimes 1_{Y^{\vee \vee }}}{\rightarrow }X^{\vee
}\otimes S^{\vee }\otimes Y^{\vee \vee }\otimes Y^{\vee \vee } \\ 
& \overset{\varphi _{X^{\vee },S^{\vee }}\otimes 1_{Y^{\vee \vee }\otimes
Y^{\vee \vee }}}{\rightarrow }Y^{\vee }\otimes Y^{\vee \vee }\otimes Y^{\vee
\vee }\text{,} \\ 
\varphi _{X,S}^{13}: & X\otimes Y^{\vee }\otimes S\otimes Y^{\vee }\overset{%
1_{X}\otimes \tau _{Y^{\vee },S}\otimes 1_{Y^{\vee }}}{\rightarrow }X\otimes
S\otimes Y^{\vee }\otimes Y^{\vee }\overset{\varphi _{X,S}\otimes 1_{Y^{\vee
}\otimes Y^{\vee }}}{\rightarrow }Y\otimes Y^{\vee }\otimes Y^{\vee }\text{,}%
\end{array}%
\]
as well as%
\begin{eqnarray*}
&&\varphi _{X^{\vee },S^{\vee }}^{13\rightarrow Y^{\vee \vee }}:X^{\vee
}\otimes Y^{\vee \vee }\otimes S^{\vee }\otimes Y^{\vee \vee }\overset{\varphi _{X^{\vee },S^{\vee }}^{13}}{\rightarrow } 
Y^{\vee }\otimes Y^{\vee \vee }\otimes Y^{\vee \vee }\overset{\ev_{Y^{\vee
}}^{\tau }\otimes 1_{Y^{\vee \vee }}}{\rightarrow }Y^{\vee \vee }\text{,} \\
&&\varphi _{X,S}^{13\rightarrow Y^{\vee }}:X\otimes Y^{\vee }\otimes
S\otimes Y^{\vee }\overset{\varphi _{X,S}^{13}}{\rightarrow } Y\otimes Y^{\vee }\otimes Y^{\vee }\overset{%
\ev_{Y}^{\tau }\otimes 1_{Y^{\vee }}}{\rightarrow }Y^{\vee }\text{.}
\end{eqnarray*}

\begin{corollary}
\label{FDP C2}Suppose that $\left( Cas\right) _{\mu _{S,X}}$ is satisfied
and that $\left( Com\right) _{\lambda _{\left[ S\right] ,\left[ X\right] }}$%
, $\left( Com\right) _{\lambda _{X,S}}$ and $\left( Com\right) _{\lambda
_{X^{\vee },S^{\vee }}}$ are commutative. Then, setting $\mu :=\mu _{S,X}$
and $\lambda :=\lambda _{\left[ S\right] ,\left[ X\right] }\lambda _{X^{\vee
},S^{\vee }}\lambda _{X,S}$, the following diagrams are commutative:%
\begin{equation*}
\xymatrix{ S\otimes X \ar[r]^-{\varphi_{S,X}} \ar[d]|{D_{S,X^{\vee}}\otimes
D_{X,S^{\vee}}} & Y \ar[d]|{\mu\cdot i_{Y}} & S^{\vee}\otimes X^{\vee}
\ar[r]^-{\varphi_{S^{\vee},X^{\vee}}} \ar[d]|{D_{S^{\vee},X}\otimes
D_{X^{\vee},S}} & Y^{\vee} \ar[d]|{\mu} \\ X^{\vee}\otimes
Y^{\vee\vee}\otimes S^{\vee}\otimes Y^{\vee\vee}
\ar[r]^-{\lambda\cdot\varphi _{X^{\vee },S^{\vee }}^{13\rightarrow Y^{\vee\vee }}} & Y^{\vee\vee} &
X\otimes Y^{\vee}\otimes S\otimes Y^{\vee}
\ar[r]^-{\lambda\cdot\varphi _{X,S}^{13\rightarrow Y^{\vee }}} & Y^{\vee}}
\end{equation*}
\end{corollary}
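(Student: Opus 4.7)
The plan is to prove the first diagram in detail; the second then follows by an entirely parallel argument, using the second formula of Lemma~\ref{S1 AIM L3} (which directly expresses $\varphi_{g}$ without any occurrence of $i_Y$) in place of the first formula, and the second diagram of Lemma~\ref{FDP L1} applied under $(Com)_{\lambda_{X^{\vee},S^{\vee}}}$ in place of $(Com)_{\lambda_{X,S}}$. In both cases the strategy is to reduce each side of the identity to a multiple of a common morphism and then to invoke Theorem~\ref{FDP T}.

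First I would treat the left-hand side of the first diagram. Lemma~\ref{S1 AIM L3} applied to $f_{S,X}$ yields $i_Y\circ\varphi_{S,X}=\ev_{13,Y^{\vee\vee}}^{\phi}\circ(D_{S,X^{\vee}}\otimes 1_X)$, and the second commutative square of Lemma~\ref{FDP L1} --- available because $(Com)_{\lambda_{X,S}}$ is assumed --- converts this into $\lambda_{X,S}\cdot(\ev_S^{\tau}\otimes 1_{Y^{\vee\vee}})\circ(1_S\otimes D_{X,S^{\vee}})$. Multiplying by $\mu$, the left-hand side becomes
\[
\mu\cdot i_Y\circ\varphi_{S,X}\;=\;\mu\lambda_{X,S}\cdot (\ev_S^{\tau}\otimes 1_{Y^{\vee\vee}})\circ(1_S\otimes D_{X,S^{\vee}}).
\]

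Next I would rewrite the right-hand side. The second formula of Lemma~\ref{S1 AIM L3}, applied to $g=f_{X^{\vee},S^{\vee}}$ --- whose associated $\iota_g^{\ast}$ satisfies $D_{\iota_g^{\ast}}=D_{X^{\vee},S}$ --- gives $\varphi_{X^{\vee},S^{\vee}}=\ev_{13,Y^{\vee}}^{\tau}\circ(D_{X^{\vee},S}\otimes 1_{S^{\vee}})$. I would substitute this into the definition of $\varphi_{X^{\vee},S^{\vee}}^{13\to Y^{\vee\vee}}$, precompose with $D_{S,X^{\vee}}\otimes D_{X,S^{\vee}}$, and perform a diagram chase exchanging tensor factors by the naturality of the symmetry $\tau$. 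This should identify the right-hand side with
\[
\lambda\cdot (\ev_S^{\tau}\otimes 1_{Y^{\vee\vee}})\circ(\Phi\otimes 1_{S^{\vee}\otimes Y^{\vee\vee}})\circ(1_S\otimes D_{X,S^{\vee}}),
\]
where
\[
\Phi\;:=\;(1_S\otimes\ev_{Y^{\vee}}^{\tau})\circ(D_{X^{\vee},S}\otimes 1_{Y^{\vee\vee}})\circ D_{S,X^{\vee}}\colon S\to S
\]
is precisely the composition appearing in Theorem~\ref{FDP T}. Under the standing hypotheses $(Cas)_{\mu_{S,X}}$, $(Com)_{\lambda_{[S],[X]}}$ and $(Com)_{\lambda_{X^{\vee},S^{\vee}}}$, that theorem asserts $\lambda_{[S],[X]}\lambda_{X^{\vee},S^{\vee}}\cdot\Phi=\mu_{S,X}\cdot 1_S$, so the right-hand side simplifies to $\mu\lambda_{X,S}\cdot(\ev_S^{\tau}\otimes 1_{Y^{\vee\vee}})\circ(1_S\otimes D_{X,S^{\vee}})$, matching the left-hand side.

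The main obstacle will be the rearrangement in the preceding paragraph: one has to permute the four factors of $X^{\vee}\otimes Y^{\vee\vee}\otimes S^{\vee}\otimes Y^{\vee\vee}$ so that the $D_{X^{\vee},S}$ introduced by the substitution aligns with the outer $D_{S,X^{\vee}}$ to form $\Phi$, while the two copies of $Y^{\vee\vee}$ are contracted correctly --- one paired with $Y^{\vee}$ through $\ev_{Y^{\vee}}^{\tau}$, and the other surviving as the codomain. This is a careful but routine diagram chase involving only the functoriality of $\otimes$ and the symmetry constraint, of the same flavour as the chases carried out in the proofs of Proposition~\ref{S1 Casimir P1} and Theorem~\ref{FDP T}.
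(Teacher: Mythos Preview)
Your proof is correct and follows essentially the same route as the paper's: both use Lemma~\ref{S1 AIM L3} twice (once to rewrite $i_Y\circ\varphi_{S,X}$ via $D_{S,X^\vee}$, once to rewrite $\varphi_{X^\vee,S^\vee}$ via $D_{X^\vee,S}$), Lemma~\ref{FDP L1} under $(Com)_{\lambda_{X,S}}$, and then Theorem~\ref{FDP T} to identify the composite $\Phi$. The only difference is organizational: the paper packages all of this into a single large diagram with labeled regions $(A)$--$(D)$ and $(\tau)$, whereas you compute the two sides separately and match them; your factorization through $\Phi\otimes 1_{S^\vee\otimes Y^{\vee\vee}}$ is exactly the content of the paper's region~$(C)$, and the ``routine diagram chase'' you describe is the paper's regions $(\tau)$ and $(D)$.
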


\begin{proof}
Suppose that $\left( Cas\right) _{\mu _{S,X}}$ is satisfied and that $\left(
Com\right) _{\lambda _{\left[ S\right] ,\left[ X\right] }}$, $\left(
Com\right) _{\lambda _{X,S}}$ and $\left( Com\right) _{\lambda _{X^{\vee
},S^{\vee }}}$ are commutative. Consider the following diagram:%
\begin{equation*}
\xymatrix{ S\otimes X \ar@{}[rd]|(0.4){(A)}
\ar[r]^(0.4){D_{S,X^{\vee}}\otimes1_{X}} \ar[d]|{1_{S}\otimes
D_{X,S^{\vee}}} & X^{\vee}\otimes Y^{\vee\vee}\otimes X
\ar[r]^(0.6){\mu_{S,X}\cdot \ev_{13,Y^{\vee\vee}}^{\phi}} & Y^{\vee\vee} & \\
S\otimes S^{\vee}\otimes Y^{\vee\vee} \ar@{}[drr]|{(C)}
\ar[urr]|{\mu_{S,X}\lambda_{X,S}\cdot \ev_{S}^{\tau}\otimes1_{Y^{\vee\vee}}}
\ar[rr]^{\mu_{S,X}} \ar[d]|{D_{S,X^{\vee}}\otimes1_{S^{\vee}\otimes
Y^{\vee\vee}}} & & S\otimes S^{\vee}\otimes Y^{\vee\vee}
\ar@{}[ul]|(0.4){(B)} \ar[u]|{\lambda_{X,S}\cdot
\ev_{S}^{\tau}\otimes1_{Y^{\vee\vee}}} & \\ X^{\vee}\otimes
Y^{\vee\vee}\otimes S^{\vee}\otimes Y^{\vee\vee} \ar@{}[drr]|{(\tau)}
\ar[rr]^{D_{X^{\vee},S}\otimes1_{Y^{\vee\vee}\otimes S^{\vee}\otimes
Y^{\vee\vee}}}
\ar[d]|{1_{X^{\vee}}\otimes\tau_{Y^{\vee\vee},S^{\vee}}\otimes1_{Y^{\vee%
\vee}}} & & S\otimes Y^{\vee}\otimes Y^{\vee\vee}\otimes S^{\vee}\otimes
Y^{\vee\vee} \ar@{}[ur]|(0.3){(D)}
\ar[u]|{\lambda_{\left[S\right],\left[X\right]}\lambda_{X^{\vee},S^{\vee}}%
\cdot1_{S}\otimes \ev_{Y^{\vee}}^{\tau}\otimes1_{S^{\vee}\otimes
Y^{\vee\vee}}} \ar[rd]^{\ev_{14}^{\tau}} \ar[d]|{1_{S\otimes
Y^{\vee}}\otimes\tau_{Y^{\vee\vee},S^{\vee}}\otimes1_{Y^{\vee\vee}}} & \\
X^{\vee}\otimes S^{\vee}\otimes Y^{\vee\vee}\otimes Y^{\vee\vee}
\ar[rr]^{D_{X^{\vee},S}\otimes1_{S^{\vee}\otimes Y^{\vee\vee}\otimes
Y^{\vee\vee}}} & & S\otimes Y^{\vee}\otimes S^{\vee}\otimes
Y^{\vee\vee}\otimes Y^{\vee\vee} \ar@{}[ur]|(0.3){(D)}
\ar[r]^(0.57){\ev_{13}^{\tau}} & Y^{\vee}\otimes Y^{\vee\vee}\otimes
Y^{\vee\vee}
\ar@/_{1pc}/[uuul]|{\lambda_{\left[S\right],\left[X\right]}\lambda_{X^{%
\vee},S^{\vee}}\lambda_{X,S}\cdot
\ev_{Y^{\vee}}^{\tau}\otimes1_{Y^{\vee\vee}}}}
\end{equation*}%
The region $\left( A\right) $ is commutative by Lemma \ref{FDP L1}, the
commutativity of $\left( B\right) $ is clear, $\left( C\right) $ is
commutative by Theorem \ref{FDP T} and $\left( D\right) $ by definition of
the evaluation maps (we have written $\ev_{14}^{\tau }:=\ev_{14,Y^{\vee
}\otimes Y^{\vee \vee }\otimes Y^{\vee \vee }}^{\tau }$ for shortness and
similarly for $\ev_{13}^{\tau }$). Recalling that we have, by definition, $%
D_{S,X^{\vee }}=D_{\iota _{S,X}}$ and $D_{X^{\vee },S}=D_{\iota _{X^{\vee
},S^{\vee }}^{\ast }}$, it follows from Lemma \ref{S1 AIM L3} we have $%
i_{Y}\circ \varphi _{S,X}=\ev_{13,Y^{\vee \vee }}^{\phi }\circ \left(
D_{\iota _{S,X}}\otimes 1_{X}\right) $ and $\varphi _{X^{\vee },S^{\vee
}}\otimes 1_{Y^{\vee \vee }\otimes Y^{\vee \vee }}=\ev_{13,Y^{\vee }\otimes
Y^{\vee \vee }\otimes Y^{\vee \vee }}^{\tau }\circ \left( D_{X^{\vee
},S}\otimes 1_{S^{\vee }\otimes Y^{\vee \vee }\otimes Y^{\vee \vee }}\right) 
$. Hence the commutative diagram gives the claimed equality:%
\begin{eqnarray*}
&&\mu _{S,X}\cdot i_{Y}\circ \varphi _{S,X}=\lambda _{\left[ S\right] ,\left[
X\right] }\lambda _{X^{\vee },S^{\vee }}\lambda _{X,S}\cdot \left(
\ev_{Y^{\vee }}^{\tau }\otimes 1_{Y^{\vee \vee }}\right) \circ \left( \varphi
_{X^{\vee },S^{\vee }}\otimes 1_{Y^{\vee \vee }\otimes Y^{\vee \vee }}\right)
\\
&&\text{ }\circ \left( 1_{X^{\vee }}\otimes \tau _{Y^{\vee \vee },S^{\vee
}}\otimes 1_{Y^{\vee \vee }}\right) \circ \left( D_{S,X^{\vee }}\otimes
D_{X,S^{\vee }}\right) =\lambda _{\left[ S\right] ,\left[ X\right] }\lambda
_{X^{\vee },S^{\vee }}\lambda _{X,S}\cdot \varphi _{X^{\vee },S^{\vee
}}^{13\rightarrow Y^{\vee \vee }}\circ \left( D_{S,X^{\vee }}\otimes D_{X,S^{\vee }}\right) \text{.}
\end{eqnarray*}%
The commutativity of the other diagram is proved in a similar way.
\end{proof}

\bigskip

\begin{corollary}
\label{FDP C3}Suppose that $\left( Cas\right) _{\mu _{S,X}}$ is satisfied,
that $\left( Com\right) _{\lambda _{\left[ S\right] ,\left[ X\right] }}$, $%
\left( Com\right) _{\lambda _{X,S}}$ and $\left( Com\right) _{\lambda
_{X^{\vee },S^{\vee }}}$ are commutative and that $Y$ is invertible of rank $%
r_{Y}$ (so that $r_{Y}\in \left\{ \pm 1\right\} $). For every morphism $%
g:A\rightarrow \hom \left( Y^{\vee },B\right) $ and $h:C\rightarrow \hom
\left( Y,D\right) $ the following diagrams are commutative, where $\mu $ and 
$\lambda $ are as in Corollary \ref{FDP C2}:%
\begin{equation*}
\xymatrix{ A\otimes S\otimes X \ar[d]|{1_{A}\otimes D_{S,X^{\vee}}\otimes D_{X,S^{\vee}}} \ar[r]^-{D_{g}\otimes\varphi_{S,X}} & B\otimes Y^{\vee\vee}\otimes Y \ar[dd]|{\mu\cdot1_{B\otimes Y^{\vee\vee}}\otimes i_{Y}} & C\otimes S^{\vee}\otimes X^{\vee} \ar[d]|{1_{C}\otimes D_{S^{\vee},X}\otimes D_{X^{\vee},S}} \ar[r]^-{D_{h}\otimes\varphi_{S^{\vee},X^{\vee}}} & D\otimes Y^{\vee}\otimes Y^{\vee} \ar[dd]|{\mu} \\ A\otimes X^{\vee}\otimes Y^{\vee\vee}\otimes S^{\vee}\otimes Y^{\vee\vee} \ar[d]|{1_{A}\otimes\varphi_{X^{\vee},S^{\vee}}^{13}} & & C\otimes X\otimes Y^{\vee}\otimes S\otimes Y^{\vee} \ar[d]|{1_{C}\otimes\varphi_{X,S}^{13}} & \\ A\otimes Y^{\vee}\otimes Y^{\vee\vee}\otimes Y^{\vee\vee} \ar[r]^-{\lambda r_{Y}\cdot\varphi_{g}\otimes1_{Y^{\vee\vee}\otimes Y^{\vee\vee}}} & B\otimes Y^{\vee\vee}\otimes Y^{\vee\vee}, & C\otimes Y\otimes Y^{\vee}\otimes Y^{\vee} \ar[r]^-{\lambda r_{Y}\cdot\varphi_{h}\otimes1_{Y^{\vee}\otimes Y^{\vee}}} & D\otimes Y^{\vee}\otimes Y^{\vee}. }
\end{equation*}
\end{corollary}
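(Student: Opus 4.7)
The strategy is to deduce the two diagrams of Corollary \ref{FDP C3} from those of Corollary \ref{FDP C2} by tensoring on the left with $D_g$ (resp.\ $D_h$) and then exploiting the invertibility of $Y$ to trade $D_g$ for $\varphi_g$ (resp.\ $D_h$ for $\varphi_h$). I describe the argument for the first diagram only; the second follows in the same way, using the second diagram of Corollary \ref{FDP C2} together with the relation $\varphi_h=(1_D\otimes \ev_Y)\circ(D_h\otimes 1_Y)$ coming from the defining property of $\alpha_{Y,D}$.

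By functoriality of $\otimes$, the top--right composition of the target diagram equals $D_g\otimes(\mu\cdot i_Y\circ\varphi_{S,X})$, and rewriting the second tensor factor by means of the first diagram of Corollary \ref{FDP C2} turns it into
\[
\lambda\cdot(D_g\otimes\varphi_{X^{\vee},S^{\vee}}^{13\rightarrow Y^{\vee\vee}})\circ(1_A\otimes D_{S,X^{\vee}}\otimes D_{X,S^{\vee}}).
\]
The left--bottom composition equals
\[
\lambda r_Y\cdot(\varphi_g\otimes 1_{Y^{\vee\vee}\otimes Y^{\vee\vee}})\circ(1_A\otimes\varphi_{X^{\vee},S^{\vee}}^{13})\circ(1_A\otimes D_{S,X^{\vee}}\otimes D_{X,S^{\vee}}).
\]
Both compositions end (on the right) with the same factor $1_A\otimes D_{S,X^{\vee}}\otimes D_{X,S^{\vee}}$, so it suffices to show
\[
D_g\otimes\varphi_{X^{\vee},S^{\vee}}^{13\rightarrow Y^{\vee\vee}}=r_Y\cdot(\varphi_g\otimes 1_{Y^{\vee\vee}\otimes Y^{\vee\vee}})\circ(1_A\otimes\varphi_{X^{\vee},S^{\vee}}^{13}).
\]
Using $\varphi_{X^{\vee},S^{\vee}}^{13\rightarrow Y^{\vee\vee}}=(\ev_{Y^{\vee}}^{\tau}\otimes 1_{Y^{\vee\vee}})\circ\varphi_{X^{\vee},S^{\vee}}^{13}$ to factor the left-hand side through $1_A\otimes\varphi_{X^{\vee},S^{\vee}}^{13}$ as well, this reduces to the equality
\[
D_g\otimes(\ev_{Y^{\vee}}^{\tau}\otimes 1_{Y^{\vee\vee}})=r_Y\cdot \varphi_g\otimes 1_{Y^{\vee\vee}\otimes Y^{\vee\vee}}
\]
as morphisms $A\otimes Y^{\vee}\otimes Y^{\vee\vee}\otimes Y^{\vee\vee}\to B\otimes Y^{\vee\vee}\otimes Y^{\vee\vee}$.

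To verify this last identity one uses the elementary relation $\varphi_g=(1_B\otimes \ev_{Y^{\vee}})\circ(D_g\otimes 1_{Y^{\vee}})$, which follows from $D_g=\alpha_{Y^{\vee},B}^{-1}\circ g$ together with the characterizing property of $\alpha_{Y^{\vee},B}$. Both sides then factor through the common morphism $D_g\otimes 1_{Y^{\vee}\otimes Y^{\vee\vee}\otimes Y^{\vee\vee}}$, and by biadditivity of $\otimes$ the whole statement collapses, after tensoring with a further $1_{Y^{\vee\vee}}$ on the right, to the core identity
\[
1_{Y^{\vee\vee}}\otimes \ev_{Y^{\vee}}^{\tau}=r_Y\cdot \ev_{Y^{\vee}}\otimes 1_{Y^{\vee\vee}}\quad\text{as maps }Y^{\vee\vee}\otimes Y^{\vee}\otimes Y^{\vee\vee}\to Y^{\vee\vee}
\]
for $Y$ invertible of rank $r_Y\in\{\pm 1\}$. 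I expect this to be the one substantive technical step: it captures the fact that for an invertible object the ``left'' and ``right'' contractions against the dual differ precisely by the rank. It can be derived by combining Lemma \ref{S1 Casimir L Properties}~(2) applied to $X=Y^{\vee}$ (giving $1_{Y^{\vee\vee}}=(\ev_{Y^{\vee}}\otimes 1_{Y^{\vee\vee}})\circ(1_{Y^{\vee\vee}}\otimes C_{Y^{\vee}})$) with the defining relation $\ev_{Y^{\vee}}^{\tau}\circ C_{Y^{\vee}}=r_Y$ in $End(\mathbb{I})$ that characterizes the rank of an invertible object, the sign discrepancy arising from the interaction of $\tau_{Y^{\vee},Y^{\vee\vee}}$ with the Casimir.
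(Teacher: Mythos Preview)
Your proof is correct and follows essentially the same route as the paper: reduce via Corollary~\ref{FDP C2}, then use invertibility of $Y$ to pass between $D_g$ and $\varphi_g$, the key input being $C_{Y^{\vee}}=r_Y\cdot(\ev_{Y^{\vee}}^{\tau})^{-1}$. The only cosmetic difference is that the paper packages this as a single diagram and uses the Casimir formula $D_g=(\varphi_g\otimes 1_{Y^{\vee\vee}})\circ(1_A\otimes C_{Y^{\vee}})$ from \eqref{S1 Casimir D DefD_f}, whereas you use the dual relation $\varphi_g=(1_B\otimes\ev_{Y^{\vee}})\circ(D_g\otimes 1_{Y^{\vee}})$; your ``core identity'' $1_{Y^{\vee\vee}}\otimes\ev_{Y^{\vee}}^{\tau}=r_Y\cdot\ev_{Y^{\vee}}\otimes 1_{Y^{\vee\vee}}$ follows immediately by substituting $C_{Y^{\vee}}=r_Y\cdot(\ev_{Y^{\vee}}^{\tau})^{-1}$ into Lemma~\ref{S1 Casimir L Properties}(2) and composing with $1_{Y^{\vee\vee}}\otimes\ev_{Y^{\vee}}^{\tau}$, with no additional $\tau$-manipulation needed.
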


\begin{proof}
Consider the following diagram:%
\begin{equation*}
\xymatrix{ A\otimes S\otimes X \ar[r]^-{1_{A}\otimes\varphi_{S,X}} \ar[d]_{1_{A}\otimes D_{S,X^{\vee}}\otimes D_{X,S^{\vee}}} & A\otimes Y \ar[rr]^-{D_{g}\otimes1_{Y}} \ar[dd]|{\mu\cdot1_{A}\otimes i_{Y}} \ar[dr]|-{1_{A}\otimes C_{Y^{\vee}}\otimes1_{Y}} & & B\otimes Y^{\vee\vee}\otimes Y \ar[d]^{\mu\cdot1_{B\otimes Y^{\vee\vee}}\otimes i_{Y}} \\ A\otimes X^{\vee}\otimes Y^{\vee\vee}\otimes S^{\vee}\otimes Y^{\vee\vee} \ar@{}[r]|(0.67){(A)} \ar[d]_{1_{A}\otimes\varphi_{X^{\vee},S^{\vee}}^{13}} & & A\otimes Y^{\vee}\otimes Y^{\vee\vee}\otimes Y \ar@{}[u]|{(C)} \ar@{}[r]|{(\otimes)} \ar[ur]|-{\varphi_{g}\otimes1_{Y^{\vee\vee}\otimes Y}} \ar[dr]|-{\mu\cdot1_{A\otimes Y^{\vee}\otimes Y^{\vee\vee}}\otimes i_{Y}} & B\otimes Y^{\vee\vee}\otimes Y^{\vee\vee} \\  A\otimes Y^{\vee}\otimes Y^{\vee\vee}\otimes Y^{\vee\vee} \ar[r]^-{\lambda\cdot1_{A}\otimes \ev_{Y^{\vee}}^{\tau}\otimes1_{Y^{\vee\vee}}} & A\otimes Y^{\vee\vee} \ar@{}[ur]|(0.6){(B)} \ar[rr]^-{r_{Y}\cdot1_{A}\otimes\left(\ev_{Y^{\vee}}^{\tau}\right)^{-1}\otimes1_{Y^{\vee\vee}}} & & A\otimes Y^{\vee}\otimes Y^{\vee\vee}\otimes Y^{\vee\vee}. \ar[u]_{\varphi_{g}\otimes1_{Y^{\vee\vee}\otimes Y^{\vee\vee}}} }
\end{equation*}%
The region $\left( A\right) $ is commutative by Corollary \ref{FDP C2}. The
region $\left( B\right) $ because $r_{Y}=r_{Y^{\vee }}:=\ev_{Y^{\vee }}^{\tau
}\circ C_{Y^{\vee }}$, implying that $C_{Y^{\vee }}=r_{Y}\cdot \left(
\ev_{Y^{\vee }}^{\tau }\right) ^{-1}$ and the functoriality of $\otimes $.
Finally $\left( C\right) $ is commutative by $\left( \text{\ref{S1 Casimir D
DefD_f}}\right) $. The commutativity of the first diagram follows and the
commutativity of the second diagram is proved in the same way.
\end{proof}

\section{Application to $\Delta $-graded algebras in $\mathcal{C}$}

If $\Delta $ is a commutative integral\footnote{%
Integrality means that we may left (and right) simplify.} semigroup, a $%
\Delta $-graded algebra in $\mathcal{C}$\ is a family $A=\left(
A_{i},\varphi _{i,j}^{A}\right) _{i,j\in \Delta }$ where $\varphi
_{i,j}=\varphi _{i,j}^{A}:A_{i}\otimes A_{j}\rightarrow A_{i+j}$ are
morphisms making the following diagrams commutative:%
\begin{equation}
\xymatrix{ A_{i}\otimes A_{j}\otimes A_{k} \ar[r]^-{1_{A_{i}}\otimes\varphi_{j,k}} \ar[d]_{\varphi_{i,j}\otimes1_{A_{k}}} & A_{i}\otimes A_{j+k} \ar[d]^{\varphi_{i,j+k}} \\ A_{i+j}\otimes A_{k} \ar[r]^-{\varphi_{i+j,k}} & A_{i+j+k}\text{.} }
\label{S1 Algebras D1}
\end{equation}%
There is an obvious notion of morphisms of $\Delta $-graded algebras and of
direct sum decomposition, given component-wise. If $i,j\in \Delta $, we
write $j\geq i$ to mean that $\exists \left( j-i\right) \in \Delta $ (unique
because $\Delta $ is integral)\ such that $\left( j-i\right) +i=j$.

If $j\geq i$, we have $\varphi _{i,j-i}=\varphi _{f_{i,j-i}}:A_{i}\otimes
A_{j-i}\rightarrow A_{j}$, where $f_{i,j-i}=f_{i,j-i}^{A}:A_{i}\rightarrow
\hom \left( A_{j-i},A_{j}\right) $, so that we may consider the associated
internal multiplication morphism%
\begin{equation*}
\iota _{i,j}:=\iota _{f_{i,j-i}}:A_{i}\rightarrow \hom \left( A_{j}^{\vee
},A_{j-i}^{\vee }\right) \text{ corresponding to }\varphi _{\iota
_{i,j}}:A_{i}\otimes A_{j}^{\vee }\rightarrow A_{j-i}^{\vee }\text{.}
\end{equation*}

\bigskip

Suppose that, for every $i\in \Delta $, we have given a biproduct
decomposition $A_{i}=A_{i}^{+}\oplus A_{i}^{-}$ and write $i_{W}^{\pm }$, $%
p_{W}^{\pm }$ and $e_{W}^{\pm }$ as usual for $W=A_{i}$ or $A_{i}^{\vee }$
and we also set $i_{i}^{\pm }=i_{W}^{\pm }$, $p_{i}^{\pm }=p_{W}^{\pm }$ and 
$e_{i}^{\pm }=e_{W}^{\pm }$ when $W=A_{i}$ and $p_{i,\vee }^{\pm
}=p_{W}^{\pm }$ when $W=A_{i}^{\vee }$. We make a choice $\varepsilon
:\Delta \rightarrow \left\{ \pm \right\} $ of factors for every $i\in \Delta 
$ that we may assume, without loss of generality, to be given by the
constant function $\varepsilon _{i}=+$.

Next we consider%
\begin{eqnarray*}
&&f_{i,j}^{+}:A_{i}^{+}\overset{i_{i}^{+}}{\rightarrow }A_{i}\overset{f_{i,j}%
}{\rightarrow }\hom \left( A_{j},A_{i+j}\right) \overset{p_{\hom \left(
A_{j}^{+},A_{i+j}^{+}\right) }}{\rightarrow }\hom \left(
A_{j}^{+},A_{i+j}^{+}\right) \text{,} \\
&&\varphi _{i,j}^{+}:A_{i}^{+}\otimes A_{j}^{+}\overset{i_{i}^{+}\otimes
i_{j}^{+}}{\rightarrow }A_{i}\otimes A_{j}\overset{\varphi _{i,j}}{%
\rightarrow }A_{i+j}\overset{p_{i+j}^{+}}{\rightarrow }A_{i+j}^{+}
\end{eqnarray*}%
and, for $j\geq i$,%
\begin{eqnarray*}
&&\iota _{i,j}^{+}:A_{i}^{+}\overset{i_{i}^{+}}{\rightarrow }A_{i}\overset{%
\iota _{i,j}}{\rightarrow }\hom \left( A_{j}^{\vee },A_{j-i}^{\vee }\right) 
\overset{p_{\hom \left( A_{j}^{+\vee },A_{j-i}^{+\vee }\right) }}{%
\rightarrow }\hom \left( A_{j}^{+\vee },A_{j-i}^{+\vee }\right) \text{ and}
\\
&&\varphi _{\iota _{i,j}}^{+}:A_{i}^{+}\otimes A_{j}^{+}\overset{%
i_{i}^{+}\otimes i_{j}^{+}}{\rightarrow }A_{i}\otimes A_{j}^{\vee }\overset{%
\varphi _{\iota _{i,j}}}{\rightarrow }A_{j-i}^{\vee }\overset{p_{j-i,\vee
}^{+}}{\rightarrow }A_{j-i}^{+\vee }\text{.}
\end{eqnarray*}

The following result is a restatement of Lemma \ref{S1 AIM L1}\ and\
Proposition \ref{S1 AIM P1}.
\begin{proposition}
\label{S1 Algebras P1}The following diagrams are commutative.

\begin{enumerate}
\item[$\left( 1\right) $] When $j\geq i$%
\begin{equation*}
\xymatrix@C=100pt{ A_{i}\otimes A_{j-i}\otimes A_{j}^{\vee} \ar[r]^-{(1_{A_{j-i}}\otimes\varphi_{\iota_{i,j}})\circ(\tau_{A_{i},A_{j-i}}\otimes1_{A_{j}^{\vee}})} \ar[d]_{\varphi_{i,j-i}\otimes1_{A_{j}^{\vee}}} & A_{j-i}\otimes A_{j-i}^{\vee} \ar[d]^{\ev_{A_{j-i}}^{\tau}} \\ A_{j}\otimes A_{j}^{\vee} \ar[r]^-{\ev_{A_{j}}^{\tau}} & \mathbb{I}\text{.} }
\end{equation*}

\item[$\left( 2\right) $] When $k\geq i$ and $k-i\geq j$,%
\begin{equation*}
\xymatrix{ A_{j}\otimes A_{i}\otimes A_{k}^{\vee} \ar[r]^-{1_{A_{j}}\otimes\varphi_{\iota_{i,k}}} \ar[d]|{\varphi_{i,j}^{\tau}\otimes1_{A_{k}^{\vee}}} & A_{j}\otimes A_{k-i}^{\vee} \ar[d]|{\varphi_{\iota_{j,k-i}}} & A_{i}\otimes A_{j} \ar[r]^-{\iota_{i,k}\otimes\iota_{j,k-i}} \ar[d]|{\varphi_{i,j}} & \hom\left(A_{k}^{\vee},A_{k-i}^{\vee}\right)\otimes\hom\left(A_{k-i}^{\vee},A_{k-i-j}^{\vee}\right) \ar[d]|{c_{A_{k}^{\vee},A_{k-i}^{\vee},A_{k-i-j}^{\vee}}^{\tau}} \\ A_{i+j}\otimes A_{k}^{\vee} \ar[r]^-{\varphi_{\iota_{i+j,k}}} & A_{k-i-j}^{\vee} & A_{i+j} \ar[r]^-{\iota_{i+j,k}} & \hom\left(A_{k}^{\vee},A_{k-i-j}^{\vee}\right)\text{,}}
\end{equation*}%
where%
\begin{equation*}
\varphi _{i,j}^{\tau }:A_{j}\otimes A_{i}\overset{\tau _{A_{j},A_{i}}}{%
\rightarrow }A_{i}\otimes A_{j}\overset{\varphi _{i,j}}{\rightarrow }A_{i+j}%
\text{.}
\end{equation*}
\end{enumerate}

Suppose that $e_{i+j}^{+}\circ \varphi _{i,j}\circ \left( e_{i}^{+}\otimes
1_{A_{j}}\right) =e_{i+j}^{+}\circ \varphi _{i,j}$ and $e_{i+j}^{+}\circ
\varphi _{i,j}\circ \left( 1_{A_{i}}\otimes e_{j}^{+}\right)
=e_{i+j}^{+}\circ \varphi _{i,j}$. Then $A^{+}:=\left( A_{i}^{+},\varphi
_{i,j}^{+}\right) _{i,j\in \Delta }$ is a $\Delta $-graded algebra in $%
\mathcal{C}$, we have $\varphi _{f_{i,j}^{+}}=\varphi _{i,j}^{+}$, the
internal multiplication morphisms $\iota _{f_{i,j}^{+}}$\ associated to this
algebra structure are explicitly given by $\iota _{f_{i,j}^{+}}=\iota
_{i,j}^{+}$ and $\varphi _{\iota _{f_{i,j}^{+}}}=\varphi _{\iota
_{i,j}^{+}}=\varphi _{\iota _{i,j}}^{+}$ and we have the analogue of the
above commutative diagrams with the $+$ sign inserted. Furthermore, we have
a biproduct decomposition $A=A^{+}\oplus A^{-}$ as $\Delta $-graded
algebras,\ where $A^{-}:=\left( A_{i}^{-},\varphi _{i,j}^{-}\right) _{i,j\in
\Delta }$ satisfies the analogue results.
\end{proposition}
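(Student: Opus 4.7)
My strategy is to reduce all three parts to the abstract results already developed in Section 2.1, as the statement itself indicates. For part $(1)$, observe that by definition $\iota_{i,j} = \iota_{f_{i,j-i}}$ where $f_{i,j-i}:A_i \to \hom(A_{j-i},A_j)$, so the square in question is exactly the characterising diagram $\left(\text{\ref{S1 AIM D1}}\right)$ for $\varphi_{\iota_f}$ unwound with this choice of $f$. Nothing more is needed beyond relabeling.

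For part $(2)$, I would invoke implication $\left(\text{\ref{S1 AIM Implication1}}\right)$ with the matching $S = A_j$, $T = A_i$, $U = A_{i+j}$, $X = A_{k-i-j}$, $Y = A_{k-i}$, $Z = A_k$, together with $f = f_{j,k-i-j}$, $g = f_{i,k-i}$, $h = f_{i+j,k-i-j}$ and $\varphi_k = \varphi_{i,j}$. Under this dictionary one checks that $\iota_f = \iota_{j,k-i}$, $\iota_g = \iota_{i,k}$, $\iota_h = \iota_{i+j,k}$ and $\varphi_k^\tau = \varphi_{i,j}^\tau$, while diagram $\left(\text{\ref{S1 AIM D2}}\right)$ becomes precisely the associativity axiom $\left(\text{\ref{S1 Algebras D1}}\right)$ for the triple of indices $(i,j,k-i-j)$, hence is commutative. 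Implication $\left(\text{\ref{S1 AIM Implication1}}\right)$ then produces the commutativity of $\left(\text{\ref{S1 AIM D3}}\right)$, which in our notation is exactly the pair of diagrams of part $(2)$.

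For part $(3)$, the identifications $\varphi_{f_{i,j}^+} = \varphi_{i,j}^+$ and $\iota_{f_{i,j}^+} = \iota_{i,j}^+$ are instances of Lemma \ref{S1 AIM L1} applied to $f = f_{i,j-i}$ with all superscripts $+$. To obtain the $+$-analogues of the diagrams in parts $(1)$ and $(2)$, I would apply Proposition \ref{S1 AIM P1} to the same matching: the two hypotheses there, namely $e_Z^+ \circ \varphi_h \circ (e_U^+ \otimes 1_X) = e_Z^+ \circ \varphi_h$ and $e_Z^+ \circ \varphi_g \circ (1_T \otimes e_Y^+) = e_Z^+ \circ \varphi_g$, specialise here to the stated hypotheses applied to the pairs $(i+j,k-i-j)$ and $(i,k-i)$ respectively. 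Proposition \ref{S1 AIM P1} then yields the commutativity of $\left(\text{\ref{S1 AIM D4}}\right)$, $\left(\text{\ref{S1 AIM D5}}\right)$ and $\left(\text{\ref{S1 AIM D6}}\right)$, which are respectively the $+$-analogue of part $(1)$, the associativity diagram $\left(\text{\ref{S1 Algebras D1}}\right)$ for $A^+$ (so that $A^+$ is indeed a $\Delta$-graded algebra), and the $+$-analogue of part $(2)$. The corresponding statements for $A^-$ follow from the same argument with the roles of $+$ and $-$ exchanged, and the biproduct decomposition $A = A^+ \oplus A^-$ of $\Delta$-graded algebras is then immediate from the component-wise biproduct structure. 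The only genuine obstacle is bookkeeping: one must track the indices carefully so that the dummy objects of Section 2.1 receive the right shifted and dualised assignments, but once the dictionary is set up the proposition is a direct quotation of Lemma \ref{S1 AIM L1} and Proposition \ref{S1 AIM P1}.
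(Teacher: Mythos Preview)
Your proposal is correct and matches the paper's approach exactly: the paper itself gives no proof beyond the sentence ``The following result is a restatement of Lemma \ref{S1 AIM L1} and Proposition \ref{S1 AIM P1}'', and your dictionary $(S,T,U,X,Y,Z)=(A_j,A_i,A_{i+j},A_{k-i-j},A_{k-i},A_k)$ with $f=f_{j,k-i-j}$, $g=f_{i,k-i}$, $h=f_{i+j,k-i-j}$, $\varphi_k=\varphi_{i,j}$ is precisely the specialisation that turns diagram $(\ref{S1 AIM D2})$ into the associativity axiom $(\ref{S1 Algebras D1})$ and $(\ref{S1 AIM D3})$ into part $(2)$. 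Your handling of part $(3)$ via Lemma \ref{S1 AIM L1} and Proposition \ref{S1 AIM P1}, with the hypotheses specialised to the pairs $(i+j,k-i-j)$ and $(i,k-i)$, is likewise the intended reduction.
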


\bigskip

We will assume from now on that we have given $\Delta $-graded algebras $%
A=\left( A_{i},\varphi _{i,j}^{A}\right) _{i,j\in \Delta }$ and $A^{\vee
}=\left( A_{i}^{\vee },\varphi _{i,j}^{A^{\vee }}\right) _{i,j\in \Delta }$
and $\mathcal{C}$ is rigid. Then we define a $\Delta
\times \Delta $-graded family $A\otimes A^{\vee }:=\left( A_{j}^{i},\varphi
_{j,l}^{i,k}\right) _{\left( i,j\right) ,\left( k,l\right) \in \Delta \times
\Delta }$ by the rule $A_{j}^{i}:=A_{i}\otimes A_{j}^{\vee }$ and%
\begin{equation*}
\varphi _{j,l}^{i,k}:=\varphi _{i,k}^{A}\otimes _{\epsilon }\varphi
_{j,l}^{A^{\vee }}:A_{j}^{i}\otimes A_{l}^{k}\rightarrow A_{j+l}^{i+k}\text{%
, associated to }f_{j,l}^{i,k}:=f_{i,k}^{A}\otimes _{\epsilon
}f_{j,l}^{A^{\vee }}:A_{j}^{i}\rightarrow \hom \left(
A_{l}^{k},A_{j+l}^{i+k}\right)
\end{equation*}%
by Lemma \ref{S1 Casimir Ltensor1}. It is easily checked that $A\otimes
A^{\vee }$ is indeed a $\Delta \times \Delta $-graded algebra.

Next we define, when $l\geq i$ and $k\geq j$,%
\begin{eqnarray*}
&&\delta _{i,l}^{A}:=\varphi _{\iota _{i,l}}=\varphi _{\iota
_{f_{i,l-i}^{A}}}:A_{i}\otimes A_{l}^{\vee }\rightarrow A_{l-i}^{\vee }\text{%
,} \\
&&\delta _{j,k}^{A^{\vee }}:=\varphi _{\iota _{f_{j,k-j}^{A^{\vee }}}^{\ast
}}:A_{j}^{\vee }\otimes A_{k}\rightarrow A_{k-j}
\end{eqnarray*}
and%
\begin{equation*}
\delta _{j,l}^{i,k}:=\delta _{j,k}^{A^{\vee }}\otimes _{\epsilon }^{\tau
}\delta _{i,l}^{A}:A_{j}^{i}\otimes A_{l}^{k}\rightarrow A_{l-i}^{k-j}\text{%
, associated to }\iota _{j,l}^{i,k}:=\iota _{f_{j,k-j}^{A^{\vee }}}^{\ast
}\otimes _{\epsilon }^{\tau }\iota _{f_{i,l-i}^{A}}:A_{j}^{i}\rightarrow
\hom \left( A_{l}^{k},A_{l-i}^{k-j}\right)
\end{equation*}%
by Lemma \ref{S1 Casimir Ltensor1}.

Applying Proposition \ref{S1 Algebras P1} to $A\otimes A^{\vee }$ we find,
thanks to Corollary \ref{S1 Casimir R} and $\left( \text{\ref{S1 AIM D1 reflexivity 2}}\right) $, 
the following result.

\begin{corollary}
\label{S1 Algebras C1}The following diagrams are commutative.

\begin{enumerate}
\item[$\left( 1\right) $] When $l\geq i$ and $k\geq j$,%
\begin{equation*}
\xymatrix@C=120pt{ A_{j}^{i}\otimes A_{k-j}^{l-i}\otimes A_{l}^{k} \ar[r]^-{(1_{A_{k-j}^{l-i}}\otimes\delta_{j,l}^{i,k})\circ(\tau_{A_{j}^{i},A_{k-j}^{l-i}}\otimes1_{A_{l}^{k}})} \ar[d]_{\varphi_{j,k-j}^{i,l-i}\otimes1_{A_{l}^{k}}} & A_{k-j}^{l-i}\otimes A_{l-i}^{k-j} \ar[d]^{\ev_{14,23}^{\tau,\phi}} \\ A_{k}^{l}\otimes A_{l}^{k} \ar[r]^-{\ev_{14,23}^{\tau,\phi}} & \mathbb{I}\text{.} }
\end{equation*}

\item[$\left( 2\right) $] When $n\geq i$, $m\geq j$, $n-i\geq k$ and $%
m-j\geq l$,%
\begin{equation*}
\xymatrix@C=33pt{ A_{l}^{k}\otimes A_{j}^{i}\otimes A_{n}^{m} \ar[r]^-{1_{A_{l}^{k}}\otimes\delta_{j,n}^{i,m}} \ar[d]|{\varphi_{j,l}^{i,k,\tau}\otimes1_{A_{n}^{m}}} & A_{l}^{k}\otimes A_{n-i}^{m-j} \ar[d]|{\delta_{l,n-i}^{k,m-j}} & A_{j}^{i}\otimes A_{l}^{k} \ar[r]^-{\iota_{j,n}^{i,m}\otimes\iota_{l,n-i}^{k,m-j}} \ar[d]|{\varphi_{j,l}^{i,k}} & \hom\left(A_{n}^{m},A_{n-i}^{m-j}\right)\otimes\hom\left(A_{n-i}^{m-j},A_{n-i-k}^{m-j-l}\right) \ar[d]|{c_{A_{n}^{m},A_{n-i}^{m-j},A_{n-i-k}^{m-j-l}}^{\tau}} \\ A_{j+l}^{i+k}\otimes A_{n}^{m} \ar[r]^-{\delta_{j+l,n}^{i+k,m}} & A_{n-i-k}^{m-j-l} & A_{j+l}^{i+k} \ar[r]^-{\iota_{j+l,n}^{i+k,m}} & \hom\left(A_{n}^{m},A_{n-i-k}^{m-j-l}\right)\text{,}}
\end{equation*}%
where%
\begin{equation*}
\varphi _{j,l}^{i,k,\tau }:A_{l}^{k}\otimes A_{j}^{i}\overset{\tau
_{A_{l}^{k},A_{j}^{i}}}{\rightarrow }A_{j}^{i}\otimes A_{l}^{k}\overset{%
\varphi _{j,l}^{i,k}}{\rightarrow }A_{j+l}^{i+k}\text{.}
\end{equation*}
\end{enumerate}
\end{corollary}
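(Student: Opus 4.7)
The plan is to apply Proposition \ref{S1 Algebras P1} directly to the $\Delta\times\Delta$-graded algebra $A\otimes A^\vee$, whose multiplication morphisms are $\varphi_{j,l}^{i,k}=\varphi_{i,k}^A\otimes_\epsilon\varphi_{j,l}^{A^\vee}$ and are associated to $f_{j,l}^{i,k}=f_{i,k}^A\otimes_\epsilon f_{j,l}^{A^\vee}$ by Lemma \ref{S1 Casimir Ltensor1}. Since the partial order on $\Delta\times\Delta$ is componentwise, the condition $(l,k)\geq(i,j)$ is precisely $l\geq i$ and $k\geq j$, matching the hypotheses of (1), and similarly for (2). Proposition \ref{S1 Algebras P1}, read in this algebra, yields two commutative diagrams of the required combinatorial shape, but phrased in terms of the formal internal multiplication $\iota_{f_{j,k-j}^{i,l-i}}$ of the tensor-product algebra, which takes values in $\hom((A_k^l)^\vee,(A_{k-j}^{l-i})^\vee)$ rather than $\hom(A_l^k,A_{l-i}^{k-j})$.

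The remaining task is thus to rewrite these diagrams into the form stated in the corollary. I would do this in two steps. First, Remark \ref{S1 Casimir R} characterises $\varphi_{\iota_{f_{j,k-j}^{i,l-i}}}$, modulo the canonical $\epsilon$ morphism $X^\vee\otimes Y^\vee\to(X\otimes Y)^\vee$, as $\varphi_{\iota_{f_{i,l-i}^A}}\otimes_\epsilon\varphi_{\iota_{f_{j,k-j}^{A^\vee}}}$, expressing the tensor-algebra internal multiplication as a combination of the two component ones; this still has $\iota_{f_{j,k-j}^{A^\vee}}$ valued in the double duals $A_k^{\vee\vee},A_{k-j}^{\vee\vee}$. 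Second, diagram (\ref{S1 AIM D1 reflexivity 2}) identifies $\varphi_{\iota_g}\circ(1\otimes i_Y)$ with $i_X\circ\varphi_{\iota_g^*}$, so once we invoke reflexivity of $A_k$ and $A_{k-j}$ we may replace $\varphi_{\iota_{f_{j,k-j}^{A^\vee}}}$ by $\delta_{j,k}^{A^\vee}=\varphi_{\iota_{f_{j,k-j}^{A^\vee}}^*}$. Carrying this substitution through, the composite of the $\epsilon$ map with $\varphi_{\iota_{f_{j,k-j}^{i,l-i}}}$ becomes $\delta_{j,l}^{i,k}$, while the source $(A_k^l)^\vee$ gets replaced by $A_l^k$ and the target $(A_{k-j}^{l-i})^\vee$ by $A_{l-i}^{k-j}$.

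Finally, under the same reflexivity identifications, the evaluation map $\ev_{A_k^l}^\tau$ appearing at the bottom-right corner of Proposition \ref{S1 Algebras P1}(1) becomes the four-factor evaluation $\ev_{14,23}^{\tau,\phi}$ of statement (1): this is a direct unravelling, since for $W=W_1\otimes W_2$ the evaluation $\ev_W^\tau$ factors via pairing the first with the third and the second with the fourth factor, the $\tau$-flips being dictated by whether each factor is an $A_\cdot$ or an $A_\cdot^\vee$. The same double rewriting applied to Proposition \ref{S1 Algebras P1}(2) yields statement (2), with the internal composition $c^\tau_{Z^\vee,Y^\vee,X^\vee}$ taking the stated form $c^\tau_{A_n^m,A_{n-i}^{m-j},A_{n-i-k}^{m-j-l}}$. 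I expect no conceptual difficulty here; the main obstacle is purely bookkeeping, namely tracking the $\epsilon$ morphisms, the $\tau$-braidings, and the reflexivity isomorphisms $i_{A_k},i_{A_{k-j}}$ across all components so that the rewriting yields exactly the two diagrams stated, with no stray factor.
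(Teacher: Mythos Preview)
Your proposal is correct and follows exactly the paper's own approach: the paper states that the corollary is obtained by applying Proposition \ref{S1 Algebras P1} to $A\otimes A^{\vee}$ and then invoking Remark \ref{S1 Casimir R} and $(\ref{S1 AIM D1 reflexivity 2})$ to rewrite the internal multiplications in terms of $\delta_{j,l}^{i,k}$, which is precisely your two-step reduction.
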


We define, when $g\geq i$,%
\begin{equation*}
D^{i,g}:=D_{\iota _{f_{i,g-i}^{A}}}:A_{i}\rightarrow A_{g-i}^{\vee }\otimes
A_{g}^{\vee \vee }\text{ and }D_{i,g}:=D_{\iota _{f_{i,g-i}^{A^{\vee
}}}^{\ast }}:A_{i}^{\vee }\rightarrow A_{g-i}\otimes A_{g}^{\vee }\text{.}
\end{equation*}%
We leave to the reader to restate the result of the previous section in this context.

\subsection{Symmetric and alternating algebras}

Suppose now that we have given a rigid and pseudo-abelian object $V\in 
\mathcal{C}$ and that $\mathcal{C}$ is $\mathbb{Q}$-linear. The
associativity constraint of $\mathcal{C}$ implies that we may define an $%
\mathbb{N}$-graded tensor algebra $\otimes ^{\cdot }V$ by the rule $\varphi
_{i,j}^{V,t}:=1_{\otimes ^{i+j}V}:\left( \otimes ^{i}V\right) \otimes \left(
\otimes ^{j}V\right) \rightarrow \otimes ^{i+j}V$.

The permutation group $S_{i}$ acts on $\otimes ^{i}V$ and, for a character $%
\chi $ of $S_{i}$ and a subset $S\subset S_{i}$, we define%
\begin{equation}
e_{S}^{\chi }:=\frac{1}{\#S}\tsum\nolimits_{\chi \in S_{i}}\chi ^{-1}\left(
\sigma \right) \sigma \text{.}  \label{Alternating algebras F idemp}
\end{equation}%
There are exactly two characters of $S_{i}$, namely $\chi =\varepsilon $
(the sign character) and $\chi =1$ (the trivial character), which are
distinct when $i\geq 2$. We let $\wedge ^{i}V$ (resp. $\vee ^{i}V$) the
biproduct factor of $\otimes ^{i}V$\ which corresponds to the idempotent $%
e_{a,V}^{i}:=e_{S_{i}}^{\varepsilon }$ (resp. $e_{s,V}^{i}:=e_{S_{i}}^{1}$%
),which exists because we assume that $V$ is pseudo-abelian. We write $%
i_{V,a}^{i}:\wedge ^{i}V\rightarrow \otimes ^{i}V$ (resp. $i_{V,s}^{i}:\vee
^{i}V\rightarrow \otimes ^{i}V$) and $p_{V,a}^{i}:\otimes ^{i}V\rightarrow
\wedge ^{i}V$ (resp. $p_{V,s}^{i}:\otimes ^{i}V\rightarrow \vee ^{i}V$) for
the associated injective and surjective morphisms. Next we claim that,
setting%
\begin{eqnarray*}
\varphi _{i,j}^{V,a} &:&\wedge ^{i}V\otimes \wedge ^{j}V\overset{%
i_{V,a}^{i}\otimes i_{V,a}^{j}}{\rightarrow }\left( \otimes ^{i}V\right)
\otimes \left( \otimes ^{j}V\right) \overset{\varphi _{i,j}^{V,t}}{%
\rightarrow }\otimes ^{i+j}V\overset{p_{V,a}^{i+j}}{\rightarrow }\wedge
^{i+j}V\text{,} \\
\varphi _{i,j}^{V,s} &:&\vee ^{i}V\otimes \vee ^{j}V\overset{%
i_{V,s}^{i}\otimes i_{V,s}^{j}}{\rightarrow }\left( \otimes ^{i}V\right)
\otimes \left( \otimes ^{j}V\right) \overset{\varphi _{i,j}^{V,t}}{%
\rightarrow }\otimes ^{i+j}V\overset{p_{V,s}^{i+j}}{\rightarrow }\vee ^{i+j}V
\end{eqnarray*}%
give rise to $\mathbb{N}$-graded algebras, called respectively the
alternating and the symmetric algebras on $V$. Since $\varphi
_{i,j}^{V,t}=1_{\otimes ^{i+j}V}$ we may check, according to Proposition \ref%
{S1 Algebras P1}, that we have $e_{S_{i+j}}^{\chi }\circ \left(
e_{S_{i}}^{\chi }\otimes 1_{\otimes ^{j}V}\right) =e_{S_{i+j}}^{\chi }$ and $%
e_{S_{i+j}}^{\chi }\circ \left( 1_{\otimes ^{i}V}\otimes e_{S_{j}}^{\chi
}\right) =e_{S_{i+j}}^{\chi }$ for $\chi =\varepsilon $ or $\chi =1$. But we
have that the action of $e_{S_{i}}^{\chi }\otimes 1_{\otimes ^{j}V}\in
End\left( \otimes ^{i+j}V\right) $ (resp. $1_{\otimes ^{i}V}\otimes
e_{S_{j}}^{\chi }\in End\left( \otimes ^{i+j}V\right) $)\ is obtained by
identifying $S_{i}\simeq S_{\left\{ 1,...,i\right\} }\subset S_{i+j}$ (resp. 
$S_{j}\simeq S_{\left\{ i+1,...,i+j\right\} }\subset S_{i+j}$) and it is
given by $e_{S_{\left\{ 1,...,i\right\} }}^{\chi }\in \mathbb{Q}\left[
S_{i+j}\right] $ (resp. $e_{S_{\left\{ i+1,...,i+j\right\} }}^{\chi }\in 
\mathbb{Q}\left[ S_{i+j}\right] $). Hence the claimed relation follows from
the identity $e_{S_{i+j}}^{\chi }e_{S_{\left\{ 1,...,i\right\} }}^{\chi
}=e_{S_{i+j}}^{\chi }$ (resp. $e_{S_{i+j}}^{\chi }e_{S_{\left\{
i+1,...,i+j\right\} }}^{\chi }=e_{S_{i+j}}^{\chi }$) taking place in $%
\mathbb{Q}\left[ S_{i+j}\right] $.

We note that it follows from the definitions that $\varphi _{i,j}^{V,a}$ and 
$\varphi _{i,j}^{V,s}$ are uniquely characterized by the property of making
the following diagrams commutative:%
\begin{equation}
\xymatrix{ \left(\otimes^{i}V\right)\otimes\left(\otimes^{j}V\right) \ar[r]^-{1_{\otimes^{i+j}V}} \ar[d]|{p_{V,a}^{i}\otimes p_{V,a}^{j}} & \otimes^{i+j}V \ar[d]|{p_{V,a}^{i+j}} & \left(\otimes^{i}V\right)\otimes\left(\otimes^{j}V\right) \ar[r]^-{1_{\otimes^{i+j}V}} \ar[d]|{p_{V,s}^{i}\otimes p_{V,s}^{j}} & \otimes^{i+j}V \ar[d]|{p_{V,s}^{i+j}} \\ \wedge^{i}V\otimes\wedge^{j}V \ar[r]^-{\varphi_{i,j}^{V,a}} & \wedge^{i+j}V\text{,} & \vee^{i}V\otimes\vee^{j}V \ar[r]^-{\varphi_{i,j}^{V,s}} & \vee^{i+j}V\text{.}}
\label{Induced morphisms D1}
\end{equation}

\bigskip 

Next we remark that we may use rigidity of $V$ to canonically identify $%
\epsilon _{V}^{i}:\left( \otimes ^{i}V^{\vee },\ev_{V}^{i}\right) \rightarrow
\left( \left( \otimes ^{i}V\right) ^{\vee },\ev_{\otimes ^{i}V}\right) $: in other
words $\left( \otimes ^{i}V^{\vee },\ev_{V}^{i}\right) $ is a dual pair for $%
\otimes ^{i}V$. Next, we define%
\begin{eqnarray*}
\ev_{V,a}^{i} &:&\wedge ^{i}V^{\vee }\otimes \wedge ^{i}V\overset{i_{V^{\vee
},a}^{i}\otimes i_{V,a}^{i}}{\rightarrow }\left( \otimes ^{i}V^{\vee
}\right) \otimes \left( \otimes ^{i}V\right) \overset{\ev_{V}^{i}}{%
\rightarrow }\mathbb{I}\text{,} \\
\ev_{V,s}^{i} &:&\vee ^{i}V^{\vee }\otimes \vee ^{i}V\overset{i_{V^{\vee
},s}^{i}\otimes i_{V,s}^{i}}{\rightarrow }\left( \otimes ^{i}V^{\vee
}\right) \otimes \left( \otimes ^{i}V\right) \overset{\ev_{V}^{i}}{%
\rightarrow }\mathbb{I}
\end{eqnarray*}%
and we claim that $\left( \wedge ^{i}V^{\vee },\ev_{V,a}^{i}\right) $ is a
dual pair for $\wedge ^{i}V$ and $\left( \vee ^{i}V^{\vee
},\ev_{V,a}^{i}\right) $ is a dual pair for $\vee ^{i}V$. Indeed we have $%
\ev_{V}^{i}\circ \left( \sigma \otimes \sigma \right) =\ev_{V}^{i}$ for every $%
\sigma \in S_{i}$, because the canonical morphism $\otimes ^{i}\mathbb{I}%
\rightarrow \mathbb{I}$ appearing in the definition of $\ev_{V}^{i}$ is $S_{i}
$-invariant. Equivalently, $\ev_{V}^{i}\circ \left( \sigma ^{-1}\otimes
1_{\otimes ^{i}V}\right) =\ev_{V}^{i}\circ \left( 1_{\otimes ^{i}V^{\vee
}}\otimes \sigma \right) $ proving that $\sigma ^{-1}=\sigma ^{\vee }$ from
which it follows that $\left( e_{S_{i}}^{\chi }\right) ^{\vee
}=e_{S_{i}}^{\chi }$. Then our claim follows from $\left( \text{\ref{S1 FDec
evaluation}}\right) $ (with $Y=Y^{\pm }=\mathbb{I}$).

\section{A Poincar\'{e} duality isomorphism for the alternating algebras}

In this section we suppose that $\mathcal{C}$ is rigid, $\mathbb{Q}$-linear and pseudo-abelian.
We consider an object $V\in \mathcal{C}$ and we apply the results on $\Delta $-graded algebras with $A=\left( \wedge
^{\cdot }V,\varphi _{i,j}^{V,a}\right) $ and $A^{\vee }=\left( \wedge
^{\cdot }V^{\vee },\varphi _{i,j}^{V^{\vee },a}\right) $. We will use the
shorter notations $i_{V}^{p}:=i_{V,a}^{p}$, $p_{V}^{p}:=p_{V,a}^{p}$, $%
e_{V}^{p}:=e_{V,a}^{p}$, $\varphi _{i,j}=\varphi _{i,j}^{V}:=\varphi
_{i,j}^{V,a}$ and $\varphi _{i,j}^{V^{\vee }}:=\varphi _{i,j}^{V^{\vee },a}$%
. In order to make explicit the internal multiplication morphisms we define,
for every $j\geq i$,%
\begin{equation*}
\varphi _{\iota _{i,j}^{V,t}}:=\ev_{V}^{i,\tau }\otimes 1_{\otimes
^{j-i}V^{\vee }}:\left( \otimes ^{i}V\right) \otimes \left( \otimes
^{j}V^{\vee }\right) \rightarrow \otimes ^{j-i}V^{\vee }\text{.}
\end{equation*}%
It is readily checked that $\varphi _{\iota _{i,j}^{V,t}}$ satisfies the
characterizing property $\left( \text{\ref{S1 AIM D1}}\right) $ of
Proposition \ref{S1 AIM R1} with $\varphi _{f}=\varphi
_{i,j-i}^{V,t}=1_{\otimes ^{j}V}$. Since the alternating algebra is obtained
from the tensor algebra as in Proposition \ref{S1 Algebras P1}, it follows
that the internal multiplication $\iota _{i,j}=\iota _{i,j}^{V,a}:=\iota
_{i,j}^{A}$ of the alternating algebra is explicitly given by%
\begin{equation*}
\varphi _{\iota _{i,j}}:\wedge ^{i}V\otimes \wedge ^{j}V^{\vee }\overset{%
i_{V}^{i}\otimes i_{V^{\vee }}^{j}}{\rightarrow }\left( \otimes ^{i}V\right)
\otimes \left( \otimes ^{j}V^{\vee }\right) \overset{\ev_{V}^{i,\tau }\otimes
1_{\otimes ^{j-i}V^{\vee }}}{\rightarrow }\otimes ^{j-i}V^{\vee }\overset{%
p_{V^{\vee }}^{i-j}}{\rightarrow }\wedge ^{j-i}V^{\vee }\text{.}
\end{equation*}%
In order to make this morphism completely explicit, we note that $%
S_{i}\times S_{j}$ acts on $\left( \otimes ^{i}V\right) \otimes \left(
\otimes ^{j}V^{\vee }\right) $ and we may identify $S_{j-i}\simeq S_{\left\{
i+1,...,j\right\} }\subset S_{j}$ acting on $\left( \otimes ^{i}V\right)
\otimes \left( \otimes ^{j}V^{\vee }\right) $. With this identification,%
\begin{equation}
\varphi _{\iota _{i,j}^{V,t}}\circ \sigma =\sigma \circ \varphi _{\iota
_{i,j}^{V,t}}\text{ for every }\sigma \in S_{j-i}.
\label{Alternating algebras F equiv}
\end{equation}%
Let%
\begin{equation*}
\mathcal{P}^{i\leq j}:=\mathcal{P}^{\left\{ i+1,...,j\right\} ,\left\{
1,...,j\right\} }=\left\{ p=\left( p_{1},...,p_{i}\right) \in \left\{
1,...,j\right\} ^{i}:p_{k}\neq p_{l}\text{ for every }k\neq l\right\} \text{,%
}
\end{equation*}%
and, for every $p=\left( p_{1},...,p_{i}\right) \in \mathcal{P}^{i\leq j}$,
write $\delta _{p}^{i\leq j}\in S_{j}$ for a fixed permutation such that $%
\delta _{p}^{i\leq j}\left( p_{k}\right) =k$ for $k\in \left\{
1,...,i\right\} $. Then $\left\{ \delta _{p}^{i\leq j}:p\in \mathcal{P}%
^{i\leq j}\right\} $ is a set of coset representatives for $S_{\left\{
i+1,...,j\right\} }\backslash S_{j}$ and, hence, $R:=\left\{ \delta \delta
_{p}^{i\leq j}:p\in \mathcal{P}^{i\leq j},\delta \in S_{i}\right\} $ is a
set of coset representatives for $S_{\left\{ i+1,...,j\right\} }\backslash
S_{i}\times S_{j}$. We have, setting $e^{i\leq j}:=\frac{\left( j-i\right) !%
}{j!}\tsum\nolimits_{p\in \mathcal{P}^{i\leq j}}\varepsilon ^{-1}\left(
\delta _{p}^{i\leq j}\right) \delta _{p}^{i\leq j}$,%
\begin{equation}
\frac{\left( j-i\right) !}{j!i!}\tsum\nolimits_{\delta \in S_{i},p\in 
\mathcal{P}^{i\leq j}}\varepsilon ^{-1}\left( \delta \delta _{p}^{i\leq
j}\right) \delta \delta _{p}^{i\leq j}=e_{S_{i}}^{\varepsilon }\cdot
e^{i\leq j}\text{,}  \label{Alternating algebras F repr}
\end{equation}%
which acts on $\left( \otimes ^{i}V\right) \otimes \left( \otimes
^{j}V^{\vee }\right) $ via $e_{V}^{i}\otimes e^{i\leq j}$. In particular,
when $i=1$, we have $\mathcal{P}^{1\leq j}=\left\{ 1,...,j\right\} $ and $%
\delta _{p}^{1\leq j}$ is any fixed permutation such that $\delta
_{p}^{1\leq j}\left( p\right) =1$; we may take, for example, $\delta
_{p}^{1\leq j}=\left( 1,...,p\right) =:c_{p}$ in this case and then $%
\varepsilon ^{-1}\left( \delta _{p}^{1\leq j}\right) =\left( -1\right) ^{p-1}
$. We can now prove the following result.

\begin{lemma}
\label{Alternating algebras L1}Setting%
\begin{equation*}
\widetilde{\varphi }_{\iota _{i,j}}:=\left( \ev_{V}^{i,\tau }\otimes
1_{\otimes ^{j-i}V^{\vee }}\right) \circ \left( e_{V}^{i}\otimes e^{i\leq
j}\right) =\frac{\left( j-i\right) !}{j!}\tsum\nolimits_{p\in \mathcal{P}%
^{i\leq j}}\varepsilon ^{-1}\left( \delta _{p}^{i\leq j}\right) \cdot \left(
\ev_{V}^{i,\tau }\otimes 1_{\otimes ^{j-i}V}\right) \circ \left(
e_{V}^{i}\otimes \delta _{p}^{i\leq j}\right) 
\end{equation*}%
we have that $\varphi _{\iota _{i,j}}$ is the unique morphism making the
following diagram commutative:%
\begin{equation*}
\xymatrix{ \left(\otimes^{i}V\right)\otimes\left(\otimes^{j}V^{\vee}\right) \ar[r]^-{\widetilde{\varphi}_{\iota_{i,j}}} \ar[d]_{p_{V}^{i}\otimes p_{V^{\vee}}^{j}} & \otimes^{j-i}V^{\vee} \ar[d]^{p_{V^{\vee}}^{j}} \\ \wedge^{i}V\otimes\wedge^{j}V^{\vee} \ar[r]^-{\varphi_{\iota_{i,j}}} & \wedge^{j-i}V^{\vee}\text{.} }
\end{equation*}%
In particular, when $i=1$, setting $\ev_{V,p}^{\tau }:=\left( \ev_{V}^{\tau
}\otimes 1_{\otimes ^{j-1}V^{\vee }}\right) \circ c_{p}$\footnote{%
We have, symbolically,%
\begin{equation*}
\ev_{0,p}\left( f\otimes x_{1}\otimes ...\otimes x_{p}\otimes ...\otimes
x_{j}\right) =\left\langle f,x_{p}\right\rangle x_{1}\otimes ...\otimes 
\widehat{x}_{p}\otimes ...\otimes x_{j}
\end{equation*}%
}, we may take%
\begin{equation*}
\widetilde{\varphi }_{\iota _{1,j}}=\frac{1}{j}\tsum\nolimits_{p=1}^{j}%
\left( -1\right) ^{p-1}\ev_{0,p}\text{.}
\end{equation*}
\end{lemma}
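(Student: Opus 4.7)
My goal is to compute $\varphi_{\iota_{i,j}}$ in closed form. From the construction of the alternating algebra as a biproduct summand of the tensor algebra (Proposition~\ref{S1 Algebras P1}) together with the explicit form $\varphi_{\iota_{i,j}^{V,t}}=\ev_{V}^{i,\tau}\otimes 1_{\otimes^{j-i}V^{\vee}}$ for the tensor algebra, one gets that the composite $\varphi_{\iota_{i,j}}\circ(p_{V}^{i}\otimes p_{V^{\vee}}^{j})$ equals
\[
p_{V^{\vee}}^{j-i}\circ\bigl(\ev_{V}^{i,\tau}\otimes 1_{\otimes^{j-i}V^{\vee}}\bigr)\circ\bigl(e_{V}^{i}\otimes e_{V^{\vee}}^{j}\bigr).
\]
Since $p_{V}^{i}\otimes p_{V^{\vee}}^{j}$ is a split epimorphism, the lemma reduces to showing that this composite agrees with $p_{V^{\vee}}^{j-i}\circ\widetilde{\varphi}_{\iota_{i,j}}$; uniqueness is then automatic from the same splitting.

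The calculation I plan to carry out proceeds in three steps. First, using the right coset decomposition $S_{j}=S_{\{i+1,\ldots,j\}}\cdot\{\delta_{p}^{i\leq j}\}_{p\in\mathcal{P}^{i\leq j}}$ and multiplicativity of the sign character, I factor
\[
e_{V^{\vee}}^{j}=e_{S_{j}}^{\varepsilon}=e_{S_{\{i+1,\ldots,j\}}}^{\varepsilon}\cdot e^{i\leq j}\qquad\text{in }\mathbb{Q}[S_{j}],
\]
so that
\[
\bigl(\ev_{V}^{i,\tau}\otimes 1\bigr)\circ\bigl(e_{V}^{i}\otimes e_{V^{\vee}}^{j}\bigr)=\bigl(\ev_{V}^{i,\tau}\otimes 1\bigr)\circ\bigl(1\otimes e_{S_{\{i+1,\ldots,j\}}}^{\varepsilon}\bigr)\circ\bigl(e_{V}^{i}\otimes e^{i\leq j}\bigr).
\]
Second, applying the equivariance $(\text{\ref{Alternating algebras F equiv}})$ $\mathbb{Q}$-linearly to the idempotent $e_{S_{\{i+1,\ldots,j\}}}^{\varepsilon}$ pushes it through the partial evaluation: the action on the last $j-i$ positions of $\otimes^{j}V^{\vee}$ becomes the $S_{j-i}$-action on $\otimes^{j-i}V^{\vee}$, giving
\[
\bigl(\ev_{V}^{i,\tau}\otimes 1\bigr)\circ\bigl(1\otimes e_{S_{\{i+1,\ldots,j\}}}^{\varepsilon}\bigr)=e_{V^{\vee}}^{j-i}\circ\bigl(\ev_{V}^{i,\tau}\otimes 1\bigr).
\]
Third, the outermost $p_{V^{\vee}}^{j-i}$ absorbs this idempotent, since $e_{V^{\vee}}^{j-i}=i_{V^{\vee}}^{j-i}\circ p_{V^{\vee}}^{j-i}$ and $p_{V^{\vee}}^{j-i}\circ i_{V^{\vee}}^{j-i}=1$. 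Combining the three steps yields the claimed identity.

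For the displayed sum formula for $\widetilde{\varphi}_{\iota_{i,j}}$, I just expand $e^{i\leq j}=\tfrac{(j-i)!}{j!}\sum_{p}\varepsilon^{-1}(\delta_{p}^{i\leq j})\delta_{p}^{i\leq j}$ in the definition. The specialisation to $i=1$ then follows by noting that $e_{V}^{1}=1_{V}$, $\mathcal{P}^{1\leq j}=\{1,\ldots,j\}$, and that one is free to choose the representatives $\delta_{p}^{1\leq j}=c_{p}=(1,2,\ldots,p)$, whose sign is $\varepsilon(c_{p})=(-1)^{p-1}=\varepsilon^{-1}(c_{p})$.

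\textbf{Main obstacle.} The only step requiring attention is the second one: carefully matching the inclusion $S_{j-i}\simeq S_{\{i+1,\ldots,j\}}\subset S_{j}$ on the source with the standard $S_{j-i}$-action on the target $\otimes^{j-i}V^{\vee}$ after the partial evaluation. However, this identification is precisely what $(\text{\ref{Alternating algebras F equiv}})$ encodes, so no genuinely new computation is needed; the rest of the argument is bookkeeping with idempotents and coset representatives.
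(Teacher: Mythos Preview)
Your argument is correct and is essentially the same as the paper's: both use the coset factorisation $e_{S_j}^{\varepsilon}=e_{S_{\{i+1,\ldots,j\}}}^{\varepsilon}\cdot e^{i\leq j}$, the $S_{\{i+1,\ldots,j\}}$-equivariance $(\text{\ref{Alternating algebras F equiv}})$ to push the inner idempotent past the partial evaluation, and then absorb it into $p_{V^{\vee}}^{j-i}$. The only cosmetic difference is that the paper packages this as an instance of a general principle (for an $H$-equivariant $f:X\to Y$ with $H\subset G$, the induced $f^{\chi}$ satisfies $f^{\chi}\circ p_X^{\chi}=p_Y^{\chi}\circ f\circ e_{R_{H\backslash G}}^{\chi}$) and then specialises to $G=S_i\times S_j$, $H=S_{\{i+1,\ldots,j\}}$, whereas you carry out that specialisation directly.
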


\begin{proof}
Suppose that we have given a subgroup $H\subset G$ of a finite group $G$,
that $G$ acts on $X$, $H$ acts on $Y$ and that we have given $f:X\rightarrow
Y$ which is $H$-equivariant. If $\chi $ is a character of $G$ and $%
R_{H\backslash G}$ is a set of coset representative for $H\backslash G$, we
may consider the elements $e_{G}^{\chi }$, $e_{H}^{\chi }$ and $%
e_{R_{H\backslash G}}^{\chi }$\ defined as in $\left( \text{\ref{Alternating
algebras F idemp}}\right) $ with $S=G$, $H$ or $R_{H\backslash G}$ and $S_{i}
$ replaced by a more general group $G$. We have that $e_{G}^{\chi }$ and $%
e_{H}^{\chi }$ are idempotents and we let $p_{X}^{\chi }:X\rightarrow
X^{\chi }$ and $p_{Y}^{\chi }:Y\rightarrow Y^{\chi }$ be the associated
surjective morphisms and $i_{X}^{\chi }$ the associated injective morphism.
Then it is a general fact that, setting $f_{R_{H\backslash G}}:=f\circ
e_{R_{H\backslash G}}^{\chi }$ and $f^{\chi }:=p_{Y}^{\chi }\circ f\circ
i_{X}^{\chi }$, the morphism $f^{\chi }$ is characterized as the unique
morphism such that $f^{\chi }\circ p_{X}^{\chi }=p_{Y}^{\chi }\circ
f_{R_{H\backslash G}}$.

We may apply this general remark with $X=\left( \otimes ^{i}V\right) \otimes
\left( \otimes ^{j}V^{\vee }\right) $, $Y=\otimes ^{j-i}V^{\vee }$, $%
f=\ev_{V}^{i,\tau }\otimes 1_{\otimes ^{j-i}V^{\vee }}$, $H=S_{\left\{
i+1,...,j\right\} }$ and $G=S_{i}\times S_{j}$; since $f$ is $H$-equivariant
by $\left( \text{\ref{Alternating algebras F equiv}}\right) $ and $R$ is a
set of coset representatives for $S_{\left\{ i+1,...,j\right\} }\backslash
S_{i}\times S_{j}$, we deduce that $f^{\varepsilon }=\varphi _{\iota _{i,j}}$
is the unique morphism such that $f^{\chi }\circ p_{X}^{\chi }=p_{Y}^{\chi
}\circ f_{R_{H\backslash G}}$, where thanks to $\left( \text{\ref%
{Alternating algebras F repr}}\right) $ and the equality $\#R=\frac{\left(
j-i\right) !}{j!i!}$,%
\begin{equation*}
f_{R_{H\backslash G}}=\left( \ev_{V}^{i,\tau }\otimes 1_{\otimes
^{j-i}V^{\vee }}\right) \circ \left( e_{V}^{i}\otimes e^{i\leq j}\right) 
\text{.}
\end{equation*}%
\end{proof}

\bigskip

Beside the properties encoded in Proposition \ref{S1 Algebras P1}, the
internal multiplication morphisms $\iota _{1,j}$ has another key property.
In symbols it says that the normalized family $\iota _{j}:=j\cdot \iota
_{1,j}$ is an antiderivation, i.e.%
\begin{equation*}
\iota _{j+l}\left( x\right) \left( \omega _{j}\wedge \omega _{l}\right)
=\iota _{j+l}\left( x\right) \left( \omega _{j}\right) +\left( -1\right)
^{j}\iota _{j+l}\left( x\right) \left( \omega _{l}\right) \text{ for }x\in V%
\text{, }\omega _{j}\in \wedge ^{j}V^{\vee }\text{ and }\omega _{l}\in
\wedge ^{l}V^{\vee }\text{.}
\end{equation*}%
This is the content of the following proposition whose proof, based on Lemma %
\ref{Alternating algebras L1}, we leave to the reader.

\begin{proposition}
\label{Alternating algebras P1}The following diagram is commutative, when $%
j,l\geq 1$:%
\begin{equation*}
\xymatrix@C=200pt{ V\otimes\wedge^{j}V^{\vee}\otimes\wedge^{l}V^{\vee} \ar[r]^-{\left(j\cdot\varphi_{\iota_{1,j}}\otimes1_{\wedge^{l}V^{\vee}},\left(-1\right)^{j}l\cdot\left(1_{\wedge^{j}V^{\vee}}\otimes\varphi_{\iota_{1,l}}\right)\circ\left(\tau_{V,\wedge^{j}V^{\vee}}\otimes1_{\wedge^{l}V^{\vee}}\right)\right)} \ar[d]_{1_{V}\otimes\varphi_{j,l}} & \wedge^{j-1}V^{\vee}\otimes\wedge^{l}V^{\vee}\oplus\wedge^{j}V^{\vee}\otimes\wedge^{l-1}V^{\vee} \ar[d]^{\varphi_{j-1,l}\oplus\varphi_{j,l-1}} \\ V\otimes\wedge^{j+l}V^{\vee} \ar[r]^-{\left(j+l\right)\varphi_{\iota_{1,j+l}}} & \wedge^{j+l-1}V^{\vee}\text{.} }
\end{equation*}
\end{proposition}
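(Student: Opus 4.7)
The plan is to lift the claimed identity from the alternating setting to the tensor algebra, where the multiplications are identities and the internal multiplications admit the explicit formula of Lemma \ref{Alternating algebras L1}. Recall from that lemma that $j\cdot \widetilde{\varphi}_{\iota _{1,j}} = \sum_{p=1}^{j} (-1)^{p-1}\ev_{V,p}^{\tau}$ on $V\otimes (\otimes^{j}V^{\vee })$, with $\varphi_{\iota _{1,j}}\circ (1_{V}\otimes p_{V^{\vee }}^{j}) = p_{V^{\vee }}^{j-1}\circ \widetilde{\varphi }_{\iota _{1,j}}$, and analogously for $l$ and $j+l$; the alternating multiplications are characterised by the squares $\left(\text{\ref{Induced morphisms D1}}\right)$, which at the tensor level are just the identity. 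After precomposing the target square with $1_{V}\otimes (i_{V^{\vee }}^{j}\otimes i_{V^{\vee }}^{l})$ and postcomposing with $p_{V^{\vee }}^{j+l-1}$, and using Proposition \ref{S1 Algebras P1} to freely insert or delete the antisymmetrizer idempotents adjacent to a tensor multiplication, both compositions in the square become explicit sums of partial evaluations on $V\otimes (\otimes^{j}V^{\vee })\otimes (\otimes^{l}V^{\vee })$.

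Concretely, the commutativity is then equivalent to the identity of morphisms $V\otimes (\otimes^{j}V^{\vee })\otimes (\otimes^{l}V^{\vee })\to \otimes^{j+l-1}V^{\vee }$:
\begin{equation*}
\sum _{p=1}^{j+l}(-1)^{p-1}\ev_{V,p}^{\tau } = \sum _{p=1}^{j}(-1)^{p-1}\ev_{V,p}^{\tau } + (-1)^{j}\sum _{q=1}^{l}(-1)^{q-1}\bigl(1_{\otimes^{j}V^{\vee }}\otimes \ev_{V,q}^{\tau }\bigr)\circ \bigl(\tau _{V,\otimes^{j}V^{\vee }}\otimes 1_{\otimes^{l-1}V^{\vee }}\bigr).
\end{equation*}
This is verified by splitting the index $p$ on the left into the ranges $p\leq j$ and $p=j+q$ with $1\leq q\leq l$. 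The first range matches the first sum on the right verbatim; in the second range, the swap $\tau _{V,\otimes^{j}V^{\vee }}$ transports $x$ past the entire $j$-block, so that pairing $x$ with the $q$-th entry of the $l$-block is exactly $\ev_{V,j+q}^{\tau }$, and $(-1)^{p-1}=(-1)^{j+q-1}=(-1)^{j}(-1)^{q-1}$ accounts precisely for the external factor $(-1)^{j}$ appearing in the statement. Finally, the cyclic permutations $c_p$ hidden in $\ev_{V,p}^{\tau }$ are compatible with the antisymmetrizer projections because $(e_{S_i}^{\varepsilon})^{\vee}=e_{S_i}^{\varepsilon}$ on $\otimes^{i}V^{\vee}$, an observation already used in section 4.1.

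The main obstacle is not any single calculation but the bookkeeping required to descend the tensor-level identity to the alternating-level statement: one must check that the antisymmetrizer idempotents $e_{V^{\vee }}^{j}\otimes e_{V^{\vee }}^{l}$ and $e_{V^{\vee }}^{j+l-1}$ can be inserted or suppressed at each stage without affecting the sums, and that $\tau _{V,\wedge^{j}V^{\vee }}$ matches the tensor-level $\tau _{V,\otimes^{j}V^{\vee }}$ after conjugation by $1_V\otimes p_{V^\vee}^j$ and $p_{V^\vee}^j\otimes 1_V$. Both points reduce to Proposition \ref{S1 Algebras P1} together with naturality of $\tau$, after which the combinatorial identity above completes the proof.
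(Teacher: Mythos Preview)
Your proposal is correct and follows exactly the route the paper indicates (the paper omits the proof, saying only that it is ``based on Lemma \ref{Alternating algebras L1}''): you lift both sides to the tensor algebra via that lemma and the squares $\left(\text{\ref{Induced morphisms D1}}\right)$, reduce to the evident combinatorial splitting $\sum_{p=1}^{j+l}=\sum_{p=1}^{j}+\sum_{p=j+1}^{j+l}$, and descend using the surjectivity of $p_{V^{\vee}}^{j}\otimes p_{V^{\vee}}^{l}$. One trivial typo: in your displayed identity the last tensor factor of the swap should be $1_{\otimes^{l}V^{\vee}}$, not $1_{\otimes^{l-1}V^{\vee}}$.
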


Working with the dual algebras one easily sees that, setting%
\begin{equation*}
\varphi _{\iota _{i,j}^{V^{\vee },t,\ast }}:=\ev_{V}^{i}\otimes
1_{\otimes ^{j-i}V}:\left( \otimes ^{i}V^{\vee }\right) \otimes \left(
\otimes ^{j}V\right) \rightarrow \otimes ^{j-i}V\text{,}
\end{equation*}%
the morphism $\varphi _{\iota _{i,j}^{V^{\vee },t,\ast }}$ satisfies the
property $\left( \text{\ref{S1 AIM D1 reflexivity 1}}\right) $ with $\varphi
_{g}=\varphi _{i,j-i}^{V^{\vee },t}=1_{\otimes ^{j}V}$, which is of course
characterizing. It follows that $\iota _{i,j}^{\ast }=\iota _{i,j}^{V^{\vee
},a,\ast }:=\iota _{i,j}^{A^{\vee },\ast }$ is obtained in the analogous way
as $\iota _{i,j}$ was obtained and the analogous of Proposition \ref%
{Alternating algebras P1} is true.

Exactly as we did with more general $\Delta $-graded algebras, we can now
define, when $l\geq i$ and $k\geq j$, $\delta _{i,l}^{A}:=\varphi _{\iota
_{i,l}}:\wedge ^{i}V\otimes \wedge ^{l}V^{\vee }\rightarrow \wedge
^{l-i}V^{\vee }$, $\delta _{j,k}^{A^{\vee }}:=\varphi _{\iota _{j,k}^{\ast
}}:\wedge ^{j}V^{\vee }\otimes \wedge ^{k}V\rightarrow \wedge ^{k-j}V^{\vee
} $ and%
\begin{equation*}
\delta _{j,l}^{i,k}:=\delta _{j,k}^{A^{\vee }}\otimes _{\epsilon }^{\tau
}\delta _{i,l}^{A}:\wedge _{j}^{i}V\otimes \wedge _{l}^{k}V\rightarrow
\wedge _{l-i}^{k-j}V\text{, associated to }\iota _{j,l}^{i,k}:=\iota
_{j,k}^{\ast }\otimes _{\epsilon }^{\tau }\iota _{i,l}:\wedge
_{j}^{i}V\rightarrow \hom \left( \wedge _{l}^{k}V,\wedge
_{l-i}^{k-j}V\right) \text{,}
\end{equation*}%
where $\wedge _{q}^{p}V:=\wedge ^{p}V\otimes \wedge ^{q}V^{\vee }$. Beside
the properties encoded in Corollary \ref{S1 Algebras C1}, the following
property is enjoyed by the families $\delta _{0,q}^{1,p}$ and $\delta
_{1,q}^{0,p}$: the proof is just an application of Proposition \ref%
{Alternating algebras P1} and its dual statement for the second diagram.

\begin{corollary}
\label{Alternating algebras C1}If $j,l\geq 1$ then the following diagram is commutative:
\begin{equation*}
\xymatrix@C=200pt{ \wedge_{0}^{1}V\otimes\wedge_{j}^{i}V\otimes\wedge_{l}^{k}V \ar[r]^-{\left(j\cdot\delta_{0,j}^{1,i}\otimes1_{\wedge_{l}^{k}V},\left(-1\right)^{j}l\cdot\left(1_{\wedge_{j}^{i}V}\otimes\delta_{0,l}^{1,k}\right)\circ\left(\tau_{\wedge_{0}^{1}V,\wedge_{j}^{i}V}\otimes1_{\wedge_{l}^{k}V}\right)\right)} \ar[d]_{1_{\wedge_{0}^{1}V}\otimes\varphi_{j,l}^{i,k}} & \wedge_{j-1}^{i}V\otimes\wedge_{l}^{k}V\oplus\wedge_{j}^{i}V\otimes\wedge_{l-1}^{k}V \ar[d]^{\varphi_{j-1,l}^{i,k}\oplus\varphi_{j,l-1}^{i,k}} \\ \wedge_{0}^{1}V\otimes\wedge_{j+l}^{i+k}V \ar[r]^-{\left(j+l\right)\cdot\delta_{0,j+l}^{1,i+k}} & \wedge_{j+l-1}^{i+k}V }
\end{equation*}

The following diagram is commutative, when $i,k\geq 1$:%
\begin{equation*}
\xymatrix@C=200pt{ \wedge_{1}^{0}V\otimes\wedge_{j}^{i}V\otimes\wedge_{l}^{k}V \ar[r]^-{\left(i\cdot\delta_{1,j}^{0,i}\otimes1_{\wedge_{l}^{k}V},\left(-1\right)^{i}k\cdot\left(1_{\wedge_{j}^{i}V}\otimes\delta_{1,l}^{0,k}\right)\circ\left(\tau_{\wedge_{1}^{0}V,\wedge_{j}^{i}V}\otimes1_{\wedge_{l}^{k}V}\right)\right)} \ar[d]_{1_{\wedge_{1}^{0}V}\otimes\varphi_{j,l}^{i,k}} & \wedge_{j}^{i-1}V\otimes\wedge_{l}^{k}V\oplus\wedge_{j}^{i}V\otimes\wedge_{l}^{k-1}V \ar[d]^{\varphi_{j,l}^{i-1,k}\oplus\varphi_{j,l}^{i,k-1}} \\ \wedge_{1}^{0}V\otimes\wedge_{j+l}^{i+k}V \ar[r]^-{\left(i+k\right)\cdot\delta_{1,j+l}^{0,i+k}} & \wedge_{j+l}^{i+k-1}V\text{.} }
\end{equation*}
\end{corollary}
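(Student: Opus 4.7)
The plan is to reduce both diagrams to the antiderivation property proved in Proposition \ref{Alternating algebras P1}, applied respectively to the alternating algebra on $V^{\vee }$ for the first diagram and (in its dual form) to the alternating algebra on $V$ for the second diagram. The point is that the $\Delta \times \Delta $-graded algebra structure on $\wedge _{\cdot }^{\cdot }V$ is built as a tensor product of the two algebras $A = \wedge ^{\cdot }V$ and $A^{\vee }=\wedge ^{\cdot }V^{\vee }$, and the morphism $\wedge _{0}^{1}V = V$ only acts on the $V^{\vee }$-tensor factor via $\varphi _{\iota _{1,j}}$; the $V$-tensor factor passes through unchanged.

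First I would unpack the definitions. By construction $\wedge _{0}^{1}V\simeq V$ since $\wedge ^{0}V^{\vee }=\mathbb{I}$, and
\[
\delta _{0,j}^{1,i}=\delta _{0,i}^{A^{\vee }}\otimes _{\epsilon }^{\tau }\delta _{1,j}^{A}
\]
where $\delta _{0,i}^{A^{\vee }}:\mathbb{I}\otimes \wedge ^{i}V\rightarrow \wedge ^{i}V$ is the unit isomorphism and $\delta _{1,j}^{A}=\varphi _{\iota _{1,j}}:V\otimes \wedge ^{j}V^{\vee }\rightarrow \wedge ^{j-1}V^{\vee }$. Similarly $\varphi _{j,l}^{i,k}=\varphi _{i,k}^{V,a}\otimes _{\epsilon }\varphi _{j,l}^{V^{\vee },a}$. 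After reordering the tensor factors with the appropriate commutativity constraints, the outer diagram splits as the tensor product of the identity diagram on the $\wedge ^{i}V\otimes \wedge ^{k}V\rightarrow \wedge ^{i+k}V$ factor (where both paths agree by functoriality of $\otimes $, since the $V$-side multiplication is the same on both routes) with the diagram of Proposition \ref{Alternating algebras P1} applied with $V$ replaced by $V^{\vee }$ on the $V\otimes \wedge ^{j}V^{\vee }\otimes \wedge ^{l}V^{\vee }$ factor. The sign $\left( -1\right) ^{j}$ in the statement comes directly from the sign $\left( -1\right) ^{j}$ in Proposition \ref{Alternating algebras P1}, which encodes the cost of passing $\wedge _{0}^{1}V$ past $\wedge ^{j}V^{\vee }$ in the antiderivation identity.

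For the second diagram I would proceed analogously with roles exchanged: $\wedge _{1}^{0}V\simeq V^{\vee }$, and
\[
\delta _{1,j}^{0,i}=\delta _{1,i}^{A^{\vee }}\otimes _{\epsilon }^{\tau }\delta _{0,j}^{A}
\]
reduces to $\varphi _{\iota _{1,i}^{\ast }}:V^{\vee }\otimes \wedge ^{i}V\rightarrow \wedge ^{i-1}V$ acting on the $V$-factor, with the identity on $\wedge ^{j}V^{\vee }$. The required commutativity then follows from the dual statement of Proposition \ref{Alternating algebras P1} for the $\iota _{\cdot ,\cdot }^{\ast }$ family (established identically once one writes the defining adjoint formula $\left( \text{\ref{S1 AIM D1 reflexivity 1}}\right) $ in place of $\left( \text{\ref{S1 AIM D1}}\right) $), yielding the sign $\left( -1\right) ^{i}$.

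The main obstacle will be the careful bookkeeping of tensor factor orders and shuffle isomorphisms: one must verify that after inserting the commutativity constraints $\tau _{\wedge _{0}^{1}V,\wedge _{j}^{i}V}$ on one side and the shuffles hidden inside $\otimes _{\epsilon }$ and $\otimes _{\epsilon }^{\tau }$ on the other, the diagram genuinely factorizes as a tensor product of a trivially commutative diagram and the antiderivation diagram of Proposition \ref{Alternating algebras P1}. Once this factorization is established, the commutativity of the corollary follows immediately from functoriality of $\otimes $ and biadditivity of $\mathcal{C}$.
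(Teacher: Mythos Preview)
Your approach is correct and matches the paper's own proof, which simply says the corollary is ``just an application of Proposition \ref{Alternating algebras P1} and its dual statement for the second diagram.'' One minor wording slip: Proposition \ref{Alternating algebras P1} is already stated for $V$ acting on $\wedge ^{\cdot }V^{\vee }$ via $\varphi _{\iota _{1,j}}$, so no replacement of $V$ by $V^{\vee }$ is needed for the first diagram---it is exactly the proposition as written, tensored with the identity on the $\wedge ^{\cdot }V$-factor.
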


\bigskip

The proof of the following lemma, which is postponed to the subsequent
subsection, is based on Corollaries \ref{S1 Algebras C1} and \ref%
{Alternating algebras C1}.

\begin{lemma}
\label{Alternating algebras L key}Let $r:=\mathrm{rank}\left( V\right) $ be
the rank of $V$, defined as the composition%
\begin{equation*}
r:\mathbb{I}\overset{C_{V}}{\rightarrow }V\otimes V^{\vee }\overset{%
\ev_{V}^{\tau }}{\rightarrow }\mathbb{I}\text{.}
\end{equation*}%
For every $g\geq i$ we have the equality%
\begin{equation*}
\binom{g}{i}^{-1}\binom{r+i-g}{i}\cdot C_{\wedge ^{g-i}V}=\delta
_{i,g}^{i,g}\circ \left( C_{\wedge ^{i}V}\otimes C_{\wedge ^{g}V}\right) 
\text{,}
\end{equation*}%
where, for every $k\in \mathbb{N}_{\geq 1}$,%
\begin{equation*}
\binom{T}{k}:=\frac{1}{k!}T\left( T-1\right) ...\left( T-k+1\right) \in 
\mathbb{Q}\left[ T\right] \text{ and }\binom{T}{0}=1\text{.}
\end{equation*}
\end{lemma}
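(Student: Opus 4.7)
My plan is to argue by induction on $i$. The base case $i = 0$ is trivial, since $\wedge^{0}V = \wedge^{0}V^{\vee} = \mathbb{I}$, $C_{\wedge^{0}V} = 1_{\mathbb{I}}$, the morphism $\delta_{0,g}^{0,g}$ is the unit isomorphism on $\wedge_g^g V$, and the scalar $\binom{g}{0}^{-1}\binom{r-g}{0} = 1$; both sides equal $C_{\wedge^g V}$. For the inductive step $i \to i+1$ (with $i+1 \leq g$) I would invoke Corollary \ref{S1 Algebras C1} part $(2)$ applied to the $\mathbb{N}\times\mathbb{N}$-graded algebra $A \otimes A^{\vee}$ with parameters $(i,j,k,l,n,m) = (1,1,i,i,g,g)$, obtaining
\[
\delta_{i+1,g}^{i+1,g} \circ \bigl(\varphi_{1,i}^{1,i,\tau} \otimes 1_{\wedge_g^g V}\bigr) \;=\; \delta_{i,g-1}^{i,g-1} \circ \bigl(1_{\wedge_i^i V}\otimes \delta_{1,g}^{1,g}\bigr),
\]
and I would apply both sides to $C_{\wedge^i V} \otimes C_V \otimes C_{\wedge^g V}$.

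For the right-hand side, two applications of the inductive hypothesis (first to $(1,g)$, then to $(i,g-1)$) produce $\lambda_{1,g}\lambda_{i,g-1}\, C_{\wedge^{g-i-1}V}$, where $\lambda_{i,g} := \binom{g}{i}^{-1}\binom{r+i-g}{i}$. For the left-hand side I would establish the auxiliary identity
\[
\varphi_{1,i}^{1,i}\bigl(C_V \otimes C_{\wedge^i V}\bigr) \;=\; C_{\wedge^{i+1}V}
\]
by combining Lemma \ref{S1 Casimir L Properties} part $(3)$ (which expresses $C_{V\otimes\wedge^i V}$ in terms of $C_V$ and $C_{\wedge^i V}$) with Lemma \ref{S1 Casimir L Properties} part $(4)$ applied to the biproduct embedding $\wedge^{i+1}V \hookrightarrow V\otimes\wedge^i V$ whose projection is exactly $\varphi_{1,i}^{V,a}$. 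This reduces the left-hand side to $\delta_{i+1,g}^{i+1,g}(C_{\wedge^{i+1}V}\otimes C_{\wedge^g V})$, and the elementary binomial identity $\lambda_{1,g}\lambda_{i,g-1} = \lambda_{i+1,g}$ (checked using $\binom{g}{i+1}^{-1} = \tfrac{i+1}{g}\binom{g-1}{i}^{-1}$ and $\binom{r+i-g+1}{i+1} = \tfrac{r-g+1}{i+1}\binom{r+i-g+1}{i}$) then closes the induction.

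Since the inductive step consumes the case $(1,g)$, I would establish it separately by direct computation: using the explicit formula of Lemma \ref{Alternating algebras L1}, which realises $\delta_{1,g}^A$ and $\delta_{1,g}^{A^{\vee}}$ as normalized alternating sums over positions $p\in\{1,\dots,g\}$ with prefactor $1/g$, one substitutes the explicit form of $C_{\wedge^g V}$ coming from iterating Lemma \ref{S1 Casimir L Properties} parts $(3)$ and $(4)$ on $C_V$. Cross-term contractions (where $\delta^A$ and $\delta^{A^{\vee}}$ select different positions) force a repeated wedge index and thus vanish by antisymmetry; the $g$ surviving diagonal contractions each contribute a factor $r - g + 1$ from the free choices of the $V$-strand index distinct from the $g-1$ remaining wedge indices. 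This yields $\delta_{1,g}^{1,g}(C_V\otimes C_{\wedge^g V}) = \tfrac{r-g+1}{g}\, C_{\wedge^{g-1}V} = \lambda_{1,g}\, C_{\wedge^{g-1}V}$. The main obstacle is this very computation: one must bookkeep the Casimir normalisations carefully (the antisymmetrizer built into $\ev_{V,a}^k$ carries an implicit factor $1/k!$ that is compensated by a corresponding scaling in $C_{\wedge^k V}$) and verify the sign cancellations of the off-diagonal terms, after which the remainder of the argument is routine.
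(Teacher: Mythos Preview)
Your inductive reduction to the case $i=1$ is essentially the paper's own argument: the paper's Step~5 invokes Corollary~\ref{S1 Algebras C1}~$(2)$ and the Casimir-product identity $\varphi_{k,1}^{k,1}\circ(C_{k}\otimes C_{1})=C_{k+1}$ (your auxiliary identity, which the paper isolates as a lemma) to pass from $(k,m)$ to $(k+1,m)$ via the already-known case $(1,m-k)$. Your variant passes from $(i,g)$ to $(i+1,g)$ via $(1,g)$ and $(i,g-1)$; the two parametrisations differ only by which factor of the product $\lambda_{i+1,g}=\lambda_{1,g}\lambda_{i,g-1}=\lambda_{i,g}\lambda_{1,g-i}$ one peels off first, and your binomial check is correct.

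The real divergence is the base case $i=1$, and this is also where your sketch is incomplete. The paper does \emph{not} compute $\delta_{1,m}^{1,m}\circ(C_{1}\otimes C_{m})$ directly; it devotes four preparatory steps to it. Step~1 establishes, by a separate induction on $m$ using the anti-derivation property (Corollary~\ref{Alternating algebras C1}), that $\delta_{0,m}^{1,m}\circ(1_{V}\otimes C_{m})$ and $\delta_{1,m}^{0,m}\circ(1_{V^{\vee}}\otimes C_{m})$ factor through $C_{m-1}$. Step~3 then combines this with Corollary~\ref{S1 Algebras C1}~$(2)$ and again the anti-derivation property to obtain a recursion for $m\cdot\delta_{1,m}^{1,m}\circ(1_{\wedge_{1}^{1}V}\otimes C_{m})$ in terms of $C_{m-1}$ and $\varphi_{1,m-2}^{1,m-2}\circ(1\otimes C_{m-2})$. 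Step~4 precomposes with $C_{1}$ and reads off $(r-m+1)\cdot C_{m-1}$.

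Your proposed alternative---expand both $\delta$'s via Lemma~\ref{Alternating algebras L1}, expand $C_{\wedge^{g}V}$ via Lemma~\ref{S1 Casimir L Properties}~$(3)$--$(4)$, and argue that off-diagonal contractions vanish ``by antisymmetry'' while each diagonal one contributes $r-g+1$---is phrased in terms of positions and free index choices, i.e.\ in vector-space language that has no direct meaning in an abstract rigid tensor category. To make it rigorous you would have to reduce everything to identities in $\mathbb{Q}[S_{g}]$ acting on $\otimes^{g}V\otimes\otimes^{g}V^{\vee}$ and trace through the projectors; this is possible in principle but is exactly the ``main obstacle'' you flag, and you have not carried it out. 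The paper's route via the anti-derivation identity is what replaces this computation, and is the substantive content you are missing.
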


\bigskip

We define, when $g\geq i$,%
\begin{equation*}
D^{i,g}:=D_{\iota _{i,g}}:\wedge ^{i}V\rightarrow \wedge ^{g-i}V^{\vee
}\otimes \wedge ^{g}V^{\vee \vee }\text{ and }D_{i,g}:=D_{\iota _{i,g}^{\ast
}}:\wedge ^{i}V^{\vee }\rightarrow \wedge ^{g-i}V\otimes \wedge ^{g}V^{\vee }%
\text{.}
\end{equation*}%
Thanks to Lemma \ref{Alternating algebras L key}, the commutative diagrams of Proposition \ref{FPD P1}, Lemma \ref{FDP L1}, Theorem \ref{FDP T} and, respectively, Corollary \ref{FDP C2}, translate into the following result.

\begin{theorem}
\label{Alternating algebras T}The following diagrams are commutative, for
every $g\geq i\geq 0$.

\begin{enumerate}
\item[$\left( 1\right) $] 
\begin{equation*}
\xymatrix@C=60pt{ \wedge^{i}V\otimes\wedge^{i}V^{\vee} \ar@/^{0.75pc}/[dr]|{\binom{g}{g-i}^{-1}\binom{r-i}{g-i}\ev_{V,a}^{i,\tau}} \ar[d]|{D^{i,g}\otimes D_{i,g}} \\ \wedge^{g-i}V^{\vee}\otimes\wedge^{g}V^{\vee\vee}\otimes\wedge^{g-i}V\otimes\wedge^{g}V^{\vee} \ar[r]^-{\ev_{13,24}^{\phi,\phi}} & \mathbb{I}\text{.} }
\end{equation*}

\item[$\left( 2\right) $] 
\begin{equation*}
\xymatrix{ \wedge^{i}V^{\vee}\otimes\wedge^{g-i}V^{\vee} \ar[r]^-{1_{\wedge^{i}V^{\vee}}\otimes D_{g-i,g}} \ar[d]|{D_{i,g}\otimes1_{\wedge^{g-i}V^{\vee}}} & \wedge^{i}V^{\vee}\otimes\wedge^{i}V\otimes\wedge^{g}V^{\vee} \ar[d]|{\left(-1\right)^{i\left(g-i\right)}\cdot \ev_{V,a}^{i}\otimes1_{\wedge^{g}V^{\vee}}} & \wedge^{i}V\otimes\wedge^{g-i}V \ar[r]^-{1_{\wedge^{i}V}\otimes D^{g-i,g}} \ar[d]|{D^{i,g}\otimes1_{\wedge^{g-i}V}} & \wedge^{i}V\otimes\wedge^{i}V^{\vee}\otimes\wedge^{g}V^{\vee\vee} \ar[d]|{\left(-1\right)^{i\left(g-i\right)}\cdot \ev_{V,a}^{i,\tau}\otimes1_{\wedge^{g}V^{\vee\vee}}} \\ \wedge^{g-i}V\otimes\wedge^{g}V^{\vee}\otimes\wedge^{g-i}V^{\vee} \ar[r]^-{\ev_{13,\wedge^{g}V^{\vee}}^{\tau}} & \wedge^{g}V^{\vee}\text{,} & \wedge^{g-i}V^{\vee}\otimes\wedge^{g}V^{\vee\vee}\otimes\wedge^{g-i}V \ar[r]^-{\ev_{13,\wedge^{g}V^{\vee\vee}}^{\phi}} & \wedge^{g}V^{\vee\vee}\text{.} }
\end{equation*}

\item[$\left( 3\right) $] 
\begin{equation*}
\xymatrix@C=40pt{ \wedge^{i}V \ar@/^{2pc}/[rrr]^-{\left(-1\right)^{i\left(g-i\right)}\binom{g}{g-i}^{-1}\binom{r-i}{g-i}} \ar[r]_-{D^{i,g}} & \wedge^{g-i}V^{\vee}\otimes\wedge^{g}V^{\vee\vee} \ar[r]_-{D_{g-i,g}\otimes1_{\wedge^{g}V^{\vee\vee}}} & \wedge^{i}V\otimes\wedge^{g}V^{\vee}\otimes\wedge^{g}V^{\vee\vee} \ar[r]_-{1_{\wedge^{i}V}\otimes \ev_{V^{\vee},a}^{g,\tau}} & \wedge^{i}V }
\end{equation*}
and
\begin{equation*}
\xymatrix@C=40pt{ \wedge^{g-i}V^{\vee} \ar@/^{2pc}/[rrr]^-{\left(-1\right)^{i\left(g-i\right)}\binom{g}{i}^{-1}\binom{r+i-g}{i}} \ar[r]_-{D_{g-i,g}} & \wedge^{i}V\otimes\wedge^{g}V^{\vee} \ar[r]_-{D^{i,g}\otimes1_{\wedge^{g}V^{\vee}}} & \wedge^{g-i}V^{\vee}\otimes\wedge^{g}V^{\vee\vee}\otimes\wedge^{g}V^{\vee} \ar[r]_-{1_{\wedge^{g-i}V^{\vee}}\otimes \ev_{V^{\vee},a}^{g}} & \wedge^{g-i}V^{\vee}\text{.} }
\end{equation*}

\item[$\left( 4\right) $] 
\begin{equation*}
\xymatrix{ \wedge^{i}V\otimes\wedge^{g-i}V \ar[r]^-{\varphi_{i,g-i}} \ar[d]|{D^{i,g}\otimes D^{g-i,g}} & \wedge^{g}V \ar[d]|{\binom{g}{g-i}^{-1}\binom{r-i}{g-i}\cdot i_{\wedge^{g}V}} & \wedge^{i}V^{\vee}\otimes\wedge^{g-i}V^{\vee} \ar[r]^-{\varphi_{i,g-i}} \ar[d]|{D_{i,g}\otimes D_{g-i,g}} & \wedge^{g}V^{\vee} \ar[d]|{\binom{g}{g-i}^{-1}\binom{r-i}{g-i}} \\ \wedge^{g-i}V^{\vee}\otimes\wedge^{g}V^{\vee\vee}\otimes\wedge^{i}V^{\vee}\otimes\wedge^{g}V^{\vee\vee} \ar[r]^-{\varphi_{g-i,i}^{13\rightarrow \wedge^{g}V^{\vee\vee}}} & \wedge^{g}V^{\vee\vee}\text{,} & \wedge^{g-i}V\otimes\wedge^{g}V^{\vee}\otimes\wedge^{i}V\otimes\wedge^{g}V^{\vee} \ar[r]^-{\varphi_{g-i,i}^{13\rightarrow \wedge^{g}V^{\vee}}} & \wedge^{g}V^{\vee}\text{.}}
\end{equation*}
\end{enumerate}
\end{theorem}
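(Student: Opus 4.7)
The plan is to deduce the theorem as a direct specialization of the formal Poincar\'e duality framework of Section 3, with $S := \wedge^i V$, $X := \wedge^{g-i} V$, and $Y := \wedge^g V$. Under this substitution, the four morphisms $f_{S,X}$, $f_{X,S}$, $f_{S^\vee, X^\vee}$, $f_{X^\vee, S^\vee}$ come from the alternating multiplications $\varphi_{i,g-i}^{V}$, $\varphi_{g-i,i}^{V}$, $\varphi_{i,g-i}^{V^\vee}$, $\varphi_{g-i,i}^{V^\vee}$, so that the Poincar\'e morphisms $D_{S,X^\vee}$, $D_{X,S^\vee}$, $D_{S^\vee,X}$, $D_{X^\vee,S}$ of Section 3 are precisely $D^{i,g}$, $D^{g-i,g}$, $D_{i,g}$, $D_{g-i,g}$. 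The dual pairs $(S^\vee, \ev_S^\tau)$ and $(X^\vee, \ev_X^\tau)$ coincide with $(\wedge^i V^\vee, \ev_{V,a}^{i,\tau})$ and $(\wedge^{g-i} V^\vee, \ev_{V,a}^{g-i, \tau})$ by the construction at the end of Section 4.

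The first routine ingredient is the verification of the commutativity constraints required by Section 3. Anticommutativity of the alternating product gives $\varphi_{g-i,i}^{V} \circ \tau_{\wedge^i V, \wedge^{g-i} V} = (-1)^{i(g-i)} \varphi_{i,g-i}^{V}$ together with the analogous identity on duals, so all four hypotheses $(Com)_{\lambda_{S,X}}$, $(Com)_{\lambda_{X,S}}$, $(Com)_{\lambda_{S^\vee, X^\vee}}$, $(Com)_{\lambda_{X^\vee, S^\vee}}$ hold with the single value $\lambda = (-1)^{i(g-i)}$. By Remark \ref{FPD R1} this yields $\lambda_{[S],[X]} = \lambda_{[X],[S]} = (-1)^{i(g-i)} \cdot (-1)^{i(g-i)} = 1$.

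The second, and essential, ingredient is the Casimir relation. Lemma \ref{Alternating algebras L key} identifies $\delta_{i,g}^{i,g} \circ (C_{\wedge^i V} \otimes C_{\wedge^g V})$ with $\binom{g}{i}^{-1} \binom{r+i-g}{i} \cdot C_{\wedge^{g-i} V}$; unwinding the definition $\delta_{i,g}^{i,g} = \varphi_{\iota_{i,g}^{\ast}} \otimes_{\epsilon}^{\tau} \varphi_{\iota_{i,g}}$ one sees this is exactly $(Cas)_{\mu_{X,S}}$ with $\mu_{X,S} = \binom{g}{i}^{-1} \binom{r+i-g}{i}$. Applying the same lemma with $i$ replaced by $g-i$ yields $(Cas)_{\mu_{S,X}}$ with $\mu_{S,X} = \binom{g}{g-i}^{-1} \binom{r-i}{g-i}$. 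This lemma is the main technical obstacle and is the only non-transcriptional step of the whole proof; it will be established in Section 5.1 by combining the anti-derivation property of Proposition \ref{Alternating algebras P1} (together with its dual analogue, encoded in Corollary \ref{Alternating algebras C1}) with the compatibilities of Corollary \ref{S1 Algebras C1}, the delicate point being to track the binomial coefficients that arise from the normalization factor $(j-i)!/j!$ appearing in Lemma \ref{Alternating algebras L1}.

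With the hypotheses in place, the four assertions follow by pure transcription from Section 3, the constants matching as claimed thanks to $\lambda_{[S],[X]} = \lambda_{[X],[S]} = 1$. Part $(1)$ is Proposition \ref{FPD P1}, producing the factor $\mu_{S,X}/\lambda_{[S],[X]} = \binom{g}{g-i}^{-1}\binom{r-i}{g-i}$. Part $(2)$ is Lemma \ref{FDP L1}, the sign $(-1)^{i(g-i)}$ originating from $\lambda_{X^\vee, S^\vee}$ in the first diagram and from $\lambda_{X,S}$ in the second. Part $(3)$ is Theorem \ref{FDP T}: the first diagram comes from the first displayed formula under $(Cas)_{\mu_{S,X}}$ and $(Com)_{\lambda_{X^\vee, S^\vee}}$, yielding the constant $\mu_{S,X}/(\lambda_{[S],[X]} \lambda_{X^\vee, S^\vee}) = (-1)^{i(g-i)} \binom{g}{g-i}^{-1}\binom{r-i}{g-i}$, and the second diagram comes from the third displayed formula under $(Cas)_{\mu_{X,S}}$ and $(Com)_{\lambda_{S,X}}$, yielding $\mu_{X,S}/(\lambda_{[X],[S]} \lambda_{S,X}) = (-1)^{i(g-i)} \binom{g}{i}^{-1}\binom{r+i-g}{i}$. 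Finally, Part $(4)$ is Corollary \ref{FDP C2}: since $\lambda = \lambda_{[S],[X]} \lambda_{X^\vee, S^\vee} \lambda_{X,S} = (-1)^{i(g-i)} \cdot (-1)^{i(g-i)} = 1$, the constant on the right-hand side reduces to $\mu_{S,X} = \binom{g}{g-i}^{-1}\binom{r-i}{g-i}$ for the first diagram, and symmetrically for the second.
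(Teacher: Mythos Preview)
Your proposal is correct and follows essentially the same approach as the paper: specialize the abstract framework of Section~3 with $(S,X,Y)=(\wedge^{i}V,\wedge^{g-i}V,\wedge^{g}V)$, verify the commutativity constraints with $\lambda_{S,X}=\lambda_{X,S}=\lambda_{S^{\vee},X^{\vee}}=\lambda_{X^{\vee},S^{\vee}}=(-1)^{i(g-i)}$ and hence $\lambda_{[S],[X]}=\lambda_{[X],[S]}=1$, invoke Lemma~\ref{Alternating algebras L key} (and its instance at $g-i$) for the Casimir conditions with $\mu_{S,X}=\binom{g}{g-i}^{-1}\binom{r-i}{g-i}$ and $\mu_{X,S}=\binom{g}{i}^{-1}\binom{r+i-g}{i}$, and then read off parts (1)--(4) from Proposition~\ref{FPD P1}, Lemma~\ref{FDP L1}, Theorem~\ref{FDP T}, and Corollary~\ref{FDP C2} respectively. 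Your tracking of the constants is somewhat more explicit than the paper's, but the argument is the same.
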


\begin{proof}
The commutative diagrams $\left( 1\right) $, $\left( 2\right) $, $\left(
3\right) $ and $\left( 4\right) $ are just Proposition \ref{FPD P1}, Lemma %
\ref{FDP L1}, Theorem \ref{FDP T} and, respectively, Corollary \ref{FDP C2}
with $\left( S,X,Y\right) =\left( \wedge ^{i}V,\wedge ^{g-i}V,\wedge
^{g}V\right) $, $\varphi _{S,X}=\varphi _{i,g-i}^{V}$, $\varphi
_{X,S}=\varphi _{g-i,i}^{V}$, $\varphi _{S^{\vee },X^{\vee }}=\varphi
_{i,g-i}^{V^{\vee }}$, $\varphi _{X^{\vee },S^{\vee }}=\varphi
_{g-i,i}^{V^{\vee }}$, $D_{S,X^{\vee }}=D^{i,g}$, $D_{X,S^{\vee }}=D^{g-i,g}$%
, $D_{S^{\vee },X}=D_{i,g}$, $D_{X^{\vee },S}=D_{g-i,g}$. Indeed we have in
this case $\mu _{S,X}=\mu _{g-i,g}=\binom{g}{g-i}^{-1}\binom{r-i}{g-i}$, $%
\mu _{X,S}=\mu _{i,g}=\binom{g}{i}^{-1}\binom{r+i-g}{i}$, $\lambda
_{S,X}=\lambda _{X,S}=\lambda _{S^{\vee },X^{\vee }}=\lambda _{X^{\vee
},S^{\vee }}=\left( -1\right) ^{i\left( g-i\right) }$ and $\lambda _{\left[ X%
\right] ,\left[ S\right] }=\lambda _{\left[ S\right] ,\left[ X\right] }=1$.
\end{proof}

\bigskip

We say that $V$ has \emph{alternating rank }$g\in \mathbb{N}_{\geq 1}$ if $%
\wedge ^{g}V$ is an invertible object and $\binom{r-i}{g-i}$ and $\binom{%
r+i-g}{i}$ are invertible for every $0\leq i\leq g$. For example, when $%
End\left( \mathbb{I}\right) $ is a field or $r\in \mathbb{Q}$, the second
condition means that $r$ is not a root of the polynomials $\binom{T-i}{g-i}%
\in \mathbb{Q}\left[ T\right] $ and $\binom{T+i-g}{i}\in \mathbb{Q}\left[ T%
\right] $ for every $0\leq i\leq g$, i.e. that $r\neq i,i+1,...,g-1$ and $%
r\neq g-i,g-i+1,...,g-1$ for every $1\leq i\leq g$.

We say that $V$ has \emph{strong alternating rank }$g\in \mathbb{N}_{\geq 1}$ if $\wedge
^{g}V $ is an invertible object and $r=g$ (hence $V$ has alternating rank $g$).
With these notations Corollary \ref{FDP C1} specializes to the following
result.

\begin{corollary}
\label{Alternating algebras CT}If $V$ has weakly geometric rank $g\in 
\mathbb{N}$ then, for every $0\leq i\leq g$, the morphisms $D^{i,g}$, $%
D_{g-i,g}$, $D^{g-i,g}$ and $D_{i,g}$ are isomorphisms and the
multiplication maps $\varphi _{i,g-i}^{V}$, $\varphi _{g-i,i}^{V}$, $\varphi
_{i,g-i}^{V^{\vee }}$ and $\varphi _{g-i,i}^{V^{\vee }}$ are perfect
pairings (meaning that the associate $\hom $ valued morphisms are
isomorphisms). Furthermore, when $V$ has geometric rank $g$, we have $\binom{%
r-i}{g-i}=\binom{r+i-g}{i}=1$ in the commutative diagrams of Theorem \ref%
{Alternating algebras T}.
\end{corollary}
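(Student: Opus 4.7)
The plan is to deduce this corollary directly from Corollary \ref{FDP C1} via the specialization $(S, X, Y) = (\wedge^i V, \wedge^{g-i} V, \wedge^g V)$, equipping this triple with the four multiplication morphisms $\varphi^V_{i,g-i}$, $\varphi^V_{g-i,i}$, $\varphi^{V^\vee}_{i,g-i}$, $\varphi^{V^\vee}_{g-i,i}$. Under this identification, the abstract morphisms $f_{S,X}$, $f_{X,S}$, $f_{S^\vee,X^\vee}$, $f_{X^\vee,S^\vee}$ become the $\hom$-valued morphisms associated to the four multiplication maps, while the abstract $D$-morphisms become $D^{i,g}$, $D^{g-i,g}$, $D_{i,g}$, $D_{g-i,g}$, exactly as set up in the proof of Theorem \ref{Alternating algebras T}.

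To apply Corollary \ref{FDP C1}, I would verify its hypotheses one by one. First, the two Casimir identities hold with scalars
\[
\mu_{S,X} = \binom{g}{g-i}^{-1}\binom{r-i}{g-i}, \qquad \mu_{X,S} = \binom{g}{i}^{-1}\binom{r+i-g}{i},
\]
as recorded in Theorem \ref{Alternating algebras T} (3); the alternating rank assumption is precisely that $\binom{r-i}{g-i}$ and $\binom{r+i-g}{i}$ are invertible in $\mathrm{End}(\mathbb{I})$ for $0 \le i \le g$, and the usual binomial coefficients $\binom{g}{i}$, $\binom{g}{g-i}$ are invertible by $\mathbb{Q}$-linearity, so both $\mu_{S,X}$ and $\mu_{X,S}$ are invertible. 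Second, the four commutativity conditions $(Com)_\lambda$ hold with $\lambda_{S,X} = \lambda_{X,S} = \lambda_{S^\vee,X^\vee} = \lambda_{X^\vee,S^\vee} = (-1)^{i(g-i)}$ by the standard graded commutativity of the wedge product; these signs are visibly invertible. Third, $Y = \wedge^g V$ is invertible by the definition of alternating rank. With all hypotheses verified, Corollary \ref{FDP C1} immediately yields that $D^{i,g}$, $D^{g-i,g}$, $D_{i,g}$, $D_{g-i,g}$ are isomorphisms and that the $\hom$-valued morphisms associated to $\varphi^V_{i,g-i}$, $\varphi^V_{g-i,i}$, $\varphi^{V^\vee}_{i,g-i}$, $\varphi^{V^\vee}_{g-i,i}$ are isomorphisms, which is the stated perfectness of the pairings.

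For the final assertion, when $V$ has strong alternating rank $g$, i.e.\ $r = g$, a direct computation gives
\[
\binom{r-i}{g-i} = \binom{g-i}{g-i} = 1, \qquad \binom{r+i-g}{i} = \binom{i}{i} = 1,
\]
so the scalars $\binom{r-i}{g-i}$ and $\binom{r+i-g}{i}$ disappear from the commutative diagrams of Theorem \ref{Alternating algebras T}.

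The only genuinely nontrivial input is the identification of the Casimir scalars $\mu_{S,X}$ and $\mu_{X,S}$ as the stated products of binomial coefficients, but that is the content of Lemma \ref{Alternating algebras L key} (and thus of Theorem \ref{Alternating algebras T} (3)), which is proved separately. Given those results, the present corollary is a direct translation, and no new computation is required.
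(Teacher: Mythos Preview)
Your proposal is correct and follows exactly the paper's approach: the paper states just before the corollary that ``With these notations Corollary \ref{FDP C1} specializes to the following result,'' and you have spelled out precisely this specialization, verifying the hypotheses of Corollary \ref{FDP C1} with the same identification $(S,X,Y)=(\wedge^i V,\wedge^{g-i}V,\wedge^g V)$ and the same constants $\mu_{S,X}$, $\mu_{X,S}$, $\lambda=(-1)^{i(g-i)}$ already computed in the proof of Theorem \ref{Alternating algebras T}.
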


We end this section with the following result.

\begin{proposition}
\label{Alternating algebras P2}The following diagrams are commutative, when $%
\wedge ^{g}V$ is invertible of rank $r_{\wedge ^{g}V}$ (hence $r_{\wedge
^{g}V}\in \left\{ \pm 1\right\} $):%
\[
\xymatrix{ \wedge^{i}V\otimes\wedge^{g-i}V\otimes V \ar[r]^-{\tau_{\wedge^{i}V\otimes\wedge^{g-i}V,V}} \ar[d]|{\left(1_{\wedge^{i}V}\otimes\varphi_{g-i,1},\left(1_{\wedge^{g-i}V}\otimes\varphi_{i,1}\right)\circ\left(\tau_{\wedge^{i}V,\wedge^{g-i}V}\otimes1_{V}\right)\right)} & V\otimes\wedge^{i}V\otimes\wedge^{g-i}V \ar[d]|{D^{1,g}\otimes\varphi_{i,g-i}}\\
\wedge^{i}V\otimes\wedge^{g-i+1}V\oplus\wedge^{g-i}V\otimes\wedge^{i+1}V \ar[d]|{D^{i,g}\otimes D^{g-i+1,g}\oplus D^{g-i,g}\otimes D^{i+1,g}} & \wedge^{g-1}V^{\vee}\otimes\wedge^{g}V^{\vee\vee}\otimes\wedge^{g}V \ar[d]|{r_{\wedge^{g}V}g\binom{g}{g-i}^{-1}\binom{r-i}{g-i}\cdot1_{\wedge^{g-1}V^{\vee}\otimes\wedge^{g}V^{\vee\vee}}\otimes i_{\wedge^{g}V}} \\
M \ar[r]_-{\left(-1\right)^{g-i}i\cdot\varphi_{g-i,i-1}^{13}\oplus\left(-1\right)^{i\left(g-i-1\right)}\left(g-i\right)\cdot\varphi_{i,g-i-1}^{13}} & \wedge^{g-1}V^{\vee}\otimes\wedge^{g}V^{\vee\vee}\otimes\wedge^{g}V^{\vee\vee}
}
\]%
where
\[
M = \wedge^{g-i}V^{\vee}\otimes\wedge^{g}V^{\vee\vee}\otimes\wedge^{i-1}V^{\vee}\otimes\wedge^{g}V^{\vee\vee}\oplus\wedge^{i}V^{\vee}\otimes\wedge^{g}V^{\vee\vee}\otimes\wedge^{g-i-1}V^{\vee}\otimes\wedge^{g}V^{\vee\vee}
\]
and%
\[
\xymatrix{ \wedge^{i}V^{\vee}\otimes\wedge^{g-i}V^{\vee}\otimes V^{\vee} \ar[r]^-{\tau_{\wedge^{i}V^{\vee}\otimes\wedge^{g-i}V^{\vee},V^{\vee}}} \ar[d]|{\left(1_{\wedge^{i}V^{\vee}}\otimes\varphi_{g-i,1},\left(1_{\wedge^{g-i}V^{\vee}}\otimes\varphi_{i,1}\right)\circ\left(\tau_{\wedge^{i}V^{\vee},\wedge^{g-i}V^{\vee}}\otimes1_{V^{\vee}}\right)\right)} & V^{\vee}\otimes\wedge^{i}V^{\vee}\otimes\wedge^{g-i}V^{\vee} \ar[d]|{D_{1,g}\otimes\varphi_{i,g-i}} \\
\wedge^{i}V^{\vee}\otimes\wedge^{g-i+1}V^{\vee}\oplus\wedge^{g-i}V^{\vee}\otimes\wedge^{i+1}V^{\vee} \ar[d]|{D_{i,g}\otimes D_{g-i+1,g}\oplus D_{g-i,g}\otimes D_{i+1,g}} & \wedge^{g-1}V\otimes\wedge^{g}V^{\vee}\otimes\wedge^{g}V^{\vee} \ar[d]|{r_{\wedge^{g}V}g\binom{g}{g-i}^{-1}\binom{r-i}{g-i}\cdot1_{\wedge^{g-1}V^{\vee}\otimes\wedge^{g}V^{\vee}\otimes\wedge^{g}V^{\vee}}} \\
\wedge^{g-i}V\otimes\wedge^{g}V^{\vee}\otimes\wedge^{i-1}V\otimes\wedge^{g}V^{\vee}\oplus\wedge^{i}V\otimes\wedge^{g}V^{\vee}\otimes\wedge^{g-i-1}V\otimes\wedge^{g}V^{\vee} \ar[r]_-{\left(-1\right)^{g-i}i\cdot\varphi_{g-i,i-1}^{13}\oplus\left(-1\right)^{i\left(g-i-1\right)}\left(g-i\right)\cdot\varphi_{i,g-i-1}^{13}} & \wedge^{g-1}V\otimes\wedge^{g}V^{\vee}\otimes\wedge^{g}V^{\vee}\text{.}
}
\]
\end{proposition}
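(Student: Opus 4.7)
We prove the first diagram; the second is proved analogously with the roles of $V$ and $V^{\vee}$ reversed. The strategy is three-step: reduce the right-then-down composition via Corollary \ref{FDP C3}, split the result using the antiderivation Proposition \ref{Alternating algebras P1}, and match the two resulting terms with the two summands of the down-then-right composition via the associativity of internal multiplication (Proposition \ref{S1 Algebras P1}$(2)$).

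First, apply Corollary \ref{FDP C3} with $(S, X, Y) = (\wedge^{i} V, \wedge^{g-i} V, \wedge^{g} V)$, $A = V$, $B = \wedge^{g-1} V^{\vee}$, and $g = \iota_{1, g} \colon V \to \hom(\wedge^{g} V^{\vee}, \wedge^{g-1} V^{\vee})$, so that $D_{g} = D^{1, g}$ and $\varphi_{g} = \varphi_{\iota_{1, g}}$. The Casimir hypothesis $(Cas)_{\mu_{S, X}}$ follows from Lemma \ref{Alternating algebras L key} applied with $(i, g)$ replaced by $(g-i, g)$, giving $\mu_{S, X} = \binom{g}{g-i}^{-1}\binom{r-i}{g-i}$; the commutativity hypotheses reduce to the graded-anti-commutativity of $\wedge^{\cdot} V$ and $\wedge^{\cdot} V^{\vee}$, yielding $\lambda_{X, S} = \lambda_{X^{\vee}, S^{\vee}} = (-1)^{i(g-i)}$, $\lambda_{[S], [X]} = 1$, and $\lambda = 1$. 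Using $r_{\wedge^{g} V}^{2} = 1$, the coefficient $r_{\wedge^{g} V} g \mu_{S, X}$ on the right-then-down arrow collapses so that the entire right-then-down composition equals
\[
g \cdot (\varphi_{\iota_{1, g}} \otimes 1) \circ (1_{V} \otimes \varphi_{g-i, i}^{V^{\vee}, 13}) \circ (1_{V} \otimes D^{i, g} \otimes D^{g-i, g}) \circ \tau_{\wedge^{i} V \otimes \wedge^{g-i} V,\, V}.
\]

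Second, apply the antiderivation property of Proposition \ref{Alternating algebras P1} to $\wedge^{\cdot} V^{\vee}$ with $(j, l) = (g-i, i)$, rewriting $g \cdot \varphi_{\iota_{1, g}} \circ (1_{V} \otimes \varphi_{g-i, i}^{V^{\vee}})$ as
\[
\varphi_{g-i-1, i}^{V^{\vee}} \circ \bigl((g-i)\, \varphi_{\iota_{1, g-i}} \otimes 1\bigr) + (-1)^{g-i} i \cdot \varphi_{g-i, i-1}^{V^{\vee}} \circ (1 \otimes \varphi_{\iota_{1, i}}) \circ (\tau_{V, \wedge^{g-i} V^{\vee}} \otimes 1).
\]
This splits the right-then-down composition into a sum of two explicit terms.

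Third, apply Proposition \ref{S1 Algebras P1}$(2)$ with $(j, i, k) = (1, i, g)$ to obtain $\varphi_{\iota_{1, g-i}} \circ (1_{V} \otimes \varphi_{\iota_{i, g}}) = \varphi_{\iota_{i+1, g}} \circ (\varphi_{i, 1}^{\tau} \otimes 1)$; tensoring with the Casimir $C_{\wedge^{g} V^{\vee}}$ and invoking the formula $D = (\varphi_{\iota} \otimes 1) \circ (1 \otimes C)$ from Proposition \ref{S1 Casimir P DefD_f} yields the compatibility
\[
(\varphi_{\iota_{1, g-i}} \otimes 1_{\wedge^{g} V^{\vee\vee}}) \circ (1_{V} \otimes D^{i, g}) \circ \tau_{\wedge^{i} V, V} = D^{i+1, g} \circ \varphi_{i, 1},
\]
together with the symmetric identity $(\varphi_{\iota_{1, i}} \otimes 1) \circ (1_{V} \otimes D^{g-i, g}) \circ \tau_{\wedge^{g-i} V, V} = D^{g-i+1, g} \circ \varphi_{g-i, 1}$. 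Substituting these into the two terms from Step 2, combined with the graded-anti-commutativity $\varphi_{g-i-1, i}^{V^{\vee}} = (-1)^{i(g-i-1)} \varphi_{i, g-i-1}^{V^{\vee}} \circ \tau$ and the identity $\tau_{\wedge^{g} V^{\vee\vee}, \wedge^{g} V^{\vee\vee}} = r_{\wedge^{g} V}$ (which accounts for swapping the two $\wedge^{g} V^{\vee\vee}$ factors in the target), identifies the first term with summand $(b)$ and the second term with summand $(a)$ of the down-then-right composition.

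The main obstacle is the bookkeeping in Step 3: the sign contributions from the antiderivation coefficient, from the graded-anti-commutativity of $\wedge^{\cdot} V^{\vee}$, and from the swap of the two $\wedge^{g} V^{\vee\vee}$ factors must conspire to produce exactly the coefficients $(-1)^{g-i} i$ and $(-1)^{i(g-i-1)} (g-i)$ of the statement; the $r_{\wedge^{g} V}$ in the proposition's coefficient is precisely what reconciles the order of the two $\wedge^{g} V^{\vee\vee}$ tensor factors. The second diagram is established by the same argument with $V$ and $V^{\vee}$ interchanged, using the dual Poincar\'{e} morphisms $D_{i, g} = D_{\iota_{i, g}^{\ast}}$ and the antiderivation of Proposition \ref{Alternating algebras P1} applied to $\wedge^{\cdot} V$ itself.
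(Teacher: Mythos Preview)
Your proposal follows essentially the same three-stage architecture as the paper: apply Corollary~\ref{FDP C3} with $g=\iota_{1,g}$ and $(S,X,Y)=(\wedge^iV,\wedge^{g-i}V,\wedge^gV)$, split via the antiderivation identity of Proposition~\ref{Alternating algebras P1}, then convert each piece using Proposition~\ref{S1 Algebras P1}\,(2) together with the Casimir description $D^{\bullet,g}=(\varphi_{\iota_{\bullet,g}}\otimes 1)\circ(1\otimes C_{\wedge^gV^\vee})$ from~(\ref{S1 Casimir D DefD_f}). This is exactly the paper's route.

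One point of bookkeeping deserves care. You write that the factor $r_{\wedge^gV}$ in the statement ``is precisely what reconciles the order of the two $\wedge^gV^{\vee\vee}$ tensor factors'' via $\tau_{\wedge^gV^{\vee\vee},\wedge^gV^{\vee\vee}}=r_{\wedge^gV}$. In the paper, however, the $r_{\wedge^gV}$ enters already at the first step, straight out of Corollary~\ref{FDP C3} (where it appears as $r_Y$ on the bottom arrow); the subsequent rearrangement of the $a$-term into the form $\varphi^{13}_{i,g-i-1}\circ(D^{g-i,g}\otimes D^{i+1,g})$ is done as a separate ``boring computation'' (the paper's equations analogous to your Step~3) claimed to use only functoriality of $\otimes$, naturality of $\tau$, and graded anti-commutativity. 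So while your ingredients are the right ones, be sure your sign ledger does not double-count: if you pull an $r_{\wedge^gV}$ out of Corollary~\ref{FDP C3} \emph{and} out of a $\tau_{L,L}$ swap, you would end up with $r_{\wedge^gV}^2=1$ rather than the single $r_{\wedge^gV}$ in the statement. Tracking precisely which of the two $\wedge^gV^{\vee\vee}$ factors in the target comes from which $D^{\bullet,g}$ at each stage is the whole content of the ``bookkeeping'' you flag, and it is worth writing out at least once rather than asserting.
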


\begin{proof}
We first apply Corollary \ref{FDP C3} with%
\[
g=\iota _{1,g}:V\rightarrow \hom \left( \wedge ^{g}V^{\vee },\wedge
^{g-1}V^{\vee }\right) \text{ and }\left( S,X,Y\right) =\left( \wedge
^{i}V,\wedge ^{g-i}V,\wedge ^{g}V\right) \text{.}
\]%
Since $r_{\wedge ^{g}V}=r_{\wedge ^{g}V}^{-1}$, the result is that, setting $%
\mu :=\binom{g}{g-i}^{-1}\binom{r-i}{g-i}$,%
\begin{eqnarray}
&&r_{\wedge ^{g}V}\mu \cdot \left( 1_{\wedge ^{g-1}V^{\vee }\otimes \wedge
^{g}V^{\vee \vee }}\otimes i_{\wedge ^{g}V}\right) \circ \left(
D^{1,g}\otimes \varphi _{i,g-i}\right) \circ \tau _{\wedge ^{i}V\otimes
\wedge ^{g-i}V,V}  \nonumber \\
&&\text{ }=\left( \varphi _{\iota _{1,g}}\otimes 1_{\wedge ^{g}V^{\vee \vee
}\otimes \wedge ^{g}V^{\vee \vee }}\right) \circ \left( 1_{V}\otimes \varphi
_{g-i,g}^{13}\right) \circ \left( 1_{V}\otimes D^{i,g}\otimes
D^{g-i,g}\right) \circ \tau _{\wedge ^{i}V\otimes \wedge ^{g-i}V,V}\text{.}
\label{Alternating algebras P2 F1}
\end{eqnarray}%
Here we recall that, by definition,%
\[
\varphi _{g-i,g}^{13}:=\left( \varphi _{g-i,i}\otimes 1_{\wedge ^{g}V^{\vee
\vee }\otimes \wedge ^{g}V^{\vee \vee }}\right) \circ \left( 1_{\wedge
^{g-i}V^{\vee }}\otimes \tau _{\wedge ^{g}V^{\vee \vee },\wedge ^{i}V^{\vee
}}\otimes 1_{\wedge ^{g}V^{\vee \vee }}\right) \text{.}
\]%
Hence we find%
\begin{eqnarray}
&&g\cdot \left( \varphi _{\iota _{1,g}}\otimes 1_{\wedge ^{g}V^{\vee \vee
}\otimes \wedge ^{g}V^{\vee \vee }}\right) \circ \left( 1_{V}\otimes \varphi
_{g-i,g}^{13}\right) =g\cdot \left( \varphi _{\iota _{1,g}}\otimes 1_{\wedge
^{g}V^{\vee \vee }\otimes \wedge ^{g}V^{\vee \vee }}\right)  \nonumber \\
&&\text{ }\circ \left( 1_{V}\otimes \varphi _{g-i,i}\otimes 1_{\wedge
^{g}V^{\vee \vee }\otimes \wedge ^{g}V^{\vee \vee }}\right) \circ \left(
1_{V\otimes \wedge ^{g-i}V^{\vee }}\otimes \tau _{\wedge ^{g}V^{\vee \vee
},\wedge ^{i}V^{\vee }}\otimes 1_{\wedge ^{g}V^{\vee \vee }}\right) 
\nonumber \\
&&\text{ }=g\cdot \left( \varphi _{\iota _{1,g}}\circ \left( 1_{V}\otimes
\varphi _{g-i,i}\right) \otimes 1_{\wedge ^{g}V^{\vee \vee }\otimes \wedge
^{g}V^{\vee \vee }}\right) \circ \left( 1_{V\otimes \wedge ^{g-i}V^{\vee
}}\otimes \tau _{\wedge ^{g}V^{\vee \vee },\wedge ^{i}V^{\vee }}\otimes
1_{\wedge ^{g}V^{\vee \vee }}\right) =a+b  \label{Alternating algebras P2 F2}
\end{eqnarray}%
where%
\begin{eqnarray*}
&&a:=\left( g-i\right) \cdot \left( \varphi _{g-i-1,i}\otimes 1_{\wedge
^{g}V^{\vee \vee }\otimes \wedge ^{g}V^{\vee \vee }}\right) \circ \left(
\varphi _{\iota _{1,g-i}}\otimes \tau _{\wedge ^{g}V^{\vee \vee },\wedge
^{i}V^{\vee }}\otimes 1_{\wedge ^{g}V^{\vee \vee }}\right) \text{,} \\
&&b:=\left( -1\right) ^{g-i}i\cdot \left( \varphi _{g-i,i-1}\otimes
1_{\wedge ^{g}V^{\vee \vee }\otimes \wedge ^{g}V^{\vee \vee }}\right) \circ
\left( 1_{\wedge ^{g-i}V^{\vee }}\otimes \varphi _{\iota _{1,i}}\otimes
1_{\wedge ^{g}V^{\vee \vee }\otimes \wedge ^{g}V^{\vee \vee }}\right) \\
&&\text{ \ \ \ \ \ }\circ \left( \tau _{V,\wedge ^{g-i}V^{\vee }}\otimes
\tau _{\wedge ^{g}V^{\vee \vee },\wedge ^{i}V^{\vee }}\otimes 1_{\wedge
^{g}V^{\vee \vee }}\right)
\end{eqnarray*}%
and where we have use the equality%
\begin{eqnarray*}
&&g\cdot \varphi _{\iota _{1,g}}\circ \left( 1_{V}\otimes \varphi
_{g-i,i}\right) =\left( g-i\right) \cdot \varphi _{g-i-1,i}\circ \left(
\varphi _{\iota _{1,g-i}}\otimes 1_{\wedge ^{i}V^{\vee }}\right) \\
&&\text{ \ \ \ \ \ \ \ \ \ }+\left( -1\right) ^{g-i}i\cdot \varphi
_{g-i,i-1}\circ \left( 1_{\wedge ^{g-i}V^{\vee }}\otimes \varphi _{\iota
_{1,i}}\right) \circ \left( \tau _{V,\wedge ^{g-i}V^{\vee }}\otimes
1_{\wedge ^{i}V^{\vee }}\right)
\end{eqnarray*}%
of Proposition \ref{Alternating algebras P1} at the end. Inserting $\left( 
\text{\ref{Alternating algebras P2 F2}}\right) $ in $\left( \text{\ref%
{Alternating algebras P2 F1}}\right) $ yields%
\begin{eqnarray}
&&r_{\wedge ^{g}V}\mu g\cdot \left( D^{1,g}\otimes \varphi _{i,g-i}\right)
\circ \tau _{\wedge ^{i}V\otimes \wedge ^{g-i}V,V}=a\circ \left(
1_{V}\otimes D^{i,g}\otimes D^{g-i,g}\right) \circ \tau _{\wedge
^{i}V\otimes \wedge ^{g-i}V,V}  \nonumber \\
&&\text{ \ \ }+b\circ \left( 1_{V}\otimes D^{i,g}\otimes D^{g-i,g}\right)
\circ \tau _{\wedge ^{i}V\otimes \wedge ^{g-i}V,V}\text{.}
\label{Alternating algebras P2 F3}
\end{eqnarray}

We now compute $a\circ \left( 1_{V}\otimes D^{i,g}\otimes D^{g-i,g}\right)
\circ \tau _{\wedge ^{i}V\otimes \wedge ^{g-i}V,V}$, using the following
formulas:%
\begin{eqnarray}
&&D^{i,g}\otimes D^{g-i,g}  \nonumber \\
&&\text{ \ \ }=\left( \varphi _{\iota _{i,g}}\otimes 1_{\wedge ^{g}V^{\vee
\vee }}\otimes \varphi _{\iota _{g-i,g}}\otimes 1_{\wedge ^{g}V^{\vee \vee
}}\right) \circ \left( 1_{\wedge ^{i}V}\otimes C_{\wedge ^{g}V^{\vee
}}\otimes 1_{\wedge ^{g-i}V}\otimes C_{\wedge ^{g}V^{\vee }}\right) \text{
(by }\left( \text{\ref{S1 Casimir D DefD_f}}\right) \text{)}
\label{Alternating algebras P2 F1 ab1} \\
&&\left( \varphi _{g-i-1,i}\otimes 1_{\wedge ^{g}V^{\vee \vee }\otimes
\wedge ^{g}V^{\vee \vee }}\right) \circ \left( 1_{\wedge ^{g-i-1}V^{\vee
}}\otimes \tau _{\wedge ^{g}V^{\vee \vee },\wedge ^{i}V^{\vee }}\otimes
1_{\wedge ^{g}V^{\vee \vee }}\right)  \nonumber \\
&&\text{ \ \ }=\varphi _{g-i-1,i}^{13}\text{ (by definition)}
\label{Alternating algebras P2 F1 a2} \\
&&\varphi _{\iota _{1,g-i}}\circ \left( 1_{V}\otimes \varphi _{\iota
_{i,g}}\right) =\varphi _{\iota _{i+1,g}}\circ \left( \varphi _{i,1}^{\tau
}\otimes 1_{\wedge ^{g}V^{\vee }}\right)  \nonumber \\
&&\text{ \ \ }=\left( -1\right) ^{i}\cdot \varphi _{\iota _{i+1,g}}\circ
\left( \varphi _{1,i}\otimes 1_{\wedge ^{g}V^{\vee }}\right) \text{ (by
Prop. \ref{S1 Algebras P1} }\left( 2\right) \text{)}
\label{Alternating algebras P2 F1 a3} \\
&&\left( \varphi _{\iota _{i+1,g}}\otimes 1_{\wedge ^{g}V^{\vee \vee
}}\otimes \varphi _{\iota _{g-i,g}}\otimes 1_{\wedge ^{g}V^{\vee \vee
}}\right) \circ \left( 1_{\wedge ^{i+1}V}\otimes C_{\wedge ^{g}V^{\vee
}}\otimes 1_{\wedge ^{g-i}V}\otimes C_{\wedge ^{g}V^{\vee }}\right) 
\nonumber \\
&&\text{ \ \ }=D^{i+1,g}\otimes D^{g-i,g}\text{ (by }\left( \text{\ref{S1
Casimir D DefD_f}}\right) \text{).}  \label{Alternating algebras P2 F1 a4}
\end{eqnarray}%
We have%
\begin{eqnarray}
&&a\circ \left( 1_{V}\otimes D^{i,g}\otimes D^{g-i,g}\right) \circ \tau
_{\wedge ^{i}V\otimes \wedge ^{g-i}V,V}=\left( g-i\right) \cdot \left(
\varphi _{g-i-1,i}\otimes 1_{\wedge ^{g}V^{\vee \vee }\otimes \wedge
^{g}V^{\vee \vee }}\right)  \nonumber \\
&&\text{ \ \ \ \ \ }\circ \left( \varphi _{\iota _{1,g-i}}\otimes \tau
_{\wedge ^{g}V^{\vee \vee },\wedge ^{i}V^{\vee }}\otimes 1_{\wedge
^{g}V^{\vee \vee }}\right) \circ \left( 1_{V}\otimes D^{i,g}\otimes
D^{g-i,g}\right) \circ \tau _{\wedge ^{i}V\otimes \wedge ^{g-i}V,V}\text{
(by }\left( \text{\ref{Alternating algebras P2 F1 ab1}}\right) \text{)} 
\nonumber \\
&&\text{ }=\left( g-i\right) \cdot \left( \varphi _{g-i-1,i}\otimes
1_{\wedge ^{g}V^{\vee \vee }\otimes \wedge ^{g}V^{\vee \vee }}\right) \circ
\left( 1_{\wedge ^{g-i-1}V^{\vee }}\otimes \tau _{\wedge ^{g}V^{\vee \vee
},\wedge ^{i}V^{\vee }}\otimes 1_{\wedge ^{g}V^{\vee \vee }}\right) 
\nonumber \\
&&\text{ \ \ \ \ \ }\circ \left( \varphi _{\iota _{1,g-i}}\otimes 1_{\wedge
^{g}V^{\vee \vee }\otimes \wedge ^{i}V^{\vee }\otimes \wedge ^{g}V^{\vee
\vee }}\right) \circ \left( 1_{V}\otimes \varphi _{\iota _{i,g}}\otimes
1_{\wedge ^{g}V^{\vee \vee }}\otimes \varphi _{\iota _{g-i,g}}\otimes
1_{\wedge ^{g}V^{\vee \vee }}\right)  \nonumber \\
&&\text{ \ \ \ \ \ }\circ \left( 1_{V\otimes \wedge ^{i}V}\otimes C_{\wedge
^{g}V^{\vee }}\otimes 1_{\wedge ^{g-i}V}\otimes C_{\wedge ^{g}V^{\vee
}}\right) \circ \tau _{\wedge ^{i}V\otimes \wedge ^{g-i}V,V}\text{ (by }%
\left( \text{\ref{Alternating algebras P2 F1 a2}}\right) \text{)}  \nonumber
\\
&&\text{ }=\left( g-i\right) \cdot \varphi _{g-i-1,i}^{13}\circ \left(
\varphi _{\iota _{1,g-i}}\otimes 1_{\wedge ^{g}V^{\vee \vee }\otimes \wedge
^{i}V^{\vee }\otimes \wedge ^{g}V^{\vee \vee }}\right) \circ \left(
1_{V}\otimes \varphi _{\iota _{i,g}}\otimes 1_{\wedge ^{g}V^{\vee \vee
}}\otimes \varphi _{\iota _{g-i,g}}\otimes 1_{\wedge ^{g}V^{\vee \vee
}}\right)  \nonumber \\
&&\text{ \ \ \ \ \ }\circ \left( 1_{V\otimes \wedge ^{i}V}\otimes C_{\wedge
^{g}V^{\vee }}\otimes 1_{\wedge ^{g-i}V}\otimes C_{\wedge ^{g}V^{\vee
}}\right) \circ \tau _{\wedge ^{i}V\otimes \wedge ^{g-i}V,V}\text{ (by }%
\left( \text{\ref{Alternating algebras P2 F1 a3}}\right) \text{)}  \nonumber
\\
&&\text{ }=\left( -1\right) ^{i}\left( g-i\right) \cdot \varphi
_{g-i-1,i}^{13}\circ \left( \varphi _{\iota _{i+1,g}}\otimes 1_{\wedge
^{g}V^{\vee \vee }\otimes \wedge ^{i}V^{\vee }\otimes \wedge ^{g}V^{\vee
\vee }}\right)  \nonumber \\
&&\text{ \ \ \ \ \ }\circ \left( \varphi _{1,i}\otimes 1_{\wedge ^{g}V^{\vee
}\otimes \wedge ^{g}V^{\vee \vee }}\otimes \varphi _{\iota _{g-i,g}}\otimes
1_{\wedge ^{g}V^{\vee \vee }}\right)  \nonumber \\
&&\text{ \ \ \ \ \ }\circ \left( 1_{V\otimes \wedge ^{i}V}\otimes C_{\wedge
^{g}V^{\vee }}\otimes 1_{\wedge ^{g-i}V}\otimes C_{\wedge ^{g}V^{\vee
}}\right) \circ \tau _{\wedge ^{i}V\otimes \wedge ^{g-i}V,V}  \nonumber \\
&&\text{ }=\left( -1\right) ^{i}\left( g-i\right) \cdot \varphi
_{g-i-1,i}^{13}\circ \left( \varphi _{\iota _{i+1,g}}\otimes 1_{\wedge
^{g}V^{\vee \vee }}\otimes \varphi _{\iota _{g-i,g}}\otimes 1_{\wedge
^{g}V^{\vee \vee }}\right)  \nonumber \\
&&\text{ \ \ \ \ \ }\circ \left( 1_{\wedge ^{i+1}V}\otimes C_{\wedge
^{g}V^{\vee }}\otimes 1_{\wedge ^{g-i}V}\otimes C_{\wedge ^{g}V^{\vee
}}\right)  \nonumber \\
&&\text{ \ \ \ \ \ }\circ \left( \varphi _{1,i}\otimes 1_{\wedge
^{g-i}V}\right) \circ \tau _{\wedge ^{i}V\otimes \wedge ^{g-i}V,V}\text{ (by 
}\left( \text{\ref{Alternating algebras P2 F1 a4}}\right) \text{)}  \nonumber
\\
&&\text{ }=\left( -1\right) ^{i}\left( g-i\right) \cdot \varphi
_{g-i-1,i}^{13}\circ \left( D^{i+1,g}\otimes D^{g-i,g}\right) \circ \left(
\varphi _{1,i}\otimes 1_{\wedge ^{g-i}V}\right) \circ \tau _{\wedge
^{i}V\otimes \wedge ^{g-i}V,V}  \label{Alternating algebras P2 F1 a}
\end{eqnarray}

We compute $b\circ \left( 1_{V}\otimes D^{i,g}\otimes D^{g-i,g}\right) \circ
\tau _{\wedge ^{i}V\otimes \wedge ^{g-i}V,V}$, using the following formulas:%
\begin{eqnarray}
&&\varphi _{g-i,i-1}\otimes 1_{\wedge ^{g}V^{\vee \vee }\otimes \wedge
^{g}V^{\vee \vee }}  \nonumber \\
&&\text{ \ \ }=\varphi _{g-i,i-1}^{13}\circ \left( 1_{\wedge ^{g-i}V^{\vee
}}\otimes \tau _{\wedge ^{i-1}V^{\vee },\wedge ^{g}V^{\vee \vee }}\otimes
1_{\wedge ^{g}V^{\vee \vee }}\right) \text{ (by definition)}
\label{Alternating algebras P2 F1 b2} \\
&&\varphi _{\iota _{1,i}}\circ \left( 1_{V}\otimes \varphi _{\iota
_{g-i,g}}\right) =\varphi _{\iota _{g-i+1,g}}\circ \left( \varphi
_{g-i,1}^{\tau }\otimes 1_{\wedge ^{g}V^{\vee }}\right)   \nonumber \\
&&\text{ \ \ }=\left( -1\right) ^{g-i}\cdot \varphi _{\iota _{g-i+1,g}}\circ
\left( \varphi _{1,g-i}\otimes 1_{\wedge ^{g}V^{\vee }}\right) \text{ (by
Prop. \ref{S1 Algebras P1} }\left( 2\right) \text{)}
\label{Alternating algebras P2 F1 b3} \\
&&\left( \varphi _{\iota _{i,g}}\otimes 1_{\wedge ^{g}V^{\vee \vee }}\otimes
\varphi _{\iota _{g-i+1,g}}\otimes 1_{\wedge ^{g}V^{\vee \vee }}\right)
\circ \left( 1_{\wedge ^{i}V}\otimes C_{\wedge ^{g}V^{\vee }}\otimes
1_{\wedge ^{g-i+1}V}\otimes C_{\wedge ^{g}V^{\vee }}\right)   \nonumber \\
&&\text{ \ \ }=D^{i,g}\otimes D^{g-i+1,g}\text{ (by }\left( \text{\ref{S1
Casimir D DefD_f}}\right) \text{),}  \label{Alternating algebras P2 F1 b4}
\end{eqnarray}%
together with the following equality, which is the consequence of a boring
computation involving the functoriality of the $\otimes $-operation, that of
the $\tau $-constraint and the anti-commutativity constraint in the
alternating algebra:%
\begin{eqnarray}
&&\left( 1_{\wedge ^{g-i}V^{\vee }}\otimes \tau _{\wedge ^{i-1}V^{\vee
},\wedge ^{g}V^{\vee \vee }}\otimes 1_{\wedge ^{g}V^{\vee \vee }}\right)
\circ \left( 1_{\wedge ^{g-i}V^{\vee }}\otimes \varphi _{\iota
_{g-i+1,g}}\otimes 1_{\wedge ^{g}V^{\vee \vee }\otimes \wedge ^{g}V^{\vee
\vee }}\right)   \nonumber \\
&&\text{ \ \ \ \ }\circ \left( \varphi _{\iota _{i,g}}\otimes \varphi
_{1,g-i}\otimes 1_{\wedge ^{g}V^{\vee }\otimes \wedge ^{g}V^{\vee \vee
}\otimes \wedge ^{g}V^{\vee \vee }}\right) \circ \left( \tau _{V,\wedge
^{i}V\otimes \wedge ^{g}V^{\vee }}\otimes \tau _{\wedge ^{g}V^{\vee \vee
},\wedge ^{g-i}V\otimes \wedge ^{g}V^{\vee }}\otimes 1_{\wedge ^{g}V^{\vee
\vee }}\right)   \nonumber \\
&&\text{ \ \ \ \ }\circ \left( 1_{V\otimes \wedge ^{i}V}\otimes C_{\wedge
^{g}V^{\vee }}\otimes 1_{\wedge ^{g-i}V}\otimes C_{\wedge ^{g}V^{\vee
}}\right) \circ \tau _{\wedge ^{i}V\otimes \wedge ^{g-i}V,V}  \nonumber \\
&&\text{ \ }=\left( -1\right) ^{g-i}\left( \varphi _{\iota _{i,g}}\otimes
1_{\wedge ^{g}V^{\vee \vee }}\otimes \varphi _{\iota _{g-i+1,g}}\otimes
1_{\wedge ^{g}V^{\vee \vee }}\right) \circ \left( 1_{\wedge ^{i}V}\otimes
C_{\wedge ^{g}V^{\vee }}\otimes 1_{\wedge ^{g-i+1}V}\otimes C_{\wedge
^{g}V^{\vee }}\right)   \nonumber \\
&&\text{ \ \ \ \ }\circ \left( 1_{\wedge ^{i}V^{\vee }}\otimes \varphi
_{g-i,1}\right) \text{.}  \label{Alternating algebras P2 F1 b5}
\end{eqnarray}%
We have%
\begin{eqnarray}
&&b\circ \left( 1_{V}\otimes D^{i,g}\otimes D^{g-i,g}\right) \circ \tau
_{\wedge ^{i}V\otimes \wedge ^{g-i}V,V}=\left( -1\right) ^{g-i}i\cdot \left(
\varphi _{g-i,i-1}\otimes 1_{\wedge ^{g}V^{\vee \vee }\otimes \wedge
^{g}V^{\vee \vee }}\right)   \nonumber \\
&&\text{ \ \ \ \ }\circ \left( 1_{\wedge ^{g-i}V^{\vee }}\otimes \varphi
_{\iota _{1,i}}\otimes 1_{\wedge ^{g}V^{\vee \vee }\otimes \wedge
^{g}V^{\vee \vee }}\right) \circ \left( \tau _{V,\wedge ^{g-i}V^{\vee
}}\otimes \tau _{\wedge ^{g}V^{\vee \vee },\wedge ^{i}V^{\vee }}\otimes
1_{\wedge ^{g}V^{\vee \vee }}\right)   \nonumber \\
&&\text{ \ \ \ \ }\circ \left( 1_{V}\otimes D^{i,g}\otimes D^{g-i,g}\right)
\circ \tau _{\wedge ^{i}V\otimes \wedge ^{g-i}V,V}\text{ (by }\left( \text{%
\ref{Alternating algebras P2 F1 ab1}}\right) \text{)}  \nonumber \\
&&\text{ }=\left( -1\right) ^{g-i}i\cdot \left( \varphi _{g-i,i-1}\otimes
1_{\wedge ^{g}V^{\vee \vee }\otimes \wedge ^{g}V^{\vee \vee }}\right) \circ
\left( 1_{\wedge ^{g-i}V^{\vee }}\otimes \varphi _{\iota _{1,i}}\otimes
1_{\wedge ^{g}V^{\vee \vee }\otimes \wedge ^{g}V^{\vee \vee }}\right)  
\nonumber \\
&&\text{ \ \ \ \ }\circ \left( \tau _{V,\wedge ^{g-i}V^{\vee }}\otimes \tau
_{\wedge ^{g}V^{\vee \vee },\wedge ^{i}V^{\vee }}\otimes 1_{\wedge
^{g}V^{\vee \vee }}\right) \circ \left( 1_{V}\otimes \varphi _{\iota
_{i,g}}\otimes 1_{\wedge ^{g}V^{\vee \vee }}\otimes \varphi _{\iota
_{g-i,g}}\otimes 1_{\wedge ^{g}V^{\vee \vee }}\right)   \nonumber \\
&&\text{ \ \ \ \ }\circ \left( 1_{V\otimes \wedge ^{i}V}\otimes C_{\wedge
^{g}V^{\vee }}\otimes 1_{\wedge ^{g-i}V}\otimes C_{\wedge ^{g}V^{\vee
}}\right) \circ \tau _{\wedge ^{i}V\otimes \wedge ^{g-i}V,V}\text{ (by }%
\left( \text{\ref{Alternating algebras P2 F1 b2}}\right) \text{)}  \nonumber
\\
&&\text{ }=\left( -1\right) ^{g-i}i\cdot \varphi _{g-i,i-1}^{13}\circ \left(
1_{\wedge ^{g-i}V^{\vee }}\otimes \tau _{\wedge ^{i-1}V^{\vee },\wedge
^{g}V^{\vee \vee }}\otimes 1_{\wedge ^{g}V^{\vee \vee }}\right) \circ \left(
1_{\wedge ^{g-i}V^{\vee }}\otimes \varphi _{\iota _{1,i}}\otimes 1_{\wedge
^{g}V^{\vee \vee }\otimes \wedge ^{g}V^{\vee \vee }}\right)   \nonumber \\
&&\text{ \ \ \ \ }\circ \left( \tau _{V,\wedge ^{g-i}V^{\vee }}\otimes \tau
_{\wedge ^{g}V^{\vee \vee },\wedge ^{i}V^{\vee }}\otimes 1_{\wedge
^{g}V^{\vee \vee }}\right) \circ \left( 1_{V}\otimes \varphi _{\iota
_{i,g}}\otimes 1_{\wedge ^{g}V^{\vee \vee }}\otimes \varphi _{\iota
_{g-i,g}}\otimes 1_{\wedge ^{g}V^{\vee \vee }}\right)   \nonumber \\
&&\text{ \ \ \ \ }\circ \left( 1_{V\otimes \wedge ^{i}V}\otimes C_{\wedge
^{g}V^{\vee }}\otimes 1_{\wedge ^{g-i}V}\otimes C_{\wedge ^{g}V^{\vee
}}\right) \circ \tau _{\wedge ^{i}V\otimes \wedge ^{g-i}V,V}  \nonumber \\
&&\text{ }=\left( -1\right) ^{g-i}i\cdot \varphi _{g-i,i-1}^{13}\circ \left(
1_{\wedge ^{g-i}V^{\vee }}\otimes \tau _{\wedge ^{i-1}V^{\vee },\wedge
^{g}V^{\vee \vee }}\otimes 1_{\wedge ^{g}V^{\vee \vee }}\right) \circ \left(
1_{\wedge ^{g-i}V^{\vee }}\otimes \varphi _{\iota _{1,i}}\otimes 1_{\wedge
^{g}V^{\vee \vee }\otimes \wedge ^{g}V^{\vee \vee }}\right)   \nonumber \\
&&\text{ \ \ \ \ }\circ \left( \varphi _{\iota _{i,g}}\otimes 1_{V}\otimes
\varphi _{\iota _{g-i,g}}\otimes 1_{\wedge ^{g}V^{\vee \vee }\otimes \wedge
^{g}V^{\vee \vee }}\right) \circ \left( \tau _{V,\wedge ^{i}V\otimes \wedge
^{g}V^{\vee }}\otimes \tau _{\wedge ^{g}V^{\vee \vee },\wedge ^{g-i}V\otimes
\wedge ^{g}V^{\vee }}\otimes 1_{\wedge ^{g}V^{\vee \vee }}\right)   \nonumber
\\
&&\text{ \ \ \ \ }\circ \left( 1_{V\otimes \wedge ^{i}V}\otimes C_{\wedge
^{g}V^{\vee }}\otimes 1_{\wedge ^{g-i}V}\otimes C_{\wedge ^{g}V^{\vee
}}\right) \circ \tau _{\wedge ^{i}V\otimes \wedge ^{g-i}V,V}\text{ (by }%
\left( \text{\ref{Alternating algebras P2 F1 b3}}\right) \text{)}  \nonumber
\\
&&\text{ }=i\cdot \varphi _{g-i,i-1}^{13}\circ \left( 1_{\wedge
^{g-i}V^{\vee }}\otimes \tau _{\wedge ^{i-1}V^{\vee },\wedge ^{g}V^{\vee
\vee }}\otimes 1_{\wedge ^{g}V^{\vee \vee }}\right) \circ \left( 1_{\wedge
^{g-i}V^{\vee }}\otimes \varphi _{\iota _{g-i+1,g}}\otimes 1_{\wedge
^{g}V^{\vee \vee }\otimes \wedge ^{g}V^{\vee \vee }}\right)   \nonumber \\
&&\text{ \ \ \ \ }\circ \left( \varphi _{\iota _{i,g}}\otimes \varphi
_{1,g-i}\otimes 1_{\wedge ^{g}V^{\vee }\otimes \wedge ^{g}V^{\vee \vee
}\otimes \wedge ^{g}V^{\vee \vee }}\right) \circ \left( \tau _{V,\wedge
^{i}V\otimes \wedge ^{g}V^{\vee }}\otimes \tau _{\wedge ^{g}V^{\vee \vee
},\wedge ^{g-i}V\otimes \wedge ^{g}V^{\vee }}\otimes 1_{\wedge ^{g}V^{\vee
\vee }}\right)   \nonumber \\
&&\text{ \ \ \ \ }\circ \left( 1_{V\otimes \wedge ^{i}V}\otimes C_{\wedge
^{g}V^{\vee }}\otimes 1_{\wedge ^{g-i}V}\otimes C_{\wedge ^{g}V^{\vee
}}\right) \circ \tau _{\wedge ^{i}V\otimes \wedge ^{g-i}V,V}\text{ (by }%
\left( \text{\ref{Alternating algebras P2 F1 b5}}\right) \text{)}  \nonumber
\\
&&\text{ }=\left( -1\right) ^{g-i}i\cdot \varphi _{g-i,i-1}^{13}\circ \left(
\varphi _{\iota _{i,g}}\otimes 1_{\wedge ^{g}V^{\vee \vee }}\otimes \varphi
_{\iota _{g-i+1,g}}\otimes 1_{\wedge ^{g}V^{\vee \vee }}\right)   \nonumber
\\
&&\text{ \ \ \ \ }\circ \left( 1_{\wedge ^{i}V}\otimes C_{\wedge ^{g}V^{\vee
}}\otimes 1_{\wedge ^{g-i+1}V}\otimes C_{\wedge ^{g}V^{\vee }}\right) \circ
\left( 1_{\wedge ^{i}V^{\vee }}\otimes \varphi _{g-i,1}\right) \text{ (by }%
\left( \text{\ref{Alternating algebras P2 F1 b4}}\right) \text{)}  \nonumber
\\
&&\text{ }=\left( -1\right) ^{g-i}i\cdot \varphi _{g-i,i-1}^{13}\circ \left(
D^{i,g}\otimes D^{g-i+1,g}\right) \circ \left( 1_{\wedge ^{i}V^{\vee
}}\otimes \varphi _{g-i,1}\right) \text{.}
\label{Alternating algebras P2 F1 b}
\end{eqnarray}

Inserting $\left( \text{\ref{Alternating algebras P2 F1 a}}\right) $ and $%
\left( \text{\ref{Alternating algebras P2 F1 b}}\right) $ in $\left( \text{%
\ref{Alternating algebras P2 F3}}\right) $ gives%
\begin{eqnarray}
&&r_{\wedge ^{g}V}\mu g\cdot \left( D^{1,g}\otimes \varphi _{i,g-i}\right)
\circ \tau _{\wedge ^{i}V\otimes \wedge ^{g-i}V,V}  \nonumber \\
&&\text{ }=\left( -1\right) ^{i}\left( g-i\right) \cdot \varphi
_{g-i-1,i}^{13}\circ \left( D^{i+1,g}\otimes D^{g-i,g}\right) \circ \left(
\varphi _{1,i}\otimes 1_{\wedge ^{g-i}V}\right) \circ \tau _{\wedge
^{i}V\otimes \wedge ^{g-i}V,V}  \nonumber \\
&&\text{ \ \ \ \ }+\left( -1\right) ^{g-i}i\cdot \varphi
_{g-i,i-1}^{13}\circ \left( D^{i,g}\otimes D^{g-i+1,g}\right) \circ \left(
1_{\wedge ^{i}V^{\vee }}\otimes \varphi _{g-i,1}\right) \text{.}
\label{Alternating algebras P2 F4}
\end{eqnarray}%
Another computation involving the functoriality of the $\otimes $-operation,
that of the $\tau $-constraint and the anti-commutativity constraint in the
alternating algebra reveals that:%
\begin{eqnarray}
&&\left( -1\right) ^{i}\left( g-i\right) \cdot \varphi _{g-i-1,i}^{13}\circ
\left( D^{i+1,g}\otimes D^{g-i,g}\right) \circ \left( \varphi _{1,i}\otimes
1_{\wedge ^{g-i}V}\right) \circ \tau _{\wedge ^{i}V^{\vee }\otimes \wedge
^{g-i}V^{\vee },V^{\vee }}  \nonumber \\
&&\text{ }=\left( -1\right) ^{i\left( g-i-1\right) }\left( g-i\right) \cdot
\varphi _{i,g-i-1}^{13}\circ \left( D^{g-i,g}\otimes D^{i+1,g}\right) \circ
\left( 1_{\wedge ^{g-i}V}\otimes \varphi _{i,1}\right) \circ \left( \tau
_{\wedge ^{i}V,\wedge ^{g-i}V}\otimes 1_{V}\right) \text{.}
\label{Alternating algebras P2 F5}
\end{eqnarray}%
The commutativity of the first diagram now follows from $\left( \text{\ref%
{Alternating algebras P2 F4}}\right) $ and $\left( \text{\ref{Alternating
algebras P2 F5}}\right) $.

The second commutative diagram is obtained in a similar way, starting with
Corollary \ref{FDP C3} applied with $h=\iota _{1,g}^{\ast }:V^{\vee
}\rightarrow \hom \left( \wedge ^{g}V,\wedge ^{g-1}V\right) $ and $\left(
S,X,Y\right) =\left( \wedge ^{i}V,\wedge ^{g-i}V,\wedge ^{g}V\right) $ and
employing the appropriate dual statements.
\end{proof}

\subsection{Proof of Lemma \protect\ref{Alternating algebras L key}}

The proof of Lemma \ref{Alternating algebras L key} will be divided in
several steps. We will use the shorthand $C_{p}:=C_{\wedge ^{p}V}:\mathbb{I}%
\rightarrow \wedge _{p}^{p}V$ in the sequel.

\bigskip

\textbf{Step 1}

We claim the commutativity of the following diagrams for every $m\geq 1$:%
\begin{equation}
\xymatrix{ & \wedge_{0}^{1}V\otimes\wedge_{1}^{1}V\otimes\wedge_{m-1}^{m-1}V \ar[r]^{\delta_{0,1}^{1,1}\otimes1_{\wedge_{m-1}^{m-1}V}} & \wedge_{0}^{1}V\otimes\wedge_{m-1}^{m-1}V \ar[d]|{\varphi_{0,m-1}^{1,m-1}} & & \wedge_{1}^{0}V\otimes\wedge_{1}^{1}V\otimes\wedge_{m-1}^{m-1}V \ar[r]^{\delta_{1,1}^{0,1}\otimes1_{\wedge_{m-1}^{m-1}V}} & \wedge_{1}^{0}V\otimes\wedge_{m-1}^{m-1}V \ar[d]|{\varphi_{1,m-1}^{0,m-1}} \\ V \ar[r]^{1_{V}\otimes C_{m}} \ar@/^{0.75pc}/[ur]|-{1_{V}\otimes C_{1}\otimes C_{m-1}} & \wedge_{0}^{1}V\otimes\wedge_{m}^{m}V \ar[r]^{\delta_{0,m}^{1,m}} & \wedge_{m-1}^{m}V & V^{\vee} \ar[r]^{1_{V^{\vee}}\otimes C_{m}} \ar@/^{0.75pc}/[ur]|-{1_{V^{\vee}}\otimes C_{1}\otimes C_{m-1}} & \wedge_{1}^{0}V\otimes\wedge_{m}^{m}V \ar[r]^{\delta_{1,m}^{0,m}} & \wedge_{m}^{m-1}V\text{.} }  \label{key claim D1}
\end{equation}%
The proof of the commutativity of the second diagram is identical to that of
the first one, so we will concentrate on the first. The case $m=1$ is
trivial and the general case is done by induction, assuming it true for $m$.

We will first need a simple lemma, whose proof is just an application of Lemma \ref{S1 Casimir L Properties} $\left( 3\right)$, Lemma \ref{S1 Casimir L Properties} $\left( 4\right)$ and $\left( 
\text{\ref{Induced morphisms D1}}\right) $.

\begin{lemma}
\label{key claim L1}The following diagram is commutative, for every $p\geq 0$%
,%
\begin{equation*}
\xymatrix{ & \wedge_{p}^{p}V\otimes\wedge_{1}^{1}V \ar[d]^{\varphi_{p,1}^{p,1}} \\ \mathbb{I} \ar[r]^-{C_{p+1}} \ar@/^{0.75pc}/[ur]|-{C_{p}\otimes C_{1}} & \wedge_{p+1}^{p+1}V\text{.}}
\end{equation*}
\end{lemma}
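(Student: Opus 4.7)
The plan is to reduce the identity to a computation in the tensor algebra $\otimes^{\cdot}V$, where the Casimir element enjoys a simple multiplicative behaviour, and then transfer the result to the alternating algebra by means of the projection idempotents $e_{V,a}^{q}$. First, Lemma \ref{S1 Casimir L Properties} $(4)$ applied to the biproduct decomposition of $\otimes^{q}V$ determined by $e_{V,a}^{q}$ gives, for every $q\geq 1$,
\[
C_{q} = (p_{V,a}^{q}\otimes p_{V^{\vee},a}^{q})\circ C_{\otimes^{q}V},
\]
and in particular $C_{p+1}=(p_{V,a}^{p+1}\otimes p_{V^{\vee},a}^{p+1})\circ C_{\otimes^{p+1}V}$. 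Next, Lemma \ref{S1 Casimir L Properties} $(3)$ applied to the identification $\otimes^{p+1}V = (\otimes^{p}V)\otimes V$ expresses $C_{\otimes^{p+1}V}$ in terms of $C_{\otimes^{p}V}$ and $C_{V}$:
\[
C_{\otimes^{p+1}V} = (1_{\otimes^{p+1}V}\otimes\epsilon)\circ(1_{\otimes^{p}V}\otimes\tau_{(\otimes^{p}V)^{\vee},V}\otimes 1_{V^{\vee}})\circ(C_{\otimes^{p}V}\otimes C_{V}),
\]
where $\epsilon$ is the canonical isomorphism under which one identifies $\otimes^{p}V^{\vee}\simeq(\otimes^{p}V)^{\vee}$ as in the paragraph preceding this section.

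Second, by the characterization $(\ref{Induced morphisms D1})$, the alternating multiplication satisfies $\varphi_{p,1}^{V,a}\circ(p_{V,a}^{p}\otimes 1_{V})=p_{V,a}^{p+1}$ (using $e_{V,a}^{1}=1_{V}$, hence $p_{V,a}^{1}=1_{V}$), and likewise $\varphi_{p,1}^{V^{\vee},a}\circ(p_{V^{\vee},a}^{p}\otimes 1_{V^{\vee}})=p_{V^{\vee},a}^{p+1}$. Unfolding the definition of the product $\varphi_{p,1}^{p,1}=\varphi_{p,1}^{V,a}\otimes_{\epsilon}\varphi_{p,1}^{V^{\vee},a}$ on the tensor product algebra (Lemma \ref{S1 Casimir Ltensor1}) yields
\[
\varphi_{p,1}^{p,1} = (\varphi_{p,1}^{V,a}\otimes\varphi_{p,1}^{V^{\vee},a})\circ(1_{\wedge^{p}V}\otimes\tau_{\wedge^{p}V^{\vee},V}\otimes 1_{V^{\vee}}).
\]

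The verification then amounts to substituting these identities into the composite $\varphi_{p,1}^{p,1}\circ(C_{p}\otimes C_{1})=\varphi_{p,1}^{p,1}\circ((p_{V,a}^{p}\otimes p_{V^{\vee},a}^{p})\circ C_{\otimes^{p}V}\otimes C_{V})$, using functoriality of $\otimes$ and $\tau$ to slide the projections $(p_{V,a}^{p}\otimes p_{V^{\vee},a}^{p})$ past the swap $1\otimes\tau\otimes 1$, and then absorbing them into the two multiplication maps via the characterizing identities of the previous paragraph. This produces $(p_{V,a}^{p+1}\otimes p_{V^{\vee},a}^{p+1})$ precomposed with the right-hand side of the Lemma \ref{S1 Casimir L Properties} $(3)$ factorization of $C_{\otimes^{p+1}V}$, which equals $C_{p+1}$ by the first step. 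The only potential pitfall is keeping track of the canonical identification $(\otimes^{p}V)^{\vee}\simeq\otimes^{p}V^{\vee}$ and the various $\tau$-constraints during this rearrangement; no conceptual obstacle arises and the entire argument is pure bookkeeping.
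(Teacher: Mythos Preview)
Your argument is correct and is precisely the approach indicated by the paper, which merely states that the proof is an application of Lemma~\ref{S1 Casimir L Properties}~$(3)$, Lemma~\ref{S1 Casimir L Properties}~$(4)$, and $(\ref{Induced morphisms D1})$; you have simply spelled out how these three ingredients combine.
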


We now consider the following diagram, where%
\begin{equation*}
f:=m\cdot \delta _{0,m}^{1,m}\otimes 1_{\wedge _{1}^{1}V}\text{ and }%
g:=\left( -1\right) ^{m}\cdot \left( 1_{\wedge _{m}^{m}V}\otimes \delta
_{0,1}^{1,1}\right) \circ \left( \tau _{\wedge _{0}^{1}V,\wedge
_{m}^{m}V}\otimes 1_{\wedge _{1}^{1}V}\right) \text{:}
\end{equation*}%
\begin{equation*}
\xymatrix{ & \wedge_{0}^{1}V\otimes\wedge_{m}^{m}V\otimes\wedge_{1}^{1}V \ar@{}[dr]|{(B)} \ar[d]|{1_{\wedge_{0}^{1}V}\otimes\varphi_{m,1}^{m,1}} \ar[r]^-{\left(f,g\right)} & \wedge_{m-1}^{m}V\otimes\wedge_{1}^{1}V\oplus\wedge_{m}^{m}V\otimes\wedge_{0}^{1}V \ar[d]^{\varphi_{m-1,1}^{m,1}\oplus\varphi_{m,0}^{m,1}} \\ V \ar[r]^-{1_{V}\otimes C_{m+1}} \ar@/^{0.75pc}/[ur]|-{1_{V}\otimes C_{m}\otimes C_{1}} & \wedge_{0}^{1}V\otimes\wedge_{m+1}^{m+1}V \ar@{}[ul]|(0.37){(A)} \ar[r]^-{\left(m+1\right)\cdot\delta_{0,m+1}^{1,m+1}} & \wedge_{m}^{m+1}V\text{.}}
\end{equation*}%
The region $\left( A\right) $ commutes by Lemma \ref{key claim L1} and the
region $\left( B\right) $ is commutative thanks to  the first diagram of
Corollary \ref{Alternating algebras C1}. We deduce that we have:%
\begin{equation}
\left( m+1\right) \cdot \delta _{0,m+1}^{1,m+1}\circ \left( 1_{V}\otimes
C_{m+1}\right) =\left( 1_{\wedge _{m}^{m+1}V}\oplus 1_{\wedge
_{m}^{m+1}V}\right) \circ \left( a,b\right) =a+b\text{,}
\label{key claim D1 F}
\end{equation}%
where%
\begin{eqnarray*}
a &=&m\cdot \varphi _{m-1,1}^{m,1}\circ \left( \delta _{0,m}^{1,m}\otimes
1_{\wedge _{1}^{1}V}\right) \circ \left( 1_{V}\otimes C_{m}\otimes
C_{1}\right) \text{,} \\
b &=&\left( -1\right) ^{m}\cdot \varphi _{m,0}^{m,1}\circ \left( 1_{\wedge
_{m}^{m}V}\otimes \delta _{0,1}^{1,1}\right) \circ \left( \tau _{\wedge
_{0}^{1}V,\wedge _{m}^{m}V}\otimes 1_{\wedge _{1}^{1}V}\right) \circ \left(
1_{V}\otimes C_{m}\otimes C_{1}\right) \text{.}
\end{eqnarray*}

We will now derive an alternative expression for $a$ by looking at the
following diagram:%
\begin{equation*}
\xymatrix{ & \wedge_{0}^{1}V\otimes\wedge_{m}^{m}V\otimes\wedge_{1}^{1}V \ar@{}[dr]|(0.4){(C)} \ar[r]^-{\delta_{0,m}^{1,m}\otimes1_{\wedge_{1}^{1}V}} & \wedge_{m-1}^{m}V\otimes\wedge_{1}^{1}V \ar@/^{0.75pc}/[dr]|-{\varphi_{m-1,1}^{m,1}} & \\ V \ar@/^{0.75pc}/[ur]|-{1_{V}\otimes C_{m}\otimes C_{1}} \ar[r]^-{1_{V}\otimes C_{1}\otimes C_{m-1}\otimes C_{1}} \ar@/_{0.75pc}/[dr]|-{1_{V}\otimes C_{1}\otimes C_{m}} & \wedge_{0}^{1}V\otimes\wedge_{1}^{1}V\otimes\wedge_{m-1}^{m-1}V\otimes\wedge_{1}^{1}V \ar@{}[dr]|{(\otimes)} \ar@{}[dl]|(0.3){(A)} \ar[r]^-{\delta_{0,1}^{1,1}\otimes1_{\wedge_{m-1}^{m-1}V\otimes\wedge_{1}^{1}V}} \ar[d]|{1_{\wedge_{0}^{1}V\otimes\wedge_{1}^{1}V}\otimes\varphi_{m-1,1}^{m-1,1}} & \wedge_{0}^{1}V\otimes\wedge_{m-1}^{m-1}V\otimes\wedge_{1}^{1}V \ar[u]|{\varphi_{0,m-1}^{1,m-1}\otimes1_{\wedge_{1}^{1}V}} \ar[d]|{1_{\wedge_{0}^{1}V}\otimes\varphi_{m-1,1}^{m-1,1}} & \wedge_{m}^{m+1}V \ar@{}[l]|(0.35){(D)} \\ & \wedge_{0}^{1}V\otimes\wedge_{1}^{1}V\otimes\wedge_{m}^{m}V \ar[r]^-{\delta_{0,1}^{1,1}\otimes1_{\wedge_{m}^{m}V}} & \wedge_{0}^{1}V\otimes\wedge_{m}^{m}V \ar@/_{0.75pc}/[ur]|-{\varphi_{0,m}^{1,m}} & }
\end{equation*}%
Here the region $\left( A\right) $ is again commutative by Lemma \ref{key
claim L1}, the region $\left( C\right) $ is commutative by our induction
assumption $\left( \text{\ref{key claim D1}}\right) $ and the functoriality
of $\otimes $ and $\left( D\right) $ commutes by the associativity
constraint. We deduce%
\begin{equation}
a=m\cdot \varphi _{0,m}^{1,m}\circ \left( \delta _{0,1}^{1,1}\otimes
1_{\wedge _{m}^{m}V}\right) \circ \left( 1_{V}\otimes C_{1}\otimes
C_{m}\right) \text{.}  \label{key claim D1 Fa}
\end{equation}

We now compute $b$, noticing that $\tau _{\wedge _{0}^{1}V,\wedge
_{m}^{m}V}\otimes 1_{\wedge _{1}^{1}V}=\tau _{\wedge _{0}^{1}V\otimes \wedge
_{1}^{1}V,\wedge _{m}^{m}V}\circ \left( 1_{\wedge _{0}^{1}V}\otimes \tau
_{\wedge _{m}^{m}V,\wedge _{1}^{1}V}\right) $ in the first of the subsequent
equalities, employing the functoriality of $\tau $ in the second one and
finally appealing to the commutativity constraint $\varphi _{m,0}^{m,1}\circ
\tau _{\wedge _{0}^{1}V,\wedge _{m}^{m}V}=\left( -1\right) ^{m}\varphi
_{0,m}^{1,m}$ in third equality:%
\begin{eqnarray}
b &=&\left( -1\right) ^{m}\cdot \varphi _{m,0}^{m,1}\circ \left( 1_{\wedge
_{m}^{m}V}\otimes \delta _{0,1}^{1,1}\right) \circ \tau _{\wedge
_{0}^{1}V\otimes \wedge _{1}^{1}V,\wedge _{m}^{m}V}\circ \left( 1_{\wedge
_{0}^{1}V}\otimes \tau _{\wedge _{m}^{m}V,\wedge _{1}^{1}V}\right) \circ
\left( 1_{V}\otimes C_{m}\otimes C_{1}\right)  \notag \\
&=&\left( -1\right) ^{m}\cdot \varphi _{m,0}^{m,1}\circ \tau _{\wedge
_{0}^{1}V,\wedge _{m}^{m}V}\circ \left( \delta _{0,1}^{1,1}\otimes 1_{\wedge
_{m}^{m}V}\right) \circ \left( 1_{V}\otimes C_{1}\otimes C_{m}\right)  \notag
\\
&=&\varphi _{0,m}^{1,m}\circ \left( \delta _{0,1}^{1,1}\otimes 1_{\wedge
_{m}^{m}V}\right) \circ \left( 1_{V}\otimes C_{1}\otimes C_{m}\right) \text{.%
}  \label{key claim D1 Fb}
\end{eqnarray}

Inserting $\left( \text{\ref{key claim D1 Fa}}\right) $ and $\left( \text{%
\ref{key claim D1 Fb}}\right) $ in $\left( \text{\ref{key claim D1 F}}%
\right) $ we deduce%
\begin{equation*}
\left( m+1\right) \cdot \delta _{0,m+1}^{1,m+1}\circ \left( 1_{V}\otimes
C_{m+1}\right) =\left( m+1\right) \cdot \varphi _{0,m}^{1,m}\circ \left(
\delta _{0,1}^{1,1}\otimes 1_{\wedge _{m}^{m}V}\right) \circ \left(
1_{V}\otimes C_{1}\otimes C_{m}\right) \text{,}
\end{equation*}%
from which the claim follows.

\bigskip

\textbf{Step 2}

Noticing that $\delta _{0,1}^{1,1}=\left( 1_{V}\otimes \ev_{V}^{\tau }\right)
\circ \left( \tau _{V,V}\otimes 1_{V^{\vee }}\right) $ and $\delta
_{1,1}^{0,1}=\ev_{V}\otimes 1_{V^{\vee }}$, we deduce from Lemma \ref{S1
Casimir L Properties} $\left( 2\right) $ that we have%
\begin{eqnarray*}
&&\delta _{0,1}^{1,1}\circ \left( 1_{V}\otimes C_{1}\right) =\left(
1_{V}\otimes \ev_{V}\right) \circ \left( 1_{V}\otimes \tau _{V,V^{\vee
}}\right) \circ \left( \tau _{V,V}\otimes 1_{V^{\vee }}\right) \circ \left(
1_{V}\otimes C_{V}\right) \\
&&\text{ }=\left( 1_{V}\otimes \ev_{V}\right) \circ \tau _{V,V\otimes V^{\vee
}}\circ \left( 1_{V}\otimes C_{V}\right) =\left( 1_{V}\otimes \ev_{V}\right)
\circ \left( C_{V}\otimes 1_{V}\right) =1_{V}\text{,} \\
&&\delta _{1,1}^{0,1}\circ \left( 1_{V}\otimes C_{1}\right) =\left(
\ev_{V}\otimes 1_{V^{\vee }}\right) \circ \left( 1_{V}\otimes C_{V}\right)
=1_{V^{\vee }}\text{.}
\end{eqnarray*}%
Hence it follows from $\left( \text{\ref{key claim D1}}\right) $ that the
following diagrams are commutative:%
\begin{equation}
\xymatrix{ V \ar[r]^-{1_{V}\otimes C_{m-1}} \ar[d]|{1_{V}\otimes C_{m}} & \wedge_{0}^{1}V\otimes\wedge_{m-1}^{m-1}V \ar[d]|{\varphi_{0,m-1}^{1,m-1}} & V^{\vee} \ar[r]^-{1_{V^{\vee}}\otimes C_{m-1}} \ar[d]|{1_{V^{\vee}}\otimes C_{m}} & \wedge_{1}^{0}V\otimes\wedge_{m-1}^{m-1}V \ar[d]|{\varphi_{1,m-1}^{0,m-1}} \\ \wedge_{0}^{1}V\otimes\wedge_{m}^{m}V \ar[r]^-{\delta_{0,m}^{1,m}} & \wedge_{m-1}^{m}V\text{,} & \wedge_{1}^{0}V\otimes\wedge_{m}^{m}V \ar[r]^-{\delta_{1,m}^{0,m}} & \wedge_{m}^{m-1}V\text{.}}  \label{key claim D2}
\end{equation}

\bigskip

\textbf{Step 3}

Next we claim that the following diagram is commutative for every $m\geq 2$:%
\begin{equation}
\xymatrix@C=110pt{ \wedge_{1}^{1}V \ar[r]^-{\left(1_{\wedge_{1}^{1}V}\otimes C_{m-1},\left(1-m\right)\cdot1_{\wedge_{1}^{1}V}\otimes C_{m-2}\right)} \ar[d]_{1_{\wedge_{1}^{1}V}\otimes C_{m}} & \wedge_{1}^{1}V\otimes\wedge_{m-1}^{m-1}V\oplus\wedge_{1}^{1}V\otimes\wedge_{m-2}^{m-2}V \ar[d]^{\ev_{V}^{\tau}\otimes1_{\wedge_{m-1}^{m-1}V}\oplus\varphi_{1,m-2}^{1,m-2}} \\ \wedge_{1}^{1}V\otimes\wedge_{m}^{m}V \ar[r]^-{m\cdot\delta_{1,m}^{1,m}} & \wedge_{m-1}^{m-1}V\text{.} }  \label{key claim D3}
\end{equation}%
Consider the following diagram, where%
\begin{equation*}
f:=\delta _{1,0}^{0,1}\otimes 1_{\wedge _{m-1}^{m-1}V}\text{ and }g:=\left(
1-m\right) \cdot \left( 1_{\wedge _{0}^{1}V}\otimes \delta
_{1,m-1}^{0,m-1}\right) \circ \left( \tau _{\wedge _{1}^{0}V,\wedge
_{0}^{1}V}\otimes 1_{\wedge _{m-1}^{m-1}V}\right) \text{:}
\end{equation*}%
\begin{equation*}
\xymatrix{ \wedge_{1}^{1}V \ar@{}[dr]|{(\otimes)} \ar[r]^-{\tau_{V,V^{\vee}}} \ar[d]|{1_{V\otimes V^{\vee}}\otimes C_{m}} & V^{\vee}\otimes V \ar@{}[dr]|{(B)} \ar[r]^-{1_{V^{\vee}\otimes V}\otimes C_{m-1}} \ar[d]|{1_{V^{\vee}\otimes V}\otimes C_{m}} & \wedge_{1}^{0}V\otimes\wedge_{0}^{1}V\otimes\wedge_{m-1}^{m-1}V \ar@/^{0.75pc}/[dr]|{\left(f,g\right)} \ar[d]|{1_{V^{\vee}}\otimes\varphi_{0,m-1}^{1,m-1}} & \\ \wedge_{0}^{1}V\otimes\wedge_{1}^{0}V\otimes\wedge_{m}^{m}V \ar@/_{0.75pc}/[dr]|{\varphi_{0,1}^{1,0}\otimes1_{\wedge_{m}^{m}V}} \ar[r]^-{\tau_{V,V^{\vee}}\otimes1_{\wedge_{m}^{m}V}} & \wedge_{1}^{0}V\otimes\wedge_{0}^{1}V\otimes\wedge_{m}^{m}V \ar[r]^-{1_{V^{\vee}}\otimes\delta_{0,m}^{1,m}} & \wedge_{1}^{0}V\otimes\wedge_{m-1}^{m}V \ar@{}[dl]|{(A)} \ar[d]|{m\cdot\delta_{1,m-1}^{0,m}} & \wedge_{m-1}^{m-1}V\oplus\wedge_{0}^{1}V\otimes\wedge_{m-1}^{m-2}V \ar@{}[l]|{(C)} \ar@/^{0.75pc}/[dl]|{\varphi_{0,m-1}^{0,m-1}\oplus\varphi_{0,m-1}^{1,m-2}} \\ & \wedge_{1}^{1}V\otimes\wedge_{m}^{m}V \ar[r]^-{m\cdot\delta_{1,m}^{1,m}} & \wedge_{m-1}^{m-1}V\text{.} & }
\end{equation*}%
The region $\left( A\right) $ is commutative by Corollary \ref{S1 Algebras
C1} $\left( 2\right) $ with $i=l=1$, $j=k=0$ and $m=n$, the region $\left(
B\right) =1_{V^{\vee }}\otimes \left( \text{\ref{key claim D2}}\right) $ is
commutative by the commutativity of the first diagram in $\left( \text{\ref%
{key claim D2}}\right) $ and the functoriality of $\otimes $ and $\left(
C\right) $ is commutative by the second diagram of Corollary \ref%
{Alternating algebras C1} with $i=1$, $j=0$, $k=l=m-1$. Noticing that $\varphi
_{0,1}^{1,0}=1_{V\otimes V^{\vee }}$, $\varphi _{0,m-1}^{0,m-1}=1_{\wedge
_{m-1}^{m-1}V}$ and $\delta _{1,0}^{0,1}=\ev_{V}$, we deduce the equality%
\begin{equation}
m\cdot \delta _{1,m}^{1,m}\circ \left( 1_{V\otimes V^{\vee }}\otimes
C_{m}\right) =\left( 1_{\wedge _{m-1}^{m-1}V}\oplus 1_{\wedge
_{m-1}^{m-1}V}\right) \circ \left( a,b\right) =a+b\text{,}
\label{key claim D3 F}
\end{equation}%
where%
\begin{eqnarray*}
a &=&\left( \ev_{V}\otimes 1_{\wedge _{m-1}^{m-1}V}\right) \circ \left(
1_{V^{\vee }\otimes V}\otimes C_{m-1}\right) \circ \tau _{V,V^{\vee
}}=\left( \ev_{V}^{\tau }\otimes 1_{\wedge _{m-1}^{m-1}V}\right) \circ \left(
1_{\wedge _{1}^{1}V}\otimes C_{m-1}\right) \text{,} \\
b &=&\left( 1-m\right) \cdot \varphi _{0,m-1}^{1,m-2}\circ \left( 1_{\wedge
_{0}^{1}V}\otimes \delta _{1,m-1}^{0,m-1}\right) \circ \left( \tau _{\wedge
_{1}^{0}V,\wedge _{0}^{1}V}\otimes 1_{\wedge _{m-1}^{m-1}V}\right) \circ
\left( 1_{V^{\vee }\otimes V}\otimes C_{m-1}\right) \circ \tau _{V,V^{\vee }}
\\
&=&\left( 1-m\right) \cdot \varphi _{0,m-1}^{1,m-2}\circ \left( 1_{\wedge
_{0}^{1}V}\otimes \delta _{1,m-1}^{0,m-1}\right) \circ \left( 1_{V\otimes
V^{\vee }}\otimes C_{m-1}\right) \text{.}
\end{eqnarray*}%
Next we remark that, by the commutativity of $1_{V}\otimes \left( \text{\ref%
{key claim D2}}\right) $ (second diagram of $\left( \text{\ref{key claim D2}}%
\right) $\ with $m$ replaced by $m-1$), $\left( 1_{V}\otimes \delta
_{1,m-1}^{0,m-1}\right) \circ \left( 1_{V\otimes V^{\vee }}\otimes
C_{m-1}\right) =\left( 1_{V}\otimes \varphi _{1,m-2}^{0,m-2}\right) \circ
\left( 1_{V\otimes V^{\vee }}\otimes C_{m-2}\right) $ and that, by
definition of the multiplication in the mixed algebra, $\varphi
_{0,m-1}^{1,m-2}\circ \left( 1_{V}\otimes \varphi _{1,m-2}^{0,m-2}\right)
=\varphi _{1,m-2}^{1,m-2}$, so that%
\begin{equation}
b=\left( 1-m\right) \cdot \varphi _{1,m-2}^{1,m-2}\circ \left( 1_{V\otimes
V^{\vee }}\otimes C_{m-2}\right) \text{.}  \label{key claim D3 Fb}
\end{equation}%
Inserting $\left( \text{\ref{key claim D3 Fb}}\right) $ in $\left( \text{\ref%
{key claim D3 F}}\right) $ we find the claimed commutativity.

$\bigskip $

\textbf{Step 4}

We now claim that%
\begin{equation}
\xymatrix{ \mathbb{I} \ar@/^{0.75pc}/[dr]^-{\left(r-m+1\right)\cdot C_{m-1}} \ar[d]_{C_{1}\otimes C_{m}} & \\ \wedge_{1}^{1}V\otimes\wedge_{m}^{m}V \ar[r]^-{m\cdot\delta_{1,m}^{1,m}} & \wedge_{m-1}^{m-1}V\text{.} }  \label{key claim D4}
\end{equation}%
is commutative for $m\geq 1$, where $r:=\mathrm{rank}\left( V\right) $.
According to $\left( \text{\ref{key claim D3}}\right) $ we have, for $m\geq
2 $,%
\begin{eqnarray*}
m\cdot \delta _{1,m}^{1,m}\circ \left( C_{1}\otimes C_{m}\right) &=&m\cdot
\delta _{1,m}^{1,m}\circ \left( 1_{\wedge _{1}^{1}V}\otimes C_{m}\right)
\circ C_{1}= \\
&=&\left( 1_{\wedge _{m-1}^{m-1}V}\oplus 1_{\wedge _{m-1}^{m-1}V}\right)
\circ \left( a\circ C_{1},b\circ C_{1}\right)
\end{eqnarray*}%
where $a=\ev_{V}^{\tau }\otimes C_{m-1}$ and $b=\left( 1-m\right) \cdot
\varphi _{1,m-2}^{1,m-2}\circ \left( 1_{\wedge _{1}^{1}V}\otimes
C_{m-2}\right) $. We have%
\begin{equation*}
a\circ C_{1}=\left( \ev_{V}^{\tau }\otimes C_{m-1}\right) \circ
C_{1}=C_{m-1}\circ \ev_{V}^{\tau }\circ C_{1}=r\cdot C_{m-1}\text{,}
\end{equation*}%
because $r=\ev_{V}^{\tau }\circ C_{V}$. On the other hand, by Lemma \ref{key
claim L1},%
\begin{eqnarray*}
b\circ C_{1} &=&\left( 1-m\right) \cdot \varphi _{1,m-2}^{1,m-2}\circ \left(
C_{1}\otimes C_{m-2}\right) =\left( 1-m\right) \cdot \varphi
_{1,m-2}^{1,m-2}\circ \left( C_{m-2}\otimes C_{1}\right) \\
&=&\left( 1-m\right) \cdot C_{m-1}\text{.}
\end{eqnarray*}%
The claimed commutativity of $\left( \text{\ref{key claim D4}}\right) $
follows for $m\geq 2$. When $m=1$ we have, by definition, $\delta
_{1,1}^{1,1}=\left( \ev_{V}\otimes \ev_{V}^{\tau }\right) \circ \left( \tau
_{V,V^{\vee }\otimes V}\otimes 1_{V^{\vee }}\right) $, so that%
\begin{eqnarray*}
\delta _{1,1}^{1,1}\circ \left( C_{1}\otimes C_{1}\right) &=&\left(
\ev_{V}\otimes \ev_{V}^{\tau }\right) \circ \left( \tau _{V,V^{\vee }\otimes
V}\otimes 1_{V^{\vee }}\right) \circ \left( C_{V}\otimes C_{V}\right) \\
&=&\ev_{V}^{\tau }\circ \left( \ev_{V}\otimes 1_{V\otimes V^{\vee }}\right)
\circ \left( \tau _{V,V^{\vee }\otimes V}\otimes 1_{V^{\vee }}\right) \circ
\left( C_{V}\otimes 1_{V\otimes V^{\vee }}\right) \circ C_{V} \\
&=&\ev_{V}^{\tau }\circ \left( \left( \ev_{V}\otimes 1_{V}\right) \circ \tau
_{V,V^{\vee }\otimes V}\circ \left( C_{V}\otimes 1_{V}\right) \right)
\otimes 1_{V^{\vee }}\circ C_{V} \\
&=&\ev_{V}^{\tau }\circ C_{V}=r\text{,}
\end{eqnarray*}%
because $\left( \ev_{V}\otimes 1_{V}\right) \circ \tau _{V,V^{\vee }\otimes
V}\circ \left( C_{V}\otimes 1_{V}\right) =\left( 1_{V}\otimes \ev_{V}\right)
\circ \left( C_{V}\otimes 1_{V}\right) =1_{V}$ by Lemma \ref{S1 Casimir L
Properties} $\left( 2\right) $.

\bigskip

\textbf{Step 5}

We can now prove that, for $0\leq k\leq m$, we have%
\begin{equation}
\xymatrix{ \mathbb{I} \ar@/^{0.75pc}/[dr]^-{\binom{r+k-m}{k}\cdot C_{m-k}} \ar[d]_{C_{k}\otimes C_{m}} & \\ \wedge_{k}^{k}V\otimes\wedge_{m}^{m}V \ar[r]^-{\binom{m}{k}\cdot\delta_{k,m}^{k,m}} & \wedge_{m-k}^{m-k}V\text{.} }  \label{key claim D5}
\end{equation}%
When $k=0$ the claim is reduced to a triviality: we have $C_{k}\otimes
C_{m}=C_{m}$, $\binom{r+k-m}{k}C_{m-k}=C_{m}$ and $\binom{m}{k}\cdot \delta
_{k,m}^{k,m}=1_{\wedge _{m}^{m}V}$. In particular we may assume $1\leq k\leq
m$. For $k=1$ this is precisely $\left( \text{\ref{key claim D4}}\right) $,
so that we may assume that the commutativity is known for $1\leq k\leq m$
and that we would like to prove it for $2\leq k+1\leq m$. Consider the
following diagram%
\begin{equation*}
\xymatrix{ & \wedge_{1}^{1}V \ar[d]|{C_{k}\otimes1_{\wedge_{1}^{1}V}\otimes C_{m}} \ar@/^{0.75pc}/[dr]|{1_{\wedge_{1}^{1}V}\otimes C_{k}\otimes C_{m}} \ar@/^{1pc}/[drr]^{\binom{r+k-m}{k}\cdot1_{\wedge_{1}^{1}V}\otimes C_{m-k}} & & \\ \mathbb{I} \ar@/^{0.75pc}/[ur]|-{C_{1}} \ar[r]^-{C_{k}\otimes C_{1}\otimes C_{m}} \ar@/_{0.75pc}/[dr]|-{C_{k+1}\otimes C_{m}} & \wedge_{k}^{k}V\otimes\wedge_{1}^{1}V\otimes\wedge_{m}^{m}V \ar@{}[ur]|(0.4){(\tau)}  \ar@{}[ul]|(0.4){(\otimes)} \ar@{}[dl]|(0.4){(A)} \ar@{}[dr]|{(B)} \ar[d]|{\varphi_{k,1}^{k,1}\otimes1_{\wedge_{m}^{m}V}} \ar[r]^-{\tau_{\wedge_{k}^{k}V,\wedge_{1}^{1}V}\otimes1_{\wedge_{m}^{m}V}} & \wedge_{1}^{1}V\otimes\wedge_{k}^{k}V\otimes\wedge_{m}^{m}V \ar@{}[u]|(0.4){(C)} \ar[r]^-{\binom{m}{k}\cdot1_{\wedge_{1}^{1}V}\otimes\delta_{k,m}^{k,m}} & \wedge_{1}^{1}V\otimes\wedge_{m-k}^{m-k}V \ar@/^{0.75pc}/[dl]|-{\delta_{1,m-k}^{1,m-k}} \\ & \wedge_{k+1}^{k+1}V\otimes\wedge_{m}^{m}V \ar[r]^-{\binom{m}{k}\cdot\delta_{k+1,m}^{k+1,m}} & \wedge_{m-k-1}^{m-k-1}V\text{.} & }
\end{equation*}%
The region $\left( A\right) $ is commutative by Lemma \ref{key claim L1}:%
\begin{equation*}
\left( \varphi _{k,1}^{k,1}\otimes 1_{\wedge _{m}^{m}V}\right) \circ \left(
C_{k}\otimes C_{1}\otimes C_{m}\right) =\left( \varphi _{k,1}^{k,1}\circ
\left( C_{k}\otimes C_{1}\right) \right) \otimes C_{m}=C_{k+1}\otimes C_{m}%
\text{.}
\end{equation*}%
The region $\left( B\right) $ is commutative by Corollary \ref{S1 Algebras
C1} $\left( 2\right) $. Finally, the region $\left( C\right) =1_{\wedge
_{1}^{1}V}\otimes \left( \text{\ref{key claim D5}}\right) $ is commutative
by induction. We deduce%
\begin{equation}
\binom{m}{k}\cdot \delta _{k+1,m}^{k+1,m}\circ \left( C_{k+1}\otimes
C_{m}\right) =\binom{r+k-m}{k}\cdot \delta _{1,m-k}^{1,m-k}\circ \left(
C_{1}\otimes C_{m-k}\right) \text{.}  \label{key claim D5 F1}
\end{equation}%
We now note that we have $k+1\leq m$ if and only if $m-k\geq 1$, so that $%
\left( \text{\ref{key claim D4}}\right) $ with $m$ replaced by $m-k$ gives
the equality%
\begin{equation}
\left( m-k\right) \cdot \delta _{1,m-k}^{1,m-k}\circ \left( C_{1}\otimes
C_{m-k}\right) =\left( r-m+k+1\right) \cdot C_{m-k-1}\text{.}
\label{key claim D5 F2}
\end{equation}%
Noticing that $\binom{m}{k+1}=\frac{m-k}{k+1}\binom{m}{k}$ we deduce,
inserting $\left( \text{\ref{key claim D5 F2}}\right) $ in $\left( \text{\ref%
{key claim D5 F1}}\right) $, that we have%
\begin{equation*}
\binom{m}{k+1}\cdot \delta _{k+1,m}^{k+1,m}\circ \left( C_{k+1}\otimes
C_{m}\right) =\frac{1}{k+1}\binom{r+k-m}{k}\left( r-m+k+1\right) \cdot
C_{m-k-1}\text{.}
\end{equation*}%
The claim follows because $\binom{r+k+1-m}{k+1}=\frac{1}{k+1}\binom{r+k-m}{k}%
\left( r-m+k+1\right) $.

\section{A Poincar\'{e} duality isomorphism for the symmetric algebras}

In this section we suppose that $\mathcal{C}$ is rigid, $\mathbb{Q}$-linear and pseudo-abelian.
We consider an object $V\in \mathcal{C}$ and we apply the results on $\Delta $-graded algebras  with $A=\left( \vee
^{\cdot }V,\varphi _{i,j}^{V,s}\right) $ and $A^{\vee }=\left( \vee ^{\cdot
}V^{\vee },\varphi _{i,j}^{V^{\vee },s}\right) $. We will use the shorter notation
$i_{V}^{p}:=i_{V,s}^{p}$, $p_{V}^{p}:=p_{V,s}^{p}$, $e_{V}^{p}:=e_{V,s}^{p}$%
, $\varphi _{i,j}=\varphi _{i,j}^{V}:=\varphi _{i,j}^{V,s}$ and $\varphi
_{i,j}^{V^{\vee }}:=\varphi _{i,j}^{V^{\vee },s}$. The same argument employed in the alternating case shows that the internal
multiplication morphisms are given, for every $j\geq i$, by the composite%
\[
\varphi _{\iota _{i,j}}:\vee ^{i}V\otimes \vee ^{j}V^{\vee }\overset{%
i_{V}^{i}\otimes i_{V^{\vee }}^{j}}{\rightarrow }\left( \otimes ^{i}V\right)
\otimes \left( \otimes ^{j}V\right) \overset{\ev_{V}^{i,\tau }\otimes
1_{\otimes ^{j-i}V^{\vee }}}{\rightarrow }\otimes ^{j-i}V^{\vee }\overset{%
p_{V^{\vee }}^{j-i}}{\rightarrow }\vee ^{j-i}V^{\vee }\text{.}
\]%
These morphisms can then be lifted to the tensor algebras as in Lemma \ref%
{Alternating algebras L1}, the only difference being that the character $%
\varepsilon $ has to be replaced by the trivial character. The effect of
this change is that the resulting normalized family $\iota _{j}:=j\cdot
\iota _{1,j}$ is now a derivation, rather than being an anti-derivation, i.e.
it satisfies the symbolic theoretic formula%
\[
\iota _{j+l}\left( x\right) \left( \omega _{j}\vee \omega _{l}\right) =\iota
_{j+l}\left( x\right) \left( \omega _{j}\right) \wedge \omega _{l}+\omega
_{j}\wedge \iota _{j+l}\left( x\right) \left( \omega _{l}\right) \text{ for }%
x\in V\text{, }\omega _{j}\in \vee ^{j}V^{\vee }\text{ and }\omega _{l}\in
\vee ^{l}V^{\vee }\text{,}
\]%
which has a formal diagram theoretic formulation analogue to Proposition \ref%
{Alternating algebras P1}. Then the analogue of Corollary \ref{Alternating
algebras C1}, that we leave to the reader to precisely formulate, is just a
formal consequence and the proof of Lemma \ref{Alternating algebras L key},
suitable modified employing the analogue of this corollary, lead to the
following result.

\begin{lemma}
\label{Symmetric algebras L key}Let $r:=\mathrm{rank}\left( V\right) $ be
the rank of $V$, defined as the composite%
\begin{equation*}
r:\mathbb{I}\overset{C_{V}}{\rightarrow }V\otimes V^{\vee }\overset{%
\ev_{V}^{\tau }}{\rightarrow }\mathbb{I}\text{.}
\end{equation*}%
For every $g\geq i$ we have the equality%
\begin{equation*}
\binom{g}{i}^{-1}\binom{r+g-1}{i}\cdot C_{\vee ^{g-i}V}=\delta
_{i,g}^{i,g}\circ \left( C_{\vee ^{i}V}\otimes C_{\vee ^{g}V}\right) \text{,}
\end{equation*}%
where, for every $k\in \mathbb{N}_{\geq 1}$,%
\begin{equation*}
\binom{T}{k}:=\frac{1}{k!}T\left( T-1\right) ...\left( T-k+1\right) \in 
\mathbb{Q}\left[ T\right] \text{ and }\binom{T}{0}=1\text{.}
\end{equation*}
\end{lemma}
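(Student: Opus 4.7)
The plan is to mirror the five-step proof of Lemma \ref{Alternating algebras L key}, using the symmetric analogue of Corollary \ref{Alternating algebras C1} (whose statement and proof are immediate once the derivation property for the symmetric algebra replaces the anti-derivation property of Proposition \ref{Alternating algebras P1}), and carefully tracking the numerical constants that change. Throughout write $C_p := C_{\vee^p V}$, and use the same symbols $\delta_{j,l}^{i,k}$, $\varphi_{j,l}^{i,k}$ as in Section 5 but attached now to the symmetric algebras on $V$ and $V^\vee$.

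First I would establish, by induction on $m$, the symmetric analogue of diagram (\ref{key claim D1}), yielding $m \cdot \delta_{0,m}^{1,m} \circ (1_V \otimes C_m) = \varphi_{0,m-1}^{1,m-1} \circ (1_V \otimes C_{m-1})$ and its dual. The argument runs exactly as in Step 1: decompose $(m+1) \cdot \delta_{0,m+1}^{1,m+1} \circ (1_V \otimes C_{m+1})$ as a sum of two contributions via the derivation property, apply the inductive hypothesis to one of them, and use the symmetric analogue of Lemma \ref{key claim L1}. The essential difference is that both contributions now carry a positive sign, so the computation does not involve the $(-1)^m$ factors that appeared in the alternating case. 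Step 2 is then an immediate specialization as before.

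Step 3 produces, for $m \geq 2$, the identity
\[
m \cdot \delta_{1,m}^{1,m} \circ (1_{V \otimes V^\vee} \otimes C_m) = \ev_V^\tau \otimes 1_{\vee^{m-1}V \otimes \vee^{m-1}V^\vee} + (m-1) \cdot \varphi_{1,m-2}^{1,m-2} \circ (1_{V \otimes V^\vee} \otimes C_{m-2}),
\]
where the critical change from (\ref{key claim D3}) is that the coefficient $(1-m)$ of the alternating case becomes $(m-1)$, once again because the symmetric commutativity constraint introduces no sign. Composing with $C_1$ as in Step 4 then yields
\[
m \cdot \delta_{1,m}^{1,m} \circ (C_1 \otimes C_m) = (r + m - 1) \cdot C_{m-1}
\]
for every $m \geq 1$, replacing the alternating constant $r - m + 1$ of (\ref{key claim D4}).

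Finally, Step 5 is an induction on $k$ following (\ref{key claim D5}) verbatim: the same Step-5 diagram, now for the symmetric algebra, combined with Step 4 applied to $m - k$, produces the recursion
\[
\binom{m}{k+1} \cdot \delta_{k+1,m}^{k+1,m} \circ (C_{k+1} \otimes C_m) = \frac{1}{k+1} \binom{r+m-1}{k} (r+m-1-k) \cdot C_{m-k-1},
\]
and the identity $\binom{r+m-1}{k+1} = \frac{r+m-1-k}{k+1} \binom{r+m-1}{k}$ closes the induction, yielding $\binom{m}{k} \cdot \delta_{k,m}^{k,m} \circ (C_k \otimes C_m) = \binom{r+m-1}{k} \cdot C_{m-k}$, i.e.\ the claim with $g = m$, $i = k$. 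The main obstacle is purely bookkeeping: one must verify that every $(-1)^{\bullet}$ appearing in the alternating proof correctly disappears in the symmetric setting. Once this is done, the single numerical change from $r-m+1$ to $r+m-1$ in Step 4 propagates through Step 5 to replace the binomial $\binom{r+i-g}{i}$ by $\binom{r+g-1}{i}$, which is exactly the stated formula.
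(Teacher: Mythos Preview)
Your proposal is correct and follows exactly the approach the paper indicates: the paper does not give a separate proof of this lemma but says that ``the proof of Lemma \ref{Alternating algebras L key}, suitably modified employing the analogue of [Corollary \ref{Alternating algebras C1}], lead to the following result,'' and your five-step outline carries out precisely this modification, correctly tracking that the sign $(-1)^m$ disappears so that $(1-m)$ becomes $(m-1)$ in Step~3 and hence $r-m+1$ becomes $r+m-1$ in Step~4. One small slip: the identity you record in Step~1 should read $\delta_{0,m}^{1,m}\circ(1_V\otimes C_m)=\varphi_{0,m-1}^{1,m-1}\circ(1_V\otimes C_{m-1})$ without the factor $m$ (compare \eqref{key claim D2}); this does not affect anything downstream.
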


\bigskip

As in the alternating case we may define, for every $g\geq i$, the Poincar\'{e} morphisms%
\[
D^{i,g}:=D_{\iota _{i,g}}:\vee ^{i}V\rightarrow \vee ^{g-i}V^{\vee }\otimes
\vee ^{g}V^{\vee \vee }\text{ and }D^{i,g}:=D_{\iota _{i,g}^{\ast }}:\vee
^{i}V^{\vee }\rightarrow \vee ^{g-i}V\otimes \vee ^{g}V^{\vee }\text{.}
\]%
The following result is obtained from Lemma \ref{Symmetric algebras L key} in the same way as Theorem  \ref{Alternating algebras T} has been obtained from Lemma \ref{Alternating algebras L key}%
\ with .

\begin{theorem}
\label{Symmetric algebras T}The following diagrams are commutative, for
every $g\geq i\geq 0$.

\begin{enumerate}
\item[$\left( 1\right) $] 
\begin{equation*}
\xymatrix@C=60pt{ \vee^{i}V\otimes\vee^{i}V^{\vee} \ar@/^{0.75pc}/[dr]|{\binom{g}{g-i}^{-1}\binom{r+g-1}{g-i}\ev_{V,s}^{i,\tau}} \ar[d]|{D^{i,g}\otimes D_{i,g}} \\ \vee^{g-i}V^{\vee}\otimes\vee^{g}V^{\vee\vee}\otimes\vee^{g-i}V\otimes\vee^{g}V^{\vee} \ar[r]^-{\ev_{13,24}^{\phi,\phi}} & \mathbb{I}\text{.} }
\end{equation*}

\item[$\left( 2\right) $] 
\begin{equation*}
\xymatrix{ \vee^{i}V^{\vee}\otimes\vee^{g-i}V^{\vee} \ar[r]^-{1_{\vee^{i}V^{\vee}}\otimes D_{g-i,g}} \ar[d]|{D_{i,g}\otimes1_{\vee^{g-i}V^{\vee}}} & \vee^{i}V^{\vee}\otimes\vee^{i}V\otimes\vee^{g}V^{\vee} \ar[d]|{ \ev_{V,s}^{i}\otimes1_{\vee^{g}V^{\vee}}} & \vee^{i}V\otimes\vee^{g-i}V \ar[r]^-{1_{\vee^{i}V}\otimes D^{g-i,g}} \ar[d]|{D^{i,g}\otimes1_{\vee^{g-i}V}} & \vee^{i}V\otimes\vee^{i}V^{\vee}\otimes\vee^{g}V^{\vee\vee} \ar[d]|{\ev_{V,s}^{i,\tau}\otimes1_{\vee^{g}V^{\vee\vee}}} \\ \vee^{g-i}V\otimes\vee^{g}V^{\vee}\otimes\vee^{g-i}V^{\vee} \ar[r]^-{\ev_{13,\vee^{g}V^{\vee}}^{\tau}} & \vee^{g}V^{\vee}\text{,} & \vee^{g-i}V^{\vee}\otimes\vee^{g}V^{\vee\vee}\otimes\vee^{g-i}V \ar[r]^-{\ev_{13,\vee^{g}V^{\vee\vee}}^{\phi}} & \vee^{g}V^{\vee\vee}\text{.} }
\end{equation*}

\item[$\left( 3\right) $] 
\begin{equation*}
\xymatrix@C=40pt{ \vee^{i}V \ar@/^{2pc}/[rrr]^-{\binom{g}{g-i}^{-1}\binom{r+g-1}{g-i}} \ar[r]_-{D^{i,g}} & \vee^{g-i}V^{\vee}\otimes\vee^{g}V^{\vee\vee} \ar[r]_-{D_{g-i,g}\otimes1_{\vee^{g}V^{\vee\vee}}} & \vee^{i}V\otimes\vee^{g}V^{\vee}\otimes\vee^{g}V^{\vee\vee} \ar[r]_-{1_{\vee^{i}V}\otimes \ev_{V^{\vee},s}^{g,\tau}} & \vee^{i}V }
\end{equation*}%
and 
\begin{equation*}
\xymatrix@C=40pt{ \vee^{g-i}V^{\vee} \ar@/^{2pc}/[rrr]^-{\binom{g}{i}^{-1}\binom{r+g-1}{i}} \ar[r]_-{D_{g-i,g}} & \vee^{i}V\otimes\vee^{g}V^{\vee} \ar[r]_-{D^{i,g}\otimes1_{\vee^{g}V^{\vee}}} & \vee^{g-i}V^{\vee}\otimes\vee^{g}V^{\vee\vee}\otimes\vee^{g}V^{\vee} \ar[r]_-{1_{\vee^{g-i}V^{\vee}}\otimes \ev_{V^{\vee},s}^{g}} & \vee^{g-i}V^{\vee}\text{.} }
\end{equation*}

\item[$\left( 4\right) $] 
\begin{equation*}
\xymatrix{ \vee^{i}V\otimes\vee^{g-i}V \ar[r]^-{\varphi_{i,g-i}} \ar[d]|{D^{i,g}\otimes D^{g-i,g}} & \vee^{g}V \ar[d]|{\binom{g}{g-i}^{-1}\binom{r+g-1}{g-i}\cdot i_{\vee^{g}V}} & \vee^{i}V^{\vee}\otimes\vee^{g-i}V^{\vee} \ar[r]^-{\varphi_{i,g-i}} \ar[d]|{D_{i,g}\otimes D_{g-i,g}} & \vee^{g}V^{\vee} \ar[d]|{\binom{g}{g-i}^{-1}\binom{r+g-1}{g-i}} \\ \vee^{g-i}V^{\vee}\otimes\vee^{g}V^{\vee\vee}\otimes\vee^{i}V^{\vee}\otimes\vee^{g}V^{\vee\vee} \ar[r]^(0.8){\varphi_{g-i,i}^{13\rightarrow\vee^{g}V^{\vee\vee}}} & \vee^{g}V^{\vee\vee}\text{,} & \vee^{g-i}V\otimes\vee^{g}V^{\vee}\otimes\vee^{i}V\otimes\vee^{g}V^{\vee} \ar[r]^-(0.7){\varphi_{g-i,i}^{13\rightarrow\vee^{g}V^{\vee}}} & \vee^{g}V^{\vee}\text{.}}
\end{equation*}
\end{enumerate}
\end{theorem}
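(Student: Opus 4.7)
The strategy is to mirror the proof of Theorem \ref{Alternating algebras T} line by line, but with the symmetric versions of the combinatorial ingredients. Concretely, I will apply Proposition \ref{FPD P1}, Lemma \ref{FDP L1}, Theorem \ref{FDP T} and Corollary \ref{FDP C2} with $(S,X,Y)=(\vee^{i}V,\vee^{g-i}V,\vee^{g}V)$ and with
\[
\varphi_{S,X}=\varphi_{i,g-i}^{V},\quad \varphi_{X,S}=\varphi_{g-i,i}^{V},\quad \varphi_{S^{\vee},X^{\vee}}=\varphi_{i,g-i}^{V^{\vee}},\quad \varphi_{X^{\vee},S^{\vee}}=\varphi_{g-i,i}^{V^{\vee}},
\]
so that $D_{S,X^{\vee}}=D^{i,g}$, $D_{X,S^{\vee}}=D^{g-i,g}$, $D_{S^{\vee},X}=D_{i,g}$, and $D_{X^{\vee},S}=D_{g-i,g}$. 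The proof then reduces to identifying the constants $\mu_{\bullet,\bullet}$ and $\lambda_{\bullet,\bullet}$ that appear in the formal statements.

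First I would verify the Casimir identities $(Cas)_{\mu_{S,X}}$ and $(Cas)_{\mu_{X,S}}$: these translate directly into the equalities
\[
\mu_{S,X}\cdot C_{\vee^{g-i}V}=\delta_{g-i,g}^{g-i,g}\circ(C_{\vee^{g-i}V}\otimes C_{\vee^{g}V}),\qquad \mu_{X,S}\cdot C_{\vee^{i}V}=\delta_{i,g}^{i,g}\circ(C_{\vee^{i}V}\otimes C_{\vee^{g}V}),
\]
which by Lemma \ref{Symmetric algebras L key} hold with
\[
\mu_{S,X}=\binom{g}{g-i}^{-1}\binom{r+g-1}{g-i},\qquad \mu_{X,S}=\binom{g}{i}^{-1}\binom{r+g-1}{i}.
\]
Next I would check the commutativity constraints. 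Since the symmetric algebra is genuinely commutative in the $ACU$ sense (the symmetrizer idempotent $e_{V,s}^{p}$ identifies anything differing by a transposition), the diagrams $(Com)_{\lambda_{S,X}}$, $(Com)_{\lambda_{X,S}}$, $(Com)_{\lambda_{S^{\vee},X^{\vee}}}$, and $(Com)_{\lambda_{X^{\vee},S^{\vee}}}$ all commute with $\lambda_{\bullet,\bullet}=1$; hence by Remark \ref{FPD R1} the composite constraint $\lambda_{[S],[X]}=\lambda_{[X],[S]}=1$ as well. This is the symmetric analogue of the sign $(-1)^{i(g-i)}$ appearing in the alternating case, and is the only genuine discrepancy between the two proofs.

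With these ingredients in place, the four commutative diagrams of the theorem follow immediately: diagram $(1)$ is Proposition \ref{FPD P1}, diagram $(2)$ is Lemma \ref{FDP L1}, diagram $(3)$ is Theorem \ref{FDP T}, and diagram $(4)$ is Corollary \ref{FDP C2}, evaluated with the constants just determined. The main obstacle is really none: the whole delicate combinatorial work—tracking the multinomial coefficient $\binom{g}{i}^{-1}\binom{r+g-1}{i}$ through the iterated Casimirs of $\vee^{\bullet}V$—has already been absorbed into Lemma \ref{Symmetric algebras L key}, whose proof in turn parallels that of Lemma \ref{Alternating algebras L key} upon replacing the sign character of $S_{p}$ by the trivial character and using the derivation (rather than anti-derivation) property of the internal contractions, so that no sign discrepancies appear and the binomial coefficient $\binom{r-i}{g-i}$ is replaced by $\binom{r+g-1}{g-i}$. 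Once that lemma is in hand, the present theorem is a purely formal consequence of the machinery of Section 3, exactly as in the alternating case.
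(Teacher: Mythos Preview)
Your proposal is correct and follows exactly the paper's own approach: the paper states that Theorem \ref{Symmetric algebras T} is obtained from Lemma \ref{Symmetric algebras L key} in the same way that Theorem \ref{Alternating algebras T} was obtained from Lemma \ref{Alternating algebras L key}, namely by specializing Proposition \ref{FPD P1}, Lemma \ref{FDP L1}, Theorem \ref{FDP T} and Corollary \ref{FDP C2} to $(S,X,Y)=(\vee^{i}V,\vee^{g-i}V,\vee^{g}V)$ with the $\mu$-constants read off from the key lemma and all $\lambda$-constants equal to $1$ by commutativity of the symmetric algebra. Your identification of the constants and of the reason the sign $(-1)^{i(g-i)}$ disappears is precisely what the paper intends.
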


\bigskip

We say that $V$ has \emph{symmetric rank }$g\in \mathbb{N}_{\geq 1}$ if $\vee ^{g}V$
is an invertible object and $\binom{r+g-1}{g-i}$ and $\binom{r+g-1}{i}$ are
invertible for every $0\leq i\leq g$. For example, when $End\left( \mathbb{I}%
\right) $ is a field or $r\in \mathbb{Q}$, the second condition means that $%
r $ is not a root of the polynomials $\binom{T+g-1}{g-i}\in \mathbb{Q}\left[
T\right] $ and $\binom{T+g-1}{i}\in \mathbb{Q}\left[ T\right] $ for every $%
0\leq i\leq g$, i.e. that $r\neq 1-g,2-g,...,-i$ and $r\neq 1-g,2-g,...,i-g$
for every $1\leq i\leq g$.

We say that $V$ has \emph{strong symmetric rank }$g\in \mathbb{N}_{\geq 1}$ if $\vee
^{g}V$ is an invertible object and $r=-g$ (hence $V$ has symmetric rank $g$).
With these notations Corollary \ref{FDP C1} specializes to the following
result.

\begin{corollary}
\label{Symmetric algebras CT}If $V$ has symmetric rank $g\in \mathbb{N}$
then, for every $0\leq i\leq g$, the morphisms $D^{i,g}$, $D_{g-i,g}$, $%
D^{g-i,g}$ and $D_{i,g}$ are isomorphisms and the multiplication maps $%
\varphi _{i,g-i}^{V}$, $\varphi _{g-i,i}^{V}$, $\varphi _{i,g-i}^{V^{\vee }}$
and $\varphi _{g-i,i}^{V^{\vee }}$ are perfect pairings (meaning that the
associate $\hom $ valued morphisms are isomorphisms). Furthermore, when $V$
has strong symmetric rank $g$, we have $\binom{r+g-1}{g-i}=\left( -1\right)
^{g-i}$ and $\binom{r+g-1}{i}=\left( -1\right) ^{i}$ in the commutative
diagrams of Theorem \ref{Symmetric algebras T}.
\end{corollary}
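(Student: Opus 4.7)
The plan is to deduce Corollary \ref{Symmetric algebras CT} as a direct application of Corollary \ref{FDP C1}, entirely in parallel with the proof of Corollary \ref{Alternating algebras CT}. I would take $\left( S,X,Y\right) =\left( \vee ^{i}V,\vee ^{g-i}V,\vee ^{g}V\right) $, with the four symmetric-algebra multiplications $\varphi _{i,g-i}^{V},\varphi _{g-i,i}^{V},\varphi _{i,g-i}^{V^{\vee }},\varphi _{g-i,i}^{V^{\vee }}$ in the roles of $\varphi _{S,X},\varphi _{X,S},\varphi _{S^{\vee },X^{\vee }},\varphi _{X^{\vee },S^{\vee }}$; the associated Poincar\'{e} morphisms are then exactly $D^{i,g},D^{g-i,g},D_{i,g},D_{g-i,g}$. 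Once the hypotheses of Corollary \ref{FDP C1} are in place, it instantly yields both that these four $D$-morphisms are isomorphisms and that the corresponding $f_{i,g-i}$, $f_{g-i,i}$ and their dual counterparts are isomorphisms, which is by definition the assertion that the multiplications are perfect pairings.

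Two families of hypotheses need to be verified. First, the four commutativity conditions $\left( Com\right) _{\lambda }$ all hold with $\lambda =1$: this expresses the strict commutativity $\varphi _{i,j}\circ \tau _{\vee ^{i}V,\vee ^{j}V}=\varphi _{j,i}$ of the symmetric algebras on $V$ and on $V^{\vee }$, and it is immediate from the characterizing property $\left( \text{\ref{Induced morphisms D1}}\right) $ together with the fact that the symmetrizer $e_{V,s}^{i+j}$ corresponds to the trivial character and is therefore invariant under the blockwise swap $S_{i}\times S_{j}\hookrightarrow S_{i+j}$. Second, the two Casimir relations $\left( Cas\right) _{\mu _{S,X}}$ and $\left( Cas\right) _{\mu _{X,S}}$ are exactly what Lemma \ref{Symmetric algebras L key} delivers when applied to the pairs $\left( i,g\right) $ and $\left( g-i,g\right) $: one reads off $\mu _{S,X}=\binom{g}{g-i}^{-1}\binom{r+g-1}{g-i}$ and $\mu _{X,S}=\binom{g}{i}^{-1}\binom{r+g-1}{i}$, while the equivalence between the two forms of $\left( Cas\right) $ has already been noted using $\left( \text{\ref{S1 AIM D1 reflexivity 2}}\right) $ and Lemma \ref{S1 Casimir L Properties} $\left( 5\right) $. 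The symmetric rank hypothesis then ensures that $\mu _{S,X}$ and $\mu _{X,S}$ are invertible in $End\left( \mathbb{I}\right) $; the four $\lambda $'s equal $1$ and are trivially invertible; and $Y=\vee ^{g}V$ is invertible by assumption. So Corollary \ref{FDP C1} applies. All the genuine work sits in Lemma \ref{Symmetric algebras L key}, which has already been established by the inductive machinery of Section 5.1 adapted to the symmetric case, so no further obstacle is expected at this step.

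For the final assertion, when $V$ has strong symmetric rank $g$ one has $r=-g$, whence $r+g-1=-1$, and the plan reduces to the elementary computation
\[
\binom{-1}{k}=\frac{\left( -1\right) \left( -2\right) \cdots \left( -k\right) }{k!}=\left( -1\right) ^{k}
\]
evaluated at $k=g-i$ and $k=i$, yielding the stated values $\left( -1\right) ^{g-i}$ and $\left( -1\right) ^{i}$ in the constants appearing in the commutative diagrams of Theorem \ref{Symmetric algebras T}. This step is routine and presents no obstacle.
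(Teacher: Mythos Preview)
Your proposal is correct and follows exactly the approach the paper intends: the paper simply states that Corollary \ref{FDP C1} specializes to this result, and you have spelled out that specialization with the right identifications $(S,X,Y)=(\vee^{i}V,\vee^{g-i}V,\vee^{g}V)$, the commutativity constants $\lambda=1$ coming from the trivial character, and the Casimir constants $\mu$ supplied by Lemma \ref{Symmetric algebras L key}. The computation $\binom{-1}{k}=(-1)^{k}$ for the strong symmetric rank case is also correct.
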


We end this section with the analogue of Proposition \ref{Alternating
algebras P2} in this setting. This a technical result that will be crucial
for the computation of \cite{MS}. The proof is just a copy of that of
Proposition \ref{Alternating algebras P2}.

\begin{proposition}
\label{Symmetric algebras P2}The following diagrams are commutative when $%
\vee ^{g}V$ is invertible of rank $r_{\vee ^{g}V}$ (hence $r_{\vee
^{g}V}\in \left\{ \pm 1\right\} $):%
\begin{equation*}
\xymatrix{ \vee^{i}V\otimes\vee^{g-i}V\otimes V \ar[r]^-{\tau_{\vee^{i}V\otimes\vee^{g-i}V,V}} \ar[d]|{\left(1_{\vee^{i}V}\otimes\varphi_{g-i,1},\left(1_{\vee^{g-i}V}\otimes\varphi_{i,1}\right)\circ\left(\tau_{\vee^{i}V,\vee^{g-i}V}\otimes1_{V}\right)\right)} & V\otimes\vee^{i}V\otimes\vee^{g-i}V \ar[d]|{D^{1,g}\otimes\varphi_{i,g-i}} \\ \vee^{i}V\otimes\vee^{g-i+1}V\oplus\vee^{g-i}V\otimes\vee^{i+1}V \ar[d]|{D^{i,g}\otimes D^{g-i+1,g}\oplus D^{g-i,g}\otimes D^{i+1,g}} & \vee^{g-1}V^{\vee}\otimes\vee^{g}V^{\vee\vee}\otimes\vee^{g}V \ar[d]|{r_{\vee^{g}V}g\binom{g}{g-i}^{-1}\binom{r+g-1}{g-i}\cdot1_{\vee^{g-1}V^{\vee}\otimes\vee^{g}V^{\vee\vee}}\otimes i_{\vee^{g}V}} \\ M \ar[r]_-{i\cdot\varphi_{g-i,i-1}^{13}\oplus\left(g-i\right)\cdot\varphi_{i,g-i-1}^{13}} & \vee^{g-1}V^{\vee}\otimes\vee^{g}V^{\vee\vee}\otimes\vee^{g}V^{\vee\vee}}
\end{equation*}%
where
\[
M = \vee^{g-i}V^{\vee}\otimes\vee^{g}V^{\vee\vee}\otimes\vee^{i-1}V^{\vee}\otimes\vee^{g}V^{\vee\vee}\oplus\vee^{i}V^{\vee}\otimes\vee^{g}V^{\vee\vee}\otimes\vee^{g-i-1}V^{\vee}\otimes\vee^{g}V^{\vee\vee}
\]
and%
\begin{equation*}
\xymatrix{ \vee^{i}V^{\vee}\otimes\vee^{g-i}V^{\vee}\otimes V^{\vee} \ar[r]^-{\tau_{\vee^{i}V^{\vee}\otimes\vee^{g-i}V^{\vee},V^{\vee}}} \ar[d]|{\left(1_{\vee^{i}V^{\vee}}\otimes\varphi_{g-i,1},\left(1_{\vee^{g-i}V^{\vee}}\otimes\varphi_{i,1}\right)\circ\left(\tau_{\vee^{i}V^{\vee},\vee^{g-i}V^{\vee}}\otimes1_{V^{\vee}}\right)\right)} & V^{\vee}\otimes\vee^{i}V^{\vee}\otimes\vee^{g-i}V^{\vee} \ar[d]|{D_{1,g}\otimes\varphi_{i,g-i}} \\ \vee^{i}V^{\vee}\otimes\vee^{g-i+1}V^{\vee}\oplus\vee^{g-i}V^{\vee}\otimes\vee^{i+1}V^{\vee} \ar[d]|{D_{i,g}\otimes D_{g-i+1,g}\oplus D_{g-i,g}\otimes D_{i+1,g}} & \vee^{g-1}V\otimes\vee^{g}V^{\vee}\otimes\vee^{g}V^{\vee} \ar[d]|{r_{\vee^{g}V}g\binom{g}{g-i}^{-1}\binom{r+g-1}{g-i}\cdot1_{\vee^{g-1}V^{\vee}\otimes\vee^{g}V^{\vee}\otimes\vee^{g}V^{\vee}}} \\ \vee^{g-i}V\otimes\vee^{g}V^{\vee}\otimes\vee^{i-1}V\otimes\vee^{g}V^{\vee}\oplus\vee^{i}V\otimes\vee^{g}V^{\vee}\otimes\vee^{g-i-1}V\otimes\vee^{g}V^{\vee} \ar[r]_-{i\cdot\varphi_{g-i,i-1}^{13}\oplus\left(g-i\right)\cdot\varphi_{i,g-i-1}^{13}} & \vee^{g-1}V\otimes\vee^{g}V^{\vee}\otimes\vee^{g}V^{\vee}\text{.} }
\end{equation*}
\end{proposition}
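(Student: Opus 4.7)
The strategy is to mirror the proof of Proposition \ref{Alternating algebras P2} step by step, with the symmetric algebra structure replacing the alternating one. The starting point is Corollary \ref{FDP C3} applied with $g = \iota_{1,g}\colon V \to \hom(\vee^g V^\vee, \vee^{g-1} V^\vee)$ (respectively $h = \iota_{1,g}^\ast\colon V^\vee \to \hom(\vee^g V, \vee^{g-1} V)$ for the second diagram) and $(S,X,Y) = (\vee^i V, \vee^{g-i} V, \vee^g V)$. By Lemma \ref{Symmetric algebras L key}, the values $\mu_{S,X} = \binom{g}{g-i}^{-1}\binom{r+g-1}{g-i}$ and $\mu_{X,S} = \binom{g}{i}^{-1}\binom{r+g-1}{i}$ take the role of the analogous constants in the alternating case, while every commutativity constant $\lambda_{V,W}$ equals $1$ (rather than $(-1)^{i(g-i)}$) because the symmetric algebra carries the trivial $S_i$-equivariance. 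This gives the identity
\begin{equation*}
r_{\vee^g V}\,\mu_{S,X}\cdot(D^{1,g}\otimes \varphi_{i,g-i})\circ\tau_{\vee^i V\otimes \vee^{g-i}V,V}
 = (\varphi_{\iota_{1,g}}\otimes 1_{\vee^g V^{\vee\vee}\otimes \vee^g V^{\vee\vee}})\circ(1_V\otimes \varphi_{g-i,g}^{13})\circ(1_V\otimes D^{i,g}\otimes D^{g-i,g})\circ\tau.
\end{equation*}

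Next, I would apply the symmetric analogue of Proposition \ref{Alternating algebras P1}, namely the derivation property which says that for $j,l\geq 1$,
\begin{equation*}
(j+l)\cdot \varphi_{\iota_{1,j+l}}\circ(1_V\otimes \varphi_{j,l}) = j\cdot \varphi_{j-1,l}\circ(\varphi_{\iota_{1,j}}\otimes 1_{\vee^l V^\vee}) + l\cdot\varphi_{j,l-1}\circ(1_{\vee^j V^\vee}\otimes \varphi_{\iota_{1,l}})\circ(\tau_{V,\vee^j V^\vee}\otimes 1_{\vee^l V^\vee}),
\end{equation*}
with no sign $(-1)^j$ in the second term. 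Splitting the right-hand side above accordingly produces two summands $a$ and $b$. The computation of each summand proceeds by the exact analogue of formulas $(\ref{Alternating algebras P2 F1 ab1})$--$(\ref{Alternating algebras P2 F1 a4})$ for $a$ and $(\ref{Alternating algebras P2 F1 b2})$--$(\ref{Alternating algebras P2 F1 b5})$ for $b$: rewrite the $D^{p,g}$'s via $(\ref{S1 Casimir D DefD_f})$, apply Proposition \ref{S1 Algebras P1} $(2)$ for the symmetric algebra to obtain $\varphi_{\iota_{1,g-i}}\circ(1_V\otimes \varphi_{\iota_{i,g}}) = \varphi_{\iota_{i+1,g}}\circ(\varphi_{i,1}^\tau \otimes 1_{\vee^g V^\vee})$ and $\varphi_{\iota_{1,i}}\circ(1_V\otimes \varphi_{\iota_{g-i,g}}) = \varphi_{\iota_{g-i+1,g}}\circ(\varphi_{g-i,1}^\tau \otimes 1_{\vee^g V^\vee})$, and finally invoke the commutativity constraint of the symmetric algebra, which gives $\varphi_{i,1}^\tau = \varphi_{1,i}$ and $\varphi_{g-i,1}^\tau = \varphi_{1,g-i}$ with no sign factors. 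This is where the $(-1)^i$ and $(-1)^{g-i}$ from the alternating case disappear.

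The final reshuffling, analogous to $(\ref{Alternating algebras P2 F5})$, again uses only the trivial commutativity constraint to switch the order of the two $\vee^\bullet V$ factors, so the sign $(-1)^{i(g-i-1)}$ does not appear either; assembling the two summands yields the first commutative diagram of the statement. The second diagram is obtained by the entirely parallel argument starting with Corollary \ref{FDP C3} applied to $h = \iota_{1,g}^\ast$, using the dual derivation property on $\vee^\cdot V^\vee$ and the dual versions of Proposition \ref{S1 Algebras P1} $(2)$ and of $(\ref{S1 Casimir D DefD_f})$. The main obstacle, as in the alternating case, is purely bookkeeping: carefully tracking the order of tensor factors through the many applications of $\tau$, of the Casimir morphisms, and of $(\ref{S1 AIM D7})$, and verifying that every sign which appeared in the alternating proof now reduces to $+1$ under the trivial commutativity constraint; once this is done, the formal structure of the calculation is identical.
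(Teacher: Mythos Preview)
Your proposal is correct and follows exactly the approach indicated by the paper, which simply states that the proof is a copy of that of Proposition~\ref{Alternating algebras P2}. In fact you supply more detail than the paper does, correctly identifying that the commutativity constants $\lambda$ become $1$, that the derivation property replaces the anti-derivation (dropping the $(-1)^j$), and that $\varphi_{i,1}^\tau=\varphi_{1,i}$ without sign, so that all the alternating signs collapse to $+1$ while the formal structure of the computation is unchanged.
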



\begin{thebibliography}{AKh}
\bibitem[AKh]{AKh} Y. Andr\'{e} and B. Kahn, \emph{Nilpotence, radicaux et
structures monoidales} (with an appendix of P. O'Sullivan), Rend. Sem. Math.
Univ. Padova 108 (2002), 107-291.

\bibitem[De]{De} P. Deligne, \emph{Cat\'{e}gories tannakiennes}, In: The
Grothendieck Festschrift (Vol. II), Progress in Math. 87, Birkh\"{a}user,
1990, 111-196.

\bibitem[DM]{DM} C. Deinger and J. Murre, Motivic decompositions of abelian
schemes and the Fourier transform, J. Reine Angew. Math. 422 (1991), 201-219.

\bibitem[IS]{IS} A. Iovita and M. Spie\ss , \emph{Derivatives of }$\emph{p}$%
\emph{-adic }$\emph{L}$\emph{-functions, Heegner cycles and monodromy
modules attached to modular forms}, Invent. Math. 154 no. 2 (2003), 333-384.

\bibitem[JL]{JL} B. W. Jordan and R. Livn\'{e}, \emph{Integral Hodge theory
and congruences between modular forms}, Duke Math. J. 80 no. 2 (1995),
419-484.

\bibitem[Kh]{Kh} B. Kahn, \emph{Zeta Functions and Motives}, Pure and
Applied Mathematics Quarterly, Volume 5, Number 1, (Special Issue: In honor
of Jean-Pierre Serre, Part 2 of 2 ), 2009, 507-570.

\bibitem[Ki1]{Ki1} S.-I. Kimura, \emph{Chow motives are finite-dimensional,
in some sense}, Math. Ann. 331 (2005), 173-201.

\bibitem[Ki2]{Ki2} S.-I. Kimura, \emph{A note on finite-dimensional motives}%
, In: Algebraic cycles and motives (in honour of J.P. Murre), Vol. 2, LMS
Series 344, Cambridge University Press, 2007, 203-213.

\bibitem[Ku]{Ku} K. Kunneman, \emph{On the Chow motive of an abelian scheme}%
, In: Motives, Proc. Symposia pure Math. 55 (I), AMS, 1994, 189-205.

\bibitem[MS]{MS} M. Masdeu and M. A. Seveso, \emph{Dirac operators in tensor
categories and the motive of odd weight modular forms}.

\bibitem[Sc]{Sc} A. Scholl, \emph{Motives for modular forms}, Invent. Math.
100 (1990), 419-430.
\end{thebibliography}
\end{document}